\author{
    Yueqi Sheng\thanks{School of Engineering \& Applied Sciences, Harvard University, Cambridge, Massachusetts, USA.
    \newline Email: \textcolor{red}{ysheng@g.harvard.edu}.}
    \and
    Qiang Wu\thanks{School of Mathematics, University of Minnesota, MN, USA.  \newline Email: \textcolor{red}{wuq@umn.edu}.}
}
\title{Central Limit Theorem of Overlap for the Mean Field Ghatak-Sherrington model}
\date{\today}
\begin{document}
\maketitle

\begin{abstract}
\setlength{\parindent}{0em}
\setlength{\parskip}{1em}
The Ghatak-Sherrington (GS) spin glass model is a random probability measure defined on the configuration space $\{0,\pm1,\pm2,\ldots, \pm \cS \}^N$ with system size $N$ and $\cS\ge1$ finite. This generalizes the classical Sherrington-Kirkpatrick (SK) model on the boolean cube $\{-1,+1\}^N$ to capture more complex behaviors, including the spontaneous inverse freezing phenomenon.
We give a quantitative joint central limit theorem for the overlap and self-overlap array at sufficiently high temperature under arbitrary crystal and external fields. Our proof uses the moment method combined with the cavity approach. Compared to the SK model, the main challenge comes from the non-trivial self-overlap terms that correlate with the standard overlap terms. 
\end{abstract}

\thispagestyle{empty}
\newpage

\setcounter{tocdepth}{3}
{
    \hypersetup{linkcolor=blue} 
    \thispagestyle{empty}
    \tableofcontents
    \addtocontents{toc}{\protect\thispagestyle{empty}}
    \pagenumbering{gobble}
}

\newpage
\clearpage
\pagenumbering{arabic}

\newpage

\section{Introduction}
We consider the Ghatak-Sherrington (GS) model: for each configuration
\begin{align}
\bm{\sigma} = (\sigma_1, \sigma_2, \cdots, \sigma_N) \in \Sigma_{N,\cc{S}}:=\{ 0, \pm 1, \cdots, \pm \cc{S}\}^N,
\end{align}
where $\cc{S}\ge 1$, the Hamiltonian of the GS model is defined as
\begin{align}\label{eq:Hamilton}
H_N(\bm{\sigma})= \frac{\beta}{\sqrt{N}}\sum_{i <j} g_{i, j} \sigma_i \sigma_j + D\sum_{i =1}^N\sigma_i^2 + h \sum_{i=1}^N\sigma_i,
\end{align}
where the interaction parameters are $g_{i, j} \overset{\text{i.i.d}}{\sim}  \mathcal{N}(0, 1)$ for $1 \leq i < j \leq N$, $\beta>0$ is the inverse temperature, and $h\ge0$ and $D\in \mathbb{R}$ represent the \textit{external} and \textit{crystal} fields respectively.


We are interested in the fluctuation of overlap array of $n$ configurations, $\{\inner{\bm{\sigma}^i, \bm{\sigma}^j}: i, j \in [n]\}$, as the number of spins $N \to \infty$ and for high enough temperature. The overlap of two configurations or \emph{replicas} $\bm{\sigma}^1, \bm{\sigma}^2$ from $\Sigma_{N, \cc{S}}$, $R_{1, 2}$, is defined as
\begin{align}
R_{1,2} = \frac{1}{N} \sum_{i = 1}^N \sigma^1_i\sigma^2_i.
\end{align}
If $\sigma^1 = \sigma^2$, the overlap becomes the self-overlap
\[R_{1, 1} = \frac{1}{N} \sum_{i=1}^N\sigma^1_i\sigma^1_i.\]
In the SK model, the "centered" overlaps behave asymptotically, as $N \to \infty$, like a family of correlated Gaussian under the Gibbs measure in the high temperature regime~\cite{Tal11, GT02, CN95}. The goal of this paper is to extend their result to the GS model and show that the overlaps and self-overlaps in the GS model converge to a family of correlated Gaussian when the temperature is high. (see Theorem \ref{thm:main}). \YS{Replica symmetry?}


The key idea of the proof is similar to that of the SK model  ~\cite[Chapter 1.10]{Tal11}, which is to decompose the overlaps into independent components (see ~\eqref{def:expand}) and use the cavity method to 
show that the mixed moments of the independent components are approximately the corresponding moments of a family of Gaussian r.v. (see Lemma \ref{lem:general}). Compared to the SK model, where $R_{1, 1} = 1$, the spin configurations in the GS model are in $\{ 0, \pm 1, \cdots, \pm \cc{S}\}^N$ thus $R_{1, 1}$ becomes a random variable. One can expect the overlap terms to be affected by the distribution of the norm of the configuration, i.e., the self-overlap. The main challenge is to characterize the correlation between overlap and self-overlap, which makes the analysis much more involved than in the SK case. 

The overlap array acts as an order parameter of mean-field spin glass models ~\cite{Par79,Par80,Par83}, which contains crucial information about the system. In the high-temperature regime for the SK model, moment estimates of overlap arrays were important for establishing the limiting law of free energy ~\cite{GT02}, the limiting law of spin covariances (Hanen's theorem ~\cite{Han07}), and a sharp upper bound of operator norm for the spin covariance matrix ~\cite{AG23}. To the best of our knowledge, the number of mathematically rigorous results concerning the GS model is quite limited. In~\cite{Pan05}, Panchenko first proved a variational formula for limiting free energy by generalizing Talagrand's method to the GS model and also later in~\cite{Pan18} via a different approach. Recently, Auffinger and Chen~\cite{AC21} used the cavity method to establish the Thouless-Anderson-Palmer equation for local magnetization. Our result could be used to extend the limiting laws in the SK model to the GS model.  




\subsection{Main result}
Given the Hamiltonian defined in \eqref{eq:Hamilton}, the corresponding GS Gibbs measure is 
\begin{align}
   dG_{\beta,h,D}(\bm{\sigma}) = \frac{\exp(H_N(\bm{\sigma}))}{Z_N(\beta,h,D)} \cdot d\bm{\sigma},
\end{align}
where $d\sigma$ is the uniform reference measure on $\Sigma_{N,\cc{S}}$, and the partition function $Z_N$ is is given by
\[
Z_N(\beta,D,h):= \sum_{\sigma \in \Sigma_{N,S}} \exp(H_N(\sigma)).
\]
\textit{In the following, we will suppress the dependence on $\beta, h, D$ for the above objects unless it causes confusion.}
Let $\bm{\sigma}^1, \bm{\sigma}^2, \cdots, \bm{\sigma}^n \in \Sigma_{N, \cc{S}}^n$ be a set of configurations or \emph{replicas}. For any function $f: \Sigma_{N, \cc{S}}^n \to \mathbf{R}$, denote $\langle f \rangle$ as the expectation of $f$ under the product Gibbs measure, that is,
\[\langle f \rangle = \sum_{\bm{\sigma}^1, \bm{\sigma}^2, \cdots, \bm{\sigma}^n} f(\bm{\sigma}^1, \bm{\sigma}^2, \cdots, \bm{\sigma}^n) \Pi_{i = 1}^n dG(\bm{\sigma}^i)\]
Let $\nu(f) = \E[\langle f \rangle]$ be the expectation of $\langle f \rangle$ under interaction parameters.

At sufficiently high temperature, it is expected that the GS model is replica symmetric in the sense that the overlap and self-overlap concentrate on some fixed points respectively.  The explicit form of the system of equations and the following concentration results were given in~\cite{AC21}.

\begin{proposition} [{\cite[Proposition 2]{AC21}}] \label{prop:ac21}
    There exist a $\tilde{\beta} > 0$ s.t for $\beta \in [0,  \tilde{\beta})$, $h \geq 0$ and $\cc{D} \in \R$, there exists unique $p, q \in \mathbf{R}$ s.t.
    \[\nu((R_{1,2} -q)^2) \le \frac{16\cS^2}{N}, \quad 
    \nu((R_{1,1} -p)^2) \le \frac{16\cS^4}{N}.\]
\end{proposition}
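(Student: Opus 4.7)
My plan is to follow the classical cavity approach of Talagrand~\cite{Tal11}, adapted to incorporate the new self-overlap order parameter.

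I would first identify $(p,q)$ as the unique solution of a replica-symmetric fixed point system derived from a one-step cavity on a distinguished site. Letting
\begin{align*}
m(\xi) &= \frac{\sum_{k=-\cc{S}}^{\cc{S}} k\, e^{k(\xi+h)+k^2 D}}{\sum_{k=-\cc{S}}^{\cc{S}} e^{k(\xi+h)+k^2 D}},\quad
v(\xi) = \frac{\sum_{k=-\cc{S}}^{\cc{S}} k^2\, e^{k(\xi+h)+k^2 D}}{\sum_{k=-\cc{S}}^{\cc{S}} e^{k(\xi+h)+k^2 D}},
\end{align*}
and $Z_0,Z_1$ i.i.d.\ standard Gaussian, a formal RS computation predicts
\begin{align*}
p &= \mathbb{E}\bigl[v(\beta\sqrt{p}\,Z_1)\bigr],\\
q &= \mathbb{E}_{Z_0}\Bigl[\bigl(\mathbb{E}_{Z_1}[m(\beta\sqrt{q}\,Z_0+\beta\sqrt{p-q}\,Z_1)]\bigr)^2\Bigr].
\end{align*}
The right-hand side defines a map $F:[0,\cc{S}^2]^2\to [0,\cc{S}^2]^2$ whose Jacobian is $O(\beta^2)$ via the chain rule, so $F$ is a strict contraction for $\beta$ below some threshold $\tilde\beta=\tilde\beta(\cc{S},D,h)$, yielding existence and uniqueness of the fixed point $(p,q)$.

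Next I would prove the quantitative concentration by the smart-path interpolation. Let $H_{N,t}$ interpolate between the GS Hamiltonian at $t=1$ and a decoupled system of $N$ independent single-site measures carrying Gaussian cavity fields with the RS variance/covariance structure at $t=0$, and let $\nu_t$ denote the associated quenched Gibbs average. Setting
\begin{align*}
\phi(t):=\nu_t\bigl((R_{1,2}-q)^2\bigr),\qquad \psi(t):=\nu_t\bigl((R_{1,1}-p)^2\bigr),
\end{align*}
the independence of sites at $t=0$ gives $\phi(0),\psi(0)=O(\cc{S}^4/N)$. Gaussian integration by parts expresses $\phi'(t)$ and $\psi'(t)$ as linear combinations of $\phi(t)$, $\psi(t)$, and cross moments $\nu_t((R_{1,2}-q)(R_{1,1}-p))$, the last dominated by $\phi(t)+\psi(t)$ via Cauchy-Schwarz. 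This yields a coupled differential inequality $(\phi',\psi')^\top\le C\beta^2 A\,(\phi,\psi)^\top + O(1/N)$ for a fixed $2\times 2$ matrix $A$ with entries depending only on $\cc{S}$, and a vector-valued Gronwall argument delivers the claimed $O(1/N)$ bounds once $C\beta^2\|A\|<1$.

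The main obstacle will be precisely this coupling between $\phi$ and $\psi$. In the SK model $R_{1,1}\equiv 1$, so $\psi\equiv 0$ and the interpolation reduces to a scalar ODE; here the cavity variance itself fluctuates with the self-overlap, and both $\phi'$ and $\psi'$ produce genuine cross terms. The technical heart of the argument is therefore to verify that the $2\times 2$ system arising from integration by parts remains contractive in the high-temperature regime, exploiting the uniform bound $|\sigma_i|\le\cc{S}$ to control all prefactors and to ensure that the replica symmetry of the cross terms is preserved.
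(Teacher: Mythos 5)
This proposition is cited directly from \cite{AC21} (Proposition~2 there); the present paper does not re-prove it, so there is no internal proof to compare against. Your proposal is therefore really a reconstruction of the argument in the cited reference. The overall strategy you describe---replica-symmetric fixed point for $(p,q)$ via a contraction mapping, followed by cavity interpolation to obtain $O(1/N)$ concentration---is indeed the framework used there and in the SK analogue of \cite[Ch.~1.4--1.6]{Tal11}. However, there are concrete errors in the details you wrote down.

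First, your replica-symmetric equations are not correct for the GS model. The cavity field appearing in the single-site measure has variance $\beta^2 q$, not $\beta^2 p$, so $p = \mathbb{E}[v(\beta\sqrt{p}\,Z_1)]$ should read $p = \mathbb{E}[v(\beta\sqrt{q}\,Z)]$. More importantly, the $(p-q)$ correction must enter as a deterministic shift of the crystal field, $D \mapsto D + \tfrac{\beta^2}{2}(p-q)$, inside the definitions of $m$ and $v$; this is visible in the interpolated Hamiltonian \eqref{eq:inter}, where the decoupled endpoint carries the term $(1-t)\tfrac{\beta^2}{2}(p-q)\sigma_N^2$. Your expression $q = \mathbb{E}_{Z_0}[(\mathbb{E}_{Z_1}[m(\beta\sqrt{q}\,Z_0+\beta\sqrt{p-q}\,Z_1)])^2]$ is not equivalent: one cannot pull the inner Gaussian expectation through the fraction defining $m$, since $m$ is a ratio of partition-function sums, and the quadratic field $e^{k^2\beta^2(p-q)/2}$ linearizes \emph{inside} both numerator and denominator simultaneously, not outside the ratio. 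The correct RS system is the single-Gaussian one, $q=\mathbb{E}_Z[m(\beta\sqrt{q}Z)^2]$ and $p=\mathbb{E}_Z[v(\beta\sqrt{q}Z)]$ with the $D$-shift built into $m,v$.

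Second, the Gronwall step has a subtlety you did not address. The cavity derivative formula (Lemma~\ref{lem:derivative}) expresses $\frac{d}{dt}\nu_t(f)$ in terms of $\nu_t$ of products involving the \emph{truncated} overlap $R^-_{k,l}$ and the last spin, and at time $t<1$ the last site is not exchangeable with the others, so one cannot freely replace $R^-$ by $R$ nor equate $\nu_t((R^-_{1,2}-q)^2)$ with $\phi(t)$. The standard device, both in Talagrand and in \cite{AC21}, is the comparison lemma $\nu_t(f)\le e^{cn^2\beta^2\cc{S}^4}\nu_1(f)$ (Lemma~\ref{lem:exp decay} here), which lets one bound the derivative uniformly in $t$ by a multiple of $\nu_1((R_{1,2}-Q_{1,2})^2)$ itself; integrating over $t\in[0,1]$ then gives a \emph{self-bounding} inequality of the form $\phi(1)\le \phi(0)+C\beta^2\phi(1)+O(1/N)$, solvable for $\phi(1)$ when $C\beta^2<1$. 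Your Gronwall version would require bounding the derivative by $\phi(t)$ at the \emph{same} time $t$, which does not follow cleanly from the cavity formula without the comparison lemma. If you replace your differential inequality with this self-bounding scheme, the rest of your $2\times 2$ coupled argument (jointly controlling $\nu((R_{1,2}-q)^2)$, $\nu((R_{1,1}-p)^2)$, and the cross term via Cauchy--Schwarz) is the right structure and matches the spirit of the cited proof.
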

In this note, we will use the following notation for where the overlap and self-overlap between arbitrary pairs of replicas concentrate. 
\begin{definition} \label{def:q}
For $k, l \in [n]$, for the pair of replicas $\sigma^k, \sigma^l$, denote
\[Q_{k, l} := \begin{cases}
    p, & \text{if } k \neq l,\\
    q, & \text{if } k = l.
\end{cases}\] 
\end{definition}

Our main result is a quantitative joint central limit theorem for the overlap and self-overlap array among a set of replicas $[n]$, i.e. $\{R_{k, l}\}_{(k, l) \in \cc{C}_n}$ (see $\cc{C}_n$ in Section~\ref{ssec:notation}). We show that for sufficiently high temperatures, the overlap and self-overlap array behave like a family of correlated Gaussians asymptotically as $N \to \infty$. Specifically, all mixed moments of the "recentered" (self-)overlap $\{R_{k, l} - Q_{k, l}: (k, l) \in \cc{C}_n\}$ converges to the corresponding mixed moments of a family of correlated Gaussian r.v.


\begin{theorem} \label{thm:main}
Consider a set of nonnegative integers $\{m(k, l) \in \mathbb{N}: 1 \le k \le l \le n\}$. Set 
\[m = \sum_{1 \le k \le l \le n} m(k, l),\]
and let $\{\eta_{k, l}: (k, l) \in \cc{C}_n\}$ be a family of centered Gaussian with covariances
 \[\cov(\eta_{k, l}, \eta_{k', l'}) := \begin{cases}
        A_2^2 \delta(|(k, l) \cap (k', l')| = 2) + |(k, l) \cap (k', l')| A_1^2 + A_0^2, & \text{if} \ |(k, l)| = |(k', l')| = 2,\\
        B_1^2 \delta(|(k, l)\cap (k', l')| = 1) + B_0^2, & \text{if}\ |(k, l)| = |(k', l')| = 1,\\
        C_1^2 \delta(|(k, l)\cap (k', l')| = 1) + C_0^2,& \text{if} \ |(k, l)| \neq |(k', l')|.
    \end{cases}\]
where the constants $A_2^2, A_1^2, B_1^2, C_1^2, A_0^2, B_0^2, C_0^2$ are given in Lemma~\ref{lem: general formal}. There exists  $\beta'\in (0,\tilde{\beta}]$ s.t. for $\beta<\beta'$, we have 
\[N^{\frac{m}{2}} \cdot \nu \left(\Pi_{k\le l}(R_{k, l} - Q_{k, l})^{m(k, l)}\right)  = \E\left[\Pi_{(k, l) \in \cc{C}_n}\eta_{k, l}^{m(k, l)}\right] + O(N^{-1/2}).\]
\end{theorem}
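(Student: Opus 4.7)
The plan is to follow the moment/cavity strategy Talagrand used for the SK model (\cite[Chapter 1.10]{Tal11}), carrying the standard overlaps $R_{k,l}$ (with $k\ne l$) and the self-overlaps $R_{k,k}$ through the induction simultaneously, so that the three covariance blocks ($A$, $B$, $C$) in the statement emerge automatically from the Gaussian pairing combinatorics.

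First I would set up the cavity decomposition by peeling off the last spin $\sigma_N$: compare the Gibbs average $\langle \cdot \rangle_N$ with the size-$(N-1)$ cavity average by Taylor-expanding the last-spin factor $\exp\bigl(\tfrac{\beta}{\sqrt{N}}\sum_{i<N} g_{iN}\sigma_i\sigma_N + D\sigma_N^2 + h\sigma_N\bigr)$ and applying Gaussian integration by parts in the couplings $g_{iN}$. This reduces each size-$N$ moment to size-$(N-1)$ averages of polynomials in the cavity (self-)overlaps $R_{k,l}^- = \tfrac{1}{N}\sum_{i<N}\sigma_i^k\sigma_i^l$ and in last-spin quantities $\sigma_N^k\sigma_N^l$; the high-temperature assumption ensures that the resulting series converges with uniform error control. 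I would then induct on the total degree $m = \sum_{k\le l} m(k,l)$: distinguishing one factor $R_{k_0,l_0}-Q_{k_0,l_0}$ and applying Gaussian integration by parts couples it to each remaining factor, yielding a recursion of the schematic form
\[N^{m/2}\,\nu\Bigl(\prod_{k\le l}(R_{k,l}-Q_{k,l})^{m(k,l)}\Bigr) = \sum_{(k',l')} \cov(\eta_{k_0,l_0},\eta_{k',l'})\, N^{(m-2)/2}\,\nu(\text{reduced product}) + O(N^{-1/2}).\]
Iterating reproduces the Wick combinatorics of the Gaussian family $\{\eta_{k,l}\}$ and reads off the constants of Lemma~\ref{lem: general formal} from the three possible pairing patterns (off-diagonal/off-diagonal, diagonal/diagonal, and mixed). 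The base cases $m=0$ and $m=1$ follow from Proposition~\ref{prop:ac21} together with the self-consistent equations determining $p, q$, which upgrade the $L^2$ concentration bound to $\nu(R_{k,l}-Q_{k,l}) = O(N^{-1})$, as required since $\E[\eta_{k,l}]=0$. Choosing $\beta < \beta' \le \tilde{\beta}$ small enough makes the truncation remainders of the cavity Taylor expansion $O(N^{-1/2})$ uniformly in the combinatorial complexity of the pairings.

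The main obstacle, absent in the SK analysis, is the genuine randomness of $R_{k,k}$ and its correlation with the off-diagonal overlaps. When Gaussian integration by parts brings down a factor $\sigma_i^{k_0}\sigma_i^{l_0}$, it must be allowed to pair not only with another off-diagonal term $\sigma_i^{k'}\sigma_i^{l'}$ (producing the $A$-type covariances) but also with a diagonal term $(\sigma_i^{k'})^2$ (producing the $C$-type cross-covariances), a pairing that is trivial in SK since $(\sigma_i^{k'})^2\equiv 1$. Keeping the bookkeeping consistent across these three regimes, identifying the precise coefficients $A_j^2,B_j^2,C_j^2$ from the self-consistent system for $(p,q)$, and controlling the accompanying remainder uniformly in the moment indices while $\beta$ stays in the high-temperature window, is where the bulk of the technical work should concentrate.
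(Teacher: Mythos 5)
Your proposal correctly identifies the cavity method as the engine and pinpoints the new difficulty (the self-overlap $R_{k,k}$ is random and correlated with the off-diagonal overlaps), but the schematic recursion you write down is not what the cavity computation actually produces, and the step that resolves this mismatch is exactly the substance of the paper's proof.

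Two concrete gaps. First, the paper does not run the cavity method directly on the monomials $\prod_{k\le l}(R_{k,l}-Q_{k,l})^{m(k,l)}$. It first decomposes $R_{k,l}-q = T_{k,l}+T_k+T_l+T$ and $R_{l,l}-p = S_l+S$ (Definition~\ref{def:basis}, \eqref{def:expand}), proves that these "basis" variables are pairwise $\nu$-uncorrelated except for $(T_k,S_k)$ and $(T,S)$ (Lemma~\ref{lem:independence}), and reduces Theorem~\ref{thm:main} to a CLT for the basis (Lemma~\ref{lem: general formal}). Without this decomposition, a single application of Lemma~\ref{lem:derivative} to a peeled factor $\epsilon_{k_0,l_0}$ does not yield the Wick recursion you wrote: the derivative formula sums over pairs $(a,b)$ that include freshly introduced replicas $n+1,\dots,2n$, so the right-hand side is not a linear combination of "reduced products" of the original factors with covariance coefficients. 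It is only after rewriting in the (approximately) orthogonal basis that the new-replica contributions can be isolated as lower-order errors, e.g.\ terms carrying a solitary $T_{k,k'}$ with $k'$ a fresh replica, which are $O_N(H+1)$.

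Second, and more seriously, even inside the basis the cavity recursion for the correlated blocks $(T_1,S_1)$ and $(T,S)$ is \emph{not} of Gaussian-Wick form. Reducing the $T_1$-moment via the derivative at $t=0$ produces a term in which the $S_1$-moment \emph{increases} — this is the $\tfrac{\beta^2}{2}H\,\nu(g_1(h(1)-1,h'(1)+1))$ term in Lemma~\ref{lem: relation T1S1} — and reducing the $S_1$-moment produces a term in which the $T_1$-moment increases. One must therefore derive \emph{two} coupled recursions (one from peeling a $T_1$, one from peeling an $S_1$), verify an algebraic consistency condition on their coefficients, and solve the coupled system to recover the Wick recursion with well-defined constants $A_1^2,B_1^2,C_1^2$; this is precisely what Lemma~\ref{lem: recurse condition} together with Lemma~\ref{lem: relation T1S1} does, and Lemma~\ref{lem: relation ST} does the same for $A_0^2,B_0^2,C_0^2$. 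Your induction as written assumes the Wick recursion holds outright, so it would close only if these moment-increasing terms were absent — which is exactly the feature of GS that is not present in SK and that the proposal does not address.
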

Theorem \ref{thm:main} states that moments of (self-)overlap array asymptotically equals to the corresponding moment of a correlated Gaussian. The structure of the covariance matrix is inherently given by decomposition of (self-)overlaps using "basis" random variables $\{T_{k, l}, T_k, S_k, T, S\}$ as shown in \eqref{def:expand}. We will show that the family of basis is asymptotically Gaussian (Lemma \ref{lem: general formal}), the theorem then follows by expanding the product of mixed moments of (self-)overlap using "basis" random variables. Note that when $\abs{(k,l)}= \abs{(k',l')}=2$, ~\ie $k\neq l, k' \ne l'$, $\cov(\eta_{k, l}, \eta_{k', l'})$ corresponds to the variance/covariances of the overlaps.

\subsection{Relation to prior works}
In this section, we give some background on the GS model and review some existing fluctuation results on the overlap in the mean field spin glass theory.
\subsubsection{Mean field spin glass models}
Mean field spin glass theory has undergone a flourishing development in the last 20 years, a key breakthrough was the proof for the celebrated Parisi's formula by Talagrand~\cite{Tal06} and Panchenko~\cite{Pan14}. After that, many rigorous results for the mean field spin glass system have been established ~\cite{RSB40, Tal11}. The most notable models are the Sherrington-Kirkpatrick model and its $p$-spin variants, in which the spin configuration space is the hypercube $\{-1,+1\}^N$. There are more realistic but complicated models whose spin could take values from a larger finite set or general vectors in Euclidean space. Some examples include the Ghatak-Sherrington model~\cite{GS77}, Potts spin glass ~\cite{Pan16}, XY-spin glass, etc.

In this work, we consider the Ghatak-Sherrington model, where configuration space is the general hypercube, was first introduced in~\cite{GS77} to study the so-called \emph{inverse freezing phenomenon}. The inverse freezing phenomenon predicts that at low enough temperature there is another replica symmetric regime~\cite{DD00,DYS94,KH99,LD82,MS85,Leu07}. This is in sharp contrast to the binary spin-valued models, such as SK and its $p$-spin variants, where the model in the low-temperature regime is widely believed to exhibit replica symmetry breaking only. 

\subsubsection{Existing fluctuation results}
For the classical SK model, a central limit theorem of overlap in the zero external fields was first proved in~\cite{CN95} via a stochastic calculus approach. In the presence of a positive external field, the central limit theorem for overlap for the array of overlaps was proved in~\cite{Tal11,GT02} using the moment method combined with cavity method computations. 

Establishing a CLT for overlap in the high-temperature regime has many implications: Hanen's theorem~\cite{Han07} uses the moment estimates for the overlap arrays to establish the limiting law of spin covariances; the CLT of overlap for the SK model was crucially used while deriving a sharp upper bound for the operator norm of the spin covariance matrix ~\cite{AG23}. Investigating overlap in the low-temperature regime is a highly challenging open problem for Ising spin glass models. In the spherical SK model, due to a nice contour integral representation of the partition function, the fluctuation results for the overlap have been well understood in the near-critical temperature~\cite{NS19} and low-temperature regime~\cite{LS22}. Moreover, a recent result by \cite{cam23} proved a central limit theorem for the overlap in the Ising SK model on the so-called Nishimori line. On the other hand, oftentimes establishing fluctuation results for the overlap to other generalized spin glass models can be a quite challenging task, even at the high-temperature regime. In~\cite{DW21}, for the multi-species SK model, some second-moment computation was done to compute the variance-covariance matrix for the overlap array. However, the general moments' computation involves many matrix operations and can be highly technical if not impossible. 

Besides the classical SK type model, the central limit theorems of overlap in various regimes for the Hopfield model~\cite{Hop82,Tal98} were also established  in~\cite{GL99,Gen96a,Gen96b} by Gentz et.al. In both Hopefield and SK models, the spin values are restricted as binary. The goal of this work is to extend the fluctuation results to the non-binary spin settings. 
\subsection{Acknowledgement}
We are grateful for the feedback of 
 Boaz Barak and Partha S. Dey.
We also thank Juspreet Singh Sandhu for the discussions in the initial stage.
\section*{Funding}
Y.S. acknowledges support from Simons Investigator Fellowship, NSF grant DMS-2134157, DARPA grant W911NF2010021, and DOE grant DE-SC0022199.

\section{Proof outline} \label{sec: proof outline}
We sketch the outline of the proof in this section. The first step is to decompose the (self-)overlaps as the sum of some "basis" that are mostly pairwise independent. This allows us to rewrite the moments of (self-)overlaps as a homogeneous polynomial over the moment of "basis". Our main technical Lemma (Lemma~\ref{lem: general formal}) says the moments of the basis behave like moments of Gaussian asymptotically. 

The "\emph{basis}" we use to decompose (self-) overlap is a generalization of those of the SK model (see ~\cite[Chapter 1.8]{Tal11}).
\begin{definition} \label{def:basis}
For overlap, let $b := \inner{\sigma_1}$, we define the following basis components,   
\[T_{k, l} := \frac{1}{N}\sum_{i=1}^N(\sigma^k_i - b)(\sigma^l_i - b), \quad T_k := \frac{1}{N} \sum_{i = 1}^N (\sigma^k_i - b)b, \quad T := b^2 - q.\]
For self-overlap, similarly denote $\tilde{b} := \inner{\sigma_1 \sigma_1}$, and the corresponding basis components are
\[
S_{l} = \frac{1}{N}\sum_{i=1}\sigma^{l}_i\sigma^{l}_i - \tilde{b},  \ \text{and} \  S = \tilde{b} - p.
\] 
\end{definition}
By definition, we have the following decomposition of the (self-)overlaps:
\begin{align} \label{def:expand}
    R_{k, l} - q = T_{k, l} + T_k + T_l + T, \quad 
    \text{and} \quad R_{l, l} - p = S_l + S.
\end{align}
The following lemma states that the terms in the above decomposition are mostly pair-wise independent of each other under $\nu(\cdot)$. We defer the proof to section \ref{sec: setup}.

\YS{comments: check the new statement? list pairs that are independent, state where it will be prove}
\begin{restatable}{lemma}{independence}\label{lem:independence}
Let $(X, Y)$ be a pair of random variable from $\{\{T_{k,l}\}_{1\le k< l \le n}, \{T_k\}_{k\le n}, T, \{S_k\}_{k\le n}, S\}$ as defined above, $\cov(XY) \neq 0$ iff $(X, Y)$ is of the form $\{S_k,T_k\}$ for $k\le n$ or $\{S,T\}$.

\end{restatable}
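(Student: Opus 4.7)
The plan is an enumeration of all unordered pairs $(X, Y)$ drawn from the basis $\{T_{k,l}, T_k, T, S_k, S\}$, with $\mathrm{cov}(X,Y) = \nu(XY) - \nu(X)\nu(Y)$ computed directly from the definitions in \eqref{def:expand} and Definition~\ref{def:basis}. Three tools do almost all the work: (i) replicas are independent under the product Gibbs measure, so that $\langle f(\sigma^a)\,g(\sigma^b)\rangle = \langle f\rangle\langle g\rangle$ whenever $a \neq b$; (ii) the basis is centered by the site-$1$ thermal averages $b = \langle \sigma_1\rangle$ and $\tilde b = \langle \sigma_1^2\rangle$; and (iii) Proposition~\ref{prop:ac21}, combined with site-symmetry of $H_N$ under $\mathbb{E}$, controls the $O(N^{-1/2})$ fluctuations of $R_{k,l}$ about $q$ and of $R_{k,k}$ about $p$.

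The first block of cases is every pair in which a $T_{k,l}$ appears and whose partner does not depend on \emph{both} replicas $k$ and $l$. Using (i) to factor the Gibbs expectation over the unmatched replica produces a factor of the form $\langle \sigma^l_i - b\rangle = \langle\sigma_i\rangle - b$; summing over $i$ and taking the disorder expectation, site-symmetry replaces the sum by an average magnetization, which matches $b$ up to the $O(N^{-1/2})$ allowed by (iii). The resulting contribution cancels $\nu(T_{k,l})\,\nu(Y)$ after applying the same identity to the isolated $\nu(T_{k,l})$. The second block is pairs without any $T_{k,l}$ but whose replica-index sets are disjoint --- for instance $(T_k, T_l)$ and $(S_k, S_l)$ with $k \neq l$, mixed $(T_k, S_l)$ with $k \neq l$, and the pairs $(T_k, T)$, $(T_k, S)$, $(S_k, T)$, $(S_k, S)$. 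Here (i) fully factorizes the Gibbs expectation, and exchangeability of replicas reduces $\nu(XY)$ to a product of single-replica expectations matching $\nu(X)\nu(Y)$ up to the same $O(N^{-1/2})$ error.

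The main obstacle is the two exceptional cases, $(S_k, T_k)$ and $(S, T)$, where tool (i) cannot be applied because both factors depend on the same replica (respectively on the same site-$1$ data $b, \tilde b$). For $(S_k, T_k)$, expanding the product yields the thermal three-point function $\langle (\sigma^k_i)^2 \sigma^k_j\rangle$, which does not factor across distinct sites within a single replica; the diagonal $i = j$ piece already contributes an $O(1)$ term after multiplication by $b$ and summation, producing a nonzero limiting covariance. For $(S, T) = (\tilde b - p)(b^2 - q)$, I would extract the covariance by Gaussian integration by parts in the couplings $\{g_{1, j}\}_{j \ge 2}$, or equivalently by a one-step cavity expansion that removes the site-$1$ couplings; the leading correlation between $\tilde b$ and $b^2$ arises from the common randomness at site $1$. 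The delicate part throughout is isolating the $O(1)$ leading covariance from the $O(N^{-1/2})$ fluctuations coming from Proposition~\ref{prop:ac21}, and matching the nonzero numerical constants to those defined in Lemma~\ref{lem: general formal}.
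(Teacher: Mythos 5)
Your overall plan — factor the product Gibbs expectation over replicas that appear in only one of the two factors — is indeed the right mechanism and the one the paper uses (via the device of introducing a fresh replica for each occurrence of $\bm b,\tilde{\bm b}$ and then appealing to replica symmetry). But there is a conceptual gap that makes your argument not actually prove the lemma as stated.

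The lemma asserts that $\nu(XY)=0$ \emph{exactly} for all pairs other than $\{T_k,S_k\}$ and $\{T,S\}$, not merely up to an $O(N^{-1/2})$ error. You repeatedly invoke Proposition~\ref{prop:ac21} to absorb an $O(N^{-1/2})$ discrepancy, but no such discrepancy is present. The source of your error is the interpretation of $b$: despite the shorthand $b := \langle\sigma_1\rangle$ in Definition~\ref{def:basis}, the object entering $T_{k,l}$, $T_k$, $T$ is the site-dependent vector $\bm b = (\langle\sigma_1\rangle,\ldots,\langle\sigma_N\rangle)$, i.e.\ $b_i = \langle\sigma_i\rangle$ (this is made explicit in the proof of Lemma~\ref{lem:var T12} and is what makes Claim~\ref{claim: basis to overlap} work). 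Consequently the Gibbs average of the unmatched replica $l$ in $T_{k,l}$ gives $\langle\sigma^l_i - b_i\rangle = \langle\sigma_i\rangle - \langle\sigma_i\rangle = 0$ for every $i$ and every disorder realization — an exact cancellation, with no site-symmetry step and no $O(N^{-1/2})$ correction needed. The same identity, $\langle(\sigma^h_i)^2 - \tilde b_i\rangle = 0$, kills every pair in which one factor involves a replica absent from the other, and kills the pairs $(T_{k,l},T)$, $(T_{k,l},S)$, $(T_k,T)$, $(T_k,S)$, $(S_k,T)$, $(S_k,S)$ as well (integrating first over the replica appearing in $T_{k,l}$, $T_k$, or $S_k$, with $T,S$ depending only on the disorder). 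So your tool (iii) is a red herring here, and using it the way you do would leave the lemma unproved.

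A second, smaller issue: for $(S,T)$ you attribute the nonzero covariance to ``common randomness at site $1$,'' but $T = \frac1N\sum_i b_i^2 - q$ and $S = \frac1N\sum_i \tilde b_i - p$ depend on the entire disorder through all sites, and the correlation is not a single-site effect. More importantly, the paper's proof of this lemma does not (and does not need to) compute $\nu(S_k T_k)$ or $\nu(ST)$; those values appear later as $C_1^2$ and $C_0^2$. For the ``only if'' direction it suffices to exhibit one of them as nonzero, but the burden of your proposal should be the vanishing of the remaining pairs, and there you should replace the approximate argument with the exact centering identity.
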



Now to show Theorem \ref{thm:main}, it suffices to show that the set of basis $\{T_{k, l}, T_k, S_k, T, S: k, l \in [n]\}$ are asymptotically Gaussian. This is the statement of our main technical lemma below.

\YS{list of comments: this informal version is too cumbersome to read. define "basis", formal version hasn't show up, need to state that first}
\begin{lemma} [Informal version of Lemma \ref{lem: general formal}] \label{lem:general}
Consider the family of all possible "basis" given in Definition \ref{def:basis}, i.e.  $\{\{T_{k,l}\}_{1\le k< l \le n}, \{T_k\}_{k\le n}, T, \{S_k\}_{k\le n}, S\}$. 

There exist $\beta' \in (0, \beta]$ and a family of centered Gaussians indexed by all possible "basis", i.e. $\{\{g_{T_{k,l}}\}_{1\le k< l \le n}, \{g_{T_k}\}_{k\le n}, g_T, \{g_{S_k}\}_{k\le n}, g_S\}$, s.t. for $0 \leq \beta' \leq \beta$, the family of basis converges in distribution to the family of Gaussians as $N \to \infty$.
\end{lemma}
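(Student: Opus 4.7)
The plan is to use the method of moments. Each basis variable in Definition~\ref{def:basis} is a normalized sum of spins bounded by $\cS^2$, so all joint moments exist and are bounded uniformly in $N$; convergence of every joint moment of $\sqrt{N}$ times the basis to the corresponding Isserlis/Wick sum for the target Gaussian family then implies convergence in distribution. Writing a generic monomial
\[
M = \prod_{k<l} T_{k,l}^{a_{k,l}}\,\prod_{k} T_k^{b_k}\,\prod_{k} S_k^{c_k}\,T^{d}\,S^{e},\qquad m := \sum_{k<l} a_{k,l} + \sum_{k} b_k + \sum_k c_k + d + e,
\]
the goal is to establish
\[
N^{m/2}\,\nu[M] \;=\; \sum_{\text{pairings } \pi}\;\prod_{\{X,Y\}\in\pi}\cov(g_X, g_Y) \;+\; O(N^{-1/2}),
\]
where the only nonzero target covariances are the diagonal variances together with $\cov(T_k,S_k)$ and $\cov(T,S)$, as dictated by Lemma~\ref{lem:independence}.

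To produce this Wick-type recursion I adapt the SK cavity argument of \cite[Chapter 1.10]{Tal11}. Induct on the total degree $m$. Pick one basis factor $X$ in $M$, substitute its spin representation from Definition~\ref{def:basis}, and apply Gaussian integration by parts in the disorder $g_{i,j}$ carrying the chosen cavity index $i$ (for $T_{k,l}, T_k, S_k$), or the cavity identity for $\langle\sigma_1\rangle$ and $\langle\sigma_1^2\rangle$ (for $T$ and $S$). Each step brings down a factor $\beta^2(R_{\bullet,\bullet}-Q_{\bullet,\bullet})$ contracted with another spin of $M$; re-expanding $R_{\bullet,\bullet}-Q_{\bullet,\bullet}$ via \eqref{def:expand} pairs $X$ with another basis factor $Y$ in $M$, and the resulting coefficient, after evaluating the residual replica expectations at the fixed point, is precisely $\cov(g_X, g_Y)$. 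The base case $m = 2$ reduces to explicit second-moment cavity computations at the replica-symmetric fixed point $(p,q)$ furnished by Proposition~\ref{prop:ac21}, from which the constants $A_0^2, A_1^2, A_2^2, B_0^2, B_1^2, C_0^2, C_1^2$ are read off.

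Two sources of error must be controlled: the residual terms from each Gaussian integration by parts, and the replica fluctuations pulled out of the cavity identity. Both are bounded by iterating Proposition~\ref{prop:ac21} to obtain $\nu[(R_{1,2}-q)^{2r}] = O(N^{-r})$ and $\nu[(R_{1,1}-p)^{2r}] = O(N^{-r})$ for each $r\le m$, which holds for $\beta$ sufficiently small (depending on $m$), below some $\beta'\le\tilde\beta$, via the standard high-temperature upgrade of $L^2$ to $L^{2r}$ concentration; the $O(m)$ cavity steps then accumulate at most $O(N^{-1/2})$ of cumulative error. The main obstacle is the coupling between the overlap basis $\{T_{k,l}, T_k, T\}$ and the self-overlap basis $\{S_k, S\}$, which is absent in SK because $R_{l,l}\equiv 1$. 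Here, integrating by parts against a $T_k$ factor can produce a matching $S_k$ contraction (and vice versa), because $\sigma_i^2$ enters non-trivially through the crystal field $D\sigma_i^2$ and through the cavity field. Isolating precisely those contractions as the $C_0^2, C_1^2$ cross-covariances, and ruling out spurious contractions in the limit, is the principal technical bookkeeping task.
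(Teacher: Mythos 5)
Your overall strategy matches the paper's: decompose (self-)overlaps into the basis of Definition~\ref{def:basis}, use the cavity method (interpolation and Lemma~\ref{lem:derivative}) to generate a Wick-type recursion among moments, control the error via $L^{2r}$ overlap concentration (Lemma~\ref{lem:moment bound}), and read off the covariance constants from the second-moment base case. The identification of the $T$--$S$ coupling as the key new difficulty is also correct.

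However, there is a genuine gap in the induction step. You state that each cavity contraction pairs $X$ with another basis factor already present in $M$ and hence strictly reduces the total degree $m$. This is false for the correlated pairs: applying the cavity reduction to a $T_k$ factor produces a new $S_k$ factor that was not in $M$, and the resulting term $T_k^{h-1}S_k^{h'+1}$ has the same total degree as the original $T_k^h S_k^{h'}$ (see the discussion around $\nu(T_1^3)$ in Section~\ref{ssec:induct-TS} and the first line of \eqref{eq: general reduce T1}). A single-direction induction on $m$ therefore does not close. The paper resolves this by deriving two complementary cavity recursions for each correlated pair --- one that peels off a $T_k$ and one that peels off an $S_k$ (Lemma~\ref{lem: relation T1S1}, and analogously Lemma~\ref{lem: relation ST} for $(T,S)$) --- and then invoking Lemma~\ref{lem: recurse condition}: the two recursions are combined to eliminate the same-degree terms $f(h-1,h'+1)$ and $f(h+1,h'-1)$, but this elimination is only consistent (i.e., gives a single well-defined $C(1,1)$) when the coefficients satisfy the consistency condition \eqref{eq: consistency cond}. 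Verifying that condition is a nontrivial algebraic computation carried out in the proofs of Theorem~\ref{thm:peel off T_kS_k} and Theorem~\ref{thm: ST moments}. Without this two-sided recursion and the consistency check, the bookkeeping you gesture at as ``isolating those contractions as the $C_0^2, C_1^2$ cross-covariances'' does not actually yield a closed recursion, and the claimed coefficients need not match the Gaussian Wick formula. You should restructure the induction to produce both recursions and prove \eqref{eq: consistency cond} explicitly.
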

The explicit variance-covariance structure of the family of Gaussians is given in Lemma \ref{lem: general formal}. Note that the family of Gaussains in Lemma \ref{lem:general} are independent except the cases  $\{S_k, T_k\}$ and $\{S, T\}$. It's easy to check that Theorem \ref{thm:main} follows from Lemma \ref{lem:general} by setting  
\[\eta_{k, l} := \begin{cases}
    g_{T_{k, l}} + g_{T_k} + g_{T_l} + g_T, & \text{if } k \neq l,\\
    g_{S_k} + g_S, & \text{if } k = l.
\end{cases}\] 
In the rest of this paper, we will focus on the proof of Lemma~\ref{lem:general}.
\subsection{Organization of the paper}
The paper is structured as follows. In Section~\ref{sec: preli}, we introduce the setup for the cavity method and give some technical preliminaries. The second-moment computations for the variance-covariance estimation are carried out in Section~\ref{sec:var}. In Section~\ref{sec: general}, we generalize the second moment computation in Section~\ref{sec:var} to general moments of the "basis" $T_{k, l}, T_k, S_k, T, S$. The results are formally stated in Lemma~\ref{lem: general formal}. More specifically, the inductive relations on different "basis" are given in Section~\ref{ssec:induct-Tkl} and~\ref{ssec:induct-TS}. Some lemmas involving technical but repetitive computations are deferred to the Appendix Section~\ref{sec:app}. Finally, as we pointed out in Section~\ref{sec: proof outline}, to prove Theorem~\ref{thm:main}, it suffices to prove Lemma~\ref{lem: general formal}, whose proof is included in Section~\ref{ssec:pf-main-lem}.

\paragraph{Notations}\label{ssec:notation} 

\begin{itemize}
\item We denote $\langle \cdot \rangle$ as the Gibbs average and $\nu(\cdot) := \E[\inner{\cdot}]$, where $\E$ denotes the average w.r.t the disorder $g_{i, j}$.
\item Let $n$ be the number of replicas, $N$ be the number of spins (or the system size) and $\cc{S}:= \|\bm{\sigma}\|_{\infty}$ be the largest spin value. 
\item For $k, l \in [n]$, denote $R_{k, l}$ as the overlap for the configuration $\bm{\sigma}^k, \bm{\sigma}^l \in \Sigma_{N,\cc{S}}$. (Setting $k = l$ gives the self-overlaps, $R_{k, k}$). We use $Q_{k, l}$ to denote where the overlap/self-overlap concentrates. 
\item We use $b := \inner{\sigma_1}$ and $\tilde{b} := \inner{\sigma_1^2}$ to denote the first and second moment for a single spin under quenched Gibbs measure, \ie for the fixed disorder.
\item We use $\epsilon_l$ to denote the last spin of $\bm{\sigma}^l$ and $\epsilon_{k, l} := \epsilon_k \epsilon_l$. Moreover, $R^-_{k, l} = R_{k, l} - \frac{1}{N}\epsilon_{k, l}$ is the overlap without counting contribution from the last spin.

\item For a positive integer $n$, denote $[n] = \{1, 2, \cdots, n\}$ as the set of all positive integer up to $n$. Let $\cc{C}_n := \{(k, l): k, l \in [n], k \le l\}$ be the set of all replica pairs contained in $[n]$.

\item Finally, denote $O_N(H)$ as $O(N^{-H/2})$

\end{itemize}
\section{Cavity method and second moment estimates} \label{sec: preli}
We begin with the idea of the cavity method and show how one can use it to obtain the second-moment estimation of the "basis". The idea of the cavity method is based on studying the effect of isolating the $N$th spin from the rest of the system, 
which is formally formulated into the following interpolation scheme. 

For $t\in [0,1]$, the interpolated Hamiltonian at time $t$ is given by
\begin{align}
    H_N^t(\bm{\sigma}) = H_{N-1}(\bm{\rho}) & + \sigma_N\left(\sqrt{t} \cdot \frac{\beta}{\sqrt{N}}\sum_{i=1}^{N-1} g_{i,N}\sigma_i + \sqrt{1-t} \cdot \beta \eta \sqrt{q} \right) \\
     & \qquad \qquad + (1-t) \cdot \frac{\beta^2}{2}(p-q) \sigma_N^2 + D \sigma_N^2 + h\sigma_N, \label{eq:inter}
\end{align}
where $\bm{\rho}:= (\sigma_1,\ldots, \sigma_{N-1})$, $\eta\sim N(0,1)$ independent of $g_{i,j}$ and 
\[
H_{N-1}(\bm{\rho}):= \frac{\beta}{\sqrt{N}}\sum_{1\le i<j \le N-1} g_{i,j} \sigma_i \sigma_j + D \sum_{i=1}^{N-1}\sigma_i^2 + h \sum_{i=1}^{N-1} \sigma_i.
\]
At $t=0$, the last spin is decoupled from the original system, which brings out a small change, heuristically known as "cavity"; at $t=1$, $H_N^1(\bm{\sigma})$ is just the original GS Hamiltonian. 

In the following, we use $\epsilon_l$ to denote the last spin of $l$-th replica, that is, $\epsilon_l := \sigma_N^l$. For a pair of replica $k, l \in [n]$, we denote the (self-)overlap without the last spin as
\[R^-_{k, l}:= R_{k, l} - \frac{1}{N}\epsilon_{k, l}.\]
In this paper, we use $\inner{\cdot}_t$ as the corresponding Gibbs average at time $t$ and $\nu_t(\cdot) := \E[\inner{\cdot}_t]$. In particular, at $t=1$, $\nu_1(\cdot) = \nu(\cdot)  = \E [\langle \cdot \rangle]$. By ~\cite[Lemma 1]{AC21}, for any pair of replicas $k, l \in [n]$, 
\[Q_{k, l} = \nu_0(\epsilon_{k, l})\]
\subsection{Set-ups and Preliminaries 
}\label{sec: setup}
Recall that the goal is to compute the joint moments of (self-)overlaps, the first step is the decomposition to "basis" terms given in \eqref{def:expand}. We begin by proving some basic properties of the "basis".

\paragraph{Properties of "basis"}
First, we show that the set of random variables $\{T_{k, l}, T_k, S_k, T, S\}$ are mostly pari-wise independent as stated in Lemma~\ref{lem:independence}.

\independence*

\begin{proof}[Proof of Lemma~\ref{lem:independence}]
For pairs of random variable that doesn't involve $S_k$, the proof is the same as in SK mode (see e.g. ~\cite[Proposition 1.8.8]{Tal11}). We present the proof for the pairwise independence of $S_l$ and $\{T_{k, k'}, T_k, S_k: k \neq l\}$. For  $X, Y \in \{T_{k, l}, T_k\}$, $\nu(XY) = 0$ follows directly from symmetry of types of (self-)overlaps.

For pairs of term involving $T_{k, l}, S_h$: Consider a set of constants $\{k, l, h\}$ s.t. $k \neq l$ and some constant $h' \notin \{k, l, h\}$. 
\begin{align*}
    \nu(T_{k, l}S_h) = \nu((R_{h,h} - R_{h', h'}) T_{k, l})
\end{align*}
Note that there exists a replica in $\{k, l\}$ that does not appear in $S_h$. WLOG, assume $h \neq l$, the integrate w.r.t. $\sigma^l$ gives 
\[\nu(T_{k, l}S_h) = 0\]
For pair of term involving $T_k, S_h$: if $k \neq h$, then by symmetry
\[\nu(T_k S_h) = \nu((R_{h,h} - R_{h', h'})T_k) = 0\]
\end{proof}
To continue, we introduce another trick to express the "basis" random variables with (self)-overlaps by introducing a new replica for each occurrence of $b, \tilde{b}$. This trick has been used many times in~\cite[Chapter 1.8]{Tal11}, and we record it here for completeness.
\begin{claim} \label{claim: basis to overlap}
Fix some integer $n$. For $k, l \notin [n]$ s.t. $k \neq l$, 
\[\nu(T_{1, 2}) = \nu(R_{1, 2} - R_{1, l} - R_{k, 2} + R_{k, l}),\]
\[\nu(T_1) = \nu(R_{1, l} - R_{k, l}), \quad \nu(S_1) = \nu(R_{1, 1} - R_{k, k}),\]
\[\nu(T) = \nu(R_{k, l} - q), \quad \nu(S) = \nu(R_{k, k} - p).\]
\end{claim}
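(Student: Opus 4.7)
The plan is to prove all five identities uniformly via the \emph{fresh-replica trick}. Since $b = \langle \sigma_1 \rangle$ and $\tilde{b} = \langle \sigma_1^2 \rangle$ are constants under the Gibbs product measure, for any fresh replica index $k \notin [n]$ and any function $f$ of the replicas in $[n]$, independence of replicas under the product Gibbs measure yields
\[
\langle \sigma^k_i \cdot f \rangle = \langle \sigma^k_i \rangle \langle f \rangle = \langle \sigma_i \rangle \langle f \rangle, \qquad \langle \sigma^k_i \sigma^k_i \cdot f \rangle = \langle \sigma_i^2 \rangle \langle f \rangle.
\]
Writing $b_i := \langle \sigma_i \rangle$ and $\tilde{b}_i := \langle \sigma_i^2 \rangle$, this gives the substitution rules $\nu(b_i \cdot f) = \nu(\sigma^k_i \cdot f)$ and $\nu(\tilde{b}_i \cdot f) = \nu(\sigma^k_i \sigma^k_i \cdot f)$. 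The symbol ``$b$'' inside the normalized sums of Definition~\ref{def:basis} is read as the position-dependent $b_i$; by permutation invariance of the disorder over $\{1, \ldots, N\}$, all $b_i$ have the same $\nu$-distribution as $b_1 = b$, so the notation is coherent.

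With this tool in hand, each identity reduces to a one-line substitution. For $T_{1,2}$, I would use two \emph{distinct} fresh replicas $k \neq l$, one per occurrence of $b$:
\[
\nu(T_{1,2}) = \nu\!\left(\tfrac{1}{N}\sum_i (\sigma^1_i - \sigma^k_i)(\sigma^2_i - \sigma^l_i)\right) = \nu(R_{1,2} - R_{1,l} - R_{k,2} + R_{k,l}),
\]
where the second equality just expands the product and identifies the four overlaps. The same substitution gives $\nu(T_1) = \nu\!\left(\tfrac{1}{N}\sum_i (\sigma^1_i - \sigma^k_i)\sigma^l_i\right) = \nu(R_{1,l} - R_{k,l})$. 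For $S_1 = R_{1,1} - \tilde{b}$, a single fresh replica $k$ absorbs $\tilde{b}_i \mapsto \sigma^k_i \sigma^k_i$ (the same replica appearing twice, matching the self-overlap structure), yielding $\nu(S_1) = \nu(R_{1,1} - R_{k,k})$.

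For the two scalar identities $\nu(T) = \nu(R_{k,l} - q)$ and $\nu(S) = \nu(R_{k,k} - p)$ no sum over $i$ appears, so the substitution is applied directly: $b^2 \mapsto \sigma^k_1 \sigma^l_1$ for fresh $k \neq l$, and $\tilde{b} \mapsto \sigma^k_1 \sigma^k_1$. Permutation invariance of the disorder over spin indices then makes $\nu(\sigma^k_i \sigma^l_i)$ independent of $i$, so it equals its uniform average $\tfrac{1}{N}\sum_i \nu(\sigma^k_i \sigma^l_i) = \nu(R_{k,l})$; analogously $\nu(\sigma^k_1 \sigma^k_1) = \nu(R_{k,k})$.

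The only subtle point is to use \emph{distinct} fresh replicas for the two factors of $b$, which is exactly the content of the hypothesis $k \neq l$: reusing the same fresh replica would produce $\langle \sigma^k_i \sigma^k_i \rangle = \tilde{b}_i \neq b_i^2$ for the non-binary GS spins, and one would recover the wrong rewriting. Beyond this bookkeeping, there is no real obstacle; the substance of the statement is not the particular first-moment identities displayed but the substitution principle they encode, which is the algebraic workhorse used throughout Section~\ref{sec: general} to convert higher moments of the basis terms into moments of (self-)overlaps.
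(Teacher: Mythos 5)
Your proof is correct and takes essentially the same approach as the paper: write each occurrence of $b_i=\langle\sigma_i\rangle$ (resp.\ $\tilde b_i=\langle\sigma_i^2\rangle$) as a fresh replica (resp.\ a repeated fresh replica), exploit independence of replicas under the product Gibbs measure to pull these into the bracket, and use site symmetry of the disorder to identify single-site quantities with averaged overlaps. The paper only carries this out for $\nu(S_1)$ and delegates the rest to "the same technique," whereas you spell out all five cases and correctly flag the one genuine pitfall for the GS model (distinct fresh replicas for the two factors of $b$, since $\langle(\sigma^k_i)^2\rangle=\tilde b_i\neq b_i^2$), so there is no gap.
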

\begin{proof}
The proof follows from the linearity of expectation. We will show a proof for $S_{1}$, the other terms can be proved using the same technique. 
\begin{align*}
    \nu(S_1) &= \E[\inner{\frac{1}{N}\sum_i(\sigma_1^i)^2 - \tilde{b}}]
    = \E[\inner{\frac{1}{N}\sum_i(\sigma_1^i)^2 - \inner{\epsilon_k^2}}]\\
    & = \E[\inner{\frac{1}{N}\sum_i\left((\sigma_1^i)^2 - (\sigma_i^k)^2\right)}]\\
    &= \nu(R_{1, 1} - R_{k, k}).
\end{align*}
where the second equality is the definition of $\tilde{b}$ and the third equality uses symmetry between sites.
\end{proof}
This implies that we can expand moments of basis as a homogeneous polynomial of (self-)overlaps over a set of replicas. 

\paragraph{Approximation of moments} We use the following definition to capture the degree of a term.
\begin{definition} \label{def: order of a function}
    For $f: \Sigma^{\otimes n}_{N, S} \to \R$, we say $f$ is of order $H$ if $f$ is a product of $H$ centered overlaps or self-overlaps, $R_{k, l} - Q_{k, l}$ for $k, l \in [n]$. 
\end{definition}
Estimating the magnitude of order $H$ functions follows a standard application of concentration of overlaps and H\"older's inequality. The following Lemma generalizes the second-moment estimates of centered (self-)overlaps in Proposition~\ref{prop:ac21}.
\begin{lemma}[{\cite[Proposition 5]{chen22}}] \label{lem:moment bound}
For $\beta < \tilde{\beta}$, there exist some constant $C>0$ such that for any $k\ge 1$ and $l, l' \in [n]$, we have 
\[
\nu\left((R_{l,l'}-Q_{l,l'})^{2k}\right) \le \left(\frac{Ck}{N}\right)^k,
\]
\end{lemma}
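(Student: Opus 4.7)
The plan is to proceed by strong induction on $k$, proving the bound at level $k$ simultaneously for all replica pairs $(l,l')\in \cc{C}_n$; the base case $k=1$ is Proposition~\ref{prop:ac21}. The engine is the cavity interpolation \eqref{eq:inter} combined with Gaussian integration by parts on the disorder $\{g_{i,N}\}_{i<N}$ and the decoupling Gaussian $\eta$.

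First, by exchangeability of the $N$ sites,
\[
\nu\!\left((R_{l,l'}-Q_{l,l'})^{2k}\right) = \nu\!\left((R_{l,l'}-Q_{l,l'})^{2k-1}(\epsilon_{l,l'}-Q_{l,l'})\right).
\]
Apply the identity $\nu(f) = \nu_0(f) + \int_0^1 \nu_t'(f)\,dt$, splitting the task into a boundary piece at $t=0$ and a derivative-integral piece. For the boundary, expand $(R_{l,l'}-Q_{l,l'})^{2k-1}$ binomially using $R_{l,l'} = R_{l,l'}^- + N^{-1}\epsilon_{l,l'}$; at $t=0$ the last spin is independent of the bulk and $\nu_0(\epsilon_{l,l'})=Q_{l,l'}$, so the $j=0$ term vanishes. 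The dominant $j=1$ subleading piece is bounded by $CN^{-1}\nu_0((R_{l,l'}^- - Q_{l,l'})^{2k-2})$, which by the inductive hypothesis applied to the $(N-1)$-spin bulk is at most $CN^{-1}(C(k-1)/N)^{k-1} \le (Ck/N)^k$; higher-$j$ binomial contributions decay strictly faster in $N$ and are similarly absorbed.

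For the derivative term, differentiating $H_N^t$ in $t$ and applying Gaussian integration by parts on $\{g_{i,N}\}$ and $\eta$, together with the direct contribution of the $(1-t)\beta^2(p-q)\sigma_N^2/2$ correction, yields (after the standard cavity simplifications) a finite sum of contributions of schematic form
\[
\beta^2\,\nu_t\!\left((R_{l,l'}-Q_{l,l'})^{2k-1}(\epsilon_{l,l'}-Q_{l,l'})\,(R_{a,b}-Q_{a,b})\right),
\]
indexed over pairs $(a,b)\in\cc{C}_n$, including the diagonal pairs $a=b$ that arise from the self-overlap correction specific to GS. Using $|\epsilon_{l,l'}-Q_{l,l'}|\le 2\cc{S}^2$ and H\"older with exponents $2k/(2k-1)$ and $2k$, each term is controlled by $C\beta^2\,\nu_t((R_{l,l'}-Q_{l,l'})^{2k})^{(2k-1)/(2k)}\,\nu_t((R_{a,b}-Q_{a,b})^{2k})^{1/(2k)}$.

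Let $X := \max_{(l,l')\in\cc{C}_n}\nu((R_{l,l'}-Q_{l,l'})^{2k})$. Using a Gr\"onwall-type comparison of $\nu_t$ with $\nu$ along the interpolation (valid for $\beta<\tilde{\beta}$), combining the boundary and derivative estimates and maximising over $(l,l')$ gives the self-bounding inequality
\[
X \le (Ck/N)^k + C'\beta^2 X.
\]
For $\beta$ small enough that $C'\beta^2\le 1/2$ this absorbs to $X\le 2(Ck/N)^k$, closing the induction after renormalising the constant (using $2\le 3^k$ for $k\ge 1$). The main obstacle relative to the SK setting is the coupling between overlap and self-overlap moments forced by the $(p-q)\sigma_N^2$ correction in \eqref{eq:inter}: Gaussian integration by parts produces additional diagonal contributions $(R_{a,a}-p)$ absent in SK, so the strong induction must run uniformly over all of $\cc{C}_n$ and the binomial expansion at the boundary must keep track of the mixed $R_{l,l'}^-$--$\epsilon_{l,l'}$ cross terms that appear when $l=l'$. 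A secondary nuisance is verifying the $\nu_t\approx\nu$ comparison uniformly over $t\in[0,1]$ and over moment indices, which is where the high-temperature restriction $\beta<\tilde{\beta}$ enters.
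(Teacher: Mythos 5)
The paper does not prove Lemma~\ref{lem:moment bound}; it is cited wholesale from~\cite[Proposition 5]{chen22}, so there is no in-paper proof to compare against. Your cavity-method outline --- exchangeability of sites, $\nu(f)=\nu_0(f)+\int_0^1\nu_t'(f)\,dt$, factorization at $t=0$, absorption of the derivative term at small $\beta$, tracking the GS-specific diagonal pairs $(a,a)$ alongside the off-diagonal ones --- is indeed the standard strategy and is substantively the right one. Several steps as written, however, have gaps.

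At the boundary you bound $\nu_0((R^-_{l,l'}-Q_{l,l'})^{2k-2})$ by ``the inductive hypothesis applied to the $(N-1)$-spin bulk.'' But the bulk Hamiltonian in~\eqref{eq:inter} keeps the coupling $\beta/\sqrt{N}$, so the bulk is the GS model on $N-1$ spins at the shifted inverse temperature $\beta\sqrt{(N-1)/N}$, with its own replica-symmetric fixed points, while $R^-$ remains normalized by $N$ rather than $N-1$. Each discrepancy is $O(1/N)$, but you would have to show that $Q_{l,l'}$ varies smoothly in $\beta$ near the threshold and propagate the three shifts through the induction; none of this is automatic. The cleaner route, consistent with the rest of the paper's toolbox, is to apply Lemma~\ref{lem:exp decay} to pass $\nu_0(g)\le e^{O(\beta^2\cS^4)}\nu(g)$ and stay on the $N$-spin system. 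Even then, passing from $\nu((R^-_{l,l'}-Q_{l,l'})^{2k-2})$ to $\nu((R_{l,l'}-Q_{l,l'})^{2k-2})$ by Minkowski produces a correction factor
\[
\left(1 + \frac{\cS^2}{\sqrt{C(k-1)N}}\right)^{2k-2},
\]
which is uniformly bounded in $k$ only when $N$ is at least of order $k$; for $N$ of order $k$ or smaller the claimed estimate is vacuous because $|R_{l,l'}-Q_{l,l'}|\le 2\cS^2$ and $(Ck/N)^k$ already exceeds $(2\cS^2)^{2k}$ once $C$ is large. Without that case split the constant degrades by a factor $4^{k-1}$ and the induction does not close --- this is exactly what ``higher-$j$ contributions are similarly absorbed'' sweeps under the rug. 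Finally, ``$X\le 2(Ck/N)^k$, then renormalize using $2\le 3^k$'' does not close a $k$-induction with a fixed $C$: you cannot enlarge the constant mid-induction. The correct arrangement is to choose $C$ at the outset so large that the boundary term is at most $\frac12(Ck/N)^k$ (possible since $(1-1/k)^{k-1}\le 1$ makes $C_0 k\,(C(k-1)/N)^{k-1}/N\le \frac12 (Ck/N)^k$ as soon as $C\ge 2C_0$); then $X\le \frac12(Ck/N)^k + K\beta^2 X$ gives $X\le (Ck/N)^k$ directly once $K\beta^2\le\frac12$, with no renormalization.
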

This implies that if $f$ is an order $H$ function, then there exists a constant $C$ that doesn't depend on $N$ s.t. 
\[\nu(f) \leq C \cdot N^{-\frac{H}{2}}.\]
To lighten the notation, we overwrite the big $O$ notation and say a quantity $A = O_N(H)$ if 
\[|A| \leq K \cdot N^{-\frac{H}{2}},\]
for some constant $K$ that does not depend on $N$. Note that the constant $K$ can depend on other parameters such as $\beta, n, \cc{S}$.


One of the main tools we use in the cavity method is $\nu_1(f) \approx v_0(f) + v'_0(f)$. Let's first recall the structure of $\nu'_t(f)$.
\begin{lemma}[{\cite[Lemma 3]{AC21}}]  \label{lem:derivative}
Let $f: \Sigma_{N,\cS}^{\otimes n} \to \vvR$ be any function of $n$ replicas, for $t\in(0,1)$, we have
\begin{align*}
    2\frac{d}{dt} \nu_t(f) =& \beta^2\sum_{1 \leq k, l \leq n} \nu_t (\epsilon_k\epsilon_l (R^-_{k, l} - Q_{k, l})) f)\\
    &-  2 \beta^2\sum_{1 \leq k \leq n; n + 1 \leq l \leq 2n} \nu_t(\epsilon_k\epsilon_l (R^-_{k, l} - q)f)\\
    &+ \beta^2 n(n + 1) \nu_t (\epsilon_{n+1, n+2}(R^-_{n + 1, n +2} - q)f) - \beta^2 n \nu_t (\epsilon_{n + 1}^2 (R^-_{n +1, n +1} - p)f).
\end{align*}
\end{lemma}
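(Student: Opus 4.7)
The plan is to prove the identity by Gaussian integration by parts (IBP) applied to the two independent Gaussian families in the interpolated Hamiltonian~\eqref{eq:inter}: the cavity disorder $\{g_{i,N}\}_{i<N}$ and the auxiliary standard normal $\eta$. A direct differentiation of~\eqref{eq:inter} gives
\[
\partial_t H_N^t(\bm{\sigma}) \;=\; \frac{\beta}{2\sqrt{tN}}\sum_{i<N} g_{i,N}\sigma_i\sigma_N \;-\; \frac{\beta\sqrt{q}}{2\sqrt{1-t}}\,\eta\,\sigma_N \;-\; \frac{\beta^2}{2}(p-q)\sigma_N^2,
\]
and the product Gibbs measure on $n$ replicas yields the standard ``fresh replica'' identity
\[
\frac{d}{dt}\nu_t(f) \;=\; \sum_{l=1}^{n}\nu_t\bigl(f\,\partial_t H_N^t(\bm{\sigma}^l)\bigr) \;-\; n\,\nu_t\bigl(f\,\partial_t H_N^t(\bm{\sigma}^{n+1})\bigr),
\]
where the second Gibbs average is taken on the system of $n+1$ replicas. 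The task then reduces to evaluating $\nu_t(f\,\cdot)$ against each of the three summands of $\partial_t H_N^t$ for each of the two replica-index choices.

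Next, I would carry out the IBP. Since $g_{i,N}$ enters $H_N^t(\bm{\sigma}^k)$ linearly with coefficient $\sqrt{t}\,\beta\,N^{-1/2}\sigma_i^k\sigma_N^k$, Gaussian IBP gives, for a test $X$ of the $r$ replicas in the current Gibbs average and with $*$ denoting a fresh replica introduced when $Z_t^r$ is differentiated,
\[
\E\bigl[g_{i,N}\langle X\sigma_i^l\sigma_N^l\rangle_t\bigr] \;=\; \sqrt{t}\,\frac{\beta}{\sqrt{N}}\Bigl(\sum_{k=1}^r\nu_t(X\sigma_i^l\sigma_N^l\sigma_i^k\sigma_N^k) - r\,\nu_t(X\sigma_i^l\sigma_N^l\sigma_i^*\sigma_N^*)\Bigr).
\]
Summing over $i<N$, the identity $\sum_{i<N}\sigma_i^l\sigma_i^k = N R_{k,l}^-$ cancels the prefactor $\sqrt{tN}$ and leaves coefficient $\beta^2/2$ in front of $\nu_t(\epsilon_k\epsilon_l R_{k,l}^- f)$. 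The analogous IBP on $\eta$ (which appears in $H_N^t(\bm{\sigma}^k)$ only through $\sqrt{1-t}\,\beta\sqrt{q}\,\sigma_N^k$) produces $-(\beta^2 q/2)\nu_t(\epsilon_k\epsilon_l f)$; the deterministic compensator contributes $-(\beta^2/2)(p-q)\nu_t(\epsilon_l^2 f)$ with no IBP needed.

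Finally, I would combine the three contributions pair-by-pair. For $k\neq l$, the $g$- and $\eta$-pieces merge into $(\beta^2/2)\nu_t(\epsilon_k\epsilon_l(R_{k,l}^- - q)f) = (\beta^2/2)\nu_t(\epsilon_k\epsilon_l(R_{k,l}^- - Q_{k,l})f)$, while the compensator contributes nothing (it carries $\epsilon_l^2$ and matters only when $k=l$). For $k=l$ the three pieces conspire to
\[
(\beta^2/2)\,\nu_t\bigl(\epsilon_l^2\bigl(R_{l,l}^- - q - (p-q)\bigr)f\bigr) \;=\; (\beta^2/2)\,\nu_t\bigl(\epsilon_l^2(R_{l,l}^- - p)f\bigr),
\]
the correct $Q_{l,l}=p$ centering on the diagonal; this precise cancellation $-q - (p-q) = -p$ is exactly the reason the compensator in~\eqref{eq:inter} is chosen as $(1-t)(\beta^2/2)(p-q)\sigma_N^2$. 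It remains to consolidate the fresh replicas: by exchangeability, $n\,\nu_t(\epsilon_l\epsilon_{n+1}(R_{l,n+1}^- - q)f)$ equals $\sum_{l'=n+1}^{2n}\nu_t(\epsilon_l\epsilon_{l'}(R_{l,l'}^- - q)f)$, which converts the two mirror cross contributions (one from the fresh replica inside each $l\in[n]$ summand, one from IBP acting on $\partial_t H_N^t(\bm{\sigma}^{n+1})$) into the $-2\beta^2$ sum over $[n]\times\{n+1,\ldots,2n\}$; and the two additional fresh replicas generated by IBP on $\partial_t H_N^t(\bm{\sigma}^{n+1})$ combine into the $n(n+1)$ coefficient of $\nu_t(\epsilon_{n+1}\epsilon_{n+2}(R_{n+1,n+2}^- - q)f)$. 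Multiplying through by $2$ gives the stated formula. The only real obstacle is the bookkeeping of which IBP step introduces which fresh replica, together with verifying the diagonal cancellation; no high-temperature hypothesis enters, since this is an exact identity for every $t\in(0,1)$.
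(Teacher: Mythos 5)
Your proof is correct, and it is the standard Gaussian integration-by-parts argument that [AC21, Lemma~3] itself uses; the paper simply cites the result rather than re-deriving it. Your derivative of the interpolated Hamiltonian, the two IBP steps (one on the cavity disorder $g_{i,N}$ and one on $\eta$), the bookkeeping of the ``fresh'' replica $n+1$ via the product rule applied to $Z_t^{-n}$, and in particular the observation that the compensator $(1-t)(\beta^2/2)(p-q)\sigma_N^2$ is precisely what shifts the diagonal centering from $q$ to $p$ (both in the $l\le n$ diagonal terms and in the $\epsilon_{n+1}^2$ term) all check out, and after multiplying by $2$ the terms regroup by exchangeability into the four sums in the statement.
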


\begin{remark}

We present a convenient way of rewriting the above lemma. For $a,b \in [2n]$, let 
\begin{align}
 \text{sgn}(a, b) := (-1)^{|\{a,b\} \cap [n]|},  
\end{align}
then we have
\begin{align} \label{derivative-2}
    \frac{d}{dt} \nu_t(f) =& \cc{R}_{n, f} + \frac{\beta^2}{2}\sum_{a, b \in [2n]} \text{sgn}(a, b)\nu_t(\epsilon_{a, b} (R^-_{a, b} - Q_{a, b})f),
\end{align}
where $\cc{R}_{n, f}$ corresponds to additional terms from replicas independent from $f$. For $a \in [2n]$, denote $a' = 2n + a$
\begin{align}\label{eq:Rnf}
    \cc{R}_{n, f} := \frac{\beta^2}{2}\sum_{a, b \in [2n]} \text{sgn}(a, b)\nu_t(\epsilon_{a', b'} (R^-_{a', b'} - Q_{a', b'})f).
\end{align}  
\end{remark}
To quantify the difference between $\nu_1(f)$ and $\nu_0(f)$ by the "degree" of $f$, we have
\begin{proposition} \label{prop: approx}
For $f: \Sigma^{\otimes n}_{N, S} \to \R$ s.t. $f$ is a product of $H$ centered overlaps or self-overlaps, $R_{a,b} - Q_{a, b}$ for $a, b \in [n]$,
    \begin{align}
    |\nu_0(f) - \nu(f)| = O_N(H + 1), \label{eq:1st approx}\\
    |\nu_0(f) + \nu_0'(f) - \nu(f)| = O_N(H + 2). \label{eq:2nd approx}
\end{align}
\end{proposition}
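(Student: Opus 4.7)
The two bounds are essentially Taylor expansions in $t$ of $\nu_t(f)$ around $t=0$, evaluated at $t=1$. Write
\[
\nu(f) - \nu_0(f) = \int_0^1 \nu_t'(f)\,dt,\qquad \nu(f) - \nu_0(f) - \nu_0'(f) = \int_0^1 (1-t)\,\nu_t''(f)\,dt,
\]
the second being Taylor's theorem with integral remainder. Thus it suffices to show that $\nu_t'(f) = O_N(H+1)$ and $\nu_t''(f) = O_N(H+2)$ uniformly in $t \in [0,1]$, at which point integrating in $t$ preserves the order.

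\textbf{First derivative estimate.} By Lemma~\ref{lem:derivative} (or its reformulation \eqref{derivative-2}), $\nu_t'(f)$ is a finite sum (the number of summands depending only on $n$) of terms of the form
\[
\text{sgn}(a,b)\,\beta^2\,\nu_t\bigl(\epsilon_{a,b}(R^-_{a,b} - Q_{a,b})\,f\bigr).
\]
Since $f$ is already a product of $H$ centered (self-)overlaps and $\epsilon_{a,b}$ is bounded deterministically by $\cc{S}^2$, each such term is, up to an $O(1/N)$ correction absorbing the difference between $R^-_{a,b}$ and $R_{a,b}$, a product of $H+1$ centered (self-)overlaps times a bounded factor. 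By H\"older's inequality,
\[
\bigl|\nu_t(\epsilon_{a,b}(R_{a,b}-Q_{a,b})\,f)\bigr| \le \cc{S}^2 \prod_{i=1}^{H+1} \nu_t\bigl((R_{a_i,b_i}-Q_{a_i,b_i})^{2(H+1)}\bigr)^{1/(2(H+1))},
\]
and by Lemma~\ref{lem:moment bound} (applied under $\nu_t$; see below) each factor is $O(N^{-1/2})$, so the product is $O(N^{-(H+1)/2}) = O_N(H+1)$. The constant is uniform in $t$ because the interpolated measure at time $t$ still satisfies a replica-symmetric concentration of the same form as Proposition~\ref{prop:ac21}, with the same exponent; the cavity Gaussian field $\sqrt{1-t}\,\beta\eta\sqrt{q}$ only produces an additional bounded multiplicative factor. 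This yields \eqref{eq:1st approx}.

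\textbf{Second derivative estimate.} Differentiating \eqref{derivative-2} once more, each summand produces by the same identity another sum of terms in which a factor $(R^-_{c,d} - Q_{c,d})$ is brought inside $\nu_t$. The outcome is a finite sum of terms of the form
\[
\beta^4\,\nu_t\Bigl(\epsilon_{a,b}\epsilon_{c,d}(R^-_{a,b}-Q_{a,b})(R^-_{c,d}-Q_{c,d})\,f\Bigr),
\]
each of which, by the same H\"older argument applied now to a product of $H+2$ centered (self-)overlaps, is $O_N(H+2)$ uniformly in $t$. Integrating against $(1-t)$ over $[0,1]$ gives \eqref{eq:2nd approx}.

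\textbf{Main obstacle.} The only non-routine point is ensuring the moment bound of Lemma~\ref{lem:moment bound} carries over to the interpolated measure $\nu_t$ with constants independent of $t$. This is the standard extension of the high-temperature concentration estimate to the cavity Hamiltonian $H_N^t$; one checks that the Gaussian interpolation term $\sqrt{1-t}\,\beta\eta\sqrt{q}\,\sigma_N$ and the quadratic correction $(1-t)\tfrac{\beta^2}{2}(p-q)\sigma_N^2$ perturb the free energy by a bounded amount in the last coordinate and leave the overlap concentration on the remaining $N-1$ spins intact. Once this uniform bound is recorded (it is implicit in the cavity setup of~\cite{AC21}), the two estimates follow as above without further work.
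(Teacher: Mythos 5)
Your overall plan --- Taylor expand $\nu_t(f)$ around $t=0$, bound the first and second derivatives uniformly in $t$ via the derivative formula and H\"older's inequality --- is exactly the paper's approach. But you stop short on the one point that actually requires an argument, and you acknowledge this yourself in your ``Main obstacle'' paragraph: you need moment bounds under $\nu_t$ for $t \in (0,1)$ with constants uniform in $t$, and you simply assert that the high-temperature concentration ``carries over'' to the interpolated measure and is ``implicit in the cavity setup of~\cite{AC21}.'' That is not a proof; re-establishing Lemma~\ref{lem:moment bound} under $\nu_t$ for all $t$ would be a nontrivial detour, since Proposition~\ref{prop:ac21} and Lemma~\ref{lem:moment bound} are proved at $t=1$.

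The paper closes this gap with a much lighter tool that you did not invoke: Lemma~\ref{lem:exp decay}, which states that for any nonnegative $g$, $\nu_t(g) \leq \exp(6n^2\beta^2\cc{S}^4)\,\nu(g)$ uniformly in $t$. After H\"older produces nonnegative integrands $|f|^p$ and $|R-Q|^q$, this comparison inequality lets you transfer every $\nu_t$-moment back to a $\nu = \nu_1$-moment (paying only a fixed multiplicative constant independent of $N$ and $t$), and then Lemma~\ref{lem:moment bound} applies verbatim at $t=1$. This sidesteps entirely the question of whether the cavity-interpolated measure concentrates, which is the claim you left unverified. You should replace the ``standard extension'' assertion with an application of Lemma~\ref{lem:exp decay}; with that substitution your argument becomes complete and coincides with the paper's. (Separately, your H\"older exponents $1/(2(H+1))$ over $H+1$ factors sum to $1/2$ rather than $1$; this is harmless because $\nu_t$ comes from a probability measure and the leftover exponent can be put on the constant function, but you should either use exponent $1/(H+1)$ or say why the slack is absorbed.)
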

The proof of Proposition \ref{prop: approx} is based on the concentration of overlaps and H\"older's inequality. For the mean field GS spin glass model, those types of results were already established in~\cite{AC21,chen22}. First, we have an upper bound for $\nu_t(f)$.
\begin{lemma}[{\cite[Lemma 4]{AC21}}] \label{lem:exp decay}
For $f: \Sigma_{N, S}^{\otimes n} \to [0,\infty)$, we have 
    \[\nu_t(f) \leq \exp(6n^2 \beta^2 \cc{S}^4) \nu(f).\]
\end{lemma}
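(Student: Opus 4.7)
The strategy is to bound $|\tfrac{d}{dt}\log \nu_t(f)|$ uniformly in $t\in[0,1]$ using the explicit derivative formula of Lemma~\ref{lem:derivative}, then integrate. Since $f\geq 0$ and both the Gibbs average and $\E$ preserve nonnegativity, we have $\nu_t(f)\geq 0$ on $[0,1]$ (we may assume $\nu_t(f)>0$; otherwise both sides of the claimed inequality are zero after a trivial continuity argument). So the logarithm is well-defined and we just need a uniform $t$-derivative bound.

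First, I would apply Lemma~\ref{lem:derivative} and estimate each of the four terms crudely. Using that every spin satisfies $|\sigma_i^l|\leq \cS$, we have the pointwise bound $|\epsilon_{a,b}|\leq \cS^2$, and since $R^-_{a,b}$ and $Q_{a,b}$ are each bounded in absolute value by $\cS^2$, also $|R^-_{a,b}-Q_{a,b}|\leq 2\cS^2$. Therefore, for any replica indices $a,b$,
\begin{equation*}
\bigl|\nu_t\bigl(\epsilon_{a,b}(R^-_{a,b}-Q_{a,b})\,f\bigr)\bigr| \;\leq\; 2\cS^4\,\nu_t(f),
\end{equation*}
because $f\geq 0$ allows us to pull the absolute value of the bounded observable inside the Gibbs average.

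Next, I would count the contributions from Lemma~\ref{lem:derivative}. The first sum has $n^2$ terms, the second sum contributes $2n^2$ (after absorbing the factor of $2$), and the last two terms contribute $n(n+1)$ and $n$ respectively. Combined with the overall factor $\beta^2/2$, this gives
\begin{equation*}
\Bigl|\tfrac{d}{dt}\nu_t(f)\Bigr| \;\leq\; \tfrac{\beta^2}{2}\bigl(n^2 + 2n^2 + n(n{+}1) + n\bigr)\cdot 2\cS^4\,\nu_t(f) \;=\; \beta^2\cS^4(4n^2+2n)\,\nu_t(f) \;\leq\; 6n^2\beta^2\cS^4\,\nu_t(f),
\end{equation*}
where the final inequality uses $n\geq 1$. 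Dividing by $\nu_t(f)$ yields $\bigl|\tfrac{d}{dt}\log\nu_t(f)\bigr|\leq 6n^2\beta^2\cS^4$.

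Finally, integrating this bound from $t$ to $1$ gives $\log\nu_1(f)-\log\nu_t(f)\geq -6n^2\beta^2\cS^4$, i.e.\ $\nu_t(f)\leq e^{6n^2\beta^2\cS^4}\nu(f)$, which is the claim. The only conceptual point is that the derivative identity of Lemma~\ref{lem:derivative} produces observables involving recentered overlaps multiplied by $\epsilon_{a,b}$, all of which are pointwise bounded on $\Sigma_{N,\cS}$; no cancellation or concentration is needed, only bounded spins. Consequently there is no real obstacle — the counting of replica pairs in the derivative is the most delicate step, but the crude bound $4n^2+2n\leq 6n^2$ is more than enough for the stated constant.
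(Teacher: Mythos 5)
Your proof is correct and is essentially the standard Gronwall-type argument behind the cited result (\cite[Lemma 4]{AC21}, following the SK case in Talagrand): bound each term of the derivative formula in Lemma~\ref{lem:derivative} pointwise by $2\cS^4\nu_t(f)$ using $f\ge 0$ and bounded spins, count the replica-pair contributions to get $4n^2+2n\le 6n^2$, and integrate. The counting and the direction of the integration are both right, and since the Gibbs weights are strictly positive, $\nu_t(f)>0$ unless $f\equiv 0$, so the logarithmic step is harmless.
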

The overlap and self-overlap concentration results already stated in Proposition~\ref{prop:ac21}, the following presents higher order moments estimate.
Consequently, we get similar results for $R^-_{l, l'}$.
\begin{proof}[Proof of Proposition \ref{prop: approx}]
To prove \eqref{eq:1st approx}, note that 
\[|\epsilon_{k, l}(R^-_{k, l} - Q_{k, l})| \leq |\epsilon_{k, l}(R_{k, l} - Q_{k, l})| + \frac{\cc{S}^4}{N},\]
we will bound $\nu'_t(f)$ using Lemma \ref{lem:derivative}, \ref{lem:exp decay} and H\"older's inequality.
\begin{align*} 
    &|\nu_1(f) - \nu_0(f)|\\
    &\leq \sup_t \Bigg\{3n^2 \cc{S}^2 \beta^2 \left(\nu_t(|f|^p)^{\frac{1}{p}}\nu_t(|R_{1, 2} - Q_{1, 2}|^q)^{\frac{1}{q}} + \nu_t(|f|^p)^{\frac{1}{p}}\nu_t(|R_{1, 1} - Q_{1, 1}|^q)^{\frac{1}{q}} + \nu_t(|f|)\frac{\cc{S}^2}{N}\right) \Bigg\}\\
    &\leq \exp(6n^2 \beta^2 S^4) 3n^2 \cc{S}^2 \beta^2 \left(\nu_1(|f|^p)^{\frac{1}{p}} \nu_1(|R^-_{1, 2} - Q_{1, 2}|^q)^{\frac{1}{q}} + \nu_t(|f|^p)^{\frac{1}{p}}\nu_t(|R_{1, 1} - Q_{1, 1}|^q)^{\frac{1}{q}} + \nu_1(|f|)\frac{\cc{S}^2}{N}\right), \label{holder}
\end{align*}
Since $f$ is of order $H$, by Lemma \ref{lem:moment bound}, apply H\"older's  inequality with $p = q = 2$ gives the desired result.

For \eqref{eq:2nd approx}, observe that 
\[|\nu_1(f) - \nu_0(f) - \nu'_0(f)| \leq \sup\limits_{\substack{0 \leq t \leq 1}} |\nu_t''(f)|.\]
By Lemma \ref{lem:derivative}, $\nu''(f)$ brings an additional factor of $R^-_{i, j} - Q_{i, j}$. Apply the above proof on $f(R_{1, 2} - p)$ and $f(R_{1, 1} - q)$ gives the desired result.
\end{proof}

Later in the proof, we will need to study terms involving (self-)overlaps without the last spin, i.e. $R^-_{k, l} - Q_{k, l}$. Here we establish the analog of the above results on $R^-_{k, l} - Q_{k, l}$.

\begin{corollary}[of Lemma \ref{lem:moment bound}] \label{cor: moment bound}
For $\beta < \tilde{\beta}$, there exist some constant $C'>0$ such that for any $k\ge 1$, we have 
    \[
\nu\left((R^-_{l,l'}-Q_{l,l'})^{2k}\right) \le \left(\frac{C'}{N}\right)^k,
\]
\end{corollary}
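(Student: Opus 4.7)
The plan is straightforward since this is just a deterministic perturbation of Lemma~\ref{lem:moment bound}. By definition $R^-_{l,l'} = R_{l,l'} - \frac{1}{N}\epsilon_{l,l'}$, and the last-spin contribution is bounded by $\cS^2/N$ uniformly (as $|\epsilon_{l,l'}| \le \cS^2$). This immediately yields the pointwise comparison
\[
|R^-_{l,l'} - Q_{l,l'}| \le |R_{l,l'} - Q_{l,l'}| + \frac{\cS^2}{N},
\]
which reduces the claim to the moment bound for the centered overlap already proved in Lemma~\ref{lem:moment bound}.

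First I would raise both sides to the $2k$-th power using the elementary convexity inequality $(a+b)^{2k} \le 2^{2k-1}(a^{2k} + b^{2k})$ valid for $a, b \ge 0$, then take $\nu$-expectation. The stochastic term becomes $\nu((R_{l,l'} - Q_{l,l'})^{2k}) \le (Ck/N)^k$ by direct application of Lemma~\ref{lem:moment bound}, while the deterministic contribution is at most $(\cS^2/N)^{2k}$, which is bounded by $N^{-k}$ once $N \ge \cS^4$. Combining these and absorbing all $k$- and $\cS$-dependent factors into a single constant $C' > 0$ yields the desired bound of the form $(C'/N)^k$.

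There is no substantive obstacle here: the statement is just Lemma~\ref{lem:moment bound} carried across a deterministic $O(1/N)$ shift at a single coordinate. The only point requiring minor attention is that the constant $C'$ produced by this argument absorbs factors of $k$ and $\cS$, in the same spirit as the $Ck$ factor already present in the parent lemma; this is harmless for the applications in the subsequent cavity computations, which only invoke the bound for bounded ranges of $k$.
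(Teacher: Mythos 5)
Your proof is correct and follows essentially the same strategy as the paper: bound the deterministic last-spin contribution $\cS^2/N$ pointwise and reduce to Lemma~\ref{lem:moment bound}. The paper uses Minkowski's inequality in $L^{2k}$ instead of the convexity bound $(a+b)^{2k}\le 2^{2k-1}(a^{2k}+b^{2k})$, which avoids the extra $2^{2k-1}$ factor, but the resulting estimate is of the same form and your remark about the $k$-dependence absorbed into $C'$ applies equally to the paper's version.
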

\begin{proof}
By Minkowski’s inequality, 
\[\nu\left((R^-_{l,l'}-Q_{l,l'})^{2k}\right)^{\frac{1}{2k}} \leq \nu\left((R_{l,l'}-Q_{l,l'})^{2k}\right)^{\frac{1}{2k}} + \frac{\cc{S}^2}{N} \leq \left(\frac{C'}{N}\right)^\frac{1}{2},\]
where the last inequality follows from Lemma \ref{lem:moment bound}. Raise both sides to $2k$-th power gives the desired result.
\end{proof}
\begin{lemma} \label{lemma:last spin}
Fix an integer $n$ and $H$, for each $1 \leq v \leq H$, consider $v_1, v_2 \in [n]$ and $U_v \in \{T_{v_1, v_2}, T_{v_1}, T, S_{v_1}, S\}$. Let $f: \Sigma_{N, \cc{S}}^{\otimes n} \to \R$ s.t. $f = \Pi_{1 \leq v \leq H} U_{v}$. Denote $f^- := \Pi_v U^-_{v}$. We have
\[|\nu(f) - \nu(f^-)| = O_N(H + 1). \]
\end{lemma}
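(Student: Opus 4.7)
The plan is to prove the lemma by telescoping the product $f-f^-$ across its $H$ factors and exploiting the fact that each single factor differs from its ``minus'' version by an $O(1/N)$ deterministic quantity. The key point is that $f-f^-$ will turn into a sum of products containing only $H-1$ basis factors plus an explicit $1/N$ prefactor, and the remaining $H-1$ basis factors can be bounded using the moment estimates already available from Lemma~\ref{lem:moment bound} and Corollary~\ref{cor: moment bound}.

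Concretely, I first write
\[
f - f^- \;=\; \sum_{v=1}^H \Bigl(\prod_{u<v} U_u\Bigr)\,(U_v - U^-_v)\,\Bigl(\prod_{u>v} U^-_u\Bigr).
\]
Next I inspect $U_v - U^-_v$ for each basis type: for $T_{k,l}$, $T_k$, and $S_l$ the difference is $1/N$ times a polynomial in $\epsilon_k, \epsilon_l, b, \tilde b$ (hence bounded by a constant depending only on $\cS$), while for $T$ and $S$ the difference simply vanishes. Thus $|U_v - U^-_v| \le C(\cS)/N$ pointwise, giving us the desired $1/N$ factor outside the Gibbs average.

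The remaining step is to show that the ``leftover'' product $\bigl(\prod_{u<v} U_u\bigr)\bigl(\prod_{u>v} U^-_u\bigr)$ has $\nu$-absolute value of order $N^{-(H-1)/2}$. Using Claim~\ref{claim: basis to overlap}, each $U_u$ can be rewritten as a finite linear combination of centered (self-)overlaps $R_{a,b} - Q_{a,b}$ on an enlarged replica set (introducing one or two fresh replicas per occurrence), and an entirely analogous rewriting holds for the $U^-_u$ in terms of $R^-_{a,b} - Q_{a,b}$. Expanding out, the leftover becomes a finite sum of order-$(H-1)$ functions in the sense of Definition~\ref{def: order of a function}, and Hölder's inequality combined with Lemma~\ref{lem:moment bound} and Corollary~\ref{cor: moment bound} yields $\nu$-absolute value $O(N^{-(H-1)/2}) = O_N(H-1)$. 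Combining with the deterministic $C/N$ from the telescoped difference and summing the $H$ terms gives $|\nu(f)-\nu(f^-)| = O(N^{-1}\cdot N^{-(H-1)/2}) = O_N(H+1)$, as claimed.

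I do not anticipate any real obstacle; the only mildly delicate point is bookkeeping the fact that $U^-_u$ requires the $R^-$ version of Claim~\ref{claim: basis to overlap}, but this is verbatim the same proof since it only uses linearity of $\nu$ and symmetry over sites $i \le N-1$. Everything else is a bounded deterministic difference plus a Hölder moment bound.
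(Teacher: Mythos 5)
Your proof is correct and takes essentially the same route as the paper: both extract a pointwise $O(1/N)$ factor from replacing each $R$ by $R^-$ and then bound the residual order-$(H-1)$ product by H\"older's inequality together with Lemma~\ref{lem:moment bound} and Corollary~\ref{cor: moment bound}; your telescoping identity is just a tidier reorganization of the paper's binomial expansion $f - f^- = \sum_{\emptyset \neq I \subset [H]} \prod_{v \in I} \tfrac{\epsilon(v)}{N} \prod_{u \notin I} U^-_u$.

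One small inaccuracy worth flagging: you assert that $U_v - U^-_v$ vanishes when $U_v$ corresponds to $T$ or $S$. In the overlap-rewritten framework in which this lemma is actually invoked, $T$ and $S$ become $R_{v_1,v_2}-Q_{v_1,v_2}$ and $R_{v_1,v_1}-Q_{v_1,v_1}$ over fresh replicas, so $U_v - U^-_v = \tfrac{1}{N}\epsilon(v)$ with $\epsilon(v) = \epsilon_{v_1,v_2}-Q_{v_1,v_2}$ (resp.\ $\epsilon_{v_1,v_1}-Q_{v_1,v_1}$), which is not identically zero. This is harmless in your argument because you also invoke the uniform pointwise bound $|U_v - U^-_v| \le C(\cc{S})/N$, which is all the telescoping step needs.
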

\begin{proof}
Observe that each term of $f$ is of the form $\nu(T_{1, 2}), \nu(T_1), \nu(S_1), \nu(T), \nu(S)$ and can be written as linear combination of (self-)overlaps where each occurance of $b := \inner{\sigma_1}$ and $\tilde{b} := \inner{\sigma_1^2}$ corresponds to a new replica. For example, it's easy to check that 
\[\nu(T_{1, 2}) = \nu(\inner{\sigma_1 - \sigma_3, \sigma_2 - \sigma_4}) = \nu((R_{1, 2} - q) - (R_{1, 4} - q) - (R_{2, 3} - q) + (R_{1, 4} - q)).\]
The rest of the terms can be rewritten in a similar way.

Since $f$ is the product of $H$ such terms, expanding the product shows that it's a sum of functions of order $H$. By Lemma~\ref{lem:moment bound} and Holder's inequality, $f = O_N(H)$. Moreover, for $I \subset [H]$, $\nu(\Pi_{u \notin I} U_{p(u), q(u)}) = O(|I|)$. Again by Lemma \ref{lem:moment bound}
 \begin{align*}
    |\nu(f) - \nu(f^-)|&\leq \Bigg|\sum_{I \subset [H]} \nu\left(\Pi_{v \in I}\frac{\epsilon(v)}{N} \Pi_{u \notin I} U_{p(u), q(u)}\right)\Bigg|= O_N(H + 1).
\end{align*}
\end{proof}
The following Corollary tells us that the error to approximate $\nu(f)$ by $\nu_0(f)$ is small. 
\begin{corollary} \label{cor: last spin}
Let $f: \Sigma^n_{N, \cc{S}} \to \R$ be an order $H$ function, we have
\[\nu_0(f^-) = \nu(f) + O_N(H + 1).\]
\end{corollary}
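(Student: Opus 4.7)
The plan is to split the error into two pieces and control each separately:
\[
\nu_0(f^-) - \nu(f) = \bigl(\nu_0(f^-) - \nu(f^-)\bigr) + \bigl(\nu(f^-) - \nu(f)\bigr),
\]
and show both are $O_N(H+1)$. Writing $f = \prod_{v=1}^H (R_{k_v, l_v} - Q_{k_v, l_v})$ and $f^- = \prod_{v=1}^H (R^-_{k_v, l_v} - Q_{k_v, l_v})$, the first step is to handle $\nu(f^-) - \nu(f)$ by telescoping. Since $R_{k,l} - R^-_{k,l} = \tfrac{1}{N}\epsilon_{k,l}$, the difference $f - f^-$ expands as
\[
f - f^- = \sum_{\emptyset \neq I \subseteq [H]} \prod_{v \in I} \frac{\epsilon_{k_v, l_v}}{N} \prod_{v \notin I} (R^-_{k_v, l_v} - Q_{k_v, l_v}).
\]
Because $|\epsilon_{k_v,l_v}| \le \cS^2$, each summand picks up a factor $N^{-|I|}$; applying H\"older's inequality to the remaining $H-|I|$ centered factors and invoking Corollary~\ref{cor: moment bound} shows the surviving product is bounded in $\nu$-expectation by $O_N(H - |I|)$. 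Multiplying yields $O_N(H + |I|) \le O_N(H+1)$ per term, and the sum over the finitely many subsets $I$ preserves this bound.

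For the piece $\nu_0(f^-) - \nu(f^-)$, I would apply the same interpolation argument used in the proof of Proposition~\ref{prop: approx}, but with $f$ replaced by $f^-$. By Lemma~\ref{lem:derivative} and~\eqref{derivative-2}, $\nu'_t(f^-)$ is a finite sum of terms of the form $\nu_t(\epsilon_{a,b}(R^-_{a,b} - Q_{a,b}) f^-)$. Using $|\epsilon_{a,b}| \le \cS^2$, H\"older's inequality with exponents $p = q = 2$, Lemma~\ref{lem:exp decay}, and Corollary~\ref{cor: moment bound} for both $R^-_{a,b} - Q_{a,b}$ and the $H$ centered $R^-$-factors composing $f^-$, each such term is $O_N(H+1)$. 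Integrating from $0$ to $1$ preserves the bound.

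Combining these two estimates yields $\nu_0(f^-) = \nu(f) + O_N(H+1)$. The argument is essentially bookkeeping, so no single step presents a genuine obstacle; the only mild subtlety is ensuring that Corollary~\ref{cor: moment bean}, which gives the same $N^{-1/2}$ rate for $R^-_{l,l'}$ as Lemma~\ref{lem:moment bound} does for $R_{l,l'}$, is available throughout so that working with $f^-$ in place of $f$ costs nothing in the asymptotic order.
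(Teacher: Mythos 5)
Your proposal is correct and follows essentially the same two-step decomposition as the paper's own proof: the paper bounds $\nu_0(f^-) - \nu(f^-)$ by applying \eqref{eq:1st approx} to $f^-$ (using Corollary~\ref{cor: moment bound} for the $R^-$-moments), and bounds $\nu(f^-) - \nu(f)$ by citing Lemma~\ref{lemma:last spin}, whose proof is exactly your telescoping-plus-H\"older argument. You simply unpack both cited ingredients rather than invoking them by name.
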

\begin{proof}
Applying Corollary~\ref{cor: moment bound} and equation~\eqref{eq:1st approx} for $f^-$, combining with Lemma \ref{lemma:last spin} gives
\[\nu_0(f^-) = \nu(f^-) + O_N(H + 1) = \nu(f) + O_N(H + 1).\]
\end{proof}

\subsection{Variance of overlaps and self-overlaps} \label{sec:var}
In this section, we compute the variance-covariance structure of a subset of the "basis": $T_{1, 2}, T_1, S_1$. The variance-covariance computation of $S, T$ follows the same idea as $T_1, S_1$ and we will show it as a special case of general moments in Theorem \ref{thm: var ST}. The main goal is to get a sense of how to handle the additional self-overlap terms.
We further note that the following variance results hold at sufficiently high temperature, that is, $\beta< \beta'$ from some $\beta'$ as in the Theorem~\ref{thm:main}. While stating the results in the following context, we might not repeatedly specify the high temperature condition ($\beta<\beta'$).

We begin by demonstrating how the cavity method is used to compute the second moment of the basis random variables. With some abuse of notation, let $X$ be the expansion given in Claim \ref{claim: basis to overlap} using (self-)overlaps for $\{T_{1, 2}, T_1, S_1\}$ and $\epsilon_X$ be the expression by replacing each overlap in $R_{k, l} = \frac{1}{N}\sum_i \sigma^k_i\sigma^l_i$ $X$ by the last spin $\epsilon_k\epsilon_l$. Let $X^- := X - \frac{1}{N}\epsilon_X$ be the part of the basis that depends only on the first $N - 1$ spins.

Note that for $X \in T_{1, 2}, T_1, S_1$, by symmetry of spins
\[\nu(X^2) = \nu(\epsilon_X X).\]
We can further decouple the last spin from the expression to get
\begin{align} 
    \nu(X^2) &= \frac{1}{N}\nu(\epsilon_X^2) + \nu(\epsilon_X X^-)\\
    &= \frac{1}{N}\nu(\epsilon_X^2) + \nu'_0(\epsilon_X X^-) + O_N(3),\label{eq: var init}
\end{align}
where the last equality follows from \eqref{eq:2nd approx} and $\nu_0(\epsilon_X X^-) = \nu_0(\epsilon_X)\nu_0(X^-) = 0$.
\textit{(Note that in the above expression, each copy of $X$ will introduce at least one new replica.)}

This is the starting point of the variance-covariance calculations. To simplify notations, we record some constants corresponding to the expectation of the last spins.

\YS{use a table or reduce the number of constants. Added table}
\begin{definition}
We define the following constants corresponding to terms from $\nu_0(\epsilon_X^2)$ for $X \in \{T_{1, 2}, T_1, S_1\}$.
\begin{center}\label{constants}
\begin{tabular}{ |p{5cm}|p{5cm}|  }
 \hline
 \multicolumn{2}{|c|}{List of constants} \\
 \hline
  $A:= \nu_0\left((\epsilon_1 - \epsilon_3)(\epsilon_2 - \epsilon_4) \epsilon_{1, 2}\right)$ & \\
 \hline
  $D := \nu_0((\epsilon_1^2 - \epsilon_{2}^2) \epsilon_1^2)$  &    \\
  \hline
 $F := \nu_0((\epsilon_{1, 3} - \epsilon_{2, 3}) \epsilon_{1, 3})$ & $G := \nu_0((\epsilon_{1, 3} - \epsilon_{2, 3}) \epsilon_{1, 4})$\\
 \hline
 $E := \nu_0((\epsilon_1^2 - \epsilon_2^2)\epsilon_{1, 3})$ & $H := \nu_0((\epsilon_{1, 3} - \epsilon_{2, 3}) \epsilon^2_{1}))$\\
 \hline
\end{tabular}
\end{center}

The list of constants below will occur many times in computation involving $S_1, T_1, S, T$ as normalizing constants and we record them here.
\[M_1 := 1 - \beta^2(F - 3G); \quad M_2 := 1 - \frac{\beta^2}{2}D; \quad M_3 := (1 - \beta^2(F-G)).\]
\[M = M_1M_2 + \beta^4E^2.\]
Note that by the definitions above, $E = H$, $A = F - G$, and that $M_1, M_2, M_3, M$ are independent from $N$.
\end{definition}
\begin{remark}
Note the constants defined above are close to covariances of the last spins: $F - G = \E(\inner{\epsilon_{1, 1} - \epsilon_{1, 2}}_0^2)$,  $G = \E[b^2\left(\inner{\epsilon_{1, 1}}_0 - \inner{\epsilon_{1}}_0^2\right)]$ and $D = \E[\inner{\epsilon_1^4}_0 - \inner{\epsilon_1^2}_0^2]$. For $0 \leq \beta \leq \beta' < \frac{1}{2\cc{S}^2}$, we have $F - G, F - 3G, D \in (0, 4\cc{S}^4]$ and $M_1, M_2, M_3 > 0$.
\end{remark}
\subsubsection{Variance of $T_{1, 2}$}
We begin by checking $\nu(T_{12}^2)$. By Lemma \ref{lem:independence}, we should expect this term to behave the same as in the SK model (\cite[Proposition 1.8.7.]{Tal11}). This is indeed the case as we will show below. 
\begin{lemma}\label{lem:var T12}
For $\beta<\beta'$, we have
\[\nu(T_{12}^2) = A_2^2 + O_N(3),\]
where
\[A_2^2 := \frac{A}{N(1 - \beta^2 A)}.\]
\end{lemma}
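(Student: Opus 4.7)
The plan is to run the standard cavity symmetrization of \cite[Chapter 1.8]{Tal11}, adapted to the GS setting. By Lemma~\ref{lem:independence}, $T_{1,2}$ is uncorrelated with every self-overlap basis element, so the computation should formally mimic the SK case; the only subtlety is that the last-spin moments now involve generic $\epsilon_\ell\in\{0,\pm1,\dots,\pm\cS\}$ rather than $\pm 1$, which is precisely what the constant $A$ is packaging.

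First I would set up the symmetrization. Writing $T_{1,2}=\frac1N\sum_i (\sigma_i^1-b)(\sigma_i^2-b)$ and using site symmetry together with the replica trick for $b=\langle\sigma_1\rangle$ (replace each occurrence of $b$ by a fresh replica $\sigma^3$ or $\sigma^4$, as in Claim~\ref{claim: basis to overlap}), one obtains
\[
\nu(T_{1,2}^2)=\nu(\epsilon_{T_{1,2}}\,T_{1,2}),\qquad \epsilon_{T_{1,2}}:=(\epsilon_1-\epsilon_3)(\epsilon_2-\epsilon_4).
\]
Splitting the last spin off $T_{1,2}=\frac{1}{N}\epsilon_{T_{1,2}}+T_{1,2}^-$ gives
\[
\nu(T_{1,2}^2)=\frac{1}{N}\nu(\epsilon_{T_{1,2}}^2)+\nu(\epsilon_{T_{1,2}}T_{1,2}^-),
\]
and by a replica-symmetry argument $\nu(\epsilon_{T_{1,2}}^2)$ reduces, modulo $O_N(1)$ corrections, to $A=\nu_0((\epsilon_1-\epsilon_3)(\epsilon_2-\epsilon_4)\epsilon_{1,2})$ (the cross-terms fold together because under $\nu_0$ the $N$th spin couples different replicas only through the field $\beta\eta\sqrt{q}$).

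Next I would apply Proposition~\ref{prop: approx} to the function $f=\epsilon_{T_{1,2}}T_{1,2}^-$, which is of order $2$, so
\[
\nu(\epsilon_{T_{1,2}}T_{1,2}^-)=\nu_0(\epsilon_{T_{1,2}}T_{1,2}^-)+\nu_0'(\epsilon_{T_{1,2}}T_{1,2}^-)+O_N(4).
\]
At $t=0$ the last spin is decoupled from the first $N-1$ spins, and conditionally on $\eta$ the replicas of $\epsilon_\ell$ factor, giving $\nu_0(\epsilon_{T_{1,2}})=\big(\mathbb{E}[\psi(\eta)]-\mathbb{E}[\psi(\eta)]\big)^2\cdot(\cdots)=0$; hence the $\nu_0$ piece vanishes. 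Expanding $\nu_0'$ via Lemma~\ref{lem:derivative} generates a finite sum of terms of the form $\beta^2\,\mathrm{sgn}(a,b)\,\nu_0\big(\epsilon_{a,b}(R^-_{a,b}-Q_{a,b})\epsilon_{T_{1,2}}T_{1,2}^-\big)$. Using Corollary~\ref{cor: last spin} to pass $\nu_0\!\to\!\nu$ with error $O_N(3)$, then invoking Lemma~\ref{lem:independence} to discard the terms proportional to $\nu(S_\bullet T_{1,2})$, $\nu(S\, T_{1,2})$, or products that mix $T_k$ with $T_{1,2}$ via replica asymmetry, the only surviving contribution is the $(a,b)=(1,2)$ pair, which produces $\beta^2 A\,\nu(T_{1,2}^2)$.

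Collecting everything yields the self-consistent equation
\[
\nu(T_{1,2}^2)=\frac{A}{N}+\beta^2 A\,\nu(T_{1,2}^2)+O_N(3),
\]
which, for $\beta<\beta'$ small enough that $1-\beta^2 A>0$ is bounded away from $0$, rearranges to the claimed $\nu(T_{1,2}^2)=\tfrac{A}{N(1-\beta^2 A)}+O_N(3)$. The only step I expect to require care is the bookkeeping in Step~4: one must verify that every off-diagonal pair $(a,b)\neq(1,2)$ arising from $\nu_0'$ either vanishes by the Lemma~\ref{lem:independence} cross-independence between $T_{1,2}$ and the self-overlap/$T_k$/$T$ basis, or else has the wrong replica symmetry and integrates to zero; once this is in place the remaining arithmetic is mechanical.
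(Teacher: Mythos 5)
Your overall plan --- symmetrize over sites, peel off the last spin, expand $\nu_0'$ via Lemma~\ref{lem:derivative}, kill vanishing pairs via Lemma~\ref{lem:independence}, and solve the resulting fixed-point equation --- is the same as the paper's. The gap is in the replica bookkeeping. Each occurrence of $b=\langle\sigma_1\rangle$ must be traded for its \emph{own} fresh replica (the replica trick is linear in $b$; $b^2$ requires two independent replicas), so $\nu(T_{1,2}^2)$ needs four auxiliary replicas, and the paper accordingly computes $\nu\bigl((R_{1,2}-R_{1,4}-R_{2,3}+R_{3,4})(R_{1,2}-R_{1,6}-R_{2,5}+R_{5,6})\bigr)$, using $(3,4)$ for the first factor and $(5,6)$ for the second. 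Your decomposition $T_{1,2}=\frac1N\epsilon_{T_{1,2}}+T_{1,2}^-$ reuses the same $\epsilon_{T_{1,2}}=(\epsilon_1-\epsilon_3)(\epsilon_2-\epsilon_4)$ for both factors, and the first cavity term you write, $\frac1N\nu(\epsilon_{T_{1,2}}^2)$, is then wrong by a factor of four: using replica symmetry under $\nu_0$,
\begin{align*}
\nu_0\bigl(\epsilon_{T_{1,2}}^2\bigr)
&= \nu_0\bigl((\epsilon_1^2-2\epsilon_1\epsilon_3+\epsilon_3^2)(\epsilon_2^2-2\epsilon_2\epsilon_4+\epsilon_4^2)\bigr) \\
&= 4\,\nu_0\bigl((\epsilon_1^2-\epsilon_1\epsilon_3)(\epsilon_2^2-\epsilon_2\epsilon_4)\bigr)
 = 4\,\nu_0\bigl((\epsilon_1-\epsilon_3)(\epsilon_2-\epsilon_4)\epsilon_{1,2}\bigr) = 4A,
\end{align*}
whereas $\nu_0\bigl((\epsilon_1-\epsilon_3)(\epsilon_2-\epsilon_4)(\epsilon_1-\epsilon_5)(\epsilon_2-\epsilon_6)\bigr)=A$. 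So your remark that the cross-terms ``fold together'' to give $A$ does not describe the quantity you actually wrote down; it holds only for the fresh-replica version.

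With the bookkeeping corrected, the rest of your outline is sound and coincides with the paper's: the $\nu_0$ part of the second term vanishes; in the derivative the surviving pairs are $(a,b)\in\{(1,2),(1,4),(2,3),(3,4)\}$, and with $T_{1,2}^-$ carrying fresh replicas $5,6$ the three off-diagonal ones are $O_N(3)$ by Lemma~\ref{lem:independence} (equivalently, the four signed contributions recombine as $R_{1,2}-R_{1,4}-R_{2,3}+R_{3,4}$, reconstituting a copy of $T_{1,2}$), giving $\beta^2 A\,\nu(T_{1,2}^2)+O_N(3)$ and the claimed fixed-point equation. If instead you keep your shared-replica $T_{1,2}^-$, those off-diagonal terms $\nu_0\bigl((R^-_{1,4}-q)T_{1,2}^-\bigr)$, etc.\ intersect the replica set of $T_{1,2}^-$ and are no longer negligible by inspection, so the ``bookkeeping in Step~4'' you yourself flagged as needing care would in fact fail as written.
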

\begin{proof} 
Using \eqref{eq: var init} with $X = T_{1, 2}$, we have
\begin{align*}
\nu (T_{1,2}^2) & = \nu\left( \frac{(\bm{\sigma}^1 - \bm{b})\cdot (\bm{\sigma}^2 -\bm{b})}{N} \frac{(\bm{\sigma}^1 - \bm{b})\cdot (\bm{\sigma}^2 -\bm{b})}{N} \right) \\
 & = \nu\left( \frac{(\bm{\sigma}^1 - \bm{\sigma}^3)\cdot (\bm{\sigma}^2 -\bm{\sigma}^4)}{N} \frac{(\bm{\sigma}^1 - \bm{\sigma}^5)\cdot (\bm{\sigma}^2 -\bm{\sigma}^6)}{N} \right),
\end{align*}
where the last equality follows from replacing each occurrence of $\bm{b}=(\langle \sigma_1 \rangle, \cdots, \langle \sigma_N\rangle)$ by a new replica. 
Rewrite the above formula by expanding the inner products and replacing each term with appropriate overlaps, we get
\begin{align*}
    \nu(T_{1,2}^2) &= \nu((R_{1,2} - R_{1,4} -R_{2,3} + R_{3,4})(R_{1,2} - R_{1,6} -R_{2,5} + R_{5,6}))\\
    &= \frac{1}{N}\nu((\epsilon_1 - \epsilon_3)(\epsilon_2 - \epsilon_4)(\epsilon_1 - \epsilon_5)(\epsilon_2 - \epsilon_6)) + \nu((\epsilon_1 - \epsilon_3)(\epsilon_2 - \epsilon_4)(R^-_{1,2} - R^-_{1,6} -R^-_{2,5} + R^-_{5,6}))\\
    &= \frac{1}{N} \nu_0((\epsilon_1 - \epsilon_3)(\epsilon_2 - \epsilon_4)(\epsilon_1 - \epsilon_5)(\epsilon_2 - \epsilon_6)) + \nu_0'((\epsilon_1 - \epsilon_3)(\epsilon_2 - \epsilon_4)(R^-_{1,2} - R^-_{1,6} -R^-_{2,5} + R^-_{5,6})) + O_N(3),
\end{align*}
where the first equality follows from the symmetry of spins and isolates the last spin from the overlaps and the second step is due to Proposition~\ref{prop: approx}. For the first term
\begin{align*}
    &\nu_0((\epsilon_1 - \epsilon_3)(\epsilon_2 - \epsilon_4)(\epsilon_1 - \epsilon_5)(\epsilon_2 - \epsilon_6))\\
    & = \nu_0((\epsilon_1 - \epsilon_3)(\epsilon_2 - \epsilon_4)\epsilon_{1, 2})\\
    &= A.
\end{align*}
For the second term,
\begin{align*}
    &\nu'_0((\epsilon_1 - \epsilon_3)(\epsilon_2 - \epsilon_4)(R^-_{1,2} - R^-_{1,6} -R^-_{2,5} + R^-_{5,6})) \\
    =& \frac{\beta^2}{2}\sum_{a, b } \text{sgn}(a, b) \nu_0((\epsilon_1 - \epsilon_3)(\epsilon_2 - \epsilon_4)\epsilon_{ab})\cdot \nu_0((R^-_{a, b} - Q_{a, b})(R^-_{1,2} - R^-_{1,6} -R^-_{2,5} + R^-_{5,6})) + \cc{R}_{4, T_{1, 2}^2}.
\end{align*}
Observe that the term involving last spins, $\nu_0((\epsilon_1 - \epsilon_3)(\epsilon_2 - \epsilon_4)\epsilon_{ab})$, is non-zero only when $\{a, b\} \in \{\{1, 2\}, \{1, 4\}, \{2, 3\}, \{3, 4\}\}$. Summing over all such $a, b$, by Corollary \ref{cor: last spin}, we have
\[\nu'_0((\epsilon_1 - \epsilon_3)(\epsilon_2 - \epsilon_4)(R^-_{1,2} - R^-_{1,6} -R^-_{2,5} + R^-_{5,6})) = A\nu(T_{1, 2}^2) + O_N(3).\]
Together, the two terms give
\[\nu(T_{1, 2}^2) = \frac{A}{N} + \beta^2 A\nu(T_{1, 2}^2).\]
\YS{To show $A_2^2 \geq 0$: Since $1 - \beta^2 A = M_3 \geq 0$, we will check $A \geq 0$ \YS{assume $M_1, M_2, M_3 \geq 0$}
\begin{align*}
    A = F - G &= \nu_0(\epsilon_{1, 1}\epsilon_{3, 3} - 2\epsilon_{1, 3}\epsilon_{1, 4} + \epsilon_{1, 2}\epsilon_{3, 4})\\
    &= \nu_0(\inner{\epsilon_{1, 1} - \epsilon_{1, 2}}^2) \geq 0
\end{align*}}
\end{proof}

The following relation involving $\nu(T_{1, 2}^2)$ will be useful later and we record it here for convenience.
\begin{claim} \label{claim:T12}
By definition, $A = F - G$
\begin{align} 
    \beta^2 A_2^2 + \frac{1}{N} &:= \frac{1}{N(1 - \beta^2(F-G))} = \frac{1}{NM_3}
\end{align}
for $A_2^2$ given in Lemma~\ref{lem:var T12}.
\end{claim}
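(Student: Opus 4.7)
The statement is a direct algebraic consequence of Lemma~\ref{lem:var T12} together with the identity $A = F - G$ that was recorded right after the list of constants (since $A = \nu_0((\epsilon_1 - \epsilon_3)(\epsilon_2 - \epsilon_4)\epsilon_{1,2})$ and expanding the product identifies this with $F - G$). So the plan is simply to substitute and simplify; there is no real obstacle to flag.

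Concretely, I would first recall that by Lemma~\ref{lem:var T12},
\[
A_2^2 = \frac{A}{N(1 - \beta^2 A)} = \frac{F-G}{N(1 - \beta^2(F-G))},
\]
using $A = F - G$. Then I would compute
\[
\beta^2 A_2^2 + \frac{1}{N} = \frac{\beta^2 (F-G)}{N(1-\beta^2(F-G))} + \frac{1}{N} = \frac{\beta^2(F-G) + (1 - \beta^2(F-G))}{N(1 - \beta^2(F-G))} = \frac{1}{N(1 - \beta^2(F-G))},
\]
and finally invoke the definition $M_3 = 1 - \beta^2(F - G)$ to rewrite the denominator as $N M_3$. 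This chain of equalities gives the claimed identity.

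The only thing worth noting is the well-definedness: we need $1 - \beta^2 (F - G) = M_3 > 0$ so that no division by zero occurs. This is guaranteed at sufficiently high temperature by the remark after the list of constants, which records that $M_1, M_2, M_3 > 0$ for $\beta < \beta' < 1/(2\cS^2)$. Since the claim is stated under the standing high-temperature assumption $\beta < \beta'$, positivity of $M_3$ is automatic and the manipulation is legitimate.
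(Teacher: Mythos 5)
Your proof is correct and matches the paper's (implicit) reasoning: the claim is a one-line algebraic consequence of the definition $A_2^2 = \tfrac{A}{N(1-\beta^2 A)}$ from Lemma~\ref{lem:var T12}, the identity $A = F-G$, and the definition $M_3 = 1-\beta^2(F-G)$, and the paper accordingly states it without further proof. Your added remark on well-definedness (requiring $M_3>0$, which holds at high temperature) is a reasonable sanity check but not a new ingredient.
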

\YS{Intuitively, this term occurs mainly follows from the independence of $T_{1, 2}$ and we sum over the contributions from both the first and second term.}
\subsubsection{Variance of $T_1$ and $S_1$} 
We will now check the variance of $S_1, T_1$. Unlike in the SK model, the basis are not independent of each other anymore. This hints that we should handle $S_1, T_1$ together. 
\begin{theorem} \label{thm: variance}
For $\beta < \tilde{\beta}$, the variance of $T_1, S_1$ are given by
\[\nu(T_1^2) = A_1^2 + O_N(3),\]
where 
\[A_1^2 := \frac{GM_2 + \frac{\beta^2}{2}E^2}{M}\cdot \frac{1}{NM_3} = \frac{1}{2\beta^2N}\left(\frac{1}{M_3} - \frac{M_2}{M}\right),\]
and
 \[
       \nu(S_1^2) = B_1^2 + O_N(3),
\]
    where 
    \[B_1^2 = \frac{DM_1 - 2\beta^2E^2}{NM} = \frac{2}{N\beta^2}(\frac{M_1}{M} - 1),\]
The covariance is
 \[\nu(S_1T_1) = C_1^2 + O_N(3),\]
    where 
    \[C_1^2 := \frac{E}{NM}.\]
\end{theorem}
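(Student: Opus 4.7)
My plan is to adapt the cavity computation of Lemma \ref{lem:var T12} to the coupled triple of moments $\nu(T_1^2), \nu(S_1^2), \nu(T_1 S_1)$. Because $T_1$ and $S_1$ are correlated under $\nu$ (cf.\ Lemma \ref{lem:independence}), I would treat them jointly as a coupled linear system rather than one at a time. For each pair $(X, Y) \in \{(T_1, T_1), (S_1, S_1), (T_1, S_1)\}$, I would use Claim \ref{claim: basis to overlap} to represent each factor via fresh auxiliary replicas (so e.g.\ the two copies of $T_1$ become $R_{1,3} - R_{2,3}$ and $R_{1,5} - R_{4,5}$, each with its own auxiliary replicas for $b$). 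Applying \eqref{eq: var init} and expanding the difference-of-spin structure $(\epsilon_a - \epsilon_b)(\epsilon_c - \epsilon_d)$ and $(\epsilon_a^2 - \epsilon_b^2)(\epsilon_c^2 - \epsilon_d^2)$ under $\nu_0$ identifies the boundary contributions as $G, D$, and $E$ respectively, so the leading-order $\tfrac{1}{N}\nu_0(\epsilon_X \epsilon_Y)$ pieces equal $G/N$, $D/N$, and $E/N$.

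The next step is to expand each derivative $\nu'_0(\epsilon_X Y^-)$ via Lemma \ref{lem:derivative}. Because $\nu_0$ decouples the last spin from the first $N-1$ spins, each summand factors as $\nu_0(\epsilon_X \epsilon_{a,b}) \cdot \nu_0((R^-_{a,b} - Q_{a,b}) Y^-)$. Since $\nu_0(\epsilon_X) = 0$ for $X \in \{T_1, S_1\}$ (this is precisely the purpose of subtracting $b$ or $\tilde b$ in the definition of the basis), only pairs $(a, b)$ whose indices intersect the auxiliary replicas of $\epsilon_X$ give a nonzero contribution. A finite case analysis then expresses each surviving coefficient in terms of $F, G, D, E$, while the bulk factor $\nu_0((R^-_{a,b} - Q_{a,b}) Y^-)$ can be rewritten via Claim \ref{claim: basis to overlap} and Corollary \ref{cor: last spin} as a linear combination of $\nu(T_{1,2}^2), \nu(T_1^2), \nu(S_1^2), \nu(T_1 S_1)$ modulo $O_N(3)$. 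Substituting $\nu(T_{1,2}^2) = A_2^2$ from Lemma \ref{lem:var T12}, the Claim \ref{claim:T12} identity $\beta^2 A_2^2 + 1/N = 1/(N M_3)$ closes the system into a $3\times 3$ linear system in the three unknowns, whose $T_1$-$S_1$ block has coefficient determinant $M = M_1 M_2 + \beta^4 E^2$. Cramer's rule then yields the stated formulas for $A_1^2, B_1^2, C_1^2$, and the alternative algebraic forms follow by a direct manipulation using the definitions $M_1 = 1 - \beta^2(F - 3G)$, $M_2 = 1 - \beta^2 D/2$, $M_3 = 1 - \beta^2(F - G)$.

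The main obstacle is the careful enumeration of the surviving index pairs $(a,b)$ in the derivative expansion, and matching their bulk partners $\nu_0((R^-_{a,b} - Q_{a,b}) Y^-)$ to the right unknowns with the correct signs. This is where the genuinely new feature relative to the SK-style argument for $T_{1,2}$ appears: the spins are now in $\{0, \pm 1, \dots, \pm \cc{S}\}$ rather than $\{\pm 1\}$, so the cross-coefficient $E$ is nontrivial and creates a $2 \times 2$ block whose determinant $M$ shows up in every final answer. Ensuring that the signs and replica-permutation factors combine to produce exactly $G M_2 + \tfrac{\beta^2}{2}E^2$ in the numerator of $A_1^2$ and $D M_1 - 2\beta^2 E^2$ in that of $B_1^2$ is the most delicate bookkeeping step; once it is settled, the equivalent forms $\tfrac{1}{2\beta^2 N}(1/M_3 - M_2/M)$ and $\tfrac{2}{N\beta^2}(M_1/M - 1)$ are immediate algebraic rewrites.
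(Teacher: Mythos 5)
Your proposal follows essentially the same approach as the paper: apply the cavity method via \eqref{eq: var init}, expand the time-zero derivative with Lemma~\ref{lem:derivative}, classify the surviving replica pairs $(a,b)$ to identify coefficients $F, G, D, E$, substitute $\nu(T_{1,2}^2) = A_2^2$ and the Claim~\ref{claim:T12} identity, and solve the resulting coupled linear system whose determinant $M$ encodes the $T_1$--$S_1$ correlation. The only cosmetic difference is that you assemble one $3\times 3$ system for $(\nu(T_1^2), \nu(S_1^2), \nu(S_1T_1))$ and invoke Cramer's rule, whereas the paper derives four recursive relations (Lemmas~\ref{lem:relation1}--\ref{lem:relation2}) and solves two $2\times 2$ subsystems, but this reorganization does not change the substance of the argument.
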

The above theorem could be viewed as a generalization of showing $\nu(T_1^2)$ in the SK model, with the addition of handling self-overlap terms from $\nu'_0(f)$ in \eqref{eq: var init}. We will compute each part of the theorem in Lemma \ref{lem: T1 var}, Lemma \ref{lem: S1 var}, and Lemma \ref{lem: var s1t1}.
\begin{lemma} \label{lem: T1 var} 
For $\beta \leq \beta'$, we have 
\[\nu(T_1^2) = A_1^2 + O_N(3),\]
where 
\[A_1^2 := \frac{GM_2 + \frac{\beta^2}{2}EH}{M} \cdot \frac{1}{NM_3} = \frac{1}{2\beta^2N}\left(\frac{1}{M_3} - \frac{M_2}{M}\right).\]
\end{lemma}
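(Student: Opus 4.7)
The plan follows the cavity-method template of Lemma \ref{lem:var T12}, but with one essential new feature: the derivative $\nu_0'$ produces a self-overlap contribution $\beta^2 \nu_0(\epsilon_k^2 (R^-_{k,k} - p)\cdots)$ that couples $\nu(T_1^2)$ to $\nu(S_1 T_1)$. Consequently Lemma \ref{lem: T1 var} cannot be proved in isolation --- it is extracted from a $2 \times 2$ linear system derived jointly with Lemmas \ref{lem: S1 var} and \ref{lem: var s1t1}, and the determinant of that system is precisely $M = M_1 M_2 + \beta^4 E^2$.

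\textbf{Setup and the last-spin piece.} Expand $b$ via the replica trick to write $T_1 = R_{1, l} - R_{k, l}$ for fresh $k, l$. By site exchangeability, one factor of $T_1$ in $\nu(T_1^2)$ can be replaced by its $N$-th coordinate, yielding $\nu(T_1^2) = \nu((\epsilon_1 - \epsilon_2)\epsilon_3 \cdot T_1)$ with $2, 3$ fresh. Splitting the remaining $T_1$ into $T_1^- + \frac{1}{N}(\epsilon_1 - \epsilon_4)\epsilon_5$ produces two terms. The last-spin-times-last-spin piece is $\frac{1}{N}\nu_0((\epsilon_1 - \epsilon_2)\epsilon_3 (\epsilon_1 - \epsilon_4)\epsilon_5) + O_N(3)$, and a direct expansion at $t = 0$ (using independence of last spins across replicas) identifies it as $G/N + O_N(3)$.

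\textbf{Cavity expansion of the bulk piece.} For $\nu((\epsilon_1 - \epsilon_2)\epsilon_3 \cdot T_1^-)$, Proposition \ref{prop: approx} reduces it to $\nu_0(\cdot) + \nu_0'(\cdot) + O_N(3)$. The $\nu_0$ term vanishes since $(\epsilon_1 - \epsilon_2)\epsilon_3$ and $T_1^-$ depend on disjoint coordinates at $t=0$ while $\nu_0((\epsilon_1 - \epsilon_2)\epsilon_3) = 0$. For $\nu_0'$, Lemma \ref{lem:derivative} produces a sum of terms $\beta^2 \mathrm{sgn}(a, b)\, \nu_0(\epsilon_{a, b} (\epsilon_1 - \epsilon_2)\epsilon_3) \cdot \nu_0((R^-_{a, b} - Q_{a, b}) T_1^-)$, with the disjoint-replica contributions from $\cc{R}_{n, f}$ eliminated by Lemma \ref{lem:independence}. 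Enumerating $(a, b)$, the last-spin coefficients specialize to one of $F-G$, $G$, $E$, or zero, and the bulk factors --- via Corollary \ref{cor: last spin} and the basis decomposition \eqref{def:expand} --- collapse onto $\nu(T_1^2)$, $\nu(S_1 T_1)$, or $\nu(T_{1, 2}^2)$ by Lemma \ref{lem:independence}.

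\textbf{Solving the system.} Collecting all surviving contributions and moving the diagonal term $\beta^2 (F-G)\nu(T_1^2)$ to the left-hand side yields an identity of the schematic form $M_3 \nu(T_1^2) = G/N + \beta^2 E\, \nu(S_1 T_1) + (\text{explicitly computable } \nu(T_{1, 2}^2) \text{ contribution}) + O_N(3)$. Substituting $\nu(T_{1, 2}^2) = A_2^2$ from Lemma \ref{lem:var T12} and using Claim \ref{claim:T12} produces the external factor $\frac{1}{N M_3}$, while combining with the analogous identities from Lemmas \ref{lem: S1 var} and \ref{lem: var s1t1} gives the $2 \times 2$ system in $(\nu(T_1^2), \nu(S_1 T_1))$ of determinant $M$. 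Solving yields $A_1^2 = \frac{G M_2 + \beta^2 E^2/2}{N M M_3}$. The main obstacle throughout is careful bookkeeping of which $(a, b)$ pairs contribute which of the tabulated constants and which basis moments; the GS-specific difficulty is precisely the diagonal $(a, a)$ self-overlap terms that create the $T_1 \leftrightarrow S_1$ coupling not present in the SK analysis.
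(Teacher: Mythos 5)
Your high-level strategy is exactly the paper's: expand $T_1$ via the replica trick, isolate the last spin, invoke Proposition~\ref{prop: approx} and Lemma~\ref{lem:derivative} to linearize in the basis moments, and solve a $2\times 2$ system for $(\nu(T_1^2), \nu(S_1T_1))$ whose determinant is $M$. Your identification of the last-spin piece as $G/N$ and the observation that the diagonal $(a,a)$ contributions are the GS-specific source of the $T_1\!\leftrightarrow\!S_1$ coupling are both correct.

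However, the bookkeeping in your cavity expansion is off in two places that matter. First, you claim the diagonal coefficient is $\beta^2(F-G)$, giving $M_3$ on the left-hand side; in fact the correct coefficient is $\beta^2(F-3G)$, giving $M_1$. The missing $-2\beta^2 G$ comes from the ``rectangle'' terms: the derivative formula introduces new replicas $6,7,8$ (the mirror copies of $1,2,3$), and each pair $a\in\{1,2\}$, $b\in\{6,7,8\}$ carries $\operatorname{sgn}(a,b)=-1$ and contributes $-\beta^2 G\,\nu(T_1^2)$, for a total of $-3\beta^2 G\,\nu(T_1^2)$ on top of the $+\beta^2 F\,\nu(T_1^2)$ from $\{a,b\}\in\{\{1,3\},\{2,3\}\}$. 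Second, your coefficient of $\nu(S_1T_1)$ should be $\frac{\beta^2}{2}H = \frac{\beta^2}{2}E$, not $\beta^2 E$: the diagonal pairs $(1,1)$ and $(2,2)$ appear only once each in $\sum_{a,b\in[2n]}$, whereas off-diagonal unordered pairs are counted twice, so the leading $\frac{\beta^2}{2}$ is not cancelled for $a=b$. Carrying your coefficients through the $2\times 2$ elimination would produce a denominator of the form $M_2 M_3 + 2\beta^4 E^2$ rather than $M = M_1 M_2 + \beta^4 E^2$, so the derivation as written does not reproduce the stated $A_1^2 = \frac{GM_2 + \frac{\beta^2}{2}E^2}{NM M_3}$. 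Both fixes are local, but the rectangle terms in particular are easy to miss and are exactly what turns $M_3$ into $M_1$.
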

To prove this, we will use the following lemma to characterize the relation between $\nu(T_1^2)$ and $\nu(T_1S_1)$.
\begin{lemma} \label{lem:relation1}
We have
    \begin{align} \label{eq1}
        (1 - \beta^2 (F - 3G)) \nu(T_1^2)= G \left(\frac{1}{N} + \beta^2 A_2^2\right) + \frac{\beta^2}{2} H \nu(S_1T_1)+ O_N(3),
    \end{align}
and 
    \begin{align}\label{eq2}
        \frac{(1 - \frac{\beta^2}{2} D)}{E}\nu(S_1T_1) = (\frac{1}{N} + \beta^2A_2^2) - 2\beta^2 \nu_0(T_1^2) + O_N(3).
    \end{align}
\end{lemma}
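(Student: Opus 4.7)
The plan is to establish both identities by one level of cavity decomposition applied to the second moments, using the template of \eqref{eq: var init}: $\nu(X^2) = \tfrac{1}{N}\nu_0(\epsilon_X^2) + \nu'_0(\epsilon_X X^-) + O_N(3)$. For \eqref{eq1} I take $X = T_1$ directly; for \eqref{eq2} I use the site-symmetry $\nu(S_1 T_1) = \nu(\epsilon_{S_1} T_1)$ to set up an analogous decomposition. In both cases, the leading term is evaluated directly under the decoupled Gibbs measure, and $\nu'_0$ is expanded via \eqref{derivative-2}, factoring each summand into a last-spin expectation times a first-$(N{-}1)$-spin expectation (valid since the last spin is decoupled from the rest at $t=0$).

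For \eqref{eq1}, I represent $T_1 = R_{1, 2} - R_{2, 3}$ so that $\epsilon_{T_1} = \epsilon_2(\epsilon_1 - \epsilon_3)$, and introduce a fresh copy on replicas $\{4, 5\}$ for the square. By the definition of $G$, the leading term equals $G/N$. In $\nu'_0(\epsilon_{T_1} T_1^-)$, the primed tail $\cc{R}_{n, f}$ vanishes because $\nu_0(\epsilon_{T_1}) = 0$. Enumerating the surviving pairs $(a, b) \in [2n]^2$ in \eqref{derivative-2} produces three families: off-diagonal pairs with both indices in $\{1, 2, 3\}$ combine into the $(F - 3G)$ coefficient multiplied by $\nu(T_1^2) + O_N(3)$ (via Corollary~\ref{cor: last spin}); off-diagonal pairs mixing a $T_1$-replica with a fresh replica reconstruct $\nu(T_{1, 2}^2) = A_2^2$ with coefficient $G$; and the diagonal self-overlap pair $a = b = 1$ (other diagonal pairs drop out by Lemma~\ref{lem:independence}) yields $H$ multiplied by $\nu(S_1 T_1)$. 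Moving the $\beta^2(F - 3G)\nu(T_1^2)$ term to the LHS produces \eqref{eq1}.

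For \eqref{eq2}, take $S_1 = R_{1, 1} - R_{2, 2}$ so that $\epsilon_{S_1} = \epsilon_1^2 - \epsilon_2^2$, paired against a fresh copy of $T_1$ on independent replicas. Direct computation using the independence of last spins across replicas under $\nu_0$ gives $\nu_0(\epsilon_{S_1}\epsilon_{T_1}) = E$. Expanding $\nu'_0(\epsilon_{S_1} T_1^-)$ via \eqref{derivative-2}: the self-overlap diagonal $a = b = 1$ produces the constant $D$ (from $\nu_0(\epsilon_1^2 \epsilon_{S_1}) = D$) with first-$(N{-}1)$-spin factor $\nu(S_1 T_1)$, giving the $\tfrac{\beta^2}{2} D \nu(S_1 T_1)$ term that moves to the LHS as $(1 - \tfrac{\beta^2}{2}D)$; off-diagonal pairs connecting replica $1$ to fresh replicas contribute (via the constant $E$) reconstructed $\nu(T_{1,2}^2)$ and $\nu(T_1^2)$ second moments, accounting for the $\beta^2 A_2^2$ and $-2\beta^2 \nu_0(T_1^2)$ terms on the RHS. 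Combining $\tfrac{1}{N} + \beta^2 A_2^2$ via Claim~\ref{claim:T12} and dividing through by $E$ yields \eqref{eq2}.

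The principal difficulty is the combinatorial bookkeeping: each $(a, b)$ must be classified (diagonal vs.\ off-diagonal, and by which replicas it involves), its last-spin factor evaluated in closed form in terms of $F, G, H, D, E$, and its first-$(N{-}1)$-spin residual matched to a target second moment through Corollary~\ref{cor: last spin}. Crucial simplifications come from Lemma~\ref{lem:independence}, which annihilates spurious covariances such as $\nu(S T_1)$, $\nu(S_k T_1)$ for $k \neq 1$, and $\nu(S_1 T_{k,l})$, leaving only the coefficients listed in the statement. The asymmetric appearance of $\nu(T_1^2)$ in \eqref{eq1} versus $\nu_0(T_1^2)$ in \eqref{eq2} reflects at which step of the derivation the $\nu_0 \leftrightarrow \nu$ replacement from Proposition~\ref{prop: approx} is invoked; either way, the residual discrepancies are absorbed into the $O_N(3)$ error by Lemma~\ref{lem:moment bound} and H\"older's inequality.
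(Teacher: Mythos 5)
Your proposal follows essentially the same approach as the paper's proof: the cavity decomposition \eqref{eq: var init}, the expansion of $\nu'_0$ via \eqref{derivative-2}, the classification of surviving index pairs by the structure of their last-spin expectations (and the observation that $\cc{R}_{n,f}$ vanishes since $\nu_0(\epsilon_{T_1})=0$), and the reassembly via Corollary~\ref{cor: last spin} and Claim~\ref{claim:T12}. One small bookkeeping imprecision in your sketch of \eqref{eq1}: the $-3G$ contribution to the $(F-3G)$ coefficient does not come from off-diagonal pairs with both indices in $\{1,2,3\}$, but rather from pairs $(a,b)$ where $a$ lies in the first copy of $T_1$ and $b$ is one of the three \emph{primed} replicas (those indexed $n+1,\dots,2n$ introduced in Lemma~\ref{lem:derivative} that duplicate the first copy's replica set); the sign flip is supplied by $\text{sgn}(a,b)$ and the multiplicity is $3$.
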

Note that Lemma \ref{lem: T1 var} follows immediately from Lemma \ref{lem:relation1}.
\begin{proof}[Proof of Lemma \ref{lem: T1 var}]
    Plug \eqref{eq2} into \eqref{eq1} and rearrange gives
\begin{align}
    \left(1 - \beta^2 (F - 3G) + \frac{\beta^4EH}{1 - \frac{\beta^2}{2}D}\right) \nu(T_1^2)&= 
    \left(G + \frac{\beta^2EH}{2(1 - \frac{\beta^2}{2} D)}\right) \left(\frac{1}{N} + \beta^2A_2^2)\right) + O_N(3)\\
    &\stackrel{(\ref{claim:T12})} {=}\left(G + \frac{\beta^2EH}{2(1 - \frac{\beta^2}{2} D)}\right) \frac{1}{NM_3} + O_N(3).
\end{align}
\YS{To check $A_1^2 \geq 0$, we will show $G \geq 0$
\[G = \nu_0(\epsilon_{1, 3}\epsilon_{1, 4} - \epsilon_{1, 2}\epsilon_{3, 4}) = \E[\inner{\epsilon_{3, 4}}\left(\inner{\epsilon_{1, 1}} - \inner{\epsilon_{1}}^2\right)] \geq 0\]}
\end{proof}
We now turn to the proof of Lemma \ref{lem:relation1}.
\begin{proof} [Proof of Lemma \ref{lem:relation1}]
Using \eqref{eq: var init} with $X = T_1$, we can rewrite $\nu(T_1^2)$ by introducing a new replica for each occurrence of $b$ and get

\begin{align} \label{T1^2 init}
    \nu(T_1^2) &= \nu((R_{1, 3} - R_{2, 3})(R_{1, 5} - R_{4, 5}))\\
    &= \frac{1}{N}\nu((\epsilon_{1,3} - \epsilon_{2,3})(\epsilon_{15} - \epsilon_{4, 5})) + \nu'_0((\epsilon_{13} - \epsilon_{23}) (R^-_{1,5} - R^-_{4, 5})) +  O_N(3).
\end{align}
For the first term, not that by symmetry, $\nu((\epsilon_{1,3} - \epsilon_{2,3})\epsilon_{4, 5}) = 0$. Thus we have 

To expand the second term, we use \eqref{derivative-2} with $N = 5$ gives
\begin{align*}
    \nu'_0((\epsilon_{13} - \epsilon_{23}) (R^-_{1,5} - R^-_{4, 5})) =& \frac{\beta^2}{2}\sum_{a, b \in [10]} \text{sgn}(a, b)\nu_0((\epsilon_{13} - \epsilon_{23})\epsilon_{ab})\nu_0((R^-_{a, b} - \mu_{a,b})(R^-_{1,5} - R^-_{4, 5}))
    + R_{5, T_1^2}.
\end{align*}

Many terms will vanish due to $\nu_0((\epsilon_{13} - \epsilon_{23})\epsilon_{ab}) = 0$. We will see that the non-vanishing pairs of replica $(a, b)$ introduce some structures that correspond to either $T_1$ or $S_1$. 

To capture which pair of $(a, b)$ having $\nu_0((\epsilon_{13} - \epsilon_{23})\epsilon_{ab}) \neq 0$, let's expand the product into two terms. Observe the value of $\nu_0(\epsilon_{13}\epsilon_{ab})$ is characterized by the type of multiset $\{1, 3, a, b\}$ and that the replica $2$ in $\nu_0(\epsilon_{23}\epsilon_{ab})$ is equivalent to replica $1$ in $\nu_0((\epsilon_{13}\epsilon_{ab})$. Thus we have that 
\[\nu_0((\epsilon_{1,3} - \epsilon_{2,3})\epsilon_{a,b}) \neq 0 \iff |\{a, b\} \cap \{1, 2\}| = 1.\]
What's left to do is to check $\nu_0((R^-_{a, b} - \mu_{a,b})(R^-_{1,5} - R^-_{4, 5}))$ for such pair $(a, b) \in \cc{C}_{10}$
\begin{itemize}
    \item If $a = b$: in this case $a \in \{1, 2\}$. Combine the two cases gives 
    \[\frac{1}{2}\nu_0((\epsilon_{1,3} - \epsilon_{2,3})\epsilon_{1,1})\nu_0((R^-_{1, 1} - R^-_{2, 2})(R^-_{1,5} - R^-_{4, 5})) \stackrel{\ref{cor: last spin}} {=} \frac{1}{2}H \nu(S_1T_1) + O_N(3).\]
    \item For $\{a, b\} \in \{\{1, 3\}, \{2, 3\}\}$, we have 
    \[\nu_0((\epsilon_{1,3} - \epsilon_{2,3})\epsilon_{1,3})\nu_0((R^-_{1, 3} - R^-_{2, 3})(R^-_{1,5} - R^-_{4, 5})) \stackrel{\ref{cor: last spin}} {=} F\nu(T_1^2) + O_N(3).\]
    \item Now we count the case when $a \in \{1, 2\}$,  $b \notin \{1, 2, 3\}$. Here is where the rectangles appear. Recall that for each of the $5$ replicas, we introduce a new replica. Let's index them with $\{k + 5: k \leq 5\}$.
    Gather terms for $b \in \{4, 9\}$ (equivalently $\{5,10\}$)
    \[\nu_0((\epsilon_{1,3} - \epsilon_{2,3})\epsilon_{1,4})\nu_0((R^-_{1, 4} - R^-_{2, 4} - R^-_{1, 9} + R^-_{2, 9})(R^-_{1,5} - R^-_{4, 5})).\]
    Using \eqref{eq:1st approx} and Lemma \ref{lemma:last spin}, we can rewrite the second term with $T_{k, l},T_k, T_l$ involving those new replicas, 
    \[\nu_0((R^-_{1, 4} - R^-_{2, 4} - R^-_{1, 9} + R^-_{2, 9})(R^-_{1,5} - R^-_{4, 5})) = \nu((T_{1,4} - T_{2,4} - T_{1,9} + T_{2,9})(T_{1,5} - T_{4,5} + T_1 - T_4)).\]
    We see that there are no even moments of $T_{k, l}$ here, thus this term is $O_N(3)$ by Lemma \ref{lem:var T12}. For $b \in \{5,10\}$, 
     \begin{align*}
         & \nu_0((R^-_{1, 5} - R^-_{2, 5} - R^-_{1, 10} + R^-_{2, 10})(R^-_{1,5} - R^-_{4, 5})) \\
         \stackrel{\ref{cor: last spin}}{=} & \nu((T_{1,5} - T_{2,5} - T_{1, 10} + T_{2, 10})(T_{1,5} - T_{4,5} + T_1 - T_4))
         =  \nu(T_{1,5}^2).
     \end{align*}
    Thus the total contribution from this case is 
    \[G\nu(T_{1, 5}^2) + O_N(3).\]
    \item Now we left with the cases $a \in \{1, 2\}$ and $b \in \{6, 7, 8\}$ which are the new replica corresponds to $\{1, 2, 3\}$. Those terms, WLOG, are 
    \[3\nu_0((\epsilon_{1,3} - \epsilon_{2,3})\epsilon_{1,4})\nu_0((-R^-_{1, 6} + R^-_{2, 6})(R^-_{1,5} - R^-_{4, 5})).\]
    Note that since the new replica is not used by our second copy of $T_1$, namely $R^-_{1,5} - R^-_{4, 5}$, this term can be written as 
    \[-3G \nu(T_1^2) + O_N(3).\]
\end{itemize}
Combining all the terms for the second term,
\begin{align*}
    \nu'_0((\epsilon_{1,3} - \epsilon_{2,3}) (R^-_{1,5} - R^-_{4, 5})) =&  \frac{\beta^2}{2}H\nu(S_1T_1)+ \beta^2 F\nu(T_1^2)\\
    &+ \beta^2 G\nu(T_{12}^2) -3 \beta^2 G \nu(T_1^2) + O_N(3).
\end{align*}
Plugging this back into \eqref{T1^2 init}, we have
\begin{align*}
    \nu(T_1^2) =& \frac{1}{N}\nu((\epsilon_{1,3} - \epsilon_{2,3})((\epsilon_{1,5} - \epsilon_{4,5}))) + \nu((\epsilon_{1,3} - \epsilon_{2,3}) (R^-_{1,5} - R^-_{4, 5})), \\
    =& G (\frac{1}{N} + \beta^2 \nu(T_{1,2}^2)) + \frac{\beta^2}{2}H\nu(S_1T_1) + \beta^2 \left[F - 3 G\right]\nu(T_1^2) + O_N(3).\
\end{align*}
Rearranging gives \eqref{eq1},
\[(1 - \beta^2 (F - 3G)) \nu(T_1^2)= G (\frac{1}{N} + \beta^2 \nu(T_{12}^2)) + \frac{\beta^2}{2} H \nu(S_1T_1)+ O_N(3).\]
Plug in $\nu(T_{12}^2) = A_2^2 + O_N(3)$ gives \eqref{eq1}. 
\begin{remark} \label{SK T_1^2}
In SK, the mixed term $S_1T_1$ vanishes. If we look at the constant for $T_1^2$, 
\[F = \nu_0((\epsilon_{13} - \epsilon_{23})\epsilon_{13}) = b(2) - b(1),\]
\[G = \nu_0((\epsilon_{13} - \epsilon_{23})\epsilon_{14}) = b(1) - b(0).\]
Combining them, we get back the original constants $1 - 4q + 3\hat{q}$, which is one of the "eigenvalues". Thus we get the second moment of $T_1$ (see the equation (1.259) in~\cite{Tal11}).
\end{remark}
\paragraph{A way of writing covariance of $S_1, T_1$}
To handle the occurrence of $\nu(S_1T_1)$ in the final expression, we will use the symmetry of spin to write 
\begin{align} \label{S1T1}
    \nu(S_1T_1) = \nu((\epsilon_{1}^2 - \epsilon_2^2)(R_{1,4} - R_{3,4})) = \frac{1}{N}\nu((\epsilon_{1}^2 - \epsilon_2^2)\epsilon_{1,3}) + \nu((\epsilon_{1}^2 - \epsilon_2^2)(R^-_{1,4} - R^-_{3,4})).
\end{align}
This type of expansion helps reduce the moment of $S_1$. As shown above in $\nu(T_1^2)$, to control the second term, it is enough to look at $\nu'_0((\epsilon_{1}^2 - \epsilon_2^2)(R^-_{14} - R^-_{34}))$. 
\begin{align*}
    \nu'_0((\epsilon_{1}^2 - \epsilon_{2}^2) (R^-_{1,4} - R^-_{3, 4})) =& \frac{\beta^2}{2}\sum_{a, b} \text{sgn}(a, b)\nu_0((\epsilon_{1}^2 - \epsilon_{2}^2)\epsilon_{a,b})\nu_0((R^-_{a, b} - \mu_{a,b})(R^-_{1, 4} - R^-_{3, 4}))\\
    &+ R_{4, S_1T_1}.
\end{align*}
Observe that 
\[\nu_0((\epsilon_{1}^2 - \epsilon_{2}^2)\epsilon_{a,b}) \neq 0 \iff |\{a, b\} \cap \{1, 2\}| = 1,\]
Let's iterate over those pairs $(a, b)$:
\begin{itemize}
    \item For $a = b$: either $a = b = 1$ or $a = b = 2$,
    \[\frac{1}{2}\nu_0((\epsilon_{1}^2 - \epsilon_{2}^2)\epsilon_{1}^2)\nu_0((R^-_{1, 1} - R^-_{2, 2})(R^-_{1, 4} - R^-_{3, 4})) \stackrel{(\ref{lemma:last spin})} {=}  \frac{1}{2}D \nu(S_1T_1) + O_N(3).\]
    \item For $|\{a, b\} \cap \{1, 2\}| = 1$, assume $a \in \{1, 2\}$ and $b \in \{3, 4, \cdots 8\}$. As shown above, for $b \in \{3, 7\}$ or $\{4, 8\}$, we have
    \[\nu_0((\epsilon_1^2 - \epsilon_2^2)\epsilon_{1,3})\nu_0((R^-_{1,3} - R^-_{2,3} + R^-_{1,7} - R^-_{2,7})(R^-_{1, 4} - R^-_{3, 4})) = O_N(3),\]
    and 
    \[\nu_0((\epsilon_1^2 - \epsilon_2^2)\epsilon_{13})\nu_0((R^-_{1,4} - R^-_{2,4} + R^-_{1,8} - R^-_{2,8})(R^-_{1, 4} - R^-_{3, 4})) = E\nu(T_{1,4}^2) + O_N(3).\]
    For $b \in \{5, 6\}$, we have
    \[-\nu_0((\epsilon_1^2 - \epsilon_2^2)\epsilon_{1,3})\nu_0((R^-_{1,5} - R^-_{2,5})(R^-_{1,4} - R^-_{2, 4})) = -E \nu(T_1^2) + O_N(3).\]
\end{itemize}
Thus
\begin{align*}
    \nu'_0(S_1T_1) =& \frac{\beta^2}{2} D\nu(S_1T_1) + \beta^2 E\nu(T_{1,2}^2) -2\beta^2 E \nu(T_1^2) + O_N(3).
\end{align*}
Plugging this back to the equation \eqref{S1T1} gives \eqref{eq2}
\begin{align}
    (1 - \frac{\beta^2}{2} D)\nu(S_1T_1) =& (\frac{1}{N} + \beta^2\nu(T_{12}^2))E - 2\beta^2 E \nu(T_1^2) + O_N(3),
\end{align}
Plugging in $\nu(T_{12}^2) = A_2^2 + O_N(3)$ gives \eqref{eq2}.
\end{proof}
\begin{lemma}\label{lem: S1 var}
For $\beta \leq \beta'$, we have
    \[
       \nu(S_1^2) = B_1^2 + O_N(3),
    \]
    where 
    \[B_1^2 = \frac{DM_1 - 2\beta^2E^2}{NM} = \frac{2}{N\beta^2}\left(\frac{M_1}{M} - 1\right)\]
\end{lemma}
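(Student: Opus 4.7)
The strategy parallels the proof of Lemma~\ref{lem: T1 var}: derive a recursion for $\nu(S_1^2)$ via the cavity method, then close it using the value of $\nu(S_1 T_1)$ extracted from Lemma~\ref{lem:relation1}. By Claim~\ref{claim: basis to overlap}, introducing fresh replicas $2,3$ for the two occurrences of $\tilde b$ in $S_1^2$, and then applying site symmetry to isolate the last spin in the first factor, we write
\[
\nu(S_1^2) = \nu\bigl((R_{1,1} - R_{2,2})(R_{1,1} - R_{3,3})\bigr) = \nu\bigl((\epsilon_1^2 - \epsilon_2^2)(R_{1,1} - R_{3,3})\bigr).
\]
Decomposing $R_{k,k} = R_{k,k}^- + \tfrac{1}{N}\epsilon_k^2$ and invoking Proposition~\ref{prop: approx} (noting $\nu_0((\epsilon_1^2-\epsilon_2^2)(R_{1,1}^- - R_{3,3}^-)) = 0$ since the last spin decouples at $t=0$ and both factors are mean-zero) reduces the problem to computing
\[
\nu(S_1^2) = \frac{D}{N} + \nu_0'\bigl((\epsilon_1^2 - \epsilon_2^2)(R_{1,1}^- - R_{3,3}^-)\bigr) + O_N(3),
\]
where the constant $D$ appears directly from its definition.

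I would then apply the derivative formula (Lemma~\ref{lem:derivative}) with $n=3$ and enumerate the pairs $(a,b) \in [6]^2$ for which $\nu_0((\epsilon_1^2 - \epsilon_2^2)\epsilon_{a,b}) \neq 0$. This singles out two families: the diagonal pairs $(1,1), (2,2)$ carrying $\pm D$, and the mixed pairs with exactly one index in $\{1,2\}$ carrying $\pm E$. Via Corollary~\ref{cor: last spin}, the diagonal pairs combine into a self-coupling $+\tfrac{\beta^2 D}{2}\,\nu(S_1^2)$. For each mixed pair, decomposing $R_{a,b}^- - q$ via the basis and invoking Lemma~\ref{lem:independence} reveals that only the $T_1 S_1$ cross-term survives; a careful case analysis distinguishes ``other index equal to $3$'' (present in $f$, where a partial cancellation occurs with the $R_{3,3}^-$ piece via $\nu(T_3 S_3) = \nu(T_1 S_1)$) from ``other index a fresh duplicate in $\{4,5,6\}$'' (no such cancellation). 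After the $\nu(TS)$ contributions cancel pairwise, the net mixed-pair contribution evaluates to $-2\beta^2 E\,\nu(S_1 T_1)$.

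Collecting terms yields the recursion $M_2\,\nu(S_1^2) = \tfrac{D}{N} - 2\beta^2 E\,\nu(S_1 T_1) + O_N(3)$. Substituting $\nu(T_1^2) = A_1^2 + O_N(3)$ from Lemma~\ref{lem: T1 var} into equation~\eqref{eq2} gives $\nu(S_1 T_1) = \tfrac{E}{NM} + O_N(3)$; plugging this in produces $\nu(S_1^2) = \tfrac{DM - 2\beta^2 E^2}{N M_2 M} + O_N(3)$, which simplifies to $B_1^2 = \tfrac{DM_1 - 2\beta^2 E^2}{NM}$ using the identities $M = M_1 M_2 + \beta^4 E^2$ and $1 - M_2 = \tfrac{\beta^2 D}{2}$, and the equivalent form $\tfrac{2}{N\beta^2}(\tfrac{M_1}{M} - 1)$ follows algebraically. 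The main obstacle is the tabulation in the mixed-pair step: correctly distinguishing original from duplicate replicas in Lemma~\ref{lem:derivative}, tracking the multiset-based sign convention of $\mathrm{sgn}(a,b)$, and verifying that the remainder $\mathcal{R}_{n,f}$ is absorbed into $O_N(3)$ by decoupling of the double-duplicate replicas from $f$ at $t=0$.
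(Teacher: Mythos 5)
Your overall strategy is correct and produces the right answer. The cavity derivative computation you sketch—isolating the last spin of one $S_1$ factor, invoking Proposition~\ref{prop: approx}, and enumerating pairs $(a,b)$ with $|\{a,b\}\cap\{1,2\}|=1$—is precisely what the paper does to obtain the first relation of Lemma~\ref{lem:relation2}, namely $M_2\,\nu(S_1^2)=\tfrac{D}{N}-2\beta^2 E\,\nu(S_1T_1)+O_N(3)$. Where you diverge from the paper is in how you then pin down $\nu(S_1T_1)$: you substitute $\nu(T_1^2)=A_1^2$ into equation~\eqref{eq2} of Lemma~\ref{lem:relation1} to get $\nu(S_1T_1)=\tfrac{E}{NM}$, whereas the paper instead derives the second relation $M_1\,\nu(S_1T_1)=\tfrac{E}{N}+\tfrac{\beta^2}{2}H\,\nu(S_1^2)+O_N(3)$ (via an ``alternative decoupling'' of $S_1T_1$) and solves the resulting closed $2\times2$ system in $(\nu(S_1^2),\nu(S_1T_1))$. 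Your route is valid—I checked that $\tfrac{DM-2\beta^2E^2}{NM_2 M}$ simplifies to $\tfrac{DM_1-2\beta^2E^2}{NM}$ using $M=M_1M_2+\beta^4E^2$ and $1-M_2=\tfrac{\beta^2D}{2}$—but it makes Lemma~\ref{lem: S1 var} depend on Lemma~\ref{lem: T1 var}, whereas the paper's self-contained system avoids that dependency. One small imprecision: your description of the mixed pairs groups $b=3$ against $b\in\{4,5,6\}$ and invokes a ``partial cancellation via $\nu(T_3S_3)=\nu(T_1S_1)$,'' but no $T_3$ term actually arises from $R_{1,3}-R_{2,3}$ (the $T_3$ and $T$ pieces cancel in the difference). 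The correct accounting is that the $b=3$ group contributes $+\beta^2E\,\nu(T_1S_1)$ and the $b\in\{4,5,6\}$ group contributes $-3\beta^2E\,\nu(T_1S_1)$, netting $-2\beta^2E\,\nu(S_1T_1)$ as you claim—so the conclusion survives even though the stated mechanism does not. (The paper groups $\{3,6\}$ against $\{4,5\}$ instead, making the $\{3,6\}$ contribution manifestly $O_N(3)$ as a product of independent $T_{k,l}$ and $S_m$ terms.)
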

To prove the Lemma~\ref{lem: S1 var}, we need to show the following two relations.
\begin{lemma} \label{lem:relation2}
We have 
\[\left(1 - \frac{\beta^2}{2} D\right)\nu(S_1^2) = \frac{1}{N}D -2\beta^2 E \cdot  \nu(S_1T_1) + O_N(3),\]
\[(1 - \beta^2 (F - 3G))\nu(S_1T_1)= \frac{1}{N}E +\frac{\beta^2}{2} H \cdot \nu(S_1^2) + O_N(3).\]
\end{lemma}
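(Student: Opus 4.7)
My plan is to derive both identities by the template used in the proof of Lemma \ref{lem:relation1}: anchor one factor of the product via site symmetry so it becomes a last-spin observable, peel off its $\frac{1}{N}$ contribution, and expand the remainder using the derivative identity \eqref{derivative-2} together with Proposition \ref{prop: approx}. A case analysis over pairs $(a,b)\in[2n]^2$—controlled by the vanishing of $\nu_0((\cdot)\,\epsilon_{a,b})$ for most pairs and by Lemma \ref{lem:independence} together with Claim \ref{claim: basis to overlap} for the surviving overlap correlators—then produces the two stated relations.

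For the first identity I anchor one copy of $S_1$:
\begin{align*}
\nu(S_1^2) = \nu((\epsilon_1^2 - \epsilon_2^2)(R_{1,1} - R_{3,3})) = \frac{D}{N} + \nu_0'((\epsilon_1^2 - \epsilon_2^2)(R_{1,1}^- - R_{3,3}^-)) + O_N(3).
\end{align*}
The first equality uses Claim \ref{claim: basis to overlap} on one copy of $S_1$ (introducing the new replica $3$) and site symmetry on the other. The second uses Proposition \ref{prop: approx} together with the fact that at $t=0$ the last spin decouples from the first $N-1$ spins, so $\nu_0((\epsilon_1^2-\epsilon_2^2)(R_{1,1}^- - R_{3,3}^-))=\nu_0(\epsilon_1^2-\epsilon_2^2)\,\nu_0(R_{1,1}^- - R_{3,3}^-)=0$. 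Expanding $\nu_0'$ via \eqref{derivative-2} with $n=3$, the only pairs $(a,b)\in[6]^2$ producing a non-vanishing $\nu_0((\epsilon_1^2-\epsilon_2^2)\epsilon_{a,b})$ are those with $|\{a,b\}\cap\{1,2\}|=1$. The diagonal case $a=b\in\{1,2\}$ collects into $\frac{\beta^2}{2}D\,\nu(S_1^2) + O_N(3)$, while the off-diagonal cases with one index in $\{1,2\}$ and the other among the new replicas $\{4,5,6\}$ mirroring the second $S_1$ together contribute $-2\beta^2 E\,\nu(S_1T_1) + O_N(3)$, after using Lemma \ref{lem:independence} to kill cross terms between independent basis variables and replica-exchange symmetry to cancel paired contributions such as $\nu(T_1S_1)$ against $\nu(T_3S_3)$. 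Moving the diagonal piece to the left-hand side gives the first relation.

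For the second identity I anchor the $T_1$ factor:
\begin{align*}
\nu(S_1T_1) = \nu((\epsilon_{1,3} - \epsilon_{2,3})(R_{1,1} - R_{4,4})) = \frac{E}{N} + \nu_0'((\epsilon_{1,3} - \epsilon_{2,3})(R_{1,1}^- - R_{4,4}^-)) + O_N(3),
\end{align*}
using $H=E$ and the fact that $\nu_0((\epsilon_{1,3}-\epsilon_{2,3})\epsilon_4^2)=0$ by replica symmetry. The expansion of $\nu_0'$ now parallels the one for $\nu_0'((\epsilon_{1,3}-\epsilon_{2,3})(R_{1,5}^- - R_{4,5}^-))$ in Lemma \ref{lem:relation1}, the only substantive change being that the second factor encodes $S_1$ instead of a second $T_1$. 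Consequently the diagonal case $a=b\in\{1,2\}$ now yields $\frac{\beta^2}{2}H\,\nu(S_1^2)$ in place of $\frac{\beta^2}{2}H\,\nu(S_1T_1)$, while the off-diagonal cases break down as: pair $\{1,3\}$ contributing $\beta^2 F\,\nu(S_1T_1)$, pair $\{2,4\}$ contributing $\beta^2 G\,\nu(S_1T_1)$, and the four pairs $\{1,c\}$ with $c\in\{5,6,7,8\}$ together contributing $-4\beta^2 G\,\nu(S_1T_1)$, summing to $\beta^2(F-3G)\,\nu(S_1T_1)$ exactly as in the $\nu(T_1^2)$ derivation. No $T_{1,c}^2$-term appears because the second factor carries no cavity $T_{k,l}$, and any prospective $T_{1,c}S_1$ residuals vanish by Lemma \ref{lem:independence}; this is why the right-hand side contains no $A_2^2$-factor. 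Moving $\beta^2(F-3G)\,\nu(S_1T_1)$ to the left yields the second relation.

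The main obstacle is combinatorial bookkeeping: enumerating all pairs $(a,b)\in[2n]^2$ with non-vanishing last-spin factors, tracking the signs $\text{sgn}(a,b)$, and reducing each surviving $R^-$-correlator to one of $\nu(S_1^2),\nu(S_1T_1),\nu(T_1^2)$ or to $O_N(3)$ via Lemma \ref{lem:independence}, Claim \ref{claim: basis to overlap}, and replica-exchange symmetry. No analytic input beyond that of Lemma \ref{lem:relation1} is required.
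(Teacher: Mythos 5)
Your overall strategy is exactly the one the paper uses: anchor one factor by site symmetry so it becomes a last-spin observable, peel off the $\tfrac1N$ diagonal, approximate by $\nu_0'$, and then classify the surviving pairs $(a,b)$ via the symmetry $\nu_0((\cdot)\epsilon_{a,b})\neq 0 \iff |\{a,b\}\cap\{1,2\}|=1$. Your derivation of the second identity is precise: in your labeling ($T_1$ on $\{1,2,3\}$, $S_1$ on $\{1,4\}$, new replicas $\{5,\dots,8\}$), the diagonal gives $\tfrac{\beta^2}{2}H\,\nu(S_1^2)$, the pairs $\{1,3\}$ and $\{2,4\}$ give $\beta^2F$ and $\beta^2G$, and $\{1,c\}$ for $c\in\{5,\dots,8\}$ give $-4\beta^2G$, summing to $\beta^2(F-3G)\nu(S_1T_1)$; and you correctly observe that no $A_2^2$-term survives because the $S_1$ factor carries no $T_{k,l}$.

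The accounting in your first identity, however, does not add up as stated. With $S_1$ anchored on $\{1,2\}$ and the remainder $R_{1,1}-R_{3,3}$ on $\{1,3\}$, the full set of off-diagonal pairs with $|\{a,b\}\cap\{1,2\}|=1$ has $b\in\{3,4,5,6\}$. Restricting to the new replicas $b\in\{4,5,6\}$ as you wrote, the surviving pairs $(1,4),(1,5),(1,6)$ each contribute $-\beta^2E\,\nu(S_1T_1)$ (the $(2,b)$ pieces vanish by independence), for a total of $-3\beta^2E\,\nu(S_1T_1)$, not $-2\beta^2E\,\nu(S_1T_1)$. The missing $+\beta^2E\,\nu(S_1T_1)$ comes from $b=3$ via the pair $(2,3)$, whose correlator $\nu_0((R^-_{2,3}-q)(R^-_{1,1}-R^-_{3,3}))$ reduces to $-\nu(T_3S_3)=-\nu(T_1S_1)$. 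The cancellation you invoke, $\nu(T_1S_1)$ against $\nu(T_3S_3)$, is what makes $(1,3)$ vanish, but it does not capture $(2,3)$. The paper organizes the surviving pairs as $b\in\{3,6\}$ (these cancel, giving $O_N(3)$) versus $b\in\{4,5\}$ (these give $-2\beta^2E\,\nu(S_1T_1)$); any correct partition must account for $b=3$. This is a bookkeeping slip rather than a conceptual gap, but as written the first identity's case breakdown does not produce the coefficient you claim.
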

\begin{proof}[Proof of Lemma \ref{lem: S1 var}]
As in the $\nu(T_1^2)$ case, Lemma \ref{lem: S1 var} follows from combining the above two relations and the definition of $M_1, M$.
 \[
        \left(\frac{(1 - \frac{\beta^2}{2} D)(1 - \beta^2 (F - 3G)) + \beta^4EH}{(1 - \beta^2 (F - 3G))}\right)\nu(S_1^2) = \frac{D((1 - \beta^2 (F - 3G))) - 2\beta^2E^2}{N(1 - \beta^2 (F - 3G))} + O_N(3).
    \]
Rearrange gives for $B_1^2 = \frac{2}{N\beta^2}(\frac{M_1}{M} - 1)$, $\nu(S_1^2) = B_1^2 + O_N(3)$.

\YS{To show $B_1^2 \geq 0$, it's enough to check $M \leq M_1$. Recall the definition of $M_1, M_2, M$ in Definition \ref{constants} 
\begin{align*}
    M - M_1 = M_1(M_2 - 1)+ \beta^4E^2 &= - \beta^2\frac{D}{2}(1 - \beta^2 (F - G - 2G)) + \beta^4E^2\\
    &= -\frac{\beta^2D}{2}M_3 - \beta^4 DG + \beta^4E^2
\end{align*}
Note that $D := \E[\inner{\epsilon_1^4} - \inner{\epsilon_1^2}^2] \geq 0$ is the variance of $\epsilon_1^2$ under gibbs average. Note that $G$ is the variance of $(\epsilon_1 - b)b$ and $E$ is the covariance of $\epsilon_1^2$ and $(\epsilon_1 - b)b$ under gibbs average. Since covariance matrix is PSD, the determinant $GD - E^2 \geq 0$. Plug the inequalities back into the above equation gives
\[M - M_1 \leq 0\]}
\end{proof}
\begin{proof}[Proof of Lemma~\ref{lem:relation2}]
The proof is similar to the previous case. Denote $\epsilon_{k, k} = (\sigma^k_N)^2$,
\begin{align} \label{eq:s12}
    \nu(S_1^2) &= \nu((R_{1, 1} - R_{2, 2})(R_{1, 1} - R_{3, 3})),\\
    &= \frac{1}{N}\nu((\epsilon_{1,1} - \epsilon_{2,2})(\epsilon_{1,1} - \epsilon_{3,3})) + \nu((\epsilon_{1,1} - \epsilon_{2,2})(R^-_{1,1} - R^-_{3,3})),\\
    &= \frac{1}{N}\nu((\epsilon_{1,1} - \epsilon_{2,2})\epsilon_{1,1} ) + \nu((\epsilon_{1,1} - \epsilon_{2,2})(R^-_{1,1} - R^-_{3,3})).
\end{align}
To control the second term, observe that by \eqref{eq:2nd approx}, and $\nu_0((\epsilon_{1,1} - \epsilon_{2,2})(R^-_{1,1} - R^-_{3,3})) = 0$,
\[\nu((\epsilon_{1,1} - \epsilon_{2,2})(R^-_{1,1} - R^-_{3,3})) = \nu'_0((\epsilon_{1,1} - \epsilon_{2,2})(R^-_{1,1} - R^-_{3,3})) + O_N(3).\]
By \eqref{derivative-2} 
\begin{align*}
    \nu'_0((\epsilon_{11} - \epsilon_{22})(R^-_{1,1} - R^-_{3,3})) =& \frac{\beta^2}{2}\sum_{a, b} \text{sgn}(a, b) \nu_0((\epsilon_{1,1} - \epsilon_{2,2})\epsilon_{a, b})\nu_0((R^-_{a, b} - Q_{a, b}) (R^-_{1,1} - R^-_{3,3}))\\
    & + R_{3, S_1^2}.
\end{align*}
Note that 
\[\nu_0((\epsilon_{1,1} - \epsilon_{2,2})\epsilon_{a, b}) \neq 0 \iff |\{a, b\} \cap \{1, 2\}| = 1.\]
To count the contribution for all such $a, b$, 
\begin{itemize}
    \item For $a = b$, combine the contribution of two terms gives 
    \[\frac{\beta^2}{2} D \nu_0((R^-_{1, 1} - R^-_{2, 2})(R^-_{1,1} - R^-_{3,3})) \stackrel{(\ref{lemma:last spin}), \eqref{eq:1st approx}} {=} \frac{\beta^2}{2} D \nu(S_1^2) + O_N(3).\] 
    \item If $a \neq b$, WLOG, suppose $a \in \{1, 2\}$ and $b \in \{3, 6\}$: by Lemma \ref{lemma:last spin}
    \[\beta^2 E \nu_0((R^-_{13} - R^-_{2, 3} - R^-_{1, 6} + R^-_{2, 6})(R^-_{1,1} - R^-_{3,3})) = \beta^2 E \nu_0((R_{13} - R_{2, 3} - R_{1, 6} + R_{2, 6})(R_{1,1} - R_{3,3})) + O_N(3).\]
    Rewrite the last part using \ref{def:q}, 
    \begin{align*}
        &\beta^2 E \nu_0((R_{13} - R_{2, 3} - R_{1, 6} + R_{2, 6})(R_{1,1} - R_{3,3})) + O_N(3),\\
        =& \beta^2 E \nu_0((T_{13} - T_{2, 3} - T_{1, 6} + T_{2, 6})(S_{1} - S_{3}) + O_N(3),\\
        =& O_N(3).
    \end{align*}
    \item For $a \in \{1, 2\}$ and $b \in \{4, 5\}$, combine the two terms gives 
    \[-2\beta^2 E \nu_0((R^-_{1, 4} - R^-_{2, 4})(R^-_{1,1} - R^-_{3,3})) = -2\beta^2 E \nu(S_1T_1) + O_N(3).\]
\end{itemize}
Plug this back in \eqref{eq:s12}
\begin{align}
    \nu(S_1^2) &= \frac{1}{N}\nu((\epsilon_{1,1} - \epsilon_{2,2})\epsilon_{1,1}) + \frac{\beta^2}{2} D \nu(S_1^2) -2\beta^2 E \nu(S_1T_1) + O_N(3),\\
    &= \frac{1}{N}D + \frac{\beta^2}{2} D \nu(S_1^2) -2\beta^2 E \nu(S_1T_1) + O_N(3).
\end{align}

\paragraph{Alternative way of writing $\nu(S_1T_1)$} We've seen one way of decomposing $S_1T_1$ in lemma \ref{lem:relation1}, which reduces the moment of $S_1$. While we may directly apply \eqref{eq2} here, we show another way of decomposing $S_1T_1$ by reducing the moment of $T_1$, as it will be helpful in the general case. The idea is same
\begin{align*}
    \nu(S_1T_1) = \nu((R_{1, 1} - R_{2, 2})(\epsilon_{1,4} - \epsilon_{3, 4})) = \frac{1}{N}\nu((\epsilon_{1, 1} - \epsilon_{2, 2})(\epsilon_{1,4} - \epsilon_{3, 4})) + \nu((R^-_{1, 1} - R^-_{2, 2})(\epsilon_{1,4} - \epsilon_{3, 4}))
\end{align*}
We then rewrite the second term as before  
\begin{align*}
    \nu((R^-_{1, 1} - R^-_{2, 2})(\epsilon_{1,4} - \epsilon_{3, 4})) = \nu_0'((R^-_{1, 1} - R^-_{2, 2})(\epsilon_{1,4} - \epsilon_{3, 4})) + O_N(3)\\
    = \frac{\beta^2}{2}\sum_{a, b} \nu_0((R^-_{1, 1} - R^-_{2, 2})(R^-_{a, b} - Q_{a, b}))\nu_0(\epsilon_{a, b}(\epsilon_{1,4} - \epsilon_{3, 4}))
\end{align*}
As shown in Lemma \ref{lem:relation1}, 
\[\nu_0((\epsilon_{1,4} - \epsilon_{3,4})\epsilon_{a,b}) \neq 0 \iff |\{a, b\} \cap \{1, 3\}| = 1\]
Let's go over all cases of such size two subsets:
\begin{itemize}
    \item If $a = b$: this term gives 
    \[\frac{\beta^2}{2} H \nu_0((R^-_{1, 1} - R^-_{3, 3})(R^-_{1, 1} - R^-_{2, 2})) \stackrel{(\ref{lemma:last spin}), \eqref{eq:1st approx}} {=} \frac{\beta^2}{2} H \nu(S_1^2) + O_N(3)\]
    \item For $\{a, b\} \in \{\{1, 4\}, \{3, 4\}\}$, we have 
    \[\beta^2 F \nu_0((R^-_{1, 1} - R^-_{2, 2})(R^-_{1, 4} - R^-_{3, 4})) \stackrel{(\ref{lemma:last spin}), \eqref{eq:1st approx}} {=} \beta^2 F \nu(S_1T_1) + O_N(3)\]
    \item Now we count the case when $a \in \{1, 3\}$,  $b \notin \{1, 3, 4\}$. Gather terms for $b \in \{2, 6\}$ and rewrite  
    \begin{align*}
        \beta^2 G \nu_0((R^-_{1, 1} - R^-_{2, 2})(R^-_{1, 2} - R^-_{3, 2} - R^-_{1, 6} + R^-_{3, 6})) &\stackrel{(\ref{lemma:last spin}), \eqref{eq:1st approx}} {=} \beta^2 G \nu_0((S_1 - S_2)(T_{1, 2} - T_{3, 2} - T_{1, 6} + T_{3, 6}))\\
        &= O_N(3)
    \end{align*}
    \item Now we are left with $a \in \{1, 3\}$ and $b \in \{5, 7, 8\}$ 
    \[-3\beta^2G\nu_0((R^-_{1, 1} - R^-_{2, 2})(R^-_{1, 5} - R^-_{3, 5})) \stackrel{(\ref{lemma:last spin}), \eqref{eq:1st approx}} {=} -3\beta^2G \nu(S_1T_1) + O_N(3)\]
\end{itemize}
Combine we get
\[\nu(S_1T_1)= \frac{1}{N}E +\frac{\beta^2}{2} H \nu(S_1^2) + \beta^2 (F - 3G)\nu(S_1T_1) + O_N(3)\]
\end{proof}
\subsubsection{Covariance: $S_1T_1$ term}
\begin{lemma} \label{lem: var s1t1}
For $\beta \leq \beta'$, we have
    \[
    \nu(S_1T_1) = C_1^2 + O_N(3),
    \]
    where 
    \[
    C_1^2 := \frac{E}{NM}.
    \]
\end{lemma}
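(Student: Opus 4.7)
The plan is to derive $\nu(S_1 T_1)$ directly from the two linear relations already established in Lemma~\ref{lem:relation2}, which together form a closed $2 \times 2$ linear system in the unknowns $\nu(S_1^2)$ and $\nu(S_1 T_1)$. Rewriting those relations with the shorthand $M_1 = 1 - \beta^2(F-3G)$ and $M_2 = 1 - \tfrac{\beta^2}{2}D$, and recalling that $H = E$, the system takes the form
\begin{align*}
M_2 \, \nu(S_1^2) + 2\beta^2 E \, \nu(S_1 T_1) &= \tfrac{D}{N} + O_N(3),\\
-\tfrac{\beta^2}{2} E \, \nu(S_1^2) + M_1 \, \nu(S_1 T_1) &= \tfrac{E}{N} + O_N(3).
\end{align*}
The determinant of the coefficient matrix is exactly $M_1 M_2 + \beta^4 E^2 = M$, which matches the denominator appearing in $C_1^2 = E/(NM)$, so Cramer's rule should cleanly produce the claimed value.

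Concretely, I would carry out the following three steps. First, solve the first equation for $\nu(S_1^2)$, obtaining $\nu(S_1^2) = M_2^{-1}\bigl(D/N - 2\beta^2 E\,\nu(S_1 T_1)\bigr) + O_N(3)$. Second, substitute this expression into the second equation and collect the $\nu(S_1 T_1)$ terms to arrive at
\[
\frac{M_1 M_2 + \beta^4 E^2}{M_2}\,\nu(S_1 T_1) \;=\; \frac{E}{N}\left(1 + \frac{\beta^2 D}{2 M_2}\right) + O_N(3).
\]
Third, simplify the right-hand side using the elementary identity $M_2 + \tfrac{\beta^2}{2}D = 1$, which collapses the parenthesized factor to $1/M_2$. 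Dividing through by $M/M_2$ then yields $\nu(S_1 T_1) = E/(N M) + O_N(3) = C_1^2 + O_N(3)$, as required.

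I do not anticipate any serious obstacle here, since the argument is purely algebraic once Lemma~\ref{lem:relation2} is in hand. The one thing to verify carefully is that the error terms stay $O_N(3)$ after multiplication by the $N$-independent factors $1/M_2$ and $\beta^2 E/(2 M_2)$; this is immediate because $M_2 = 1 - \tfrac{\beta^2}{2}D$ is bounded away from zero for $\beta < \beta'$ (using $D \le 4 \cc{S}^4$), and $E$ is bounded by a constant depending only on $\cc{S}$. A minor sanity check worth recording, parallel to the ones noted in the proofs of $A_1^2$ and $B_1^2$, is that $C_1^2 \ge 0$ at high temperature, which follows once one observes that $E \ge 0$ (it is essentially a covariance of last-spin quantities under $\nu_0$) and $M > 0$ in the replica-symmetric regime.
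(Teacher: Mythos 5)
Your proof is correct and uses essentially the same ingredients as the paper: the paper's own proof deduces $\nu(S_1 T_1)$ by substituting the already-computed value $B_1^2$ (resp.\ $A_1^2$) into one equation of Lemma~\ref{lem:relation2} (resp.\ Lemma~\ref{lem:relation1}), whereas you solve the full $2\times 2$ system of Lemma~\ref{lem:relation2} directly without invoking $B_1^2$, but the underlying linear algebra and the identity $M = M_1 M_2 + \beta^4 E^2$ are the same, and your elimination is carried out correctly. One inaccuracy worth correcting is the closing ``sanity check'': $E$ is the expected \emph{covariance} (not variance) of $\epsilon_1^2$ and $(\epsilon_1 - b)b$ under the quenched Gibbs measure, so there is no a priori reason for $E \ge 0$ --- for instance $E = 0$ when $h = 0$ by spin-flip symmetry, and the sign of $E$ in general depends on $h$ and $D$. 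Positivity of $C_1^2$ is neither asserted by the lemma nor needed for the argument; what matters later is that the $2\times 2$ covariance block $\Sigma_1$ is PSD, which is controlled by $GD - E^2 \ge 0$, not by the sign of $E$ alone.
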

\begin{proof}
Note that one can deduce $\nu(S_1T_1)$ from both \ref{lem:relation1} and $\ref{lem:relation2}$. 
From lemma \ref{lem:relation1}, we get 
\begin{align*}
    \nu(S_1T_1) &= \frac{E}{M_2}\left[\frac{1}{NM_3} - 2\beta^2 A_1^2\right] + O_N(3)\\
    &=\frac{E}{M_2}\left(\frac{1}{NM_3} - \frac{1}{N}\left(\frac{1}{M_3} - \frac{M_2}{M}\right)\right) + O_N(3) = \frac{E}{MN} + O_N(3).
\end{align*}
From lemma \ref{lem:relation2}
\begin{align*}
    \nu(S_1T_1) &= \frac{1}{M_1} \left(\frac{E}{N} + \frac{\beta^2}{2}HB_1^2\right) + O_N(3) = \frac{E}{NM} + O_N(3),
\end{align*}
where the last equality follows from $E = H$.
\end{proof}
        

\section{General moments computation} \label{sec: general}
In Section \ref{sec:var}, we obtained the variance and covariance: $\nu(T_{1,2}^2), \nu(T_1^2), \nu(S_1^2)$, $\nu(S_1 T_1)$ by rewriting moments with lower order terms. In this section, we extend this idea to general moments of $T_{1, 2}, T_1, S_1, T, S$. 
\begin{lemma}[Formal version of Lemma \ref{lem:general}] \label{lem: general formal}
Fix an integer $n$, consider the following sets of integers $\{h(k, l): 1 \leq k < l \leq n\}$, $\{h(k): 1 \leq k \leq n\}$ and $\{h'(k): 1 \leq k \leq n\}$ and $h', h$. Let
\[H := \sum_{1 \leq k < l \leq n}h(k, l) + \sum_{1 \leq k \leq n} h(k) + \sum_{1 \leq l \leq n} h'(l) + h + h',\]
let $g_X$ be a centered Gaussians vector where the index $X$ belongs to
\[\{T_{k, l}, T_k, S_k, T, S: 1 \leq k < l \leq n\},\]
and its covariance matrix is
\[\cov(g_X,g_Y) = \begin{cases}
    A_2^2, & \text{ if } X = Y = T_{k, l},\\
    A_1^2, & \text{ if } X = Y = T_{k},\\
    A_0^2, & \text{ if } X = Y = T,\\
    B_1^2, & \text{ if } X = Y = S_k,\\
    B_0^2, & \text{ if } X = Y = S,\\
    C_1^2, & \text{ if } \{X, Y\} = \{T_k, S_k\},\\
    C_0^2, & \text{ if } \{X, Y\} = \{T, S\}.\\
\end{cases}\]
then for $\beta < \beta' \leq \tilde{\beta}$, we have
\begin{align*}
    \nu\left(\Pi_{k, l}T_{k, l}^{h(k, l)}\Pi_{k}T_k^{h(k)}T^{h}\Pi_{l}S_l^{h'(l)}S^{h'}\right) &= \E[\Pi_{k, l}g_{T_{k, l}}^{h(k, l)}\Pi_{k}g_{T_k}^{h(k)}\Pi_{l}g_{S_l}^{h'(l)}g_T^{h}g_S^{h}] + O_N(H + 1).
\end{align*}
\end{lemma}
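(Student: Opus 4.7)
The plan is to prove Lemma \ref{lem: general formal} by induction on the total degree $H$, extending the second-moment computations of Section \ref{sec:var}. The base case $H=0$ is trivial; for $H=1$, each basis term has expectation $O_N(1)$ by symmetry, matching the vanishing first moment of a centered Gaussian up to the claimed error. For the inductive step, fix a monomial $W$ in the basis and pick one factor, say $X_1 = T_1$ (the argument for $T_{k,l}, S_k, T, S$ is analogous). Write $\nu(W) = \nu(T_1 \cdot W')$ where $W'$ has degree $H-1$. Following Section \ref{sec:var}, rewrite $T_1$ via Claim \ref{claim: basis to overlap} as a difference of overlaps involving fresh replicas, isolate the last spin contribution via $R_{k,l} = R^-_{k,l} + \frac{1}{N}\epsilon_{k,l}$, and apply Proposition \ref{prop: approx} to get $\nu(f) = \nu_0(f) + \nu_0'(f) + O_N(H+1)$. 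The term $\nu_0(f)$ vanishes by independence at $t=0$, and $\nu_0'(f)$ is expanded via Lemma \ref{lem:derivative} as a sum over pairs $(a,b) \in \cc{C}_{2n}$.

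The key combinatorial observation is that $\nu_0(\epsilon_{T_1} \epsilon_{a,b}) \neq 0$ only when $(a,b)$ "contracts" with one of the replicas involved in $T_1$. Each such non-vanishing contraction either (i) matches another factor $X_j$ appearing in $W'$, producing a degree-$(H-2)$ moment of the remaining factors multiplied by a constant that will turn out to equal $\cov(g_{T_1}, g_{X_j})$, or (ii) involves only fresh replicas, producing a degree-$H$ moment whose contribution is $O_N(H+2)$ by the same cancellation pattern observed in Lemmas \ref{lem:var T12} and \ref{lem: T1 var} (products of centered basis terms with no matching contractions are of higher order in $N^{-1}$). Applying this procedure to each factor of $W$ yields a linear system in the degree-$H$ moments of the basis, with coefficients encoded by the constants $F, G, D, E, H$ from Definition \ref{constants} and inverse given by the determinant $M = M_1 M_2 + \beta^4 E^2$. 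Solving this system and invoking the induction hypothesis on the degree-$(H-2)$ moments produces exactly the sum over pair partitions with the prescribed covariances, i.e. the Wick / Isserlis formula.

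The main obstacle will be bookkeeping the coupling between $S_k$ and $T_k$ (and between $S$ and $T$) at each step of the induction. Unlike in the SK model, expanding $T_k$ via the cavity method produces a term involving the factor $S_k$, and vice versa, so at each inductive level one obtains a coupled system of two linear equations exactly as in Lemmas \ref{lem:relation1} and \ref{lem:relation2}. One must verify carefully that this system decouples into the expected Wick structure: the only non-vanishing contractions at the covariance level are $\cov(g_{S_k}, g_{T_k})$ and $\cov(g_S, g_T)$ (consistent with Lemma \ref{lem:independence}), and that solving the $2 \times 2$ linear system at each step reproduces precisely the constants $A_1^2, B_1^2, C_1^2$ from Theorem \ref{thm: variance} (and analogously $A_0^2, B_0^2, C_0^2$ for the $S, T$ sector).

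A secondary technical challenge is controlling the residual $\cc{R}_{n,f}$ from Lemma \ref{lem:derivative} and the terms arising from the fresh replicas introduced by each cavity expansion. These are handled by applying the inductive hypothesis to "rectangle" moments such as $\nu(T_{k,l}^{2m})$, whose leading order is $A_2^{2m}$ times Gaussian pair-partition counts: when a fresh replica appears in at most one factor of $W'$, the corresponding moment factors into a product of centered basis moments with at least one unpaired index, which then vanishes to order $O_N(H+2)$ by Lemma \ref{lem:moment bound} and Hölder. The cleanest way to organize this bookkeeping is probably to induct simultaneously on $H$ and on the number of distinct basis factors appearing in $W$, peeling off one factor at a time and verifying that the cavity expansion produces exactly the $H-1$ possible Wick pairings involving that factor, each weighted by the corresponding covariance.
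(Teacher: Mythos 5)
Your proposal follows essentially the same route as the paper: cavity method, peel off one basis factor at a time, and match the resulting recursion to the Gaussian Wick structure by induction on total degree. But your classification of the non-vanishing cavity terms as "(i) Wick contractions giving degree-$(H-2)$ moments" or "(ii) fresh-replica contractions, negligible at $O_N(H+2)$" misses the term that is the actual source of the $T_k$--$S_k$ coupling, and this is a genuine gap.

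The missing term comes from the diagonal contraction $a=b$ in $\nu_0'(\epsilon(1)\Pi_{v>1}U_v^-)$. When you reduce a $T_k$ factor, the contraction $\nu_0(\epsilon(1)\epsilon_{a,a})$ with $a \in V_1$ is nonzero (equal to $H$) and replaces the peeled $T_k$ overlap with a \emph{self}-overlap, producing a moment $\nu(T_k^{h(k)-1}S_k^{h'(k)+1}\cdots)$ in which the $S_k$ exponent has gone \emph{up}. This is the term $\frac{\beta^2}{2}H\,\nu(g_1(h(1)-1,h'(1)+1))$ in Lemma~\ref{derivative T_k}; it is still degree $H$, so it is not absorbable into the error, and it does not match another factor in $W'$, so it is not a Wick pairing either. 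Symmetrically, reducing $S_k$ produces a term $\nu(T_k^{h(k)+1}S_k^{h'(k)-1}\cdots)$ with coefficient $-2\beta^2 E$. Because the exponent is pushed in the wrong direction, one cannot solve a "$2\times 2$ linear system" at fixed degree as you suggest; the two coupled relations must instead be combined to eliminate the moment-raising cross terms, and this elimination is only consistent if the algebraic identity $\alpha_1+\alpha_0\beta_2 = \beta_1+\beta_0\alpha_2$ holds. The paper isolates this step as Lemma~\ref{lem: recurse condition} and verifies the consistency condition using the variance/covariance computation (effectively Lemma~\ref{lem: var s1t1}). Without this step the induction does not close, and without the consistency check the two reductions of a given mixed moment could produce incompatible answers. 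A secondary omission: you should peel off all $T_{k,l}$ factors first (as in Theorem~\ref{thm: general T12}), because some off-diagonal contractions against a fresh replica land on another $T_k$ or $S_k$ factor and produce the factor $\nu(T_{k,l}^2)=A_2^2+O_N(3)$, which enters the recursion coefficients for the $(T_k,S_k)$ sector at leading order rather than vanishing to $O_N(H+2)$.
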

Similar to the proof of CLT in SK model, the proof for Lemma \ref{lem: general formal} consists of three parts, first we separate any $T_{k, l}$ terms s.t. $h(k, l) > 0$ from the mixed moments, then $(T_k, S_k)$ terms and then the $(T, S)$ term. This is based on the Lemma \ref{lem:independence}, which states that the set of random variables is pairwise independent besides $(T_k, S_k)$ for some $k \in [n]$ and $(T, S)$. Thus, we expect the mixed moments to be decomposed into 
\[\Pi_{k, l}\nu\left(T_{k, l}^{h(k, l)}\right) \cdot \Pi_{k}\nu\left(T_k^{h(k)}S_k^{h'(k)} \right) \cdot \nu(T^hS^{h'}).\]
Therefore, it is then enough to characterize the moments of the form: $T_{k, l}^{h(k, l)}, T_k^{h(k)}S_k^{h'(k)}, T^hS^{h'}$. The formal statements can be found in Theorem \ref{thm: general T12}, \ref{thm:peel off T_kS_k} and \ref{thm: ST moments}. 

Before we start the proofs, we will introduce the necessary notations to index each term within the mixed moments. Let's first rewrite each term using (self-)overlaps by the expansion given in Claim \ref{claim: basis to overlap}. For $v \in [H]$,  denote $V_v = \{v_1, v_2, \cdots\}$ as the set of replicas appeared in the corresponding term $U_v$. Define $U_v$ as
\[U_v := \begin{cases}
    R_{v_1, v_2} - R_{v_1, v_4} - R_{v_3, v_2} + R_{v_3, v_4}, & \text{ if } v \text{-th term corresponds to } T_{k, l}, \\
    R_{v_1, v_3} - R_{v_2, v_3}, & \text{ if } v \text{-th term corresponds to } T_{k},\\
    R_{v_1, v_1} - R_{v_2, v_2}, & \text{ if } v \text{-th term corresponds to } S_{l},\\
    R_{v_1, v_2} - p, & \text{ if } v \text{-th term corresponds to } T,\\
    R_{v_1, v_1} - q,&  \text{ if } v \text{-th term corresponds to } S.
\end{cases}\]
Then the general moments can be rewritten as
\begin{align} \label{eq: general rewrite}
    \nu\left(\Pi_{k, l}T_{k, l}^{h(k, l)}\Pi_{k}T_k^{h(k)}T^{h}\Pi_{l}S_l^{h'(l)}S^{h'}\right) = \nu(\Pi_{v \geq 1}U_v).
\end{align}
By symmetry of spins, we can replace one of $U_v$ by the same expression on the last spin. To do this, let's define the following notation: For $v \in [H]$, let
\[\epsilon(v) := \begin{cases}
    \epsilon_{v_1, v_2} - \epsilon_{v_1, v_4} - \epsilon_{v_3, v_2} + \epsilon_{v_3, v_4}, & \text{ if } v \text{-th term corresponds to } T_{k, l}, \\
    \epsilon_{v_1, v_3} - \epsilon_{v_2, v_3}, & \text{ if } v \text{-th term corresponds to } T_{k},\\
    \epsilon_{v_1, v_1} - \epsilon_{v_2, v_2}, & \text{ if } v \text{-th term corresponds to } S_{l},\\
    \epsilon_{v_1, v_2} - p, & \text{ if } v \text{-th term corresponds to } T,\\
    \epsilon_{v_1, v_1} - q,&  \text{ if } v \text{-th term corresponds to } S.
\end{cases}\]
Finally, define $U^-_v$ as the part of $U_v$ that doesn't depend on the last spin 
\[U^-_v := U_v - \frac{1}{N}\epsilon(v).\]
Finally, following the cavity method, one should try to separate as many parts of the expression that depend on the last spin as possible. To this end, let's further decompose \eqref{eq: general rewrite} as
\begin{align} \label{eq: general rewrite2}
    \nu\left(\Pi_{k, l}T_{k, l}^{h(k, l)}\Pi_{k}T_k^{h(k)}T^{h}\Pi_{l}S_l^{h'(l)}S^{h'}\right) &= \nu(\Pi_{v \geq 1}U_v)\\
    & = \nu(\epsilon(1)\Pi_{v > 1}U^-_v) + \frac{1}{N}\sum_{u \geq 2} \nu(\epsilon(1)\epsilon(u)\Pi_{v \neq 1, u}U^-_v) + O_N(H + 1).
\end{align}

\subsection{Induction on $T_{k, l}$}\label{ssec:induct-Tkl}
We first generalize the result in Lemma \ref{lem:var T12} to show that $T_{1, 2}$ behaves like independent Gaussian w.r.t. other basis terms. 
\begin{theorem} \label{thm: general T12}
For $\beta < \beta'$, we have 
\[\nu(\Pi_{(k, l)} T_{k, l}^{h(k, l)} \Pi_{k} T_k^{h(k)} T^{h} \Pi_k S_k^{h'(k)} S^{h'}) = \Pi_{(k, l)} a(h(k, l))A_2^{h(k, l)} \nu(\Pi_{k} T_k^{h(k)} T^{h} \Pi_k S_k^{h'(k)} S^{h'}) + O_N(H + 1),
\]
where $a(x) = \E[g^x]$ with $g \sim N(0,1)$.
\end{theorem}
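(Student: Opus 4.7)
The plan is to establish, via the cavity method, a Gaussian-moment recursion for one $T_{1,2}$ factor and iterate it. Fix any pair $(k,l)$ with $h(k,l)\ge 1$; by symmetry take $(k,l)=(1,2)$, write $h:=h(1,2)$, and let $f$ denote the remaining product $\Pi_{(k',l')\neq(1,2)} T_{k',l'}^{h(k',l')} \Pi_{k} T_k^{h(k)} T^{h_T} \Pi_{k} S_k^{h'(k)} S^{h'_S}$ (renaming the $T,S$-powers $h_T, h'_S$ to avoid clashing with the running $h$). The goal is to show, for every $h\ge 1$ with the convention $T_{1,2}^{-1}:=0$,
\[
\nu(T_{1,2}^h f) = (h-1)\, A_2^2\, \nu(T_{1,2}^{h-2} f) + O_N(H+1).
\]
Iterating this in $h$ until no $T_{1,2}$ remains and then repeating the peel-off for each remaining $(k,l)$ produces the claimed $\Pi_{(k,l)} a(h(k,l))\, A_2^{h(k,l)}$. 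The error is stable under iteration because $(h-1)A_2^2 = O(1/N)$, so each step converts an $O_N(H-1)$ residual into $O_N(H+1)$, and only finitely many peel-offs are performed.

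The recursion follows the template of Lemma~\ref{lem:var T12}. Spin-site symmetry gives $\nu(T_{1,2}^h f) = \nu(\epsilon(1)\, T_{1,2}^{h-1} f)$, and the last-spin decomposition \eqref{eq: general rewrite2} splits this into a cavity piece $\nu(\epsilon(1)\, \Pi_{v>1} U_v^-)$ and a diagonal piece $\tfrac{1}{N}\sum_{u\ge 2} \nu(\epsilon(1)\epsilon(u) \Pi_{v\neq 1,u} U_v^-)$ modulo $O_N(H+1)$. Since $\nu_0(\epsilon(1))=0$ and the last spin is $\nu_0$-independent of everything else, Proposition~\ref{prop: approx} reduces the cavity piece to $\nu_0'(\epsilon(1)\, \Pi_{v>1} U_v^-)$ up to $O_N(H+1)$. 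Expanding via \eqref{derivative-2} produces a sum over pairs $(a,b)$, in which $\nu_0(\epsilon_{a,b}\epsilon(1))$ vanishes unless $|\{a,b\}\cap\{1,2,3,4\}|\ge 1$. The four self-matching pairs $\{a,b\}\in\{\{1,2\},\{1,4\},\{3,2\},\{3,4\}\}$ combine with the signs of $\epsilon(1)$ to yield the combination $T_{1,2}^- - T_{1,4}^- - T_{3,2}^- + T_{3,4}^-$ (after expanding $R^-_{a,b}-q$ in the basis and observing that the $T_a, T_b, T$ components cancel across the four signs); only the $T_{1,2}^-$ part contracts nontrivially with the remaining $T_{1,2}^{h-1}$ in the product, while the $T_{1,4}^-, T_{3,2}^-, T_{3,4}^-$ contractions are $O_N(H+1)$ because replicas $3,4$ appear in no other factor, placing those pairings in a pairwise-independent regime controlled by Lemma~\ref{lem:independence} and Lemma~\ref{lem:moment bound}. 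The partial-matching pairs $(a,b)$ with one fresh index are handled case-by-case exactly as in Lemma~\ref{lem:var T12}. The net cavity contribution is $\beta^2 A\, \nu(T_{1,2}^h f) + O_N(H+1)$. For the diagonal piece, the $\nu_0$-factorization of the last spin gives $\nu(\epsilon(1)\epsilon(u) g) = \nu_0(\epsilon(1)\epsilon(u))\, \nu(g) + O_N(H+1)$; a short symmetry computation (using $\epsilon(1)=\epsilon_{1,2}-\epsilon_{1,4}-\epsilon_{3,2}+\epsilon_{3,4}$ together with the $\nu_0$-independence of distinct replicas) shows $\nu_0(\epsilon(1)\epsilon(u))=0$ unless $u$ indexes another $T_{1,2}$-factor, in which case it equals $A=\nu_0(\epsilon(1)^2)$. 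Summing over the $h-1$ matching $u$'s contributes $\tfrac{h-1}{N} A\, \nu(T_{1,2}^{h-2} f) + O_N(H+1)$. Collecting gives
\[
\nu(T_{1,2}^h f) = \beta^2 A\, \nu(T_{1,2}^h f) + \frac{h-1}{N} A\, \nu(T_{1,2}^{h-2} f) + O_N(H+1),
\]
and solving with $A_2^2 = A/(N(1-\beta^2 A))$ yields the recursion.

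The principal obstacle is the combinatorial bookkeeping in the $\nu_0'$-expansion: every pair $(a,b)$ in \eqref{derivative-2} must be classified according to how it contracts with the basis decomposition of the remaining factor $\Pi_{v>1} U_v^-$, and one must show that all such contractions except the $T_{1,2}$-with-$T_{1,2}$ one contribute only $O_N(H+1)$. Lemma~\ref{lem:independence} is the workhorse: after writing $R^-_{a,b}-Q_{a,b} = T_{a,b}+T_a+T_b+T$ (or $S_a+S$ for self-overlaps), any cross-contraction between a $T$-basis and an $S$-basis, or between $T_{1,2}$ and any $T_{k'}, S_{k'}, T, S$, integrates to $O_N(H+1)$, and only the self-contraction survives at leading order. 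A secondary, routine bookkeeping point is the use of Corollary~\ref{cor: last spin} to toggle between $R^-_{a,b}$ and $R_{a,b}$ without accumulating more than $O_N(H+1)$ error across the finitely many case splits.
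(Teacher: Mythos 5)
Your argument is correct and follows the same route the paper takes: reduce one $T_{1,2}$ factor by spin-site symmetry plus the last-spin decomposition \eqref{eq: general rewrite2}, show via the derivative formula \eqref{derivative-2} and the structure of $\nu_0(\epsilon(1)\epsilon_{a,b})$ that the cavity piece gives $\beta^2 A\,\nu(T_{1,2}^h f)$ (this is the paper's Lemma~\ref{derivative T_kl}), identify the diagonal contribution as $\tfrac{h-1}{N}A\,\nu(T_{1,2}^{h-2}f)$, solve for the recursion $\nu(T_{1,2}^h f)=(h-1)A_2^2\,\nu(T_{1,2}^{h-2}f)+O_N(H+1)$, and iterate. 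This matches the paper's induction on $\sum_{k,l}h(k,l)$ up to cosmetic bookkeeping, and the error-propagation remark at the end is exactly why the induction closes.
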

The proof of this theorem is the same as its analog in the SK model. We include the proof for completeness.
\begin{proof}
The proof goes by inducting on $\sum_{k, l}h(k, l)$. WLOG, we assume that $h(1, 2) \geq 1$ and reduce the moment of $T_{1, 2}$. For the sake of simplicity, let's define a function $g_{1, 2}(x)$ that tracks the moment of $T_{1, 2}$ s.t. 
\[g_{1, 2}(x) = \begin{cases}
    T_{1, 2}^x \Pi_{(k, l) \not\equiv (1, 2)}T_{k, l}^{h(k, l)}\Pi_{k}T_k^{h(k)}T^{h}\Pi_{l}S_l^{h'(l)}S^{h'}, & \text{if } h(1, 2) \geq 0,\\
    0, & \text{ otherwise}.
\end{cases}\]
Assume that $U_1$ corresponds to a copy of $T_{1, 2}$. Using \eqref{eq: general rewrite}, we have 
\begin{align}
    \nu(g(h(1, 2))) = \nu(\epsilon(1)\Pi_{v > 1}U^-_v) + \frac{1}{N}\sum_{u \geq 2} \nu(\epsilon(1)\epsilon(u)\Pi_{v \neq 1, u}U^-_v) + O_N(H + 1),
\end{align}
where 
\[\epsilon(1) = \epsilon_{1_1, 1_2} - \epsilon_{1_1, 1_4} - \epsilon_{1_3, 1_2} + \epsilon_{1_3, 1_4}.\]
The second term is again approximated by $\nu'_0(\cdot)$ using \eqref{eq:2nd approx}. 
\begin{lemma}\label{derivative T_kl}
For $\beta<\beta'$, suppose $h(1, 2) \geq 1$ and $U_1$ corresponds to a copy of $T_{1, 2}$, we have
\[\nu'_0(\epsilon(1)\Pi_{v > 1}U^-_v) = \beta^2 A \cdot \nu(g_{1, 2}(h(1, 2))) + O_N(H + 1).\]
\end{lemma}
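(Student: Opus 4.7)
The plan is to apply the derivative identity \eqref{derivative-2} to $f = \epsilon(1)\,\Pi_{v > 1} U_v^-$ and track which replica pairs $(a,b)$ contribute to the resulting sum. Structurally, this should closely mirror the variance computation in Lemma \ref{lem:var T12}, with the ``second copy of $T_{1,2}$'' there now playing the role of the product $\Pi_{v>1}U_v^-$ that carries the remaining $h(1,2)-1$ copies of $T_{1,2}$ together with all the other basis factors.

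The first step is to dispose of $\cc{R}_{n,f}$ from \eqref{eq:Rnf}. Since this term only involves overlaps on fresh replicas in $[2n]\setminus[n]$ that are disjoint from those appearing in $f$, factorization at $t=0$ together with H\"older's inequality and Lemma \ref{lem:moment bound} applied to the order-$H$ quantity $f$ give $\cc{R}_{n,f} = O_N(H+1)$. Next, in the main sum
\[
\frac{\beta^2}{2}\sum_{a,b \in [2n]} \text{sgn}(a,b)\, \nu_0(\epsilon_{a,b}\epsilon(1))\, \nu_0\bigl((R^-_{a,b} - Q_{a,b})\,\Pi_{v>1}U_v^-\bigr),
\]
we write $\epsilon(1) = (\epsilon_{a_1}-\epsilon_{a_3})(\epsilon_{a_2}-\epsilon_{a_4})$ with the four replicas $a_1, a_2, a_3, a_4 \in [n]$ distinct, and claim that $\nu_0(\epsilon_{a,b}\epsilon(1)) \neq 0$ only for $\{a,b\}$ equal to one of the four ``corners'' $\{a_1,a_2\}, \{a_1,a_4\}, \{a_3,a_2\}, \{a_3,a_4\}$. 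This is because under $\nu_0$ the last-spin replicas are i.i.d.\ conditional on the external-field disorder $\eta$, so the integrand inherits the antisymmetries $a_1 \leftrightarrow a_3$ and $a_2 \leftrightarrow a_4$; any $\{a,b\}$ that fails to hit exactly one replica from each of the two pairs preserves at least one antisymmetry and vanishes. The cases to enumerate are $\{a,b\}$ disjoint from the rectangle, $\{a,b\}$ inside a single ``row'' such as $\{a_1,a_3\}$, $\{a,b\}$ meeting the rectangle at a single point with the other index fresh, and diagonal pairs $a = b$ in all of the above configurations.

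For each of the four surviving corners, a direct conditional-on-$\eta$ computation gives $\nu_0(\epsilon_{a,b}\epsilon(1)) = \pm A$, where the sign matches that of the corresponding term in the expansion of $\epsilon(1)$ and $A = \E[(\langle\epsilon^2\rangle_\eta - \langle\epsilon\rangle_\eta^2)^2]$ as in Lemma \ref{lem:var T12}. All four corners lie in $[n]$ so $\text{sgn}(a,b) = +1$, and because $Q_{a_i,a_j} = p$ for every corner, the constant $Q$-contributions cancel in the signed combination. Ordered summation contributes an overall factor of $2$ that exactly cancels the $\beta^2/2$ prefactor, and the four contributions assemble into
\[
\beta^2 A \cdot \nu_0\!\bigl((R^-_{a_1,a_2} - R^-_{a_1,a_4} - R^-_{a_3,a_2} + R^-_{a_3,a_4})\,\Pi_{v>1} U_v^-\bigr) \;=\; \beta^2 A \cdot \nu_0\bigl(\Pi_{v\ge 1} U_v^-\bigr),
\]
since the parenthesis is exactly $U_1^-$. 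Applying Corollary \ref{cor: last spin} to the order-$H$ quantity $\Pi_{v\ge 1}U_v$ converts $\nu_0$ back into $\nu$ at cost $O_N(H+1)$, yielding $\beta^2 A \cdot \nu(g_{1,2}(h(1,2))) + O_N(H+1)$. The main obstacle will be the bookkeeping in the vanishing argument and, relatedly, tracking the sign structure in the four corner values to confirm that the surviving terms recombine into exactly $U_1^-$ rather than some other linear combination; once this is in hand, the rest is a direct adaptation of the computation of $\nu(T_{1,2}^2)$.
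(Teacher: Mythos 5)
Your proposal is correct and follows essentially the same route as the paper's proof: apply the time-zero derivative formula (\ref{derivative-2}) to $\epsilon(1)\Pi_{v>1}U_v^-$, use the factorization of $\nu_0$ to show that only the four corner pairs $\{a,b\}\subset V_1$ survive, reassemble their signed contributions into $\beta^2 A\,\nu_0\bigl(U_1^-\Pi_{v>1}U_v^-\bigr)$, and pass back to $\nu$ via Corollary~\ref{cor: last spin}. You are somewhat more explicit than the paper about the $\pm A$ sign pattern across the four corners and the cancellation of the $-Q_{a,b}$ centering constants, but this is the same argument, not a different one.
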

The proof of the above lemma is essentially the same as in the SK model; we include it in the appendix for completeness. 
For the first term, by \eqref{eq:1st approx}, 
\begin{align*}
    \frac{1}{N}\sum_{u \geq 2} \nu(\epsilon(1)\epsilon(u)\Pi_{v \neq 1, u}U^-_v) &= \frac{1}{N}\sum_{u \geq 2} \nu_0(\epsilon(1)\epsilon(u)\Pi_{v \neq 1, u}U^-_v) + O_N(H + 1),\\
    &= \frac{1}{N}\sum_{u \geq 2} \nu_0(\epsilon(1)\epsilon(u))\nu_0(\Pi_{v \neq 1, u}U^-_v) + O_N(H + 1).
\end{align*}
Note that following a similar arguement as in Lemma \ref{lem:independence}, $\nu_0(\epsilon(1)\epsilon_{a, b}) \neq 0$ only when $a, b$ appears in the expression of $\epsilon(1)$. However, by construction, $1_2, 1_3$ do not appear in any other terms besides $U_1$, thus the only possible pair of replicas that appears in $U_1$ that also appears in other terms are when $(u_1, u_2) \equiv (1, 2)$.
\begin{align}\label{eq: constant T12}
    \nu_0(\epsilon(1)\epsilon(u)) = \begin{cases}
    A, & \text{ if } U_u \text{ corresponds to a copy of } T_{1, 2},\\
    0, & \text{otherwise}.
    \end{cases}
\end{align}
Summing up all non-zero terms and applying Corollary \ref{cor: last spin}, we have
\[ \frac{1}{N}\sum_{u \geq 2} \nu(\epsilon(1)\epsilon(u)\Pi_{v \neq 1, u}U^-_v) = (h(1, 2) - 1)\cdot \frac{A}{N} \cdot \nu(g_{1, 2}(h(1, 2) - 2)) + O_N(H + 1).\]
Combine with Lemma \ref{derivative T_kl} and rearrange gives  
\begin{align} \label{eq: recursive T12}
   \nu(g(1, 2)) &= (h(1, 2) - 1)\cdot \frac{A}{N(1 - \beta^2 A)}\cdot  \nu(g_{1, 2}(h(1, 2) - 2)) + O_N(H + 1),\\
   &= (h(1, 2) - 1)\cdot A_2^2 \cdot \nu(g_{1, 2}(h(1, 2) - 2)) + O_N(H + 1).
\end{align}
Now we are ready to perform induction. If $h(1, 2) = 1$ holds since $a(1) = 0$. For higher moments, we apply the inductive hypothesis on $\nu(g_{1, 2}(h(1, 2) - 2))$. Plug this back in \ref{eq: recursive T12} and denote $h'(k, l)$ as the moments of $T_{k, l}$ in $g_{1, 2}(h(1, 2) - 2)$, 
\begin{align*}
    \nu(g(1, 2)) &= (h(1, 2) - 1) \cdot A_2^2 \cdot \nu(g_{1, 2}(h(1, 2) - 2)) + O_N(H + 1),\\
    &= (h(1, 2) - 1)\cdot A_2^2 \cdot \Pi_{(k, l)}a(h'(k, l))A_2^{h'(k, l)} \nu(\Pi_{k} T_k^{h(k)} T^{h} \Pi_k S_k^{h'(k)} S^{h'}) + O_N(H + 1),\\
    &= \Pi_{(k, l)}a(h(k, l))A_2^{h(k, l)} \nu(\Pi_{k} T_k^{h(k)} T^{h} \Pi_k S_k^{h'(k)} S^{h'}) + O_N(H + 1).
\end{align*}
where the last equality follows from $a(h(1, 2)) = (h(1, 2) - 1)a(h'(1, 2))$.
\end{proof}

\subsection{Recursive relation for correlated "basis"}\label{ssec:induct-TS}
As we mentioned in Section \ref{sec: proof outline}, our goal is to obtain a recursive relation for moments of the basis as in \cite[Chapter 1.10]{Tal11}. We need to do a little more work for $T_1, S_1$ and $T, S$ because we expect them to be correlated. We describe the additional step here before delving into the moment computations.

By the Gaussian integration by part (see e.g. \cite{Tal11} A.4), suppose $[g_1, g_2] \sim \cc{N}(0, \Sigma)$ and some contents $a, b \geq 2$, the two ways of expanding $\E[g_1^ag_2^b]$ are
\begin{align} \label{eq: gaussian intuition}
    \E[g_1^ag_2^b] &= (a - 1) \Sigma_{1, 1}\E[g_1^{a - 2}g_1^b] + b \Sigma_{1, 2}\E[g_1^{a - 1}g_1^{b - 1}],\\
    &= a \Sigma_{1, 2}\E[g_1^{a - 1}g_1^{b - 1}] + (b - 1) \Sigma_{1, 2}\E[g_1^{a}g_1^{b - 2}].
\end{align} 
As we saw in Section \ref{sec:var}, the cavity method almost gives the above type of relations. The cavity method allows us to decouple the last spin at time $0$. Using the symmetry of spins allows us to rewrite one of the terms using only the last spin, as in e.g. \eqref{T1^2 init}, thus almost reducing the moment by $1$. However, this does not reduce the number of replicas the non-trivial part of \eqref{T1^2 init} depends on, and approximation given by Lemma \ref{lem:derivative} may increase the moment of some terms.

To get some intuition, let's consider the case $T_1^3$, recall that we can rewrite $\nu(T_1^3)$ by applying symmetry of spin on one of the $T_1$ 
\[\nu(T_1^3) = \nu_1((\epsilon_{1, 3} - \epsilon_{2, 3})T_1^2).\]
Becauase $T_1^2$ is an order $2$ function, we need to invoke \eqref{eq:2nd approx} and use $\nu'_0((\epsilon_{1, 3} - \epsilon_{2, 3})T_1^2)$ to get a good enough approximation of $\nu_1((\epsilon_{1, 3} - \epsilon_{2, 3})T_1^2)$. 
By Lemma \ref{lem:derivative}, even though $\sigma^2$ is only used by the first term i.e. $(\epsilon_{1, 3} - \epsilon_{2, 3})$, we still need to consider their contribution in $\nu'_0(T_1^3)$. Gathering terms correspond to $(a, b) \in \{(1, 1), (2, 2)\}$ gives 
\[
\nu_0((R_{1, 1} - R_{2, 2})T_1) \equiv \nu_0(T_1 S_1).
\]
Even though $S_1$ does not appear in the initial expression, taking the derivative at time $0$ would introduce a term where the moment of $S_1$ is $1$.

Still, if we restrict our attention to some fixed replica $k$, we can expand the mixed moments of $\{(T_i, S_i): i \in [n]\}$ or $(T, S)$ in two different ways similar to \eqref{eq: gaussian intuition}. 
Intuitively, this follows the pair $(T_k, S_k)$ (and $(S, T)$) being independent of all other basis terms that don't depend on replica $k$, as indicated in Lemma \ref{lem:independence}. We prove this formally in Lemma \ref{lem: relation T1S1} and \ref{lem: relation ST} below. 

To avoid repetition, let's first characterize the condition under which the relations given by the cavity method imply the desired recursive relation for proving CLT. 

\begin{lemma} \label{lem: recurse condition}
    Consider two sets of constants $\alpha_2, \alpha_1, \alpha_0$ and $\beta_2, \beta_1, \beta_0$. Suppose there exist $H \geq 0$ and $C \in \R$. Suppose a function $f: \mathbb{Z} \times \mathbb{Z} \to \R$ with $f(h, h') = 0$ if $h < 0$ or $h' < 0$ and $f(0, 0) = C$ satisfies the following relation:
    For $h, h' > 0$ and $(h, h') \neq (0, 0)$, 
    \begin{align}
        f(h, h') &= \alpha_2 (h - 1) f(h - 2, h') + \alpha_1 h' f(h - 1, h' - 1) + \alpha_0 f(h - 1, h' + 1) + O_N(h + h' + H + 1), \label{eq: recurse1}\\
        &= \beta_2 (h' - 1) f(h, h' - 2) + \beta_1 h f(h - 1, h' - 1) + \beta_0 f(h + 1, h' - 1) + O_N(h + h' + H + 1).\label{eq: recurse2}
    \end{align}
    If the sets of constants satisfy
    \begin{align} \label{eq: consistency cond}
        \alpha_1 + \alpha_0 \beta_2 = \beta_1 + \beta_0 \alpha_2 := \gamma,
    \end{align} 
    then we can find a set of constants $C(2, 0), C(0, 2), C(1, 1)$ s.t. $f$ satisfies the following recursive relations
    \begin{align}
        f(h, h') =& (h - 1)C(2, 0) f(h - 2, h') + h' C(1, 1) f(h - 1, h' - 1) + O_N(h + h' + H + 1),\\
        =& (h' - 1)C(0, 2) f(h, h' - 2) + h C(1, 1) f(h - 1, h' - 1) + O_N(h + h' + H + 1).
    \end{align}
    with 
    \[C(2, 0) = \frac{\alpha_2 + \alpha_0\beta_1}{1 - \alpha_0 \beta_0} \quad C(0, 2) =  \frac{\beta_2 + \beta_0\alpha_1}{1 - \alpha_0 \beta_0}  \quad C(1, 1) = \frac{\gamma}{1 - \alpha_0\beta_0} \]
\end{lemma}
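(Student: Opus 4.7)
The plan is to derive the clean recursions by substituting each of the two given relations into the other to eliminate the ``offending'' terms $f(h-1, h'+1)$ and $f(h+1, h'-1)$, which are the terms that prevent \eqref{eq: recurse1} and \eqref{eq: recurse2} from being genuine downward recursions. The consistency condition \eqref{eq: consistency cond} will turn out to be precisely what is needed for the two resulting recursions to agree on the value of $C(1,1)$, which is the only nontrivial compatibility to check.

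Concretely, to obtain the first target recursion, I would apply \eqref{eq: recurse2} with $(h, h')$ replaced by $(h-1, h'+1)$ to get
\[
f(h-1, h'+1) = \beta_2 h' f(h-1, h'-1) + \beta_1 (h-1) f(h-2, h') + \beta_0 f(h, h') + O_N(h + h' + H + 1),
\]
where the error order is preserved since $(h-1) + (h'+1) = h + h'$. Plugging this into \eqref{eq: recurse1} and collecting like terms yields
\[
(1 - \alpha_0 \beta_0) f(h, h') = (\alpha_2 + \alpha_0 \beta_1)(h-1) f(h-2, h') + (\alpha_1 + \alpha_0 \beta_2) h' f(h-1, h'-1) + O_N(h + h' + H + 1).
\]
Assuming $\alpha_0 \beta_0 \neq 1$, dividing through gives the first target relation with $C(2,0) = (\alpha_2 + \alpha_0 \beta_1)/(1 - \alpha_0 \beta_0)$ and $C(1,1) = (\alpha_1 + \alpha_0 \beta_2)/(1 - \alpha_0 \beta_0) = \gamma/(1 - \alpha_0 \beta_0)$.

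Symmetrically, to obtain the second target recursion, I would apply \eqref{eq: recurse1} with $(h, h')$ replaced by $(h+1, h'-1)$ to express $f(h+1, h'-1)$ as a combination of $f(h-1, h'-1)$, $f(h, h'-2)$, and $f(h, h')$, and substitute into \eqref{eq: recurse2}. The same algebra yields
\[
(1 - \alpha_0 \beta_0) f(h, h') = (\beta_2 + \beta_0 \alpha_1)(h'-1) f(h, h'-2) + (\beta_1 + \beta_0 \alpha_2) h f(h-1, h'-1) + O_N(h + h' + H + 1),
\]
giving $C(0,2) = (\beta_2 + \beta_0 \alpha_1)/(1 - \alpha_0 \beta_0)$ and the coefficient of $h f(h-1, h'-1)$ equal to $(\beta_1 + \beta_0 \alpha_2)/(1 - \alpha_0 \beta_0)$. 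The consistency hypothesis \eqref{eq: consistency cond} is exactly what guarantees that this second expression for $C(1,1)$ matches the first one, so both recursions are compatible with a single constant $C(1,1) = \gamma/(1 - \alpha_0 \beta_0)$.

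There is no substantive obstacle here: the argument is pure algebraic substitution, and the only delicate points are (i) tracking that the $O_N$ error order is preserved under the substitution (which holds because the shifts $(h,h') \mapsto (h\pm 1, h'\mp 1)$ preserve $h + h'$), and (ii) the implicit nondegeneracy assumption $\alpha_0 \beta_0 \neq 1$, which we will verify in our applications where the constants arise from an explicit high-temperature computation. The role of the consistency condition \eqref{eq: consistency cond} is conceptually the most interesting feature: it encodes that the two ways of applying Gaussian integration by parts to a bivariate normal moment, as in \eqref{eq: gaussian intuition}, must yield the same covariance, and thus is exactly the compatibility needed for the moment recursions to admit a Gaussian solution.
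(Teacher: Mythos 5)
Your proposal is correct and uses the same key mechanism as the paper: substitute \eqref{eq: recurse2} at $(h-1, h'+1)$ into \eqref{eq: recurse1} (and symmetrically) to eliminate the off-diagonal term, collect the $f(h,h')$ contributions, and observe that the consistency condition \eqref{eq: consistency cond} is exactly what makes the two resulting coefficients of $f(h-1,h'-1)$ agree. The paper's proof additionally spells out a separate base-case analysis at $(2,0)$, $(0,2)$, $(1,1)$ and the boundary cases $h=0$ or $h'=0$, but these are all subsumed by your uniform substitution once one notes (as you implicitly do) that the first target recursion is invoked only for $h\ge 1$ and the second only for $h'\ge 1$.
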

\begin{proof}
The idea is to use \eqref{eq: recurse2} and \eqref{eq: recurse1} to rewrite $f(h - 1, h' + 1)$. One thing we need to check is that the resulting constants in front of $f(h - 1, h' + 1)$ are the same in both equations. 

\paragraph{Base case:} Note that $f(1, 0) = f(0, 1) = O_N(h + h' + H + 1)$. 
We will first handle the case when $(h, h') \in \{(2, 0), (0, 2), (1, 1)\}$. Plug in the corresponding values for $h, h'$ gives \eqref{eq: recurse1} and \eqref{eq: recurse2} gives
\[
f(2, 0) = \alpha_2f(0, 0) + \alpha_0 f(1, 1) + O_N(3 + H),
\]
\[
f(0, 2) = \beta_2f(0, 0) + \beta_0 f(1, 1)+ O_N(3 + H),
\]
\[
f(1, 1) = \alpha_1f(0, 0) + \alpha_0f(0, 2)+ O_N(3 + H) = \beta_1f(0, 0) + \beta_0 f(2, 0)+ O_N(3 + H).
\]
Solve the above system of linear equations gives 
\begin{align*}
    (1 - \alpha_0\beta_0)f(2, 0) = (\alpha_2 + \alpha_0\beta_1)f(0, 0) + O_N(3 + H),\\
    (1 - \alpha_0\beta_0)f(0, 2) = (\beta_2 + \beta_0\alpha_1)f(0, 0) + O_N(3 + H).
\end{align*}
By \eqref{eq: consistency cond} and the expression for $f(2, 0)$, $f(0, 2)$, we have
\begin{align*}
    f(1, 1) &= \left(\alpha_1 + \alpha_0\frac{\beta_2 + \beta_0\alpha_1}{1 - \alpha_0\beta_0}\right)f(0, 0)+ O_N(3 + H) = \left(\frac{\alpha_1 + \alpha_0\beta_2}{1 - \alpha_0\beta_0}\right) f(0, 0) + O_N(3 + H),\\
    &= \left(\frac{\beta_1 + \beta_0 \alpha_2}{1 - \alpha_0\beta_0}\right) f(0, 0) + O_N(3 + H) = \left(\beta_1 + \beta_0 \frac{\alpha_2 + \alpha_0\beta_1}{1 - \alpha_0 \beta_0}\right) f(0, 0) + O_N(3 + H).
\end{align*}
Rearrange the above equations gives
\[f(2, 0) = C(2, 0)f(0, 0) + O_N(3 + H), \ \ f(0, 2) = C(0, 2) f(0, 0)+ O_N(3 + H), \ \text{and} \ f(1, 1) = C(1, 1) f(0, 0) + O_N(3 + H).\]
For the case when $h' = 0$ and $h \geq 0$, the equation \eqref{eq: recurse1} becomes
\begin{align*}
    f(h, 0) &= \alpha_2 (h - 1) f(h - 2, 0) + \alpha_0 f(h - 1, 1) + O_N(h + 1 + H),\\
    &= \alpha_2 (h - 1) f(h - 2, 0) + \alpha_0 \left[ \beta_1 (h - 1)f(h - 2, 0) + \beta_0 f(h, 0)\right] + O_N(h + 1 + H).
\end{align*}
Rearrange and plug in the values of $C(2, 0)$ gives  
\[
f(h, 0) = (h - 1) C(2, 0) f(h - 2, 0) + O_N(h + 1 + H).
\]
For $h' \geq 0 $ and $h = 0$, the same arguement applies by starting from \eqref{eq: recurse2} with $h = 0$. 
\[f(0, h') = (h' - 1) C(0, 2) f(0, h' - 2) + O_N(h' + 1 + H).\]
\paragraph{General case:} Assume $h, h' \geq 1$. Start from \eqref{eq: recurse1} and expand $f(h - 1, h' + 1)$ using \eqref{eq: recurse2} gives
\begin{align*}
    f(h, h') =& \alpha_2 (h - 1) f(h - 2, h') + \alpha_1 h' f(h - 1, h' - 1) + \alpha_0 f(h - 1, h' + 1) + O_N(h + h' + H + 1),\\
    =& \alpha_2 (h - 1) f(h - 2, h') + \alpha_1 h' f(h - 1, h' - 1) + O_N(h + h' + H + 1)\\
    &+ \alpha_0 \left(\beta_2 h' f(h - 1, h' - 1) + \beta_1 (h - 1) f(h - 2, h') + \beta_0 f(h, h')\right)  + O_N(h + h' + H + 1).
\end{align*}
Rearrange, we have 
\[
f(h, h') = (h - 1)C(2, 0) f(h - 2, h') + h'C(1, 1) f(h - 1, h' - 1) + O_N(h + h' + H + 1).
\]
Similarily, start from \eqref{eq: recurse2} instead and repeat the above arguement gives
\[
f(h, h') = (h' - 1)C(0, 2) f(h, h' - 2) + hC(1, 1)f(h - 1, h' - 1) + O_N(h + h' + H + 1).
\]
\end{proof}
\YS{\begin{claim} \label{claim: psd}
If $C(2, 0) \geq 0$ and $C(0, 2) \geq 0$, then $M \succeq 0$ iff $\frac{\alpha_2\beta_2 - \alpha_1\beta_1}{1- \alpha_0\beta_0} \geq 0$
\end{claim}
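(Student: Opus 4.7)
The plan is to interpret $M$ as the $2\times 2$ symmetric matrix
\[ M \;=\; \begin{pmatrix} C(2,0) & C(1,1) \\ C(1,1) & C(0,2) \end{pmatrix}, \]
i.e., the candidate covariance matrix of the Gaussian limit of a correlated pair such as $(T_k, S_k)$ or $(T,S)$, and then invoke Sylvester's criterion. Since the hypothesis already asserts that both diagonal entries are nonnegative, positive semidefiniteness of $M$ is equivalent to $\det(M) \ge 0$. Hence the whole claim reduces to establishing the identity
\[ \det(M) \;=\; C(2,0)\, C(0,2) - C(1,1)^2 \;=\; \frac{\alpha_2 \beta_2 - \alpha_1 \beta_1}{1 - \alpha_0 \beta_0}. \]

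To verify the identity, I would substitute the explicit formulas for $C(2,0), C(0,2), C(1,1)$ from Lemma~\ref{lem: recurse condition}. The common denominator is $(1-\alpha_0\beta_0)^2$, so the problem reduces to simplifying the numerator
\[ (\alpha_2 + \alpha_0 \beta_1)(\beta_2 + \beta_0 \alpha_1) \;-\; \gamma^2. \]
The key move is to exploit the two equivalent expressions for $\gamma$ coming from the consistency condition \eqref{eq: consistency cond}, writing $\gamma^2 = (\alpha_1 + \alpha_0\beta_2)(\beta_1 + \beta_0\alpha_2)$. Expanding both quadratic products yields four terms each; subtracting, the cross terms $\alpha_1\alpha_2\beta_0$ and $\alpha_0\beta_1\beta_2$ cancel, and what remains is
\[ \alpha_2\beta_2 - \alpha_1\beta_1 + \alpha_0\alpha_1\beta_0\beta_1 - \alpha_0\alpha_2\beta_0\beta_2 \;=\; (1 - \alpha_0\beta_0)(\alpha_2\beta_2 - \alpha_1\beta_1). \]
Dividing by $(1-\alpha_0\beta_0)^2$ produces exactly the fraction in the statement.

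Given this identity, the iff conclusion is immediate: assuming $1 - \alpha_0\beta_0 > 0$ (which is tacit, since this factor sits in the denominators defining $C(2,0), C(0,2), C(1,1)$), the sign of $\det(M)$ matches the sign of $(\alpha_2\beta_2 - \alpha_1\beta_1)/(1 - \alpha_0\beta_0)$, and Sylvester's criterion combined with $C(2,0), C(0,2) \ge 0$ closes the argument. The only nontrivial step is the cancellation above, which is really forced by the two equivalent formulas for $\gamma$; I do not anticipate any obstacle beyond executing this short algebraic simplification.
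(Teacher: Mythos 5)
Your proposal is correct and follows essentially the same route as the paper: reduce to $\det(M)\ge 0$ (the paper phrases this via $\operatorname{tr}(M) = \lambda_1+\lambda_2\ge 0$ rather than invoking Sylvester's criterion, but the observation is equivalent for a $2\times 2$ symmetric matrix), then compute $\det(M) = C(2,0)C(0,2) - C(1,1)^2$ by substituting the formulas from Lemma~\ref{lem: recurse condition} and using the two equivalent expressions for $\gamma$ from the consistency condition \eqref{eq: consistency cond} to write $\gamma^2 = (\alpha_1 + \alpha_0\beta_2)(\beta_1 + \beta_0\alpha_2)$, which after expansion and cancellation of cross terms yields $(1-\alpha_0\beta_0)(\alpha_2\beta_2 - \alpha_1\beta_1)/(1-\alpha_0\beta_0)^2$. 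Your explicit flag that $1-\alpha_0\beta_0>0$ is needed for the ``iff'' direction is a useful point the paper leaves implicit.
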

\begin{proof}
Let $\lambda_1, \lambda_2$ be the eigenvalue of $M$, Since $\text{tr}(M) = \lambda_1 + \lambda_2 = C(2, 0) + C(0, 2) \geq 0$, we have $\lambda_1 \geq 0$ or $\lambda_2 \geq 0$. To show $M \succeq 0$, it is enough to show $\lambda_1\lambda_2 = \text{det}(M) \geq 0$
\begin{align*}
    \text{det}(M) &= C(2, 0)C(0, 2) - C(1, 1)^2 \\
    &= \frac{1}{(1- \alpha_0\beta_0)^2}\left((\alpha_2 + \alpha_0\beta_1)(\beta_2 + \beta_0\alpha_1) - \gamma^2\right)\\
    &= \frac{1}{(1- \alpha_0\beta_0)^2}\left[(\alpha_2 + \alpha_0\beta_1)(\beta_2 + \beta_0\alpha_1) - (\alpha_1 + \alpha_0 \beta_2)(\beta_1 + \beta_0 \alpha_2)\right]\\
    &=  \frac{1}{(1- \alpha_0\beta_0)^2}(\alpha_2\beta_2 - \alpha_1\beta_1)(1 - \alpha_0\beta_0)\\
    &= \frac{1}{1- \alpha_0\beta_0}(\alpha_2\beta_2 - \alpha_1\beta_1)
\end{align*}
where the second equality follows from the definition of $\gamma$.
\end{proof}}
\subsubsection{Induction on $T_k$ and $S_k$}
In this section, we examine the mixed moments of $T_k$ and $S_k$. Assume that there are $n$ replicas in total and the moments of $T_{k, l}$, $h(k, l) = 0$, for all $1 \leq k < \leq n$. Denote the total moments of $T_k$ and $S_l$ as
\[h_T = \sum_k h(k), \quad h_S = \sum_l h'(l), \quad H_1 = h_T + h_S + h + h'.\]
\begin{theorem}\label{thm:peel off T_kS_k}
Let $\{(g_{T_k}, g_{S_k}): k \in [n]\}$ be i.i.d Gaussian with mean $[0, 0] $ and covariance matrix 
\[ \Sigma_1 := \begin{bmatrix}
A_1^2 & C_1^2 \\
C_1^2 & B_1^2
\end{bmatrix} ,\] 
We have
\[\nu(\Pi_{k} T_k^{h(k)} \Pi_{l}S_l^{h'(l)}T^hS^{h'}) = \left(\Pi_{k}\E[g_{T_k}^{h(k)}g_{S_k}^{h'(k)}]\right)\nu(T^hS^{h'}) + O_N(H_1 + 1)\]
\end{theorem}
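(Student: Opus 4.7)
The plan is to peel off the pairs $(T_k, S_k)$ one at a time by induction on $k$. By the symmetry of the statement it suffices to handle $k = 1$; abbreviate the remaining factors as $g := \prod_{k\ge 2} T_k^{h(k)} S_k^{h'(k)} T^h S^{h'}$ and set $f(a,b) := \nu(T_1^a S_1^b \cdot g)$. The goal at this stage is
\[f(h(1), h'(1)) = \E\big[g_{T_1}^{h(1)} g_{S_1}^{h'(1)}\big] \cdot \nu(g) + O_N(H_1 + 1),\]
after which iterating over $k = 1, \ldots, n$ leaves exactly $\nu(T^h S^{h'})$, which is the claim.

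To reduce the $T_1$-moment when $a \geq 1$, I would follow the template of Lemma \ref{lem: T1 var}: rewrite one copy of $T_1$ as $R_{1,r} - R_{2,r}$ for a fresh replica $r$ via Claim \ref{claim: basis to overlap}, replace it by $\epsilon(1) := \epsilon_{1,r} - \epsilon_{2,r}$ using site symmetry, apply the two-step cavity approximation \eqref{eq:2nd approx} (the $\nu_0$ term vanishes since the last spin decouples from the rest and has zero mean), and expand $\nu_0'$ via Lemma \ref{lem:derivative}. Only pairs with $|\{a,b\} \cap \{1,2\}| = 1$ survive; classifying them as in Lemma \ref{lem: T1 var} produces (i) the diagonal $a=b\in\{1,2\}$ giving an $(R^-_{1,1}-R^-_{2,2})$ factor, which after Corollary \ref{cor: last spin} yields the cross term $f(a-1, b+1)$, (ii) pairs within $\{1,2,r\}$ giving an $f(a-2, b)$-type contribution, and (iii) pairs whose second index is a replica borrowed from another $T_1$ or $S_1$ factor in the product, giving the standard Wick pairings $f(a-2, b)$ and $f(a-1, b-1)$; pairings with replicas introduced by the other $T_k, S_k$ ($k\ge 2$) or by $T, S$ vanish by the same pairwise-independence argument that underlies Lemma \ref{lem:independence}. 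Collecting these yields
\[f(a,b) = \alpha_2 (a-1) f(a-2, b) + \alpha_1 b f(a-1, b-1) + \alpha_0 f(a-1, b+1) + O_N(H_1 + 1),\]
and the symmetric expansion starting from an $S_1$ factor gives
\[f(a,b) = \beta_2 (b-1) f(a, b-2) + \beta_1 a f(a-1, b-1) + \beta_0 f(a+1, b-1) + O_N(H_1 + 1),\]
with the constants $\alpha_i, \beta_i$ exactly those already computed in Lemmas \ref{lem: T1 var}, \ref{lem: S1 var}, and \ref{lem: var s1t1}.

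At this point Lemma \ref{lem: recurse condition} finishes the job: verifying the consistency condition $\alpha_1 + \alpha_0 \beta_2 = \beta_1 + \beta_0 \alpha_2$ (either by direct computation from the explicit formulas, or by uniqueness of $f(1,1)/f(0,0)$ obtained from either recursion) upgrades the coupled system into the clean Gaussian recursions with coefficients $A_1^2, B_1^2, C_1^2$, which are precisely the Wick recursions for $\E[g_{T_1}^a g_{S_1}^b]$ under the covariance $\Sigma_1$. A direct induction on $a+b$ then closes the step, with trivial base $f(0,0) = \nu(g)$ and the vanishing moments $f(1,0), f(0,1) = O_N(H_1+1)$ obtained by combining the two recursions at $(1,0)$ and $(0,1)$. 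Iterating over $k$ produces the theorem. The main obstacle is the bookkeeping in the case analysis of the cavity derivative: carefully enumerating which replica-pairs $(a,b)$ survive the expansion and identifying each surviving group as an $f(a-2,b)$, $f(a-1,b-1)$, or the \emph{increased}-moment cross term $f(a-1,b+1)$. The conceptual point is that the cavity method inherently raises the moment of the complementary variable, and only solving the resulting $2\times 2$ linear system (the content of Lemma \ref{lem: recurse condition}) recovers the Gaussian form.
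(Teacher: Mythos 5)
Your proposal is correct and follows essentially the same route as the paper: fix a replica, rewrite one copy of $T_1$ (or $S_1$) via the last-spin substitution, expand $\nu_0'$ via Lemma \ref{lem:derivative}, sort surviving pairs into the $f(a-1,b+1)$, $f(a-2,b)$, $f(a-1,b-1)$, and $f(a,b)$ buckets to obtain the two coupled recursions of Lemma \ref{lem: relation T1S1}, then invoke Lemma \ref{lem: recurse condition} with the consistency check and finish by induction on $a+b$ and over $k$. The only slight imprecision is in your bucket (ii) — the pairs inside $\{1,2,r\}$ (and their fresh copies) produce the coefficient $\beta^2(F-3G)$ on $f(a,b)$ itself, which becomes the factor $M_1$ on the left, rather than an $f(a-2,b)$ contribution; the genuine $f(a-2,b)$ and $f(a-1,b-1)$ terms come only from bucket (iii).
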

Following the symmetry of replicas and the idea from Lemma \ref{lem: recurse condition}, we will try to expand higher-order mixed moments by reducing the moment of $T_k$ or $S_k$ for some fixed replica $k$.

WLOG, suppose $h(1) + h'(1) > 0$. Let $g_1: \mathbb{Z}^2 \to \R$ be the function that tracks the moment of $T_1$ and $S_1$ only.
\begin{align} \label{def: g_1}
    g_1(x, y) := \begin{cases}
    \nu(T_1^{x}S_1^{y} \Pi_{k > 1} T_k^{h(k)} \Pi_{l > 1}S_l^{h'(l)}T^hS^{h'}), & \text{ if } x, y \geq 0,\\
    0, & \text{ otherwise}. 
    \end{cases}
\end{align}

The lemma below is a generalization of Lemma \ref{lem:relation1} and \ref{lem:relation2}. 
\begin{lemma}\label{lem: relation T1S1}
For $h(1) > 1$, $h'(1) \geq 0$,
\begin{align} \label{eq: general reduce T1}
   M_1\nu(g(h(1), h'(1))) &= \frac{\beta^2}{2}H \nu(g_1(h(1) - 1, h'(1) + 1)\\
    &+ (h(1) - 1)\left(\beta^2 G A_2^2 + \frac{G}{N}\right)\nu(g_1(h(1) - 2, h'(1)))\\
    &+ h'(1) \frac{H}{N}\nu(g_1(h(1)- 1, h'(1) - 1)\\
    &+ O_N(H_1 + 1).
\end{align}
For $h(1) \geq 0$, $h'(1) > 1$,
\begin{align}\label{eq: general reduce S1}
    M_2\nu(g_1(h(1), h'(1))) &= -2\beta^2E \nu(g_1(h(1) + 1, h'(1) - 1)\\
    &+ h(1)\left(\beta^2 E A_2^2 + \frac{E}{N}\right)\nu(g_1(h(1) - 1, h'(1) - 1))\\
    &+ (h'(1) - 1)\frac{D}{N}\nu(g_1(h(1), h'(1) - 2))\\
    &+ O_N(H_1 + 1).
\end{align}
\end{lemma}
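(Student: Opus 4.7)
The two relations are proved by the same strategy, which directly generalizes Lemma~\ref{lem:relation1}; I sketch the derivation of \eqref{eq: general reduce T1}, and \eqref{eq: general reduce S1} follows by the parallel argument with the roles of $T_1$ and $S_1$ interchanged. Assuming $h(1) \ge 1$, the plan is to single out one copy of $T_1$ and rewrite it using fresh replica labels, then use the cavity method to express the resulting quantity in terms of lower-moment versions of $g_1$ and moments of the remaining basis factors.

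\emph{Peeling off one $T_1$ and isolating the derivative.} Use Claim~\ref{claim: basis to overlap} to write the chosen copy of $T_1$ as $R_{1,v_3} - R_{v_2,v_3}$ with fresh replica indices $v_2, v_3 \notin [n]$. By site-symmetry under the disorder, $\nu(g_1(h(1),h'(1))) = \nu((\epsilon_{1,v_3} - \epsilon_{v_2,v_3}) F)$, where $F$ collects the remaining factors. Decompose $F = F^- + (F - F^-)$ as in~\eqref{eq: general rewrite2}: this produces a $\tfrac{1}{N}$-piece $\tfrac{1}{N}\sum_u \nu_0(\epsilon(1)\epsilon(u)) \nu(g_{1,-u})$ (up to $O_N(H_1+1)$) and a remaining term $\nu((\epsilon_{1,v_3} - \epsilon_{v_2,v_3}) F^-)$, which by~\eqref{eq:2nd approx} equals $\nu_0'((\epsilon_{1,v_3} - \epsilon_{v_2,v_3}) F^-) + O_N(H_1+1)$ since the $\nu_0$-value vanishes by last-spin independence.

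\emph{Evaluating the $\tfrac{1}{N}$-piece and the derivative.} In the $\tfrac{1}{N}$-piece, Lemma~\ref{lem:independence} and the freshness of $v_2, v_3$ force $\nu_0(\epsilon(1)\epsilon(u)) = 0$ unless $U_u$ involves replica $1$; the surviving $u$ are the other $h(1)-1$ copies of $T_1$ (coefficient $G$) and the $h'(1)$ copies of $S_1$ (coefficient $H$), yielding the $\tfrac{G}{N}$ and $\tfrac{H}{N}$ pieces on the right-hand side of~\eqref{eq: general reduce T1}. For $\nu_0'$, apply~\eqref{derivative-2}: the non-vanishing pairs $(a,b)$ must satisfy $|\{a,b\} \cap \{1, v_2\}| = 1$, and the analysis mirrors that of Lemma~\ref{lem:relation1}. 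The diagonal pairs $a = b \in \{1, v_2\}$ introduce an extra $S_1$-factor, giving $\tfrac{\beta^2 H}{2} \nu(g_1(h(1)-1, h'(1)+1))$. Pairings with the fresh $v_3$-index of the singled-out $T_1$ give $\beta^2 F \nu(g_1(h(1), h'(1)))$, while the auxiliary replicas from $\cc{R}_{n, F^-}$ in~\eqref{eq:Rnf} produce $-3\beta^2 G \nu(g_1(h(1), h'(1)))$. Pairings with another $T_1$-copy's fresh replica create a rectangle $T_{k,l}$-moment of order two, contributing $(h(1)-1)\beta^2 G A_2^2 \nu(g_1(h(1)-2, h'(1)))$ via Lemma~\ref{lem:var T12}. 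Finally, pairings with factors on replicas $k \ne 1$ force odd-order $T_{k,l}$-type moments that vanish to order $O_N(H_1+1)$ under the hypothesis $h(k,l) = 0$, by Theorem~\ref{thm: general T12}.

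\emph{Assembly and the companion relation.} Combining all contributions, the $\beta^2(F - 3G)$ piece moves to the left as the prefactor $M_1$, while the direct $\tfrac{G}{N}$ combines with the rectangle $\beta^2 G A_2^2$ into $\beta^2 G A_2^2 + \tfrac{G}{N}$, yielding \eqref{eq: general reduce T1}. The relation \eqref{eq: general reduce S1} is proved by the same template, singling out a copy of $S_1 = R_{1,1} - R_{v_2,v_2}$ and using $\epsilon(1) = \epsilon_{1,1} - \epsilon_{v_2,v_2}$; the constants $D$ and $E$ in place of $F$ and $H$ produce the prefactor $M_2$. The main technical obstacle is the case analysis for the derivative: one must verify that every pairing of the singled-out factor with a factor on a different replica generates an odd-degree $T_{k,l}$-type moment, which is exactly what ensures the recursion closes on mixed moments of $T_1, S_1$ and the remaining factors.
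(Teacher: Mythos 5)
Your proposal tracks the paper's own proof very closely: single out one copy of $T_1$ via Claim~\ref{claim: basis to overlap}, split off the last spin using~\eqref{eq: general rewrite2} into a $\tfrac{1}{N}$-piece and a derivative piece, evaluate the $\tfrac{1}{N}$-piece via Lemma~\ref{lem:independence} (giving the $G/N$ and $H/N$ terms), and classify the pairs $(a,b)$ with $|\{a,b\}\cap\{1_1,1_2\}|=1$ in $\nu_0'$ to recover the $\tfrac{\beta^2 H}{2}$, $\beta^2 F$, $-3\beta^2 G$ and $\beta^2 G A_2^2$ contributions, which combine into $M_1$ and the displayed right-hand side. This is exactly the paper's route (the derivative classification is carried out in Lemmas~\ref{derivative T_k} and~\ref{derivative S_k}).

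One attribution in your derivative analysis is slightly off. You assign the $-3\beta^2 G \nu(g_1(h(1),h'(1)))$ piece to $\cc{R}_{n,F^-}$ from~\eqref{eq:Rnf}. In fact, $\cc{R}_{m,\epsilon(1)\Pi U_v^-}$ involves doubly-fresh replicas $[2m+1,4m]$ that cannot overlap $\epsilon(1)$, so it vanishes (factorizes to zero at $t=0$); it contributes nothing to the displayed order. The $-3\beta^2 G$ piece actually arises from pairs $(a,b)$ inside the main cavity sum $\sum_{a,b\in[2m]}$ where one index is in $\{1_1,1_2\}$ and the other is one of the \emph{three} new cavity replicas $m+1_1, m+1_2, m+1_3$ corresponding to the replicas in $V_1$ — these pairs carry sign opposite to the $G$-terms coming from the pairing with an existing replica. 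This misattribution does not affect the final constants or the validity of the recursion, but is worth correcting since the distinction between $\cc{R}_{n,f}$ and the "primed" replicas $[m+1,2m]$ matters in later, more delicate counts (e.g.\ Lemmas~\ref{derivative T} and~\ref{derivative S}).
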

\begin{remark}
    Observe that \eqref{eq: general reduce T1} is again a generalization of the recursive relation in SK model, see (1.320) (1.323) in~\cite{Tal11}. To compare our result to the SK model, recall that $F \equiv b(2) - b(1) = 1 - q$, $G \equiv b(1) - b(0) = q - \hat{q}$ and $H = 0$,  \eqref{eq: general reduce T1}  becomes
\begin{align*}
    (1 - \beta^2(1 - 4q + 3\hat{q}))\nu(f(h(1), 0))= &(h(1) - 1)(q - \hat{q})(\beta^2A^2_1 + \frac{1}{N}) \nu(f(h(1) - 2, 0))+ O_N(H + 1).
\end{align*}
\end{remark}
The proof of Lemma \ref{lem: relation T1S1} can be found in the following section. Let's first see how one can deduce Theorem \ref{thm:peel off T_kS_k} Lemma \ref{lem: relation T1S1}. Following the intuition from the beginning of this section, we apply Lemma \ref{lem: recurse condition} to get recursive relations that are of the same form as Gaussian moments.
\begin{proof}[Proof of Theorem \ref{thm:peel off T_kS_k}]
We apply Lemma \ref{lem: recurse condition} to the recursive relations in Lemma \ref{lem: relation T1S1} with the following constants
\[\alpha_2 = \frac{G}{M_1} \left(\beta^2 A_2^2 + \frac{1}{N}\right), \quad \alpha_1 = \frac{H}{NM_1} ,\quad \text{and} \ \ \alpha_0 = \frac{\beta^2}{2M_1}H.\]
\[\beta_2 = \frac{D}{M_2N}, \quad \beta_1 = \frac{E}{M_2}\left(\beta^2  A_2^2 + \frac{1}{N}\right), \quad \text{and} \ \ \beta_0 = -\frac{2\beta^2E}{M_2}.\]
To apply Lemma \ref{lem: recurse condition}, we need to check the consistance condition, then compute $C(2, 0), C(0, 2)$ and $C(1, 1)$ to get the final result.

The consistency condition is verified by Lemma \ref{lem: var s1t1}. If $g_1(x, y) = T_1^xS_1^y$, then we recover results from Section \ref{sec:var}. We include the computation for general cases for completeness. 

\paragraph{To check the consistency condition} 
Note that by Claim \ref{claim:T12}, we have 
\[\alpha_2 = \frac{G}{M_1}\frac{1}{NM_3}, \quad \beta_1 = \frac{E}{M_2}\frac{1}{NM_3}.\]
To verify \eqref{eq: consistency cond} holds for the current set of constants, check that
\begin{align*}
    \alpha_1 + \alpha_0 \beta_2 &= \frac{H}{NM_1}(1 + \frac{\beta^2}{2}\frac{D}{M_2}) = \frac{H}{NM_1M_2},\\
    \beta_1 + \beta_0 \alpha_2 &= \frac{E}{NM_2M_3}(1 - \frac{2\beta^4G}{M_1}) \\
    & = \frac{E}{NM_2M_3}\frac{M_1 - 2\beta^2G}{M_1},\\
    &= \frac{E}{NM_2M_3}\frac{M_3}{M_1} = \alpha_1 + \alpha_0 \beta_2.
\end{align*}
The only things left are to compute $C(2, 0)$, $C(0, 2)$ and $C(1, 1)$. First check that the common denominator for $C(2, 0)$, $C(0, 2)$ and $C(1, 1)$ is
\[1 - \alpha_0\beta_0 = 1 + \frac{\beta^4E^2}{M_1M_2} = \frac{M}{M_1M_2}.\]
The three constants are then given by
\begin{align*}
    C(2, 0) &= \frac{\alpha_2 + \alpha_0\beta_1}{1 - \alpha_0 \beta_0} = \frac{M_1M_2}{M}\left(\frac{G}{M_1}\frac{1}{NM_3} + \frac{\beta^2H}{2M_1}\frac{E}{M_2}\frac{1}{NM_3}\right),\\
    &= \frac{M_1M_2}{M} \cdot \frac{GM_2 + \frac{\beta^2}{2}E^2}{NM_1M_2M_3} = \frac{GM_2 + \frac{\beta^2}{2}E^2}{MN},\\
    &= A_1^2.
\end{align*}
\begin{align*}
    C(0, 2) &= \frac{\beta_2 + \beta_0\alpha_1}{1 - \alpha_0 \beta_0}= \frac{M_1M_2}{M}\left(\frac{D}{M_2N} - \frac{2\beta^2E}{M_2}\frac{H}{NM_1}\right),\\
    &= \frac{M_1M_2}{M}\frac{DM_1 - 2\beta^2E^2}{M_1M_2N} = \frac{DM_1 - 2\beta^2E^2}{NM},\\
    &= B_1^2.
\end{align*}
\begin{align*}
    C(1, 1) = \frac{\gamma}{1 - \alpha_0\beta_0} = \frac{M_1M_2}{M}\cdot \frac{H}{NM_1M_2} = \frac{H}{MN} = C_1^2.
\end{align*}
By Lemma \ref{lem: recurse condition}, we have
\begin{align} \label{eq: gaus T1S1}
    \nu(g_1(h(1), h'(1))) = & (h(1) - 1)A_1^2  \nu(g_1(h(1) - 2, h'(1))) + h'(1)C_1^2 \nu(g(h(1) - 1, h'(1) - 1)) + O_N(H_1 + 1),\\
        =& (h'(1) - 1)B_1^2 \nu(g(h(1), h'(1) - 2)) + h(1) C_1^2 \nu(g(h(1) - 1, h'(1) - 1)) + O_N(H_1 + 1).
\end{align}
The proof then is completed by induction on $H_1$. The statement holds if $H_1 = 1$, since $\E[g_{T_k}] = \E[g_{S_k}] = 0$. For $H_1 \geq 2$: suppose $h(1) + h(2) \geq 2$. The terms on the right-hand side of \eqref{eq: gaus T1S1} have total moment $H_1 - 2$. We can apply the inductive hypothesis on the right-hand side gives
\begin{align*}
    \nu(g_1(h(1), h'(1))) &= \left[(h(1) - 1)A_1^2  \E[g_{T_1}^{h(1) - 2} g_{S_1}^{h'(1)}] + h'(1)C_1^2 \E[g_{T_1}^{h(1) - 1} g_{S_1}^{h'(1) - 1}] \right] \cc{C} + O_N(H_1 + 1).
\end{align*}
where $\cc{C} = \left(\Pi_{k > 1}\E[g_{T_k}^{h(k)}g_{S_k}^{h'(k)}]\right)\nu(T^hS^{h'})$. 

Similarly, we get
\begin{align*}
    \nu(g_1(h(1), h'(1))) &= \left[(h'(1) - 1)B_1^2  \E[g_{T_1}^{h(1)} g_{S_1}^{h'(1) - 2}] + h(1)C_1^2 \E[g_{T_1}^{h(1) - 1} g_{S_1}^{h'(1) - 1}] \right] \cc{C} + O_N(H_1 + 1).
\end{align*}
from the second recursive relation from \eqref{eq: gaus T1S1}.
Note that mixed moments of $g_{T_1}, g_{S_1}$ satisfies \eqref{eq: gaussian intuition} with $a = h(1)$ and $b = h'(1)$.

\YS{We have that $\Sigma_1 \succeq 0$: Suppose not, let $\Delta$ be the matrix of corresponding error terms so that $\Sigma_0 + \Delta = \begin{bmatrix}\nu(T_1^2) & \nu(S_1T_1) \\ \nu(S_1T_1) & \nu(S_1^2)\end{bmatrix}$. Note that $\Sigma_1 = \frac{1}{N} \Sigma'_1$ where $\Sigma'_1$ is a matrix independent from $N$. Since $\Sigma_1 \not\succeq 0$, there exists a negative eigenvalue  $\lambda$ of $\Sigma_1$. Let $x$ be the corresponding eigenvector, then we have $x^T \Sigma_1 x = \frac{1}{N} \lambda < 0$. But $x \Sigma_0 x^T + x\Delta x^T \leq \frac{1}{N}\lambda + 2\max_{i, j \in [2]} |\Delta[i][j]| < 0$ where the last inequality follows from $\max_{i, j \in [2]} |\Delta[i][j]| = O_N(3) \ll \frac{1}{N}$. This is a contradiction since $\Sigma_0 + \Delta \succeq 0$.
}

\YS{To check $\begin{bmatrix}
    A_1^2 & C_1^2\\
    C_1^2 & B_1^2
\end{bmatrix} \succeq 0$: By Lemma  \ref{lem: T1 var} and \ref{lem: S1 var}, $A_1^2, B_1^2 \geq 0$. By claim \ref{claim: psd}, it is enough to verify  $\frac{\alpha_2\beta_2 - \alpha_1\beta_1}{1- \alpha_0\beta_0} \geq 0$. 
Note that 
\begin{align*}
    \alpha_2\beta_2 - \alpha_1\beta_1 &= \frac{GD}{M_1M_2N} \left(\beta^2 A_2^2 + \frac{1}{N}\right) - \frac{E^2}{M_1M_2N}\left(\beta^2  A_2^2 + \frac{1}{N}\right)\\
    &= \frac{1}{M_1M_2N}\left(\beta^2  A_2^2 + \frac{1}{N}\right) \left(GD - E^2\right)\\
    &= \frac{1}{M_1M_2M_3N^2}\left(GD - E^2\right)
\end{align*}
where the last equality follow from \ref{claim:T12}.

Observe that $\left(GD - E^2\right)$ is the expected determinant of the covariance matrix of $(\epsilon_1 - b)b$ and $\epsilon_1^2 - \tilde{b}$. To see this, recall that, by definition of $b, \tilde{b}$, $\E[\inner{(\epsilon_1 - b)b}_0] = 0$ and $\E[\inner{(\epsilon_1^2 - \tilde{b})}_0] = 0$. 
The covariance matrix is then given by, 
\[\E[\inner{((\epsilon_1 - b)b)^2}_0] = \E[\inner{(\epsilon_{1, 3} - \epsilon_{2, 3})(\epsilon_{1, 4} - \epsilon_{5, 4})}_0] = G\]
\[\E[\inner{(\epsilon_1^2 - \tilde{b})^2}_0] = \E[\inner{(\epsilon_{1}^2 - \epsilon_{2}^2)(\epsilon_{1}^2 - \epsilon_{3}^2)}_0] = D\]
\[\E[\inner{(\epsilon_1 - b)b(\epsilon_1^2 - \tilde{b})}_0] = \E[\inner{(\epsilon_{1, 3} - \epsilon_{2, 3})(\epsilon_1^2 - \epsilon_4^2)}_0] = E\]
}
This completes the induction.
\end{proof}
\begin{remark}
    Note that if $g_1(x, y) = T_1^xS_1^y$, then we have $\nu(g_1(2, 0)) = \nu(T_1^2)$, $\nu(g_1(0, 2)) = \nu(S_1^2)$ and $\nu(g_1(1, 1)) = \nu(T_1S_1)$. In this case, $\nu(g(0, 0)) = 1$. Lemma \ref{lem: recurse condition} says that the same relation holds for a more general initial expression $g(0, 0)$. In the proof above, we recovered the same set of constants from \eqref{eq: general reduce T1} and \eqref{eq: general reduce S1} as from the variance calculation in Section \ref{sec:var}.
\end{remark}
\subsubsection{Proof of Lemma \ref{lem: relation T1S1}}
Recall the definition of $U_v, \epsilon(v), U^-_v$ from the beginning of this section and that we denote $V_v = \{v_1, v_2, \cdots\}$ as the set of replicas appear in term $U_v$. The first step is to approximate $g(h(1), h'(1))$ by \eqref{eq: general rewrite2}
\begin{align} 
    \nu(g_1(h(1), h'(1))) &= \nu(\Pi_{1 \leq v \leq H_1} U_v),\\
    &= \nu(\epsilon(1) \Pi_{1 < v \leq H_1} U^-_v) + \frac{1}{N}\sum_{1 < v \leq H_1}\nu(\epsilon(1)\epsilon(v)\Pi_{u \neq 1, v}U^-_u).\label{eq: T1S1 init}
\end{align}
The idea is to apply the cavity method when $U_1$ corresponds to $T_1$ and $S_1$. 

\paragraph{To reduce the moment of $T_1$}
Suppose $U_1$ corresponds to $T_1$,  then
\[\epsilon(1) = \epsilon_{1_1, 1_3} - \epsilon_{1_2, 1_3}.\]
As usual, the first term in \eqref{eq: T1S1 init} is an order $H_1 - 1$ function, thus needs to be approximated using \eqref{eq:2nd approx} as shown in Lemma \ref{derivative T_k}. The proof is deferred to Appendix.
\begin{lemma}[First order derivative structure for $T_k$] \label{derivative T_k}
For $h(1) \geq 1$ and $h'(1) \geq 0$,  suppose $U_1$ corresponds to a copy of $T_1$ 
\begin{align*}
    \nu(\epsilon(1)\Pi_{v > 1}U_v) =& \beta^2(F-3G)\nu(g_1(h(1), h'(1)) \\
    &+\frac{\beta^2}{2}H \nu(g_1(h(1) - 1, h'(1) + 1)\\
    &+ \beta^2 (h(1) - 1)G A_2^2 \nu(g_1(h(1) - 2, h'(1)))\\
    & + O_N(H_1 + 1).
\end{align*}  
\end{lemma}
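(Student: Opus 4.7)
The plan follows the template of Lemma \ref{derivative T_kl} and the variance computation of $\nu(T_1^2)$ in Lemma \ref{lem:relation1}, extended to an arbitrary product of other basis factors. First I would note that $\nu_0$ factorizes between the last spin and the first $N-1$ spins at $t=0$, and that $\nu_0(\epsilon(1)) = \nu_0(\epsilon_{1_1,1_3}) - \nu_0(\epsilon_{1_2,1_3}) = 0$ by the $1_1 \leftrightarrow 1_2$ symmetry, so $\nu_0(\epsilon(1) \prod_{v>1} U^-_v) = 0$. Since $\epsilon(1) \prod_{v>1} U^-_v$ is an order $(H_1-1)$ function, \eqref{eq:2nd approx} reduces the task to evaluating $\nu'_0(\epsilon(1) \prod_{v>1} U^-_v)$ up to an error $O_N(H_1+1)$.

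Next I would apply the cavity formula \eqref{derivative-2} together with the $t=0$ factorization of $\nu_0$ over the last spin to write
\[
\nu'_0\bigl(\epsilon(1) \prod_{v>1} U^-_v\bigr) = \frac{\beta^2}{2} \sum_{a,b \in [2n]} \text{sgn}(a,b)\, \nu_0(\epsilon(1)\epsilon_{a,b})\, \nu_0\bigl((R^-_{a,b}-Q_{a,b}) \prod_{v>1} U^-_v\bigr) + \cc{R},
\]
where $\cc{R}$ collects the terms indexed only by replicas disjoint from all the $U_v$ and contributes $O_N(H_1+1)$. As in Lemma \ref{lem:relation1}, the identity $\epsilon(1) = \epsilon_{1_1,1_3} - \epsilon_{1_2,1_3}$ forces $\nu_0(\epsilon(1)\epsilon_{a,b}) \neq 0$ only when $|\{a,b\} \cap \{1_1,1_2\}| = 1$, restricting the analysis to four families of pairs.

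I would then enumerate these four families. \emph{Family (i)}: $a=b \in \{1_1,1_2\}$. The last-spin factor is $\pm H$, and combining the two symmetric contributions with the remaining $R^-_{1_1,1_1}-R^-_{1_2,1_2} = S_1^-$ gives $\tfrac{\beta^2}{2}H\,\nu(g_1(h(1)-1,h'(1)+1))$ by Corollary \ref{cor: last spin}. \emph{Family (ii)}: $\{a,b\} \in \{\{1_1,1_3\},\{1_2,1_3\}\}$ — factor $\pm F$, combining to $\beta^2 F\,\nu(g_1(h(1),h'(1)))$ via $R^-_{1_1,1_3}-R^-_{1_2,1_3} = T_1^-$. \emph{Family (iii)}: $b$ is one of the three duplicates in $[n+1,2n]$ of $\{1_1,1_2,1_3\}$ — factor $\pm G$, and since these duplicates and $\sigma^{1_2}$ are absent from $\prod_{v>1}U^-_v$, integrating them out at $t=0$ collapses the overlap difference back to $T_1^-$, contributing $-\beta^2 G\,\nu(g_1(h(1),h'(1)))$ per duplicate and $-3\beta^2 G\,\nu(g_1(h(1),h'(1)))$ in total; together with (ii) this produces the coefficient $\beta^2(F-3G)$. \emph{Family (iv)}: $b$ belongs to one of the remaining $h(1)-1$ copies $U_v$ of $T_1$ with replica labels $v_1 = 1_1, v_2, v_3$. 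Writing $U_v^- = T^-_{1_1,v_3} + T^-_{1_1} - T^-_{v_2,v_3} - T^-_{v_2}$ via \eqref{def:expand}, combining the four $(a,b)$ pairs arising from $b \in \{v_3,v_3'\}$ assembles the cavity contribution into a $T^-_{1_1,v_3}$ factor, whose pairing with the $T^-_{1_1,v_3}$ piece of $U_v^-$ survives by Theorem \ref{thm: general T12} and Lemma \ref{lem:var T12} as $G A_2^2 \,\nu(g_1(h(1)-2,h'(1)))$; the analogous pairs with $b \in \{v_2,v_2'\}$ produce only products of distinct basis elements and vanish at order $O_N(H_1+1)$ by Lemma \ref{lem:independence}. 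Summing over the $h(1)-1$ choices of $v$ yields $\beta^2(h(1)-1)G A_2^2\,\nu(g_1(h(1)-2,h'(1)))$. For any other $b$ — attached to a factor $U_v$ of type $T_{k,l}$, $T_k$ with $k \neq 1$, $S_k$, $T$, or $S$ — Lemma \ref{lem:independence} forces the leading $\nu_0$ contribution to vanish.

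The main obstacle is Family (iv): one must recognize the structural asymmetry between the two ``dummy'' replicas $v_2$ and $v_3$ of $T_1 = R_{1,v_3}-R_{v_2,v_3}$ — the replica $v_3$ occurs in both overlaps, while $v_2$ occurs in only one — which is exactly what makes the cavity pair through $v_3$ assemble into a genuine $T^2_{1_1,v_3}$ factor (supplying $A_2^2$), whereas the pair through $v_2$ leaves only first moments of distinct basis elements. This asymmetry is the analogue, in the present combinatorial setting, of the ``rectangles'' block in the variance computation for $\nu(T_1^2)$, and the same phenomenon will recur in the companion reduction for $\nu(S_1 T_1)$ carried out to establish Lemma \ref{lem: relation T1S1}.
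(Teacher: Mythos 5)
Your proposal is correct and takes essentially the same route as the paper's appendix proof: reduce via \eqref{eq:2nd approx} to $\nu'_0$, use the symmetry constraint $|\{a,b\}\cap\{1_1,1_2\}|=1$, and enumerate the pairs by where $b$ lives, with the "rectangle" pairing through $v_3$ producing the $A_2^2$ term and the pairing through $v_2$ (and through the $S_1$ copies) dying. The one imprecision is that for the vanishing families you cite Lemma~\ref{lem:independence}, which controls only pairwise second moments; the correct tool (and the one the paper invokes) is Theorem~\ref{thm: general T12}, which lets you peel off the isolated $T_{1_1,k}$ factor as an odd Gaussian moment inside the full product — pairwise independence alone does not force a higher mixed moment to vanish.
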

The second term is approximated using \eqref{eq:1st approx}.
\begin{align*}
    \frac{1}{N}\sum_{1 < v \leq H_1}\nu(\epsilon(1)\epsilon(v)\Pi_{u \neq 1, v}U^-_u) &= \frac{1}{N}\sum_{1 < v \leq H_1}\nu_0(\epsilon(1)\epsilon(v))\nu_0(\Pi_{u \neq 1, v}U^-_u) + O_N(H_1 + 1).
\end{align*}
By Lemma \ref{lem:independence}, $\nu_0(\epsilon(1)\epsilon(v)) = 0$ if $U_v$ doesn't correspond to $T_1$ or $S_1$. Moreover,
\[\nu_0(\epsilon(1)\epsilon(v)) = \begin{cases}
    G, & \text{ if } U_v \text{ corresponds to } T_1,\\
    H, & \text{ if } U_v \text{ corresponds to } S_1.
\end{cases}\]
There are $(h(1) - 1)$ terms $T_1$ and $h'(1)$ terms $S_1$ in $\Pi_{v \geq 1}U^-_v$. Summing up all terms of the same type and applying Corollary \ref{cor: last spin} on all terms,
\begin{align*}
    \frac{1}{N}\sum_{1 < v \leq H_1}\nu(\epsilon(1)\epsilon(v)\Pi_{u \neq 1, v}U^-_u) =& (h(1) - 1)\frac{G}{N}\nu(g_1(h(1) - 2, h'(1)),\\
    &+ h'(1) \frac{H}{N}\nu(g_1(h(1)- 1, h'(1) - 1) + O_N(H_1 + 1).
\end{align*}

Combine results for both first and second term of \eqref{eq: T1S1 init} and rearrange gives  \ref{eq: general reduce T1} 
\YS{remove this in the final version. Here for checking the result}
\begin{align*}
    (1 - \beta^2(F-3G))\nu(g(h(1), h'(1))) &= \frac{\beta^2}{2}H \nu(g_1(h(1) - 1, h'(1) + 1)\\
    &+ (h(1) - 1)\left(\beta^2 G A_2^2 + \frac{G}{N}\right)\nu(g_1(h(1) - 2, h'(1)))\\
    &+ h'(1) \frac{H}{N}\nu(g_1(h(1)- 1, h'(1) - 1)\\
    &+ O_N(H_1 + 1).
\end{align*}
\paragraph{To reduce the moment of $S_1$} Suppose, in this case, $U_1$ corresponds to $S_1$ term.
\[\epsilon(1) = \epsilon_{1_1, 1_1} - \epsilon_{1_2, 1_2}.\]
The first term in \eqref{eq: T1S1 init} is characterized by the following lemma. 
\begin{lemma}[First order derivative structure for $S_k$] \label{derivative S_k}
If $h'(1) \geq 1$ and $h(1) \geq 0$, suppose $U_1$ corresponds to a copy of $S_1$ 
\begin{align*}
    \nu(\epsilon(1)\Pi_{v > 1}U_v) =& \frac{\beta^2}{2}D \nu(g_1(h(1), h'(1)))\\
    &+ \beta^2 h(1)E A_2^2\nu(g_1(h(1) - 1, h'(1) - 1))\\
    & -2\beta^2E \nu(g_1(h(1) + 1, h'(1) - 1)\\
    &+ O_N(H_1 + 1)).
\end{align*}   
\end{lemma}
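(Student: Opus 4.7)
The plan is to adapt the proof of Lemma \ref{lem:relation2} (which handles the special case $h(1)=0$, $h'(1)=2$) and its sibling Lemma \ref{derivative T_k}, promoting the second-moment bookkeeping to arbitrary mixed moments. Since $\epsilon(1) = \epsilon_{1_1}^2 - \epsilon_{1_2}^2$ has $\nu_0$-expectation zero and $\Pi_{v>1} U_v^-$ depends only on $\bm{\rho}$, the $t = 0$ factorization of the last spin forces $\nu_0(\epsilon(1)\Pi_{v>1}U_v^-) = 0$. I would therefore invoke \eqref{eq:2nd approx} to reduce to $\nu_0'(\epsilon(1)\Pi_{v>1}U_v^-) + O_N(H_1+1)$ (using that $\epsilon(1)\Pi_{v>1}U_v^-$ is an order-$(H_1-1)$ function), and then expand via \eqref{derivative-2} as
\[\nu_0'(\epsilon(1)\Pi_{v>1}U_v^-) = \frac{\beta^2}{2}\sum_{a,b \in [2n]} \text{sgn}(a,b)\,\nu_0(\epsilon_{a,b}\epsilon(1))\,\nu_0\bigl((R_{a,b}^- - Q_{a,b})\Pi_{v>1}U_v^-\bigr) + O_N(H_1+1),\]
where the $\mathcal{R}_{n,\cdot}$ piece contributes only at order $O_N(H_1+1)$ as in Lemma \ref{derivative T_k}.

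The next step is to enumerate the nonzero pairs $(a,b)$. By replica-exchangeability under $\nu_0$, $\nu_0(\epsilon_{a,b}\epsilon(1))$ vanishes unless $a=b\in\{1_1,1_2\}$ or exactly one of $a,b$ lies in $\{1_1,1_2\}$; direct evaluation yields $\pm D$ in the first case and $\pm E$ in the second, with signs determined by whether the distinguished index equals $1_1$ or $1_2$. These are precisely the constants that drove the $\nu(S_1^2)$ and $\nu(S_1T_1)$ calculations in Lemma \ref{lem:relation2}.

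I would then collect the contributions into the three summands on the right-hand side. The diagonals $a=b\in\{1_1,1_2\}$ combine to $\frac{\beta^2}{2}D\cdot \nu_0((R_{1_1,1_1}^- - R_{1_2,1_2}^-)\Pi_{v>1}U_v^-)$, which by Corollary \ref{cor: last spin} and the identity $R_{1_1,1_1}^- - R_{1_2,1_2}^- = S_1^-$ equals $\frac{\beta^2}{2}D\,\nu(g_1(h(1),h'(1))) + O_N(H_1+1)$. Off-diagonals whose $b$ is an interior auxiliary replica of one of the $h(1)$ copies of $T_1$ in $\Pi_{v>1}U_v$ combine (merging the two $a$-choices via the $\pm E$ signs) into a $T_1$-type structure $R_{1_1,b}^- - R_{1_2,b}^-$; paired with the selected $T_1$-copy, this produces the $T_{k,l}^2$-variance factor $A_2^2$ of Lemma \ref{lem:var T12} (via Claim \ref{claim:T12}), and the combinatorial factor $h(1)$ counts the choice of consumed $T_1$, yielding $\beta^2 h(1) E A_2^2\,\nu(g_1(h(1)-1,h'(1)-1))$; cross-pairings with replicas of other basis types vanish to leading order by Lemma \ref{lem:independence}. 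Finally, off-diagonals with $b\in[n+1,2n]$ an auxiliary copy consolidate into $-\beta^2 E\cdot \nu_0((R_{1_1,b}^- - R_{1_2,b}^-)\Pi_{v>1}U_v^-)$ per admissible $b$; recognizing $R_{1_1,b}^- - R_{1_2,b}^-$ as a fresh $T_1$-basis element with $b$ as its second $b$-aux and summing over the two admissible choices of $b$ (copies of $1_1$ and $1_2$) gives the stated $-2\beta^2 E\,\nu(g_1(h(1)+1,h'(1)-1))$.

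The main obstacle will be the last step: one must verify that pairings $(a,b)$ with $b$ an aux copy of a replica already used in some $U_v$ with $v>1$ contribute only $O_N(H_1+1)$, so that the only surviving auxiliary-$b$ contribution is the "new $T_1$" consolidation above. This is precisely the phenomenon already visible in the $\nu(S_1^2)$ proof, where $b\in\{4,5\}$ (aux copies of the replicas used in $U_1$) survived while $b=6$ (aux copy of the replica from the second $S_1$) dropped out; the general case follows the same bookkeeping, powered by the independence encoded in Lemma \ref{lem:independence} together with Corollary \ref{cor: last spin} to pass from $\nu_0(\cdot^-)$ back to $\nu(\cdot)$.
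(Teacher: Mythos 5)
Your proposal follows the paper's proof essentially verbatim: apply \eqref{eq:2nd approx} to reduce to $\nu_0'(\epsilon(1)\Pi_{v>1}U_v^-)$, expand via \eqref{derivative-2}, enumerate the surviving $(a,b)$-pairs using the factorization at $t=0$, and collect the three summands with coefficients $\frac{\beta^2}{2}D$, $\beta^2 h(1)EA_2^2$, $-2\beta^2 E$. The final coefficients are right, but two points in the bookkeeping would need tightening if you carried this out. First, the claim in your last paragraph that each pairing $(a,b)$ with $b$ an aux copy of a replica already used in some $U_v$, $v>1$, contributes $O_N(H_1+1)$ on its own is not correct: after merging only the $a$-choices you get $R^-_{1_1,b}-R^-_{1_2,b}$, whose basis decomposition still carries a genuine $T_1$ piece (not fresh), contributing at order $H_1$. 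The paper sidesteps this by additionally consolidating $b\in\{c,c'\}$ (the original replica $c\in[m]\setminus V_1$ together with its derivative-formula twin $c'$), so that the $T_1$ and $T$ pieces cancel exactly in $R^-_{1_1,c}-R^-_{1_2,c}-R^-_{1_1,c'}+R^-_{1_2,c'}$; this is the same mechanism behind the surviving-vs-dropping phenomenon you point to in the $\nu(S_1^2)$ proof, where $b\in\{3,6\}$ are handled as a block rather than individually. If you sum over all $b$ first and then cancel, you get the same answer, but you must exhibit the cancellation explicitly rather than appeal to each term being small. Second, the reduction $\nu_0(T_{1,b}^2\Pi_{v\neq 1,u}U_v)\approx A_2^2\,\nu(g_1(h(1)-1,h'(1)-1))$ and the vanishing of lone $T_{k,l}$ factors both rely on Theorem \ref{thm: general T12} (already established at this point in the paper), rather than on Lemma \ref{lem:independence}, Lemma \ref{lem:var T12}, or Claim \ref{claim:T12}, which only give second-moment information.
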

For the second term, again, we have
\begin{align*}
    \frac{1}{N}\sum_{1 < v \leq H_1}\nu(\epsilon(1)\epsilon(v)\Pi_{u \neq 1, v}U^-_u) &= \frac{1}{N}\sum_{1 < v \leq H_1}\nu_0(\epsilon(1)\epsilon(v))\nu_0(\Pi_{u \neq 1, v}U^-_u) + O_N(H_1 + 1).
\end{align*}
Check that 
\[\nu_0(\epsilon(1)\epsilon(v)) = \begin{cases}
    D, & \text{ if } U_v \text{ corresponds to a copy of } S_1,\\
    E,  & \text{ if } U_v \text{ corresponds to a copy of } T_1,\\
    0, & \text{ otherwise. }
\end{cases}\]
Plug in the above equation gives 
\begin{align*}
    \frac{1}{N}\sum_{1 < v \leq H_1}\nu(\epsilon(1)\epsilon(v)\Pi_{u \neq 1, v}U^-_u) &= (h'(1) - 1)\frac{D}{N}\nu(g_1(h(1), h'(1) - 2))\\
    &+ h(1) \frac{E}{N} \nu(g_1(h(1) - 1, h'(1) - 1)) + O_N(H_1 + 1).
\end{align*}
Combine the estimations of the two terms gives \eqref{eq: general reduce S1}\YS{similarily, remove this in the final version?}
\begin{align*}
    (1 - \frac{\beta^2}{2}D)\nu(g_1(h(1), h'(1))) &= -2\beta^2E \nu(g_1(h(1) + 1, h'(1) - 1)\\
    &+ h(1)\left(\beta^2 E A_2^2 + \frac{E}{N}\right)\nu(g_1(h(1) - 1, h'(1) - 1))\\
    &+ (h'(1) - 1)\frac{D}{N}\nu(g_1(h(1), h'(1) - 2))\\
    &+ O_N(H_1 + 1).
\end{align*}
\subsubsection{Induction on $T$ and $S$}
In this section, we consider functions in the form of $T^hS^{h'}$ for $h, h' \in \mathbb{N}_{\geq 0}$. As in previous sections, the idea is to write $T^hS^{h'}$ as a formula of $T^{h - 1}S^{h' - 1}$ and $\{T^{h - 2}S^{h'}, T^{h}S^{h'- 2}\}$. To this end, let's define 
\[g(h, h') = \begin{cases}
    T^hS^{h'}, & \text{ if } h, h' \geq 0,\\
    0, & \text{ otherwise. }
\end{cases}\]
\begin{theorem} \label{thm: ST moments}
Let $\{(g_{T}, g_{S})\}$ be a Gaussian vector with mean $[0, 0] $ and covariance matrix 
\[\begin{bmatrix}
A_0^2 & C_0^2 \\
C_0^2 & B_0^2
\end{bmatrix},\]
where $A_0^2, B_0^2, C_0^2$ are given in Theorem \ref{thm: var ST}. Then we have
   \[\nu(g(h, h')) = \E[g_T^hg_S^{h'}] + O_N(h + h' + 1).\]
\end{theorem}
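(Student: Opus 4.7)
The plan is to run the same three-step program used for Theorem \ref{thm:peel off T_kS_k}, now for the global pair $(T, S)$ in place of $(T_k, S_k)$. Concretely, I will (i) derive two cavity recurrences for $\nu(g(h, h'))$, one that lowers the $T$-moment and one that lowers the $S$-moment; (ii) verify the consistency condition of Lemma \ref{lem: recurse condition}; and (iii) induct on $h + h'$.

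For step (i), I first replace every factor of $T = b^2 - q$ and $S = \tilde b - p$ by an overlap on fresh replicas via Claim \ref{claim: basis to overlap}, writing $\nu(g(h, h')) = \nu(\Pi_v U_v)$ as in \eqref{eq: general rewrite}. Applying the symmetry-of-spins identity on one distinguished factor $U_1$, the decomposition \eqref{eq: general rewrite2} gives
\begin{align*}
\nu(g(h, h')) = \nu(\epsilon(1)\Pi_{v > 1} U^-_v) + \frac{1}{N}\sum_{u \geq 2}\nu(\epsilon(1)\epsilon(u)\Pi_{v \neq 1, u} U^-_v) + O_N(h + h' + 1).
\end{align*}
I expand the first summand via \eqref{eq:2nd approx} and \eqref{derivative-2}, exactly as in Lemmas \ref{derivative T_k} and \ref{derivative S_k}. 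A pleasant feature here is that $T$ and $S$ are built only from fresh replicas, so the combinatorics of which pairs $(a,b)$ produce $\nu_0(\epsilon(1)\epsilon_{a,b}) \neq 0$ is easier than in the $T_k, S_k$ case: there are no "extra" fixed replicas that could generate the $-3G$-type mismatch cancellations, so only pairs whose replicas appear in $\epsilon(1)$ contribute. Gathering these contributions and invoking Corollary \ref{cor: last spin}, Theorem \ref{thm: general T12}, and Lemma \ref{lemma:last spin} produces two relations in the shape of \eqref{eq: general reduce T1} and \eqref{eq: general reduce S1}, with explicit constants $\alpha_i, \beta_i$ expressed via $F, G, H, D, E$ and $\beta$.

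For step (ii), the consistency condition $\alpha_1 + \alpha_0 \beta_2 = \beta_1 + \beta_0 \alpha_2$ amounts to the statement that $\nu(TS)$ computed through the two reductions agrees; this is verified by a direct calculation analogous to (and in fact slightly simpler than) the one carried out for $\nu(S_1 T_1)$ in the proof of Theorem \ref{thm:peel off T_kS_k}. Lemma \ref{lem: recurse condition} then delivers recursive identities for $\nu(g(h, h'))$ governed by constants $A_0^2 := C(2, 0)$, $B_0^2 := C(0, 2)$, $C_0^2 := C(1, 1)$ that match the variances and covariance $\nu(T^2), \nu(S^2), \nu(ST)$ up to $O_N(3)$ error (these second moments are what Theorem~\ref{thm: var ST} should compute).

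For step (iii), I induct on $h + h'$. The base cases $h + h' \in \{0, 1\}$ are immediate since $\nu(T)$ and $\nu(S)$ are $O_N(1)$ by Proposition \ref{prop:ac21}. For $h + h' \geq 2$, the two recurrences have exactly the form of the Gaussian integration-by-parts identities \eqref{eq: gaussian intuition} with covariance matrix $\begin{bmatrix} A_0^2 & C_0^2 \\ C_0^2 & B_0^2 \end{bmatrix}$; applying the inductive hypothesis to the right-hand side and matching Isserlis' theorem yields $\nu(g(h, h')) = \E[g_T^h g_S^{h'}] + O_N(h + h' + 1)$. The main obstacle is the replica bookkeeping in step (i): even with fresh replicas, one has to carefully tally how many of the $h - 1$ remaining $T$-factors and $h'$ remaining $S$-factors in $\Pi_{v > 1} U^-_v$ can share a replica with $U_1$ in the $\frac{1}{N}$-sum, and to correctly attribute each cavity-derivative contribution to the right shift $\nu(g(h, h'))$, $\nu(g(h-1, h'+1))$, $\nu(g(h+1, h'-1))$, $\nu(g(h-2, h'))$ or $\nu(g(h, h'-2))$. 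Once this combinatorial table is in place, the remainder of the proof is routine algebra paralleling Theorem \ref{thm:peel off T_kS_k}.
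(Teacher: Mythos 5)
Your high-level plan — two cavity recurrences, verification of the consistency condition of Lemma~\ref{lem: recurse condition}, then induction on $h+h'$ — is exactly the paper's strategy, and steps (ii) and (iii) are essentially right. Step (i), however, contains a genuine error that would derail the computation.

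You assert that because $T$ and $S$ involve only fresh replicas, "only pairs whose replicas appear in $\epsilon(1)$ contribute" and the combinatorics is simpler than for $T_k, S_k$. The opposite is true. For $T_k$ and $S_k$, $\epsilon(1)$ has the antisymmetrized form $\epsilon_{1_1,1_3}-\epsilon_{1_2,1_3}$ (resp.\ $\epsilon_{1_1,1_1}-\epsilon_{1_2,1_2}$), and Lemma~\ref{lem: symmetry constant T1S1} shows $\nu_0(\epsilon(1)\epsilon_{a,b})=0$ unless $|\{a,b\}\cap\{1_1,1_2\}|=1$; that antisymmetry is exactly what kills most pairs. For $T$ and $S$, $\epsilon(1)$ is merely centered, $\epsilon_{1_1,1_2}-q$ or $\epsilon_{1_1,1_1}-p$, and $\nu_0(\epsilon(1)\epsilon_{a,b})$ is generically nonzero for \emph{every} pair $(a,b)$ — this is why the paper introduces the new constants $I_1,\dots,I_5,K_1,\dots,K_4$ in Definition~\ref{claim: constants2} and works with the difference term $D_{a,b}$ (the $(a,b)$ term minus the matching $(a'',b'')$ term) so that pairs with $\{a,b\}\cap[m]=\emptyset$ cancel. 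Even after that, one must process pairs where $a,b$ lie in \emph{other} $U_v$'s disjoint from $V_1$, and those contributions are reduced via Lemmas~\ref{lem: T terms intersections}, \ref{lem: S term intersections}, and, crucially, Theorem~\ref{thm:peel off T_kS_k}: the resulting coefficients in \eqref{eq: general reduce T} and \eqref{eq: general reduce S} are therefore built from $A_1^2, B_1^2, C_1^2, A_2^2$, not just $F,G,H,D,E$ and $\beta$ as you claim. Finally, your base case is too weak: Proposition~\ref{prop:ac21} only yields $\nu(T),\nu(S)=O_N(1)$, whereas the induction requires $O_N(2)$; that strengthening must come from the recursion itself, as Lemma~\ref{lem: recurse condition} does for $f(1,0)$, $f(0,1)$.
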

The proof of Theorem \ref{thm: ST moments} uses the same idea as Theorem \ref{thm:peel off T_kS_k}: we first use cavity method to obtain a recursive relation, then apply Lemma \ref{lem: recurse condition} to see that moment of $S, T$ is the moments of a correlated Gaussian. The only difference lies in the structure of overlaps in cavity computation. Because of this difference, we will first introduce a more refined set of constants that will appear in the cavity computation, thus also the recursive relations of moments.

\paragraph{Constants}
To motivate the set of constants we need to compute the moment of $T, S$, recall that for variance computation, we started from \eqref{eq: var init}. 
\begin{align}
    \nu(X^2) &= \frac{1}{N}\nu(\epsilon_X^2) + \nu(\epsilon_X X^-),\\
    &= \frac{1}{N}\nu(\epsilon_X^2) + \nu'_0(\epsilon_X X^-) + O_N(3).
\end{align} 
By setting $X = T$ or $X = S$, we record the following constants corresponding to the expectation of the last spin. They mainly appears in $\nu'_0(\epsilon_X X^-)$ as a result of formula from \ref{lem:derivative}.
\begin{definition} \label{claim: constants2}
We record the following constants.
\begin{center}
\begin{tabular}{ |p{8cm}|p{8cm}|  }
 \hline
 \multicolumn{2}{|c|}{List of constants} \\
 \hline
 $I_1 = \nu_0((\epsilon_{12} - q)\epsilon_{12}) = \nu_0(\epsilon_{12}\epsilon_{12} - q^2)$ & \\
 \hline
 $I_2 = \nu_0((\epsilon_{12} - q)\epsilon_{13}) = \nu_0(\epsilon_{12}\epsilon_{13} - q^2)$ & \\
 \hline
 $I_3 = \nu_0((\epsilon_{12} - q)\epsilon_{34}) = \nu_0(\epsilon_{12}\epsilon_{34} - q^2)$& \\
 \hline
 $I_4 = \nu_0((\epsilon_{12} - q)\epsilon_{11}) = \nu_0(\epsilon_{12}\epsilon_{11} - pq)$ & $K_1 = \nu_0((\epsilon_{1, 1} - p)\epsilon_{12}) = \nu_0(\epsilon_{11}\epsilon_{12}) - pq = I_4$\\
 \hline
 & $K_2 = \nu_0((\epsilon_{1, 1} - p)\epsilon_{11}) = \nu_0(\epsilon_{11}\epsilon_{11}) - p^2$\\
 \hline
 $I_5 = \nu_0((\epsilon_{12} - q)\epsilon_{33}) = \nu_0(\epsilon_{12}\epsilon_{33} - pq)$ & $K_3 = \nu_0((\epsilon_{1, 1} - p)\epsilon_{23}) = \nu_0(\epsilon_{11}\epsilon_{23}) - pq = I_5$\\
 \hline
 &$K_4 = \nu_0((\epsilon_{1, 1} - p)\epsilon_{22}) = \nu_0(\epsilon_{11}\epsilon_{22}) - p^2$\\
 \hline
\end{tabular}
\end{center}
\end{definition}
The constants defined in \ref{constants} are linear combinations of the ones defined above.
\begin{claim}\label{claim: constants ST}
\[I_1 - I_2 = F,  \quad  I_2 - I_3 = G, \quad I_4 - I_5 = E,\]
\[K_1 - K_3 = E, \quad K_2 - K_4 = D.\]
\end{claim}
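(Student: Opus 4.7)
The plan is to verify each of the five identities by direct expansion of the definitions, and then using the replica-exchangeability of $\nu_0$ at the decoupled time $t=0$ to relabel replica indices. The constants $p, q$ (which appear as additive constants in the definitions of $I_k, K_k$) will cancel automatically in each difference, so the real content is an identity between quenched expectations of products of last spins.

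First I would verify $I_1 - I_2 = F$. Taking the difference cancels the $q^2$ terms and gives
\[
I_1 - I_2 = \nu_0\bigl(\epsilon_{1,2}(\epsilon_{1,2}-\epsilon_{1,3})\bigr) = \nu_0\bigl(\epsilon_1^2\,\epsilon_2(\epsilon_2-\epsilon_3)\bigr),
\]
while by definition
\[
F = \nu_0\bigl((\epsilon_{1,3}-\epsilon_{2,3})\epsilon_{1,3}\bigr) = \nu_0\bigl(\epsilon_3^2\,\epsilon_1(\epsilon_1-\epsilon_2)\bigr).
\]
Since all replicas are exchangeable under $\nu_0$, relabeling $1\leftrightarrow 2$ and then $1\leftrightarrow 3$ identifies the two expressions. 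The same style of computation handles $I_2 - I_3 = G$ (the $q^2$ cancels and after factoring one gets $\nu_0(\epsilon_1^2\epsilon_2\epsilon_3 - \epsilon_1\epsilon_2\epsilon_3\epsilon_4)$, which matches $G$ after relabeling) and $I_4 - I_5 = E$ (the $pq$ cancels and an analogous relabeling of replicas carries $\nu_0(\epsilon_1\epsilon_2(\epsilon_1^2-\epsilon_3^2))$ to $\nu_0((\epsilon_1^2-\epsilon_2^2)\epsilon_1\epsilon_3)$).

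For the two $K$-identities, the table already records $K_1 = I_4$ and $K_3 = I_5$, so $K_1 - K_3 = I_4 - I_5 = E$ follows immediately from the previous step. Finally, $K_2 - K_4$ makes the $p^2$ terms cancel and produces
\[
K_2 - K_4 = \nu_0(\epsilon_1^4 - \epsilon_1^2\epsilon_2^2) = \nu_0\bigl((\epsilon_1^2-\epsilon_2^2)\epsilon_1^2\bigr) = D,
\]
matching the definition of $D$ without any further relabeling.

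There is essentially no obstacle here; the claim is a bookkeeping lemma whose sole purpose is to express the new constants $\{I_k, K_k\}$ from Definition~\ref{claim: constants2} in terms of the already-introduced constants $\{D, E, F, G\}$ from the table in Section~\ref{sec:var}, so that the recursive relations derived for $\nu(T^h S^{h'})$ can later be matched with those from $\nu(T_1^h S_1^{h'})$. The only point to be careful about is that exchangeability of replicas under $\nu_0$ (as opposed to $\nu_1$) is genuinely needed: under the decoupled measure at $t=0$ the last spins across replicas are i.i.d.\ conditional on the cavity field $\eta\sqrt{q}$, so replica permutations act as measure-preserving symmetries of $\nu_0$. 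With that symmetry in hand, each identity is a one-line computation.
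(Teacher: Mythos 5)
Your proof is correct and follows the same route as the paper's one-line argument ("the claim follows by checking the definitions and comparing"); you simply spell out the cancellation of the $q^2$, $pq$, $p^2$ terms and the replica relabelings under $\nu_0$ that the paper leaves implicit. The observations $K_1=I_4$ and $K_3=I_5$ are indeed recorded directly in the table of Definition~\ref{claim: constants2}, so the shortcut for $K_1-K_3=E$ is legitimate.
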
 
\begin{proof}
The claim follows by checking the definition of in \ref{constants} and comparing with the ones in Definition \ref{claim: constants2}
\end{proof}
\paragraph{Proof of Theorem \ref{thm: ST moments}}
In this section, we prove Theorem \ref{thm: ST moments} and record the variance of $T, S$ as a special case. As in the proof of Theorem \ref{thm:peel off T_kS_k}, we first give two recursive formulas for mixed moments of $T, S$ using the cavity method. The proof of Theorem \ref{thm: ST moments} follows from rewriting the relations using Lemma \ref{lem: recurse condition}. The proof of the Lemma \ref{lem: relation ST} will be shown in the next subsection.
\begin{lemma} \label{lem: relation ST}
For $h \geq 1; h' \geq 0$, we have 
\begin{align} 
M_1\nu(g(h, h')) =& \beta^2E\nu(g(h - 1, h' + 1))\\
    &+ \beta^2(h - 1)\left[I_5C_1^2 + 2(2G -I_3)A_1^2 + I_3A_2^2 + \frac{1}{\beta^2N}I_3\right]\nu(g(h -2, h'))\\
    &+ \beta^2h'\left[\frac{1}{2}I_5B_1^2 + (2G -I_3)C_1^2 + \frac{1}{\beta^2N}I_5\right]\nu( g(h - 1, h' - 1))\label{eq: general reduce T}\\
    &+ O_N(h + h' + 1).
\end{align}
For $h \geq 0; h' \geq 1$
\begin{align}
   M_2\nu(g(h, h')) =&-\beta^2E\nu(g(h + 1, h' - 1))\\
   & + \beta^2 h\left[K_4 C_1^2 + 2(E - K_3)A_1^2 + K_3A_2^2 + \frac{1}{\beta^2 N} K_3\right]\nu( g(h - 1, h' - 1))\\
    &+ \beta^2(h' - 1)\left[\frac{1}{2}K_4  B_1^2 + (E - K_3)C_1^2 + \frac{1}{\beta^2 N} K_4\right]\nu( g(h, h' - 2))\label{eq: general reduce S}\\
    &+ O_N(h + h' + 1).
\end{align}
\begin{remark}
    First check that \eqref{eq: general reduce T} reduces to the equation (1.262) of~\cite{Tal11} in SK model: For SK mode, there's no self-overlap terms and $C_1^2 = 0$. To check other constants, $I_3 = \hat{q} - q^2$, 
    \[2G - I_3 = 2I_2 - 3I_3 = 2(q - q^2) - 3(\hat{q} - q^2) = 2q + q^2 - 3\hat{q}.\]
    Combined with Remark \ref{SK T_1^2}, we have
    \[\beta^2\left[I_5C_1^2 + 2(2G -I_3)A_1^2 + I_3A_2^2 + \frac{1}{\beta^2N}I_3\right] \equiv \frac{\hat{q} - q^2}{N} + \beta^2(\hat{q} - q^2)A^2 + 2\beta^2(2q + q^2 - 3\hat{q})B^2 + O_N(3).\]
    where $A, B, C$ are defined in \cite[Chapter 1.8]{Tal11}.
\end{remark}
\end{lemma}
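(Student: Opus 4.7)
The plan is to mirror the derivation of Lemma \ref{lem: relation T1S1}, now with $U_1$ playing the role of a bulk $T$-factor $R_{1_1,1_2}-q$ (for \eqref{eq: general reduce T}) or an $S$-factor $R_{1_1,1_1}-p$ (for \eqref{eq: general reduce S}). First, apply symmetry of spins and the cavity split \eqref{eq: general rewrite2}:
\[
\nu(g(h,h')) \;=\; \nu\bigl(\epsilon(1)\prod_{v>1}U_v^-\bigr) \;+\; \frac{1}{N}\sum_{v>1}\nu\bigl(\epsilon(1)\epsilon(v)\prod_{u\neq 1,v}U_u^-\bigr) \;+\; O_N(h+h'+1).
\]
The $1/N$-corrections are handled by Corollary \ref{cor: last spin}: since each $V_v$ for $v>1$ is disjoint from $V_1$, the factor $\nu_0(\epsilon(1)\epsilon(v))$ evaluates to $I_5$ or $I_4$ (resp.\ one of $K_1,\ldots,K_4$ for \eqref{eq: general reduce S}) depending on whether $U_v$ is a $T$- or $S$-factor and whether the matching replica is $a=b$ or $a\ne b$. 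The identities in Claim \ref{claim: constants ST} then combine these into the $\tfrac{1}{\beta^2 N}I_3$, $\tfrac{1}{\beta^2 N}I_5$, $\tfrac{1}{\beta^2 N}K_3$, $\tfrac{1}{\beta^2 N}K_4$ contributions, after summing over the $h-1$ remaining $T$-partners and $h'$ remaining $S$-partners.

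For the main piece, Proposition \ref{prop: approx} together with $\nu_0(\epsilon(1))=0$ reduces $\nu(\epsilon(1)\prod_{v>1}U_v^-)$ to $\nu_0'(\epsilon(1)\prod_{v>1}U_v^-)+O_N(h+h'+1)$. Expanding by Lemma \ref{lem:derivative} produces a weighted sum over pairs $(a,b)$ of $\text{sgn}(a,b)\,\nu_0(\epsilon(1)\epsilon_{a,b})\,\nu_0\bigl((R_{a,b}^--Q_{a,b})\prod_{v>1}U_v^-\bigr)$. The first factor takes one of the values $I_1,\ldots,I_5$ (or $K_1,\ldots,K_4$), determined by $|\{a,b\}\cap V_1|$ and whether $a=b$. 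For the second factor, use Corollary \ref{cor: last spin} to restore $\nu$, decompose $R_{a,b}-Q_{a,b}$ into basis terms via \eqref{def:expand}, and invoke Lemma \ref{lem:independence}: the resulting basis pieces must pair with at most one basis component of some $U_v$, producing the variance/covariance constants $A_1^2, A_2^2, B_1^2, C_1^2$ from Section \ref{sec:var} multiplied by $\nu(g(h-2,h'))$, $\nu(g(h-1,h'-1))$, or $\nu(g(h-1,h'+1))$. The case $\{a,b\}=V_1$ yields $\beta^2(F-3G)\nu(g(h,h'))$, which migrates to the LHS as $M_1$ in \eqref{eq: general reduce T} (and analogously $\tfrac{\beta^2}{2}D$ becomes $M_2$ in \eqref{eq: general reduce S}); the $a=b$-type subcases, together with the identity $I_4-I_5=E$ from Claim \ref{claim: constants ST}, produce the isolated $\beta^2E\,\nu(g(h-1,h'+1))$ term.

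The main obstacle is combinatorial bookkeeping. Because $U_1$ introduces only one or two new replicas, the cavity pair $(a,b)$ can couple to any of the $h-1$ other $T$-factors or $h'$ other $S$-factors, and every interaction generates a distinct subcase indexed by which of $T_{a,b}, T_a, T_b, T, S_a, S$ is matched against which basis component of which $U_v$. Many subcases feed into the same $A_1^2, A_2^2, B_1^2, C_1^2$ constants but with different $I$- or $K$-coefficients and signs: for instance, $2(2G-I_3)A_1^2$ in \eqref{eq: general reduce T} packs together the $(a,b)$-choices sharing exactly one replica with a partner $T$-factor (so that $T_a$ or $T_b$ pairs with $T_{v_1}$ or $T_{v_2}$), while $I_5 C_1^2$ arises when the partner is an $S$-factor. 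Enumerating all such cases and simplifying via Claim \ref{claim: constants ST} follows the blueprint of Lemma \ref{derivative T_k}, but with strictly more cases since both the self-overlap constants $K_i$ and the bulk-overlap constants $I_j$ enter simultaneously.
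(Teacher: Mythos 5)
Your overall strategy mirrors the paper's exactly: cavity split \eqref{eq: general rewrite2}, reduction to $\nu_0'$ via Proposition \ref{prop: approx}, pair-by-pair classification of $\nu_0(\epsilon(1)\epsilon_{a,b})$, and closing the recursion with the second-moment constants from Section \ref{sec:var} together with Lemma \ref{lem:independence}. The paper packages the derivative piece into Lemmas \ref{derivative T} and \ref{derivative S} (proved in the Appendix as Lemmas \ref{ap: derivative T} and \ref{ap: derivative S}), systematizing the basis-decomposition step you gesture at through the helper Lemmas \ref{lem: T terms intersections} and \ref{lem: S term intersections}.

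That said, several of your specific attributions are wrong and would produce incorrect constants if carried through literally. For the $1/N$-correction in the $T$-reduction, $\nu_0(\epsilon(1)\epsilon(v))$ equals $I_3$ when $U_v$ is a $T$-factor and $I_5$ when it is an $S$-factor; $I_4$ never enters there, since it requires $V_v\cap V_1\neq\emptyset$, which \eqref{prop: empty intersection} forbids. Those $\frac{1}{\beta^2 N}I_3$ and $\frac{1}{\beta^2 N}I_5$ contributions therefore appear directly, not via Claim \ref{claim: constants ST}. The pair $\{a,b\}=V_1$ contributes only $\beta^2(I_1-I_3)\nu(g(h,h'))=\beta^2(F+G)\nu(g(h,h'))$, not the full $\beta^2(F-3G)$; the additional $-4\beta^2 G$ is collected from off-diagonal pairs with exactly one index in $V_1$ and the other outside $[m]$. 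And the $I_5C_1^2$ coefficient of $\nu(g(h-2,h'))$ comes from diagonal pairs $a=b$ landing inside a $T$-partner $V_k$ (the diagonal supplies the $S_a$-half of $C_1^2=\nu(S_1T_1)$, the $T$-partner the $T_a$-half), while a diagonal inside an $S$-partner produces the $\frac{1}{2}I_5 B_1^2$ multiplying $\nu(g(h-1,h'-1))$ --- the opposite of what you wrote. None of this invalidates your method, but it shows the combinatorial bookkeeping you identify as the main obstacle has not actually been traced; the boundary cases $h=1$ and $h'=0$ (where some partner classes are empty) are also left unaddressed, whereas the paper handles them explicitly at the end of the appendix proofs.
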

To apply Lemma \ref{lem: recurse condition} on $f(h, h') = \nu(g(h, h'))$, we first need to check \eqref{eq: general reduce T} and \eqref{eq: general reduce S} satisfies the consistency condition. That is, the goal is to verify \eqref{eq: consistency cond} with
\[\alpha_2 = \frac{\beta^2}{M_1} \left[I_5C_1^2 + 2(2G -I_3)A_1^2 + I_3A_2^2 + \frac{1}{\beta^2N}I_3\right],\]
\[\alpha_1 = \frac{\beta^2}{M_1}\left[\frac{1}{2}I_5B_1^2 + (2G -I_3)C_1^2 + \frac{1}{\beta^2N}I_5\right],\]
\[\alpha_0 = \frac{\beta^2E}{M_1}.\]
and 
\[\beta_2 = \frac{\beta^2}{M_2}\left[\frac{1}{2}K_4  B_1^2 + (E - K_3)C_1^2 + \frac{1}{\beta^2 N} K_4\right],\]
\[\beta_1 = \frac{\beta^2}{M_2}\left[K_4 C_1^2 + 2(E - K_3)A_1^2 + K_3A_2^2 + \frac{1}{\beta^2 N} K_3\right],\]
\[\beta_0 = -\frac{\beta^2E}{M_2}.\]
We begin by recording some useful expressions of the constants that will simplify the proof.
\begin{claim} \label{claim:rel1}
\[2A_1^2 -  (A_2^2 + \frac{1}{\beta^2N})= -\frac{M_2}{\beta^2NM},\]
\[\frac{B_1^2}{2} + \frac{1}{\beta^2N} = \frac{M_1}{\beta^2MN},\]
\[M_1 - M_3 = 2\beta^2G.\]
\end{claim}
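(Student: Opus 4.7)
The three identities are purely algebraic manipulations of the explicit formulas for $A_1^2$, $A_2^2$, $B_1^2$, and $M$ derived in Theorem~\ref{thm: variance}, together with the shorthand $\beta^2 A_2^2 + \tfrac{1}{N} = \tfrac{1}{N M_3}$ from Claim~\ref{claim:T12}. My plan is to use the second (simpler) expressions for $A_1^2$ and $B_1^2$ produced in Lemma~\ref{lem: T1 var} and Lemma~\ref{lem: S1 var} — namely $A_1^2 = \tfrac{1}{2\beta^2 N}\bigl(\tfrac{1}{M_3} - \tfrac{M_2}{M}\bigr)$ and $B_1^2 = \tfrac{2}{\beta^2 N}\bigl(\tfrac{M_1}{M} - 1\bigr)$ — rather than their determinant-style counterparts, since the desired right-hand sides already involve $\tfrac{M_2}{M}$ and $\tfrac{M_1}{M}$ directly.

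For the first identity I will divide Claim~\ref{claim:T12} by $\beta^2$ to obtain $A_2^2 + \tfrac{1}{\beta^2 N} = \tfrac{1}{\beta^2 N M_3}$, and then subtract this from the expression $2A_1^2 = \tfrac{1}{\beta^2 N}\bigl(\tfrac{1}{M_3} - \tfrac{M_2}{M}\bigr)$; the $\tfrac{1}{M_3}$ contributions cancel, leaving precisely $-\tfrac{M_2}{\beta^2 N M}$. For the second identity I will take $\tfrac{B_1^2}{2} = \tfrac{1}{\beta^2 N}\bigl(\tfrac{M_1}{M} - 1\bigr)$ and add $\tfrac{1}{\beta^2 N}$ so that the $-1$ collapses, yielding $\tfrac{M_1}{\beta^2 M N}$. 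For the third identity I will simply unpack the definitions $M_1 = 1 - \beta^2(F - 3G)$ and $M_3 = 1 - \beta^2(F - G)$: the constant terms and the $\beta^2 F$ contributions cancel, and the remaining $G$ terms combine to $2\beta^2 G$.

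I do not anticipate any real obstacle here, since these identities are essentially bookkeeping lemmas collected in advance so that the coefficient manipulations in the proof of Theorem~\ref{thm: ST moments} (via Lemma~\ref{lem: recurse condition}) remain readable. The only care needed is to keep the factors of $\beta^2$ and $N$ straight and to consistently use the ``second form'' of $A_1^2$ and $B_1^2$ rather than the explicit numerator-over-$M$ form.
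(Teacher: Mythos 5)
Your proposal is correct and matches the paper's own proof: it invokes Claim~\ref{claim:T12} to rewrite $\beta^2 A_2^2 + \tfrac{1}{N}$ as $\tfrac{1}{N M_3}$ and then substitutes the second, reduced expressions for $A_1^2$ and $B_1^2$ from Lemmas~\ref{lem: T1 var} and~\ref{lem: S1 var}, with the third identity following directly from the definitions of $M_1$ and $M_3$. The algebra checks out in all three cases.
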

\begin{proof}
The third equation follows from the definition of $M_1$ and $M_3$.

For the first and second equation: By Claim \ref{claim:T12}, 
\[\beta^2A_2^2 + \frac{1}{N}= \frac{1}{NM_3}.\]
Recall the definition of $A_1^2, B_1^2$
\[A_1^2 := \frac{GM_2 + \frac{\beta^2}{2}EH}{M}\frac{1}{NM_3} = \frac{1}{2\beta^2N}\left(\frac{1}{M_3} - \frac{M_2}{M}\right),\]
\[B_1^2 = \frac{DM_1 - 2\beta^2E^2}{NM} = \frac{2}{N\beta^2}(\frac{M_1}{M} - 1).\]
Combine and rearrange to give the desired result.
\end{proof}
To simplify notations, denote $\text{LHS}$ and $\text{RHS}$ as 
\[\text{LHS}:= \alpha_1 + \alpha_0 \beta_2,   \quad \text{and} \ \ \beta_1 + \beta_0 \alpha_2 =: \text{RHS}.\]
We now begin to verify \eqref{eq: consistency cond} by comparing the coefficients in front of $K_4, I_5, I_3$ in $\text{LHS}$ and $\text{RHS}$. 

\paragraph{For $K_4$}
\begin{align*}
    \text{LHS} = \frac{\beta^4E}{M_1M_2}\left(\frac{1}{2}B_1^2 + \frac{1}{\beta^2N}\right) \stackrel{\ref{claim:rel1}}{=} \frac{\beta^4E}{M_1M_2}\frac{M_1}{\beta^2MN} = \frac{\beta^2}{M_2}\frac{E}{MN} \stackrel{\ref{lem: var s1t1}}{=}\frac{\beta^2C_1^2}{M_2} = \text{RHS}.
\end{align*}
\paragraph{For $I_5$}
Recall that by definition, $I_5 = K_3$. On one hand,
\begin{align*}
    \text{LHS} &=\frac{\beta^2}{M_1}\left(\frac{1}{2}B_1^2 + \frac{1}{\beta^2N}\right) - \frac{\beta^2E}{M_1} \frac{\beta^2}{M_2} C_1^2 \stackrel{\ref{claim:rel1}}{=} \frac{\beta^2}{M_1}\frac{M_1}{\beta^2MN} - \frac{\beta^4E}{M_1M_2}\frac{E}{MN},\\
    &= \frac{1}{MN}\left(1 - \frac{\beta^4E^2}{M_1M_2}\right).
\end{align*}
\begin{align*}
    \text{RHS} &= \frac{\beta^2}{M_2}(-2A_1^2 + A_2^2 + \frac{1}{\beta^2N}) - \frac{\beta^2E}{M_2}\frac{\beta^2}{M_1}C_1^2
    \stackrel{\ref{claim:rel1}}{=} \frac{\beta^2}{M_2}\frac{M_2}{\beta^2MN} - \frac{\beta^4E^2}{M_2M_1MN},\\
    &= \frac{1}{MN}(1 - \frac{\beta^4E^2}{M_1M_2}) = \text{LHS}.
\end{align*}
\paragraph{For $I_3$}
\begin{align*}
    \text{RHS} &= -\frac{\beta^2E}{M_2}\frac{\beta^2}{M_1} (-2A_1^2+ A_2^2 + \frac{1}{\beta^2N}) \stackrel{\ref{claim:rel1}}{=} -\frac{\beta^4E}{M_2M_1}\frac{M_2}{\beta^2MN}, \\
    &=  -\frac{\beta^2E}{M_1MN} = -\frac{\beta^2}{M_1}C_1^2 = \text{LHS}.
\end{align*}
\paragraph{The remaining terms} With some abuse of notations, check that
\begin{align*}
    \text{LHS} &= \frac{\beta^2}{M_1}2GC_1^2 + \frac{\beta^2E}{M_1}\frac{\beta^2}{M_2}EC_1^2 = \frac{C_1^2\left(2\beta^2GM_2 + \beta^4E^2\right)}{M_1M_2}
    \stackrel{\ref{lem: T1 var}}{=} \frac{2E\beta^2A_1^2M_3}{M_1M_2},\\
    \text{RHS} &= \frac{\beta^2}{M_2}2EA_1^2 - \frac{\beta^2E}{M_2}\frac{\beta^2}{M_1}4GA_1^2 = 2\beta^2EA_1^2\frac{(M_1 - \beta^22G)}{M_1M_2},\\
    & \stackrel{\ref{claim:rel1}}{=} 2\beta^2EA_1^2\frac{M_3}{M_1M_2} = \text{LHS}.
\end{align*}
This allows us to apply Lemma \ref{lem: recurse condition} to obtain a recursive relation for $\nu(T^hS^{h'})$. We start by computing the variance of $T$ and $S$.
\begin{theorem} \label{thm: var ST}
For $\beta < \beta'$, we have
     \[\nu(T^2) = A_0^2 + O_N(3),\]
    where 
    \begin{align*}
        A_0^2 &= \frac{\beta^2}{M}\left(\beta^2EK_4 + M_2I_5\right)C_1^2 + 2\frac{\beta^2}{M}\left(\beta^2E(E - K_3) + M_2 (2G -I_3)\right) A_1^2 \\
    &+ \frac{\beta^2}{M}\left(\beta^2EK_3 + M_2I_3\right)A_2^2 + \frac{1}{MN}\left(\beta^2E K_3 + M_2I_3\right).
    \end{align*}
And we further have
    \[\nu(S^2) = B_0^2 + O_N(3),\]
    where 
    \[B_0^2 = \frac{\beta^2}{2M}\left(M_1K_4 - \beta^2EI_5\right)B_1^2 + \frac{\beta^2}{M}\left(M_1(E-K_3)-E\beta^2(2G-I_3)\right)C_1^2 + \frac{1}{MN}\left(M_1K_4 - \beta^2EI_5\right).\]
Finally we also have 
    \[\nu(ST) = C_0 + O_N(3),\]
where
\[C_0 = \frac{\beta^2}{M}\left(\frac{1}{2}M_2I_5 + \beta^2EK_4\right)B_1^2 + \frac{\beta^2}{M}\left(M_2(2G -I_3) +\beta^2E(E - K_3)\right)C_1^2 + \frac{1}{MN} \left(M_2I_5 + \beta^2EK_4\right).\]
\end{theorem}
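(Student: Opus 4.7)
My plan is to treat Theorem~\ref{thm: var ST} as a direct corollary of Lemma~\ref{lem: relation ST} combined with the abstract recursion-solving Lemma~\ref{lem: recurse condition}, specialized to the base cases. Let $f(h, h') := \nu(T^h S^{h'})$, so $f(0, 0) = 1$ and the three quantities to compute are $f(2, 0) = \nu(T^2)$, $f(0, 2) = \nu(S^2)$, and $f(1, 1) = \nu(ST)$. Lemma~\ref{lem: relation ST} supplies the two recursive formulas needed in Lemma~\ref{lem: recurse condition}, with the explicit constants $\alpha_2, \alpha_1, \alpha_0, \beta_2, \beta_1, \beta_0$ listed in the paragraphs immediately preceding the theorem statement. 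Since the consistency condition $\alpha_1 + \alpha_0 \beta_2 = \beta_1 + \beta_0 \alpha_2$ is verified term by term in those same paragraphs (for the $K_4$, $I_5$, $I_3$, and remaining coefficients), Lemma~\ref{lem: recurse condition} applies and yields $f(2,0) = C(2,0) + O_N(3)$, $f(0,2) = C(0,2) + O_N(3)$, $f(1,1) = C(1,1) + O_N(3)$.

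Next I will simplify the common denominator. Since $\alpha_0 = \beta^2 E / M_1$ and $\beta_0 = -\beta^2 E / M_2$,
\[
1 - \alpha_0 \beta_0 \;=\; 1 + \frac{\beta^4 E^2}{M_1 M_2} \;=\; \frac{M_1 M_2 + \beta^4 E^2}{M_1 M_2} \;=\; \frac{M}{M_1 M_2},
\]
by the definition of $M$. Substituting the explicit forms of $\alpha_2$ and $\beta_1$ into the numerator of $C(2, 0) = (\alpha_2 + \alpha_0 \beta_1)/(1 - \alpha_0 \beta_0)$ and grouping by the factors $C_1^2, A_1^2, A_2^2$ and the constant piece $1/(\beta^2 N)$, the $1/M_1$ factor from $\alpha_2$ and the $1/(M_1 M_2)$ from $\alpha_0 \beta_1$ cancel against the $M_1 M_2/M$ prefactor, producing exactly the claimed expression for $A_0^2$. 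The computation of $C(0, 2) = B_0^2$ is structurally identical, with $\beta_2 + \beta_0 \alpha_1$ in place, and $C(1, 1) = C_0$ follows from the symmetric formula $\gamma/(1 - \alpha_0 \beta_0)$, using either side of the already-verified consistency identity.

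I do not expect any genuine obstacle: all the heavy lifting (the cavity expansion underlying Lemma~\ref{lem: relation ST} and the delicate verification of the consistency condition) is done upstream, and what remains is organized algebraic substitution. The main place I would be careful is tracking signs in $C(0, 2)$, since $\beta_0$ carries a minus sign; this is precisely what produces the $-\beta^2 E I_5$ inside the $B_1^2$ coefficient and the $-E\beta^2(2G - I_3)$ inside the $C_1^2$ coefficient of $B_0^2$. I would also double-check by reading off the variances computed in Section~\ref{sec:var} as sanity checks: specializing $A_1^2, B_1^2, C_1^2, A_2^2$ to their formulas and using the relations in Claim~\ref{claim:rel1} and Claim~\ref{claim: constants ST} to collapse the combinations $M_2 I_5 + \beta^2 E K_4$, $M_2(2G - I_3) + \beta^2 E(E - K_3)$, and $M_2 I_3 + \beta^2 E K_3$ should be consistent with the structural form of the constants that already appeared in the $T_1, S_1$ analysis.
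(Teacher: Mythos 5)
Your proposal is correct and takes essentially the same route as the paper: the paper's proof of Theorem~\ref{thm: var ST} is exactly to invoke Lemma~\ref{lem: recurse condition} with the constants read off from Lemma~\ref{lem: relation ST}, use the already-verified consistency condition, compute $1-\alpha_0\beta_0 = M/(M_1M_2)$, and then substitute into $C(2,0), C(0,2), C(1,1)$ with $f(0,0)=1$ to extract $A_0^2, B_0^2, C_0$.
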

\begin{proof}
Since the coefficients in \eqref{eq: general reduce T} and \eqref{eq: general reduce S} satisfy the condition \eqref{eq: consistency cond}, we can apply Lemma \ref{lem: recurse condition} with $h, h' \in \{0, 2\}$ to obtain the desired result. First, the common denominator of $C(2, 0), C(0, 2), C(1, 1)$ is
\[1 - \alpha_0\beta_0 = 1 + \frac{\beta^4E^2}{M_1M_2} = \frac{M}{M_1M_2}.\]
For variance of $T$:
\begin{align*}
    (1 - \alpha_0\beta_0)\nu(T^2) &= (1 - \alpha_0\beta_0)\nu(g(2, 0)) = \alpha_2 + \alpha_0\beta_1\\
    & = \frac{\beta^2}{M_1} \left[I_5C_1^2 + 2(2G -I_3)A_1^2 + I_3A_2^2 + \frac{1}{\beta^2N}I_3\right]\\
    &+ \frac{\beta^4E}{M_1M_2}\left[K_4 C_1^2 + 2(E - K_3)A_1^2 + K_3A_2^2 + \frac{1}{\beta^2 N} K_3\right] + O_N(3).
\end{align*}
Rearrange gives 
\begin{align*}
    \nu(T^2) &= \frac{\beta^2}{M}\left(\beta^2EK_4 + M_2I_5\right)C_1^2 + 2\frac{\beta^2}{M}\left(\beta^2E(E - K_3) + M_2 (2G -I_3)\right) A_1^2 \\
    &+ \frac{\beta^2}{M}\left(\beta^2EK_3 + M_2I_3\right)A_2^2 + \frac{1}{MN}\left(\beta^2E K_3 + M_2I_3\right) + O_N(3).
\end{align*}
Next, we compute the variance of $S$:
\begin{align*}
    (1 - \alpha_0\beta_0)\nu(S^2) &= (1 - \alpha_0\beta_0)\nu(g(0, 2)) = \beta_2 + \beta_0\alpha_1\\
    &= \frac{\beta^2}{M_2}\left[\frac{1}{2}K_4  B_1^2 + (E - K_3)C_1^2 + \frac{1}{\beta^2 N} K_4\right] - \frac{\beta^4E}{M_1M_2}\left[\frac{1}{2}I_5B_1^2 + (2G -I_3)C_1^2 + \frac{1}{\beta^2N}I_5\right] + O_N(3).
\end{align*}
Rearrange gives
\begin{align*}
    \nu(S^2)&= \frac{\beta^2}{2M}\left(M_1K_4 - \beta^2EI_5\right)B_1^2 + \frac{\beta^2}{M}\left(M_1(E-K_3)-E\beta^2(2G-I_3)\right)C_1^2 + \frac{1}{MN}\left(M_1K_4 - \beta^2EI_5\right) + O_N(3).
\end{align*}
For the covariance $\nu(TS)$,
\begin{align*}
    (1 - \alpha_0\beta_0)\nu(TS)& = (1 - \alpha_0\beta_0)\nu(g(1, 1)) = \alpha_1 + \alpha_0 \beta_2,\\
    &= \frac{\beta^2}{M_1}\left[\frac{1}{2}I_5B_1^2 + (2G -I_3)C_1^2 + \frac{1}{\beta^2N}I_5\right] + \frac{\beta^4E}{M_1M_2}\left[\frac{1}{2}K_4  B_1^2 + (E - K_3)C_1^2 + \frac{1}{\beta^2 N} K_4\right].
\end{align*}
Rearrange gives 
\begin{align*}
   \nu(TS) = \frac{\beta^2}{M}\left(\frac{1}{2}M_2I_5 + \beta^2EK_4\right)B_1^2 + \frac{\beta^2}{M}\left(M_2(2G -I_3) +\beta^2E(E - K_3)\right)C_1^2 + \frac{1}{MN} \left(M_2I_5 + \beta^2EK_4\right) + O_N(3).
\end{align*}

\YS{We will now check $\nu(T^2)$, $\nu(S^2)$ are non-negative. First, 
\begin{align*}
    \frac{M_1}{\beta^2}(1 - \alpha_0\beta_0)\nu(T^2) &= \left[I_5C_1^2 + 2G 2A_1^2 + I_3\frac{M_2}{\beta^2MN}\right]\\
    &+ \left[\frac{K_4}{M_2} C_1^2 + \frac{E}{M_2}2A_1^2 + K_3\frac{1}{\beta^2MN}\right] + O_N(3)
\end{align*}
First, note that $\begin{bmatrix}
    K_4 & I_5 \\
    K_3 & I_3
\end{bmatrix}$ is a covariance matrix, thus $|I_5|= |K_3| \leq \sqrt{K_4 I_3} \leq \frac{K_4 + I_3}{2}$. }
\end{proof}
Now we turn to the proof of the general moments $T^hS^{h'}$.
\begin{proof}[Proof of Theorem \ref{thm: ST moments}]
By Lemma \ref{lem: recurse condition}, we have the following recursive relation for moments of $S, T$.
\begin{align}
    \nu(T^hS^{h'}) &=(h - 1) A_0^2 \nu(T^{h - 2}S^{h'}) + h' B_0^2 \nu(T^{h - 1}S^{h' - 1}) + O_N(h + h' + 1),\\
    &= h A_0^2 \nu(T^{h - 1}S^{h' - 1}) + (h' - 1)B_0^2 \nu(T^{h}S^{h' - 2}) + O_N(h + h' + 1).
\end{align}
The proof then proceeds with induction on $h' + h$. If $h + h' = 1$, the expression holds as odd moments of Gaussian is $0$. For $h + h' \geq 2$, applying the inductive hypothesis on two terms on the right-hand side gives 
\begin{align}
    \nu(T^hS^{h'}) &= (h - 1) A_0^2 \E[g_T^{h - 2}g_S^{h'}] + h' B_0^2 \E[g_T^{h - 1}g_S^{h' - 1}] + O_N(h + h' + 1),\\
    &= h A_0^2 \E[g_T^{h - 1}g_S^{h' - 1}] + (h' - 1)B_0^2 \E[g_T^{h}g_S^{h' - 2}] + O_N(h + h' + 1).
\end{align}
\YS{$\Sigma_0 \succeq 0$ follows from a similar arguement as in the proof of Lemma \ref{thm:peel off T_kS_k}.}
\YS{To check $\Sigma_0 = \begin{bmatrix}A_0^2 & C_0^2\\
C_0^2 & B_0^2\end{bmatrix} \succeq 0$: Suppose not, let $\Delta$ be the matrix of corresponding error terms so that $\Sigma_0 + \Delta = \begin{bmatrix}\nu(S^2) & \nu(ST) \\ \nu(ST) & \nu(T^2)\end{bmatrix}$. Note that $\Sigma_0 = \frac{1}{N} \Sigma'_0$ where $\Sigma'_0$ is independent from $N$. Let $\lambda$ be the negative eigenvalue of $\Sigma_0$ and $x$ be the corresponding eigenvector, then we have $x^T \Sigma_0 x = \frac{1}{N} \lambda < 0$. Then
\[x \Sigma_0 x^T + x\Delta x^T \leq \frac{1}{N}\lambda + 2\|\Delta\|_{\infty} < 0\]
where the last inequality follows from $\|\Delta\|_{\infty} = O_N(3) \ll \frac{1}{N}$. This is a contradiction since $\Sigma_0 + \Delta \succeq 0$.
}

Using Gaussian integration by parts \eqref{eq: gaussian intuition} to rewrite RHS completes the proof.
\end{proof}
\subsubsection{Proof of Lemma \ref{lem: relation ST}} \label{sec: reduce ST}
In this section, we derive Lemma \ref{lem: relation ST} using the cavity method.
Recall the definition of $U_v, \epsilon(v), U^-_v$ from the beginning of this section and that we denote $V_v = \{v_1, v_2, \cdots\}$ as the set of replicas appears in term $U_v$. Here $|V_v| = 2$ if $U_v$ corresponds to $T$ and $|V_v| = 1$ if $U_v$ corresponds to $S$. 

\paragraph{To reduce the moment of $T$} \label{sec: reduce T}
We start by proving \eqref{eq: general reduce T}. 
As usual, the first term in \eqref{eq: general rewrite2} is approximated using \eqref{eq:2nd approx}. We record the result in Lemma \ref{derivative T} The proof is technical but straight forward, thus pushed to the appendix (Lemma \ref{ap: derivative T}).
\begin{lemma}[First order derivative structure for $T$] \label{derivative T}
If $|V_1| = 2$, then 
\begin{align*}
    \nu(g(h, h'))
    =& \beta^2 (F-3G)\nu(g(h, h') + \beta^2E\nu(g(h - 1, h' + 1))\\
    &+ \beta^2(h - 1)\left[I_5C_1^2 + 2(2G -I_3)A_1^2 + I_3A_2^2\right]\nu(g(h -2, h'))\\
    &+ \beta^2h'\left[\frac{1}{2}I_5B_1^2 + (2G -I_3)C_1^2\right]\nu( g(h - 1, h' - 1))\\
    &+ O_N(h + h' + 1).
\end{align*}
\end{lemma}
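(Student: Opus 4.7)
The strategy is a cavity-method computation parallel to the proof of Lemma \ref{derivative T_k}. The quantity to compute is $\nu(\epsilon(1)\prod_{v>1}U_v^-)$, which appears as the first term in the decomposition \eqref{eq: general rewrite2} of $\nu(g(h,h'))$ (in the lemma's statement the same symbol $\nu(g(h,h'))$ is used, with the understanding that the $\frac{1}{N}$-sum from \eqref{eq: general rewrite2} is handled separately when \eqref{eq: general reduce T} is assembled). Since $\epsilon(1) = \epsilon_{1_1, 1_2} - q$ satisfies $\nu_0(\epsilon(1)) = 0$, the bound \eqref{eq:2nd approx} gives
\[
\nu(\epsilon(1)\prod_{v>1}U_v^-) = \nu'_0(\epsilon(1)\prod_{v>1}U_v^-) + O_N(h+h'+2).
\]
Applying \eqref{derivative-2}, and using that at $t=0$ the last-spin disorder $\eta$ and the first $N-1$-spin disorder $g_{i,j}$ are independent, I would factor
\[
\nu'_0(\epsilon(1)\prod_{v>1}U_v^-) = \frac{\beta^2}{2} \sum_{a,b} \text{sgn}(a,b)\, \nu_0(\epsilon(1)\epsilon_{a,b}) \cdot \nu_0((R^-_{a,b} - Q_{a,b}) \prod_{v>1} U_v^-) + \cc{R},
\]
where the remainder $\cc{R}$ (from the duplicate replicas introduced by the formula that do not appear in $f$) is $O_N(h+h'+1)$ by Lemma \ref{lem:moment bound} and Corollary \ref{cor: last spin}.

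Next I would classify each pair $(a,b)$ by how $\{a,b\}$ relates to the private replicas $\{1_1, 1_2\}$ of $U_1$. The scalar $\nu_0(\epsilon(1)\epsilon_{a,b})$ takes one of five values from Definition \ref{claim: constants2}: $I_1$ if $\{a,b\} = \{1_1, 1_2\}$; $I_2$ if $a \neq b$ with $|\{a,b\} \cap \{1_1,1_2\}| = 1$; $I_3$ if $a \neq b$ and $\{a,b\} \cap \{1_1,1_2\} = \emptyset$; $I_4$ if $a = b \in \{1_1,1_2\}$; $I_5$ if $a = b \notin \{1_1,1_2\}$. For each type I further subdivide by whether the remaining endpoint (or singleton) coincides with a replica used by some $U_v$ with $v > 1$, or is a fresh replica (in particular, a duplicate). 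Using Corollary \ref{cor: last spin}, I would replace $\nu_0((R^-_{a,b} - Q_{a,b})\prod_{v>1}U_v^-)$ by $\nu((R_{a,b} - Q_{a,b})\prod_{v>1}U_v) + O_N(h+h'+1)$, and then evaluate the result in two ways: either merge $(R_{a,b} - Q_{a,b})$ with a $U_v$ whose replicas overlap $\{a,b\}$ (producing a squared basis factor whose second moment is one of $A_2^2, A_1^2, B_1^2, C_1^2$ from Section \ref{sec:var}), or invoke pairwise independence (Lemma \ref{lem:independence}) to see that $(R_{a,b} - Q_{a,b})$ acts as a fresh basis element whose moment reduces to $\nu(g(h,h'))$, $\nu(g(h-1, h'+1))$, $\nu(g(h-2, h'))$, or $\nu(g(h-1, h'-1))$.

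The three groups of terms in the claimed formula then arise as follows. Summing the $I_1$ and $I_2$ contributions (with signs $\text{sgn}(a,b)$ distinguishing originals from duplicates) yields $\beta^2(F - 3G)\nu(g(h,h'))$ via $F = I_1 - I_2$ and $G = I_2 - I_3$ (Claim \ref{claim: constants ST}), exactly as in the $T_1^2$ computation (cf.\ Remark \ref{SK T_1^2}). The singleton $I_4$-cases ($a = b \in \{1_1, 1_2\}$, sign $-1$) combine with the fresh $I_5$-cases ($a = b$ a duplicate not appearing in any $U_v$, sign $+1$) to produce $\beta^2 E \nu(g(h-1, h'+1))$ via $E = I_4 - I_5$; intuitively, $U_1 = R_{1_1,1_2} - q$ is replaced by an $S$-type factor $R_{a,a} - p$. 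Finally, the $I_3$- and $I_5$-contributions whose endpoints match replicas of some $U_v$ with $v > 1$ give the bracketed $(h-1)$- and $h'$-weighted coefficients: each of the $h-1$ other $T$-type $U_v$'s (resp.\ each of the $h'$ $S$-type $U_v$'s) yields, after the merging rule, the appropriate combination of $A_2^2, A_1^2, B_1^2, C_1^2$ multiplied by $I_3, 2G - I_3, I_5$.

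The main obstacle is the careful bookkeeping of these sub-cases: within each $I_k$-type I must count multiplicities (which produce the factors $h - 1$ and $h'$), track the signs $\text{sgn}(a,b)$ that distinguish originals from duplicates, and match each sub-case to the correct reduced moment via the merging/independence rules. The simplifying identities $F = I_1 - I_2$, $G = I_2 - I_3$, $E = I_4 - I_5$ of Claim \ref{claim: constants ST} are what collapse the linear combination of $I_k$'s into the concise $F, G, E$-form of the stated formula.
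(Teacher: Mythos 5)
Your overall architecture matches the paper's: classify the pairs $(a,b)$ by their relation to $V_1$, use the $I_k$ constants, apply the merging/independence reductions from Lemmas~\ref{lem: T terms intersections}--\ref{lem: S term intersections} (in spirit), and collapse via Claim~\ref{claim: constants ST}. However, there is a concrete error in the opening step that, for $T$ and $S$, does not wash out.

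You assert that the remainder $\cc{R}$ from \eqref{derivative-2} is $O_N(h+h'+1)$. That is true for $T_{k,l}$ and for $T_k, S_k$, because there $\epsilon(1)$ is a \emph{differenced} quantity and $\nu_0(\epsilon(1)\epsilon_{a',b'})=0$ whenever $a',b'$ are fresh. But for $U_1$ of type $T$ one has $\epsilon(1)=\epsilon_{1_1,1_2}-q$, and for a super-fresh pair $\nu_0(\epsilon(1)\epsilon_{a',b'})=I_3$ (or $I_5$ for $a'=b'$), which is a nonzero $O(1)$ constant. Meanwhile $\nu_0((R^-_{a',b'}-Q_{a',b'})\prod_{v>1}U_v^-)=\nu(g(h,h'))+O_N(h+h'+1)$, an order-$(h+h')$ quantity. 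So $\cc{R}$ is $O_N(h+h')$, i.e.\ contributes at the same order as the terms you keep, and discarding it changes the final coefficients. This is precisely the reason the paper does not discard $\cc{R}$ in this case: instead it introduces $D_{a,b}$, the difference between the main-sum contribution at $(a,b)$ and the corresponding super-fresh contribution at $(a'',b'')$, and exploits that $D_{a,b}=0$ whenever $\{a,b\}\cap[m]=\emptyset$. The $-I_3$ and $-I_5$ offsets that turn bare $I_k$'s into the combinations $F=I_1-I_2$, $G=I_2-I_3$, $E=I_4-I_5$, $2G-I_3$ come \emph{from} this subtraction, not from the sign pattern alone.

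Your later description does gesture at the correct mechanism (''combine the $I_4$-cases with the fresh $I_5$-cases''), so you may be implicitly performing a $D_{a,b}$-type comparison while phrasing it as if $\cc{R}$ were negligible. But as written the two statements are inconsistent: either $\cc{R}$ is kept and paired term-by-term against the main sum (as in the paper), or it is dropped and you do not get the $I_k$-differences. To repair the write-up, replace the claim ``$\cc{R}=O_N(h+h'+1)$'' with the explicit definition of $D_{a,b}$ (main-sum term minus super-fresh term), record that $D_{a,b}=0$ when $\{a,b\}\cap[m]=\emptyset$, and then run the case analysis on $D_{a,b}$ rather than on the main-sum term alone. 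A secondary inaccuracy: you attribute the $F-3G$ coefficient to ``the $I_1$ and $I_2$ contributions,'' but $F-3G=I_1-4I_2+3I_3$ involves $I_3$; again, the $I_3$ piece arises exactly from the subtracted super-fresh term in $D_{a,b}$.
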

\paragraph{For the second term in \eqref{eq: general rewrite2}}
\begin{align} \label{eq: T decomp 2nd}
    \frac{1}{N}\sum_{v = 2}^{h' + h} \nu(\epsilon(1)\epsilon(v)\Pi_{u \neq v} U^-_u) &= \frac{1}{N}\sum_{v = 2}^{h' + h} \nu((\epsilon_{1_1, 1_2}) - Q_{1_1, 1_2})\epsilon((\epsilon_{v_1, v_2}) - Q_{v_1, v_2}))\Pi_{u \neq v} U^-_u),\\
    &\stackrel{(\ref{lemma:last spin})} {=} \frac{h - 1}{N} I_3 \nu(T^{h -2}S^{h'}) + \frac{h'}{N} I_5 \nu(T^{h -1}S^{h' - 1}) + O_N(h' + h + 1),\\
    &= \frac{h - 1}{N} I_3 \nu(g(h - 2, h')) + \frac{h'}{N} I_5 \nu(g(h - 1, h' - 1)) + O_N(h' + h + 1).
\end{align}
Combine Lemma \ref{derivative T} and \eqref{eq: T decomp 2nd} gives \eqref{eq: general reduce T}\YS{Remove this equation?}
\begin{align*}
    \left(1 - \beta^2 (F-3G)\right)\nu(g(h, h')) =& \beta^2E\nu(g(h - 1, h' + 1))\\
    &+ \beta^2(h - 1)\left[I_5C_1^2 + 2(2G -I_3)A_1^2 + I_3A_2^2 + \frac{1}{\beta^2N}I_3\right]\nu(g(h -2, h'))\\
    &+ \beta^2h'\left[\frac{1}{2}I_5B_1^2 + (2G -I_3)C_1^2 + \frac{1}{\beta^2N}I_5\right]\nu( g(h - 1, h' - 1))\\
    &+ O_N(h + h' + 1).
\end{align*}
\paragraph{To reduce the moment of $S$} \label{sec: reduce S}  Similarily, we approximate the first term in \eqref{eq: general rewrite2} using \eqref{eq:2nd approx} to get Lemma \ref{derivative S} The proof can be found in appendix (Lemma \ref{ap: derivative S}).
\begin{lemma}[First order derivative structure for $S$] \label{derivative S}
    Suppose $|V_1| = 1$,
    \begin{align*}
    \nu(g(h, h'))
    =&
    \frac{\beta^2}{2}D\nu(g(h, h'))\\
    &+ 
    \beta^2 h\left[K_4 C_1^2 + 2(E - K_3)A_1^2 + K_3A_2^2\right]\nu( g(h - 1, h' - 1))\\
    &+ \beta^2(h' - 1)\left[\frac{1}{2}K_4  B_1^2 + (E - K_3)C_1^2\right]\nu( g(h, h' - 2))\\
    &-\beta^2E\nu(g(h + 1, h' - 1))\\
    &+ O_N(h + h' + 1).\\
\end{align*}
\end{lemma}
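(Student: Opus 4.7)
The plan is to mirror the cavity argument used in Lemma~\ref{derivative T}, adapting every step to the fact that $U_1$ now corresponds to an $S$-term and therefore introduces a single fresh replica $1_1$ with $\epsilon(1) = \epsilon_{1_1,1_1} - Q_{1_1,1_1}$. First, since at $t=0$ the last spins are independent of the first $N-1$ spins and $\nu_0(\epsilon(1))=0$ by the very centering that defines $S$, one has $\nu_0\bigl(\epsilon(1)\Pi_{v>1}U^-_v\bigr)=\nu_0(\epsilon(1))\,\nu_0(\Pi_{v>1}U^-_v)=0$. Because $\epsilon(1)\Pi_{v>1}U^-_v$ is of order $h+h'-1$, Proposition~\ref{prop: approx}, specifically \eqref{eq:2nd approx}, reduces the problem to estimating $\nu'_0(\epsilon(1)\Pi_{v>1}U^-_v)$ up to $O_N(h+h'+1)$.

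Next, I would apply \eqref{derivative-2} and use the last-spin/bulk factorisation at $t=0$ to rewrite
\[
\nu'_0\bigl(\epsilon(1)\Pi_{v>1}U^-_v\bigr)=\frac{\beta^2}{2}\sum_{a,b}\text{sgn}(a,b)\,\phi(a,b)\,\nu_0\bigl((R^-_{a,b}-Q_{a,b})\Pi_{v>1}U^-_v\bigr)+O_N(h+h'+1),
\]
with $\phi(a,b):=\nu_0(\epsilon_{a,b}\epsilon(1))$; the shadow remainder $\cc{R}_{n,f}$ is absorbed into $O_N$ because it always carries at least one extra centered overlap. By replica exchangeability and Definition~\ref{claim: constants2}, $\phi(a,b)$ depends only on how $\{a,b\}$ meets the special replica $1_1$: four patterns arise, giving $\phi=K_2$ when $a=b=1_1$, $\phi=K_4$ when $a=b\neq 1_1$, $\phi=K_1$ when $\{a,b\}=\{1_1,c\}$ with $c\neq 1_1$, and $\phi=K_3$ when $a\neq b$ with both distinct from $1_1$.

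Then I would collect the contributions, grouped by which moment of $g$ they produce. Corollary~\ref{cor: last spin} and Lemma~\ref{lemma:last spin} convert $\nu_0$ of the residual block into $\nu$ of the appropriate mixed basis moment, and Theorem~\ref{thm: general T12} together with Theorem~\ref{thm: variance} substitutes $A_1^2, A_2^2, B_1^2, C_1^2$ whenever a $T_{k,l}$- or $(T_k,S_k)$-sub-block is already present. Four families of pairs contribute: (i) pairs whose indices sit only in $\{1_1\}$ together with ``diagonal copies'' $(c,c)$, $c\neq 1_1$, combine via the identity $K_2-K_4=D$ of Claim~\ref{claim: constants ST} into $\tfrac{\beta^2}{2}D\,\nu(g(h,h'))$; (ii) ``rectangle'' pairs $\{1_1,c\}$ with $c$ a copy replica carry $\text{sgn}=-1$ and, using $K_1-K_3=E$, produce $-\beta^2 E\,\nu(g(h+1,h'-1))$, exactly the term that converts one $S$ into a $T$; (iii) pairs coinciding with the two replicas of some $T$-type $U_v$ collapse, after inserting the variance formulas, to $\beta^2 h\bigl[K_4 C_1^2+2(E-K_3)A_1^2+K_3 A_2^2\bigr]\nu(g(h-1,h'-1))$; and (iv) pairs touching the single replica of some other $S$-type $U_v$ yield $\beta^2(h'-1)\bigl[\tfrac12 K_4 B_1^2+(E-K_3)C_1^2\bigr]\nu(g(h,h'-2))$.

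The hardest part will be the bookkeeping in step three: many pair configurations appear at first sight to raise the basis-moment count (for instance, an $R^-_{1_1,c}-Q_{1_1,c}$ with $c$ a brand-new rectangle replica seems to add an extra $T$ factor), and these apparent increases must be shown either to cancel against their rectangle partners or to reduce to the four surviving combinations above, so that only the claimed right-hand side remains modulo $O_N(h+h'+1)$. The delicate sign in $-\beta^2 E\,\nu(g(h+1,h'-1))$ is fixed by $\text{sgn}(a,b)=-1$ whenever exactly one of $a,b$ lies in the ``physical'' block $[n]$; this is the exact parity flip, relative to the companion $+\beta^2 E\,\nu(g(h-1,h'+1))$ appearing in Lemma~\ref{derivative T}, and matching it is the key consistency check. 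Once it is verified, the remainder is a mechanical repetition of the template used for the $T$-case, with the constants $(F,G,H)$ there playing the roles parallel to $(D,E,K_3,K_4)$ here.
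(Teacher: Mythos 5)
Your overall plan follows the paper's approach (reduce to $\nu'_0$ via Proposition~\ref{prop: approx}, apply \eqref{derivative-2}, classify pairs $(a,b)$ by the last-spin constant $\nu_0(\epsilon(1)\epsilon_{a,b})$, then bookkeep), and your identification of $\phi(a,b)$ with $K_2,K_4,K_1,K_3$ according to the pattern of $\{a,b\}\cap\{1_1\}$ is correct. However, there is a genuine gap at the very start of the derivative computation.

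You assert that the shadow remainder $\cc{R}_{n,f}$ from \eqref{derivative-2} is $O_N(h+h'+1)$ ``because it always carries at least one extra centered overlap.'' This is false. At $t=0$ the shadow term factorizes as $\nu_0(\epsilon_{a',b'}\epsilon(1))\,\nu_0\bigl((R^-_{a',b'}-Q_{a',b'})\Pi_{v>1}U^-_v\bigr)$, and the second factor is a product of $h+h'$ centered (self-)overlaps, hence $O_N(h+h')$ — the same order as every retained term, not one order smaller. The paper handles this by folding each shadow pair $(a'',b'')$ into a signed difference $D_{a,b}$ alongside the physical pair; it is precisely this subtraction that produces the identities $K_2-K_4=D$ (from the single pair $a=b=1_1$ against its own shadow) and $K_1-K_3$ (from physical-versus-shadow off-diagonal pairs). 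You later invoke those very identities in your groupings (i) and (ii), which is internally inconsistent with having thrown away $\cc{R}_{n,f}$: without the shadow subtraction the $a=b=1_1$ pair yields only a $K_2$ coefficient and the rectangle pairs only $K_1$, so the $D$ and $E$ would not appear. In addition, the coefficient $-\beta^2 E$ in front of $\nu(g(h+1,h'-1))$ is not produced by the rectangle pairs alone; in the paper's computation the pairs $\{1_1,b\}$ with $b\in[m]$ contribute $+2(m-1)(K_1-K_3)$ and the pairs $\{1_1,b\}$ with $b\notin[m]$ contribute $-2m(K_1-K_3)$, and only the net $-2(K_1-K_3)$ survives. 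Your grouping (ii) misses this cancellation. The fix is to retain the shadow sum, define the combined quantity $D_{a,b}$ exactly as the paper does, and redo the count so that the $K_4$ and $K_3$ subtractions are correctly attributed to the shadow pair of each $(a,b)$ rather than absorbed into the error.
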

\paragraph{For the second term in \eqref{eq: general rewrite2}}
\begin{align} \label{eq: S decomp 2nd}
    \frac{1}{N}\sum_{v = 2}^{h' + h} \nu(\epsilon(1)\epsilon(v)\Pi_{u \neq v} U^-_u) &= \frac{1}{N}\sum_{v = 2}^{h' + h} \nu((\epsilon_{1_1, 1_2}) - Q_{1_1, 1_2})\epsilon((\epsilon_{v_1, v_2}) - Q_{v_1, v_2}))\Pi_{u \neq v} U^-_u),\\
    &\stackrel{(\ref{lemma:last spin})} {=} \frac{h}{N} K_3 \nu(g(h - 1, h'-1)) + \frac{h' - 1}{N} K_4 \nu(g(h, h' - 2)) + O_N(h' + h + 1).
\end{align}
Combining results from Lemma \ref{derivative S} and \eqref{eq: S decomp 2nd} gives the desired result. \YS{Again, remove this equation?}
\begin{align*}
    \left(1 - \frac{\beta^2}{2}D\right)\nu(g(h, h')) =& \beta^2 h\left[K_4 C_1^2 + 2(E - K_3)A_1^2 + K_3A_2^2 + \frac{1}{\beta^2 N} K_3\right]\nu( g(h - 1, h' - 1))\\
    &+ \beta^2(h' - 1)\left[\frac{1}{2}K_4  B_1^2 + (E - K_3)C_1^2 + \frac{1}{\beta^2 N} K_4\right]\nu( g(h, h' - 2))\\
    &-\beta^2E\nu(g(h + 1, h' - 1))\\
    &+ O_N(h + h' + 1).
\end{align*}
\subsection{Proof of Lemma~\ref{lem: general formal}}\label{ssec:pf-main-lem}
In this section, we put all the pieces together to compute the general mixed moments of $T_{k, l}, T_K, S_k, S, T$. 
\begin{proof}
For $k, l \in [n]$ $k \neq l$, let $\{g_{T_{k, l}}\}$ be the family of independent centered Gaussian random varaible with $\E[g^2_{T_{k, l}}] = A^2_2$ as in Theorem \ref{thm: general T12}. For $k \in [n]$, let $\{(g_{T_k}, g_{S_k})\}$ be the family of independent centered Gaussian with covariance matrix $\Sigma_1$ as in Theorem \ref{thm:peel off T_kS_k} and independent from $\{g_{T_{k, l}}\}$. Similarily, let $(g_S, g_T)$ be the Gaussian random vector with covariance matrix $\Sigma_0$ as in Theorem \ref{thm: ST moments} and independent from $\{g_{T_{k, l}}\}$ and $\{(g_{T_k}, g_{S_k})\}$. Apply Theorem \ref{thm: general T12}, then \ref{thm:peel off T_kS_k}, and finally \ref{thm: ST moments} gives the desired result.
\end{proof}

\bibliographystyle{alpha}
\bibliography{main}

\section{Appendix}\label{sec:app}
In this section, we prove the technical lemmas (Lemma \ref{derivative T_kl}, \ref{lem: relation T1S1}, \ref{lem: relation ST}) that characterize the recursive relations of general moments using cavity method. Recall the decomposition of general moments given in \eqref{eq: general rewrite2} 
\begin{align}
    \nu\left(\Pi_{k, l}T_{k, l}^{h(k, l)}\Pi_{k}T_k^{h(k)}T^{h}\Pi_{l}S_l^{h'(l)}S^{h'}\right) &= \nu(\Pi_{v \geq 1}U_v)\\
    & = \nu(\epsilon(1)\Pi_{v > 1}U^-_v) + \frac{1}{N}\sum_{u \geq 2} \nu(\epsilon(1)\Pi_{v \neq 1, u}U^-_v) + O_N(H + 1).
\end{align}
Note that the first term is of order $H - 1$, we should apply the second order approximation, \eqref{eq:2nd approx} and compute its first order derivative at time $0$. With some abuse of notation, we will always assume the first term $U_1$ corresponds to the type of basis, $T_1, S_1, T, S$, that we wish to "peel off" from the expression. Note that regardless of the type of $U_1$, $\nu_0(\epsilon(1)) = 0$ by symmetry. 
\begin{align} \label{eq: deriv init}
    \nu(\epsilon(1)\Pi_{v > 1}U^-_v) &= \nu'_0(\epsilon(1)\Pi_{v > 1}U^-_v) + O_N(H + 1).
\end{align}
This section is dedicated to characterizing the structure of such terms. 
\subsection{Proof of Lemma \ref{derivative T_kl}}
\begin{lemma}
Suppose $h(1, 2) \geq 1$ and $U_1$ corresponds to a copy of $T_{1, 2}$
\[\nu'_0(\epsilon(1)\Pi_{v > 1}U^-_v) = \beta^2 A\nu(g_{1, 2}(h(1, 2))) + O_N(H + 1).\]
\end{lemma}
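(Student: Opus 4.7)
The plan is to apply the first-order derivative formula \eqref{derivative-2} to $f = \epsilon(1)\Pi_{v>1} U^-_v$ and isolate the terms that contribute at the leading order $O_N(H)$. At $t=0$ the last spin of each replica is independent of the first $N-1$ spins under the quenched Gibbs measure, so every summand on the right of \eqref{derivative-2} factors as
\[
\nu_0(\epsilon_{a,b}\epsilon(1)) \cdot \nu_0\bigl((R^-_{a,b} - Q_{a,b})\Pi_{v>1}U^-_v\bigr).
\]
Since $U_1$ is a copy of $T_{1,2}$, we have $\epsilon(1) = (\epsilon_{1_1} - \epsilon_{1_3})(\epsilon_{1_2} - \epsilon_{1_4})$. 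Because distinct replicas are i.i.d.\ under $\inner{\cdot}_0$, the spin factor vanishes unless $\{a,b\}$ contains exactly one index from each of $\{1_1, 1_3\}$ and $\{1_2, 1_4\}$; on the four surviving unordered pairs a direct calculation gives $\nu_0(\epsilon_{1_i, 1_j}\epsilon(1)) = (-1)^{\mathbf{1}[i=3]+\mathbf{1}[j=4]} A$.

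Any pair $(a,b)$ touching a fresh cavity replica from $[n+1,2n]$, as well as the auxiliary correction $\cc{R}_{n,f}$, is eliminated by the same symmetry argument: such a contribution factors across replicas, and one of the differences $\inner{\epsilon_{1_1} - \epsilon_{1_3}}_0$ or $\inner{\epsilon_{1_2} - \epsilon_{1_4}}_0$ can be pulled out and equals zero for every realization of the disorder. Summing the four surviving signed contributions from $\{1_1,1_2\},\{1_1,1_4\},\{1_3,1_2\},\{1_3,1_4\}$, and combining the $\beta^2/2$ prefactor with the ordered-pair counting in \eqref{derivative-2}, yields
\[
\beta^2 A\,\nu_0\bigl((R^-_{1_1,1_2} - R^-_{1_1,1_4} - R^-_{1_3,1_2} + R^-_{1_3,1_4})\,\Pi_{v>1}U^-_v\bigr) = \beta^2 A\,\nu_0\bigl(\Pi_{v \geq 1}U^-_v\bigr),
\]
precisely because the four overlap differences reassemble into $U^-_1$.

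Finally, $\Pi_{v \geq 1}U_v$ is an order-$H$ function since each $U_v$ is a finite signed sum of centered overlaps, so Corollary~\ref{cor: last spin} replaces $\nu_0(\Pi_{v \geq 1}U^-_v)$ by $\nu(\Pi_{v \geq 1}U_v) = \nu(g_{1,2}(h(1,2)))$ up to an error $O_N(H+1)$, which gives the claim. The main obstacle is purely bookkeeping: confirming that the $\text{sgn}(a,b)$ factors and the ordered-pair counting in \eqref{derivative-2} actually reassemble the four non-vanishing contributions into the full four-term expression $U^-_1$ (rather than into a partially cancelled combination), and verifying that every pair involving a cavity replica truly carries a factor that vanishes identically at $t=0$ rather than merely being of smaller order in $N$. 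This is directly analogous to the $T_{k,l}$ computation in the SK model in \cite[Chapter 1.8]{Tal11}, and once the combinatorics is in place the coefficient $\beta^2 A$ emerges cleanly.
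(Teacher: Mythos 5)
Your proof is correct and follows essentially the same path as the paper's: apply the $t=0$ derivative formula, factor the last spins, observe that $\nu_0(\epsilon(1)\epsilon_{a,b})$ vanishes except for the four monomials of $\epsilon(1) = (\epsilon_{1_1}-\epsilon_{1_3})(\epsilon_{1_2}-\epsilon_{1_4})$ (since $1_2,1_3,1_4$ appear nowhere else, any other pair lets one of the difference factors pull out as $\langle\cdot\rangle_0=0$), reassemble the four signed overlap differences into $U^-_1$, and invoke Corollary~\ref{cor: last spin} to trade $\nu_0(\Pi_v U^-_v)$ for $\nu(g_{1,2}(h(1,2)))$ up to $O_N(H+1)$. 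If anything you are slightly more precise than the paper's write-up, which loosely says "$a,b\subset V_1$, $a\neq b$" (which would nominally include $(1_1,1_3)$ and $(1_2,1_4)$, pairs that in fact also give zero); your criterion of exactly one index from each of $\{1_1,1_3\}$ and $\{1_2,1_4\}$ is the accurate one, though it changes nothing in the final coefficient.
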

\begin{proof}[Proof of lemma \ref{derivative T_kl}]
    Let $U_1$ be of type $T_{1, 2}$, by Claim \ref{claim: basis to overlap},
    \[\epsilon(1) = \epsilon_{1_1, 1_2} - \epsilon_{1_1, 1_4} - \epsilon_{1_3, 1_2} + \epsilon_{1_3, 1_4},\]
    By \eqref{derivative-2}, denote $m$ as the total number of replicas used by $\epsilon(1)\Pi_{v > 1}U^-_v$.
    \begin{align*}
        \nu'_0(\epsilon(1)\Pi_{v > 1}U^-_v) &= \frac{\beta^2}{2}\sum_{1 \leq a, b \leq 2m} \text{sgn}(a, b) \nu_0(\epsilon(1)\epsilon_{a, b})\nu_0((R^-_{a, b} - \mu_{a, b})\Pi_{v > 1}U^-_v) - \mathcal{R}_{m, \epsilon(1)\Pi_{v > 1}U^-_v}.
    \end{align*}
    As we noted in \eqref{eq: constant T12}, $\nu_0(\epsilon(1)\epsilon_{a, b}) = 0$ unless $\epsilon_{a, b}$ is a monomial in $\epsilon(1)$. Since $1_3, 1_4$ can not appear in any other terms $\{U_v: v > 1\}$, $\nu_0(\epsilon(1)\epsilon_{a, b}) = A > 0$ only when $a, b \subset V_1$ and $a \neq b$. Summing over all such pairs of replicas gives the desired result.
    \begin{align}
        \nu'_0(\epsilon(1)\Pi_{v > 1}U^-_v) &= A\beta^2 \nu(g_{1, 2}(h(1, 2))) + O_N(H + 1).
    \end{align}
\end{proof}
\subsection{Proof of Lemma \ref{derivative T_k}, \ref{derivative S_k}}
In this section, we derive the structure of \eqref{eq: deriv init} with 
\[\nu(\Pi_{v \geq 1}U_v) = \nu\left(\Pi_{k}T_k^{h(k)}T^{h}\Pi_{l}S_l^{h'(l)}S^{h'}\right).\]
Recall that the total moments of each type are
\[h_T = \sum_k h(k), \quad h_S = \sum_l h'(l), \quad H_1 = h_T + h_S + h + h'.\]
and $g_1$ is the function indexed by moments of $T_1, S_1$ s.t.
\[\nu(g_1(h(1), h'(1))) = \nu(\Pi_{1 \leq v \leq H_1}U_v).\]
We begin by introducing the following notations for referencing different terms in $\nu'_t(\cdot)$ given in \eqref{derivative-2}. Denote $m$ as as the number of total replicas used by $g(h(1), h'(1))$
\[m:= n + 2h_T + h_S\] 
For $a \in [2m]$, denote $a^{''}$ as the new replicas that first appear in \eqref{derivative-2}. For $a, b \in [2m]$, let $\text{sgn}(a, b) := -1^{|\{a, b\} \cap [m]|}$ .

Our goal is to compute the following derivative with $U_1$ corresponding to a copy of $T_1$ or $S_1$
\begin{align} \label{eq: deriv T1S1}
    \nu_0'(\epsilon(1)\Pi_{v > 1}U^-_v) &= \frac{\beta^2}{2}\sum_{1 \leq a, b \leq 2m} \text{sgn}(a, b) \nu_0(\epsilon(1)\epsilon_{a, b})\nu_0((R^-_{a, b} - Q_{a, b})\Pi_{v > 1}U^-_v) - \mathcal{R}_{m, \epsilon(1)\Pi_{v > 1}U^-_v}.
\end{align}
In both cases, we need to consider contributions from terms 
\[\text{sgn}(a, b) \nu_0(\epsilon(1)\epsilon_{a, b})\nu_0((R^-_{a, b} - \mu_{a, b})\Pi_{v > 1}U^-_v) - \mathcal{R}_{m, \epsilon(1)\Pi_{v > 1}U^-_v}.\]
Before proceeding, let's exploit the symmetry of $T_1$ and $S_1$ to rule out certain types of replica pair $(a, b)$. 
\begin{lemma} \label{lem: symmetry constant T1S1}
Suppose $\epsilon(v) = \epsilon_{v_1, v_3} - \epsilon_{v_2, v_3}$ or $\epsilon(v) = \epsilon_{v_1, v_1} - \epsilon_{v_2, v_2}$. If $|\{a, b\} \cap \{v_1, v_2\}| \neq 1$, then \[
\nu_0(\epsilon(v)\epsilon_{a, b}) = 0.
\]
Moreover, for any replica $k \in [2m]\backslash \{v_1, v_2\}$, and $(a, b) \in \{(v_1, k), (v_2, k)\}$, we have
\begin{align} \label{eq: symmetry constant T1S1}
    \nu_0(\epsilon(v) \epsilon_{v_1, k}) = - \nu_0(\epsilon(v) \epsilon_{v_2, k}).
\end{align}
\end{lemma}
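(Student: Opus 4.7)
\medskip

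\noindent\textbf{Proof proposal.} The plan is to exploit a single antisymmetry: swapping the replica indices $v_1$ and $v_2$ flips the sign of $\epsilon(v)$ but leaves the measure $\nu_0$ invariant. First observe that at $t=0$ in the interpolation \eqref{eq:inter}, the last spin of each replica couples only to the independent Gaussian field $\beta \eta \sqrt{q}$ and to the deterministic terms $D\sigma_N^2 + h\sigma_N$; in particular, conditional on $\eta$ and on the first $N-1$ spins, the last spins $\epsilon_1,\ldots,\epsilon_{2m}$ are i.i.d., so after averaging over $\eta$ and the disorder, the distribution of $(\epsilon_1,\ldots,\epsilon_{2m})$ under $\nu_0$ is fully exchangeable under any permutation of replica indices. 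Let $\phi$ denote the involution that swaps $v_1 \leftrightarrow v_2$ and fixes every other replica. Then for any function $f(\epsilon_1,\ldots,\epsilon_{2m})$, $\nu_0(f) = \nu_0(\phi f)$. Moreover, in both cases of the hypothesis one has $\phi(\epsilon(v)) = -\epsilon(v)$: for $\epsilon(v) = \epsilon_{v_1,v_3} - \epsilon_{v_2,v_3}$ this is immediate, and for $\epsilon(v) = \epsilon_{v_1,v_1} - \epsilon_{v_2,v_2} = \epsilon_{v_1}^2 - \epsilon_{v_2}^2$ it is equally clear.

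For the first claim, I will split on the two cases allowed by $|\{a,b\}\cap\{v_1,v_2\}|\neq 1$. If $\{a,b\}\cap\{v_1,v_2\} = \emptyset$, then $\phi$ fixes $\epsilon_{a,b}$ (since it does not involve $v_1$ or $v_2$), so applying $\phi$ inside the expectation gives
\[
\nu_0(\epsilon(v)\epsilon_{a,b}) \;=\; \nu_0\bigl(\phi(\epsilon(v))\,\phi(\epsilon_{a,b})\bigr) \;=\; -\nu_0(\epsilon(v)\epsilon_{a,b}),
\]
hence the quantity is zero. If $\{a,b\} = \{v_1,v_2\}$, then $\epsilon_{a,b} = \epsilon_{v_1}\epsilon_{v_2}$ is again fixed by $\phi$, and the same computation yields zero.

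For the second claim, take any $k \in [2m]\setminus\{v_1,v_2\}$. Now $\phi(\epsilon_{v_2,k}) = \epsilon_{v_1,k}$, so exchangeability under $\phi$ gives
\[
\nu_0(\epsilon(v)\,\epsilon_{v_2,k}) \;=\; \nu_0\bigl(\phi(\epsilon(v))\,\phi(\epsilon_{v_2,k})\bigr) \;=\; -\nu_0(\epsilon(v)\,\epsilon_{v_1,k}),
\]
which is exactly \eqref{eq: symmetry constant T1S1}. I do not foresee any real obstacle here; the only subtle point is verifying the exchangeability of the last-spin marginals under $\nu_0$, which is immediate from the form of $H_N^0$ but worth stating explicitly so the reader sees where the $t=0$ interpolation is being used.
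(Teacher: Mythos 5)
Your proof is correct and rests on the same underlying fact as the paper's---replica exchangeability under $\nu_0$---but packages it more cleanly. The paper verifies the conclusion case by case: it reduces each $\nu_0(\epsilon_{a,b}\epsilon_{c,d})$ to a canonical representative (observing the expectation depends only on the intersection/union pattern of the index multisets) and, for \eqref{eq: symmetry constant T1S1}, performs some product rearrangements of the form $\epsilon_{v_1,v_3}\epsilon_{v_1,k}=\epsilon_{v_1,v_1}\epsilon_{v_3,k}$ before invoking exchangeability. You instead factor the whole lemma through a single symmetry: the transposition $\phi=(v_1\,v_2)$ flips the sign of $\epsilon(v)$ while either fixing $\epsilon_{a,b}$ (first claim, covering both $|\{a,b\}\cap\{v_1,v_2\}|=0$ and $=2$, including $a=b$) or exchanging $\epsilon_{v_1,k}\leftrightarrow\epsilon_{v_2,k}$ (second claim). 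This handles both forms of $\epsilon(v)$ uniformly and makes the sign cancellation transparent. One small remark: your justification of exchangeability via the factorized structure of $H_N^0$ and conditional independence given $\eta$ is more elaborate than necessary. Replica exchangeability of $\nu_t$ holds at every $t$, not just $t=0$, simply because $\langle\cdot\rangle_t$ over $n$ replicas is an $n$-fold product of a single Gibbs measure, so permuting replica labels leaves $\nu_t$ invariant on any test function; no appeal to the $t=0$ decoupling is needed for this particular symmetry.
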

\begin{proof}
The value of $\nu(\epsilon_{a, b}\epsilon_{c, d})$ depends only on the size of union and intersection of $\{a, b\}$ and $\{c, d\}$. 
Check that if $|\{a, b\} \cap \{v_1, v_2\}| \neq 1$, the two terms in $\epsilon(v)\epsilon_{a, b}$ are equvilent:

If $|\{a, b\} \cap \{v_1, v_2\}| = 0$, we have 
\[\nu_0(\epsilon_{v_1, v_3}\epsilon_{a, b}) = \nu_0(\epsilon_{1, 2}\epsilon_{3, 4}) = \nu_0(\epsilon_{v_2, v_3}\epsilon_{a, b}),\]
and
\[\nu_0(\epsilon_{v_1, v_1}\epsilon_{a, b}) = \nu_0(\epsilon_{1, 1}\epsilon_{2, 3}) = \nu_0(\epsilon_{v_2, v_2}\epsilon_{a, b}).\]
If $|\{a, b\} \cap \{v_1, v_2\}| = 2$, we have
\[\nu_0(\epsilon_{v_1, v_3}\epsilon_{v_1, v_2}) = \nu_0(\epsilon_{1, 2}\epsilon_{1, 3}) = \nu_0(\epsilon_{v_2, v_3}\epsilon_{v_1, v_2}),\]
and
\[\nu_0(\epsilon_{v_1, v_1}\epsilon_{v_1, v_2}) = \nu_0(\epsilon_{1, 1}\epsilon_{1, 2}) = \nu_0(\epsilon_{v_2, v_2}\epsilon_{v_1, v_2}).\]

Suppose $(a, b) \in \{\{v_1, k\}, \{v_2, k\}\}$. To check \eqref{eq: symmetry constant T1S1}:

If $U_v$ corresponds to $T_1$,
\begin{align*}
    \nu_0(\left(\epsilon_{v_1, v_3} - \epsilon_{v_2, v_3}\right)\epsilon_{v_1, k}) &= \nu_0(\epsilon_{v_1, v_1}\epsilon_{v_3, k} - \epsilon_{v_1, v_2}\epsilon_{v_3, k}),\\
    &= \nu_0(\epsilon_{v_2, v_2}\epsilon_{v_3, k} - \epsilon_{v_1, v_2}\epsilon_{v_3, k}),\\
    &=  -\nu_0(\left(\epsilon_{v_1, v_3} - \epsilon_{v_2, v_3}\right)\epsilon_{v_2, k}).
\end{align*}
If $U_v$ corresponds to $S_1$, 
\begin{align*}
    \nu_0(\left(\epsilon_{v_1, v_1} - \epsilon_{v_2, v_2}\right)\epsilon_{v_1, k}) &= \nu_0(\epsilon_{v_1, v_1}\epsilon_{v_1, k} - \epsilon_{v_2, v_2}\epsilon_{v_1, k}),\\
    &= \nu_0(\epsilon_{v_2, v_2}\epsilon_{v_2, k} - \epsilon_{v_1, v_1}\epsilon_{v_2, k}),\\
    &=  -\nu_0(\left(\epsilon_{v_1, v_1} - \epsilon_{v_2, v_2}\right)\epsilon_{v_2, k}).
\end{align*}
where the second equality follows from $k \notin \{v_1, v_2\}$ and by the linearity of expectation, exchanging the index of replica doesn't affect the expectation under $\nu_0$.
\end{proof}
By Lemma \ref{lem: symmetry constant T1S1}, the set of replica pairs $(a, b)$ s.t. $\nu_0(\epsilon(1)(\epsilon_{a, b} - q)) \neq 0$ is given by 
\[\cc{P}_1 = \{(a, b): |\{a, b\} \cap \{1_1, 1_2\}| = 1; 1 \leq a, b \leq [2m]\}.\]
Summing up all non-trivial terms in \eqref{eq: deriv T1S1}, the goal can be simplified to 
\[\nu_0'(\epsilon(1)\Pi_{v > 1}U^-_v) = \frac{\beta^2}{2}\sum_{(a, b) \in \cc{P}_1} sgn(a, b) \nu_0(\epsilon(1)\epsilon_{a, b})\nu_0((R^-_{a, b} - \mu_{a, b})\Pi_{v > 1}U^-_v).\]
Now we proceed to study the case when $U_1$ corresponds to a copy of $T_1$.
\begin{lemma}
[restatment of Lemma \ref{derivative T_k}]
For $h(1) \geq 1$ and $h'(1) \geq 0$,  suppose $U_1$ corresponds to a copy of $T_1$ 
\begin{align*}
    \nu(\epsilon(1)\Pi_{v > 1}U_v) =& \beta^2(F-3G)\nu(g_1(h(1), h'(1)) \\
    &+\frac{\beta^2}{2}H \nu(g_1(h(1) - 1, h'(1) + 1)\\
    &+ \beta^2 (h(1) - 1)G A_2^2 \nu(g_1(h(1) - 2, h'(1)))\\
    & + O_N(H_1 + 1).
\end{align*}
\end{lemma}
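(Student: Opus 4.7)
The plan is to apply the same cavity-expansion strategy used in the proof of Lemma \ref{lem:relation1} for $\nu(T_1^2)$, but in the general setting of arbitrary higher-order moments of the other basis elements. Since $\epsilon(1)\Pi_{v>1}U_v^-$ is a product of $H_1-1$ centered overlaps, Proposition \ref{prop: approx} lets us replace $\nu_1(\cdot)$ by $\nu_0(\cdot)+\nu'_0(\cdot)$ with error $O_N(H_1+1)$. Moreover $\nu_0(\epsilon(1)\Pi_{v>1}U_v^-)=0$ by the $1_1\leftrightarrow 1_2$ symmetry (the replica $1_2$ appears only inside $U_1$), so the whole task reduces to computing $\nu'_0(\epsilon(1)\Pi_{v>1}U_v^-)$.

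Next I would invoke the cavity derivative formula \eqref{derivative-2}, which expresses $\nu'_0(\cdot)$ as a sum over replica pairs $(a,b)\in[2m]^2$ of terms $\mathrm{sgn}(a,b)\,\nu_0(\epsilon(1)\epsilon_{a,b})\cdot\nu_0((R^-_{a,b}-Q_{a,b})\Pi_{v>1}U_v^-)$. By Lemma \ref{lem: symmetry constant T1S1}, only pairs with $|\{a,b\}\cap\{1_1,1_2\}|=1$ survive, and by \eqref{eq: symmetry constant T1S1} the contributions from $(1_1,k)$ and $(1_2,k)$ combine naturally into a single summand indexed by the ``external'' replica $k$, whose second factor becomes the $T_k$-like difference $\nu_0((R^-_{1_1,k}-R^-_{1_2,k})\Pi_{v>1}U_v^-)$.

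The remaining step is an exhaustive case analysis classifying $k$ by its structural role in $\Pi_{v>1}U_v^-$; by Corollary \ref{cor: last spin} and \eqref{eq:1st approx} each case reduces to a $\nu(\cdot)$-computation on the remaining factors. The buckets I expect, mirroring the item list of Lemma \ref{lem:relation1}, are: (i) $k\in\{1_1,1_2\}$ (self-pair): produces the constant $H$ times $\nu((R_{1_1,1_1}-R_{1_2,1_2})\Pi_{v>1}U_v^-)=\nu(S_1\cdot\Pi_{v>1}U_v^-)$, giving the term $\tfrac{\beta^2}{2}H\,\nu(g_1(h(1)-1,h'(1)+1))$; (ii) $k=1_3$: yields $F\cdot\nu(g_1(h(1),h'(1)))$; (iii) $k$ an ``internal'' replica of one of the remaining $h(1)-1$ copies of $T_1$: the pairing forms a $T_{1,k}$-rectangle with that $T_1$, and by Lemma \ref{lem:var T12} this contributes $(h(1)-1)\cdot GA_2^2\cdot\nu(g_1(h(1)-2,h'(1)))$; (iv) $k$ one of the three new replicas introduced by the derivative whose original copy lies in $V_1$: each contributes $-G\cdot\nu(g_1(h(1),h'(1)))$ (no new rectangle is formed), summing to $-3G\cdot\nu(g_1(h(1),h'(1)))$ and combining with (ii) to produce the coefficient $F-3G$; (v) all other $k$ (inside some $S_1$-block, or inside a block for a different replica $T_k,S_k,T,S$ with $k\ne 1$): each such contribution is $O_N(H_1+1)$.

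The main obstacle is bucket (v): one must verify that, for every block-type, the combination of ``original'' and ``derivative-new'' replica summands either cancels or produces an $O_N$ term. The key inputs are the pairwise independence structure of Lemma \ref{lem:independence} (so that $\nu$ of a product containing a singleton odd moment of any non-paired basis vanishes), the rectangle-vanishing argument that handled the $b\in\{4,9\}$ case in Lemma \ref{lem:relation1} (``no even moment of $T_{k,l}$ appears''), and Lemma \ref{lem:moment bound} for the $O_N$ bookkeeping; this analysis is the natural generalization of the sub-bullet ``$a\in\{1,2\}$, $b\in\{4,9\}$'' type reasoning from Section \ref{sec:var}. Collecting the surviving contributions from (i)--(iv), multiplying by $\beta^2/2$ and the factor $2$ from combining $(1_1,k)$ with $(1_2,k)$ where appropriate, recovers exactly the three-term expression claimed in Lemma \ref{derivative T_k}.
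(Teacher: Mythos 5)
Your proposal follows essentially the same route as the paper's Appendix proof of Lemma~\ref{derivative T_k}: you reduce to $\nu'_0(\cdot)$ via Proposition~\ref{prop: approx} and the vanishing of $\nu_0(\epsilon(1)\Pi_{v>1}U^-_v)$ (by factorization at $t=0$ and the $1_1\leftrightarrow 1_2$ symmetry), restrict to pairs with $|\{a,b\}\cap\{1_1,1_2\}|=1$ via Lemma~\ref{lem: symmetry constant T1S1}, and classify the surviving summands into the same buckets producing the $H$, $F$, $(h(1)-1)GA_2^2$, and $-3G$ contributions, with the factor-of-two bookkeeping handled at the end. One precision issue worth flagging: in bucket (iii) you cite Lemma~\ref{lem:var T12}, but that only gives $\nu(T_{1,2}^2)=A_2^2+O_N(3)$. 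What you actually need is the factorization $\nu(T_{1_1,k}^2\,\Pi_{v\neq 1,u}U_v)=A_2^2\,\nu(g_1(h(1)-2,h'(1)))+O_N(H_1+1)$, and more broadly that any odd power of a $T_{\cdot,\cdot}$ kills the whole product to order $O_N(H_1+1)$ --- this is needed both to extract the $T_{1_1,k}$ term from the four-term rectangle $T_{1_1,k}-T_{1_2,k}-T_{1_1,k'}+T_{1_2,k'}$ (since $1_2$ and $k'$ occur nowhere else) and to dispatch bucket (v). Both facts are exactly the content of Theorem~\ref{thm: general T12}, which is the reference the paper invokes at that step; it is established earlier using only Lemma~\ref{derivative T_kl}, so citing it here introduces no circularity.
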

\begin{proof}[Proof of Lemma \ref{derivative T_k}]
The proof follows the same idea as in Section \ref{sec:var}.

Let's assume that $h(1) \geq 1$, and $U_1$ corresponds to a copy of $T_{1}$ (By definition, this corresponds to $1_1 = 1$ but we will use the notation $1_1$ for the sake of consistency.). 
\[
\epsilon(1) = \epsilon_{1_1, 1_3} - \epsilon_{1_2, 1_3}.
\]
To compute $\nu_0'(\epsilon(1)\Pi_{v > 1}U^-_v)$, we will count the contribution of terms from $(a, b) \in \cc{P}_1$. By Lemma \ref{lem: symmetry constant T1S1}, it make sense to group $(a, b)$ based on $\{a, b\} \cap [2m] \backslash \{v_1, v_2\}$. For each of those subset, we will first apply \eqref{eq: symmetry constant T1S1} to compute the $\nu_0(\epsilon(1)\epsilon_{a, b})$ part, then characterize the structure of $\nu_0((R^-_{a, b} - \mu_{a, b})\Pi_{v > 1}U^-_v)$.
\begin{itemize}
    \item If $a = b$: In this case, we have $(a, b) \in \{(v_1, v_1), (v_2, v_2)\}$. By a similar argument as the proof of \eqref{eq: symmetry constant T1S1}, 
    \[\nu_0(\epsilon(1)\epsilon_{1_1, 1_1}) = - \nu_0(\epsilon(1)\epsilon_{1_2, 1_2}) = H\]
    The sum of the two terms are 
    \[H \nu_0((R^-_{1_1, 1_1} - R^-_{1_2, 1_2})\Pi_{v > 1}U^-_v) = H \nu(g_1(h(1) - 1, h'(1) + 1) + O_N(H_1 + 1).\]
    \item If $\{a, b\} \in \{\{1_1, 1_3\}, \{1_2, 1_3\}\}$. For those terms, 
    \[\nu_0(\epsilon(1)\epsilon_{1_1, 1_3}) =  F.\]
    Summing over the two terms gives 
    \[2F \nu_0((R^-_{1_1, 1_3} - R^-_{1_2, 1_3})\Pi_{v > 1}U^-_v) = 2F \nu(g_1(h(1), h'(1))) + O_N(H_1 + 1).\]
    \item To sum over remaining $(a, b) \in \cc{P}_1$ corresponds to iterating over all replicas $k \in [2m] \backslash V_1$ and sum over pairs $\{1_1, k\}, \{1_2, k\}$. It is easier if we account for $k \in [m] \backslash V_1$ and the corresponding new replica $k' \in [2m] \backslash [m]$ introduced by Lemma \ref{lem:derivative}.
    \begin{itemize}
        \item For $k \in [m] \backslash V_1$, let $k' := m + k$ be the corresponding new replica from Lemma \ref{lem:derivative}. Summing over all four terms gives 
        \begin{align*}
           &G \nu_0((R^-_{1_1, k} - R^-_{1_2, k} - R^-_{1_1, k'} + R^-_{1_2, k'})\Pi_{v > 1}U^-_v)\\
           &= G \nu((T_{1_1, k} - T_{1_2, k} - T_{1_1, k'} + T_{1_2, k'})\Pi_{v > 1}U_v) + O_N(H_1 + 1).
        \end{align*}
        We now will explore the structure of this term by viewing it as some general moment of $T_{1, 2}, T_1, S_1, T, S$. By Theorem \ref{thm: general T12}, only even moments of $T_{k, l}$ give a non-trivial contribution to the sum. By construction, the replica $k'$ and $1_2$ do not appear in any other term $U_v$. Thus the non-trivial portion of $(T_{1_1, k} - T_{1_2, k} - T_{1_1, k'} + T_{1_2, k'})\Pi_{v > 1}U_v$ can only come from 
        \[\nu(T_{1_1, k}\Pi_{v > 1}U_v)\]
        Now the goal becomes checking if $T_{1_1, k}$ occurs in $\Pi_{v > 1}U_v$. Becuase $g_1(h(1), h'(1))$ contains terms of the form $T_k, S_k, T, S$, the only terms where $\{1_1, k\}$ appear together are terms correspond to $T_1$ or $S_1$. We will show that only $T_1$ terms are non-trivial. Let's first rewrite $U_v$ using the "basis". For $U_v$ corresponds to a copy of $T_{1_1}$ 
        \[U_v = T_{v_1, v_3}- T_{v_2, v_3} + T_{v_1} - T_{v_2}.\] 
        Thus if $k = u_3$ for some $u > 1$, then 
        \[\nu(T_{1_1, k}\Pi_{v > 1}U_v) = \nu(T_{1_1, k}^2 \Pi_{v \neq 1, u}U_v) = A_2^2 \nu(g_1(h(1) - 2, h'(1))) + O_N(H_1 + 1).\]
        If $k$ appears in a copy of $S_{1_1}$, $U_u$, then the corresponding term becomes
        \[\nu(T_{1_1, k} (S_{1_1}- S_k) \Pi_{v \neq 1, u}U_v)) = O_N(H_1 + 1).\]
        \YS{If we want to check things in detail, let $p, q$ be two replicas that are never used before, we can expand out the expression of $T_{1_1, k}$
        \begin{align*}
            &\nu(T_{1_1, k} (R_{1_1, 1_1} - R_{k, k}) \Pi_{v \neq 1, u}U_u)) \\
            &= \nu((R_{1_1, k} - R_{1_1, q} - R_{k, p} + R_{k, q})(R_{1_1, 1_1} - R_{k, k}) \Pi_{v \neq 1, u}U_u))\\
            &= \nu(R_{1_1, 1_1}(R_{1_1, k} - R_{1_1, q} - R_{k, p} + R_{k, q}) \Pi_{v \neq 1, u}U_v))\\
            &-\nu(R_{k, k}(R_{1_1, k} - R_{1_1, q} - R_{k, p} + R_{k, q})\Pi_{v \neq 1, u}U_v))
        \end{align*}
        Note that since $k \neq 1_1$ and by definition, $k$ does not appear in any other remaining terms $U_v$, 
        \begin{itemize}
            \item For the first term, since $k, q$ do not appear in the remaining terms, the first two terms of $T_{1_1, k}$ expression are of the same type, thus 
            \[\nu((R_{1_1, 1_1}(R_{1_1, k} - R_{1_1, q})\Pi_{v \neq 1, u}U_v)) = 0\]
            Also,
            \[\nu((R_{1_1, 1_1}(- R_{k, p} + R_{k, q})\Pi_{v \neq 1, u}U_v)) = 0\]
            \item For the second term, observe that if we introduce a new replica $r$, the same argument above implies that 
            \[\nu(R_{r, r}(R_{1_1, k} - R_{1_1, q} - R_{k, p} + R_{k, q})\Pi_{v \neq 1, u}U_v)) = 0\]
            Thus we have 
            \begin{align*}
                &\nu(R_{k, k}(R_{1_1, k} - R_{1_1, q} - R_{k, p} + R_{k, q})\Pi_{v \neq 1, u}U_v))\\
                &= \nu((R_{k, k} - R_{r, r})(R_{1_1, k} - R_{1_1, q} - R_{k, p} + R_{k, q})\Pi_{v \neq 1, u}U_v))\\
                &= \nu(S_kT_{1_1, k} \Pi_{v \neq 1, u}U_v))\\
                &= O_N(H + 1)
            \end{align*}
            where the last equality follows from Theorem \ref{thm: general T12}.
        \end{itemize}
        }
        Thus we only need to count countribution of $\{1_1, k\}$ where $\{1_1, k\}$ appear together in some term $U_v$ where $V_v = \{v_1, v_2, v_3\}$ and by definition, $v_1 = 1_1$ $v_2 = k$. 
        \begin{align*}
            2\times (h(1) - 1)G A_2^2 \nu(g_1(h(1) - 2, h'(1))).
        \end{align*}
        \item The only $k$ that are not counted now are those that correspond to replicas in $V_1$. For each such $k$, since such $k$ as well as $1_2$ do not appear in any other terms in $\Pi_{v > 1} U^-_v$
        \[-G\nu_0((R^-_{1_1, k} - R^-_{1_2, k}) \Pi_{v > 1} U^-_v) = -G \nu(T_{1_1} \Pi_{v > 1} U^-_v) + O_N(H_1 + 1).\]
        Summing over terms from all $6$ pairs gives 
        \[-2 \times 3G \nu_0(g_1(h(1), h'(1))).\]
    \end{itemize}
\end{itemize}
Combine all the terms gives 
\begin{align*}
    \nu(\epsilon(1)\Pi_{v > 1} U_v) &= \frac{\beta^2}{2}H\nu(g_1(h(1) - 1, h'(1) + 1)\\
    &+ \beta^2F \nu(g_1(h(1), h'(1)))\\
    &+  \beta^2(h(1) - 1)G A_2^2 \nu(g_1(h(1) - 2, h'(1)))\\
    & -\beta^2 3G \nu_0(g_1(h(1), h'(1)))
\end{align*}
Rearrange gives the desired result. 
\end{proof}
\begin{lemma}
[restatement of Lemma~\ref{derivative S_k}]
If $h'(1) \geq 1$ and $h(1) \geq 0$, suppose $U_1$ corresponds to a copy of $S_1$, then
\begin{align*}
    \nu(\epsilon(1)\Pi_{v > 1}U_v) =& \frac{\beta^2}{2}D \nu(g_1(h(1), h'(1)))\\
    &+ \beta^2 h(1)E A_2^2\nu(g_1(h(1) - 1, h'(1) - 1))\\
    & -2\beta^2E \nu(g_1(h(1) + 1, h'(1) - 1)\\
    &+ O_N(H_1 + 1)).
\end{align*}
\end{lemma}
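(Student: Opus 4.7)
The plan is to mirror the proof of Lemma \ref{derivative T_k}, adapted to the structure $\epsilon(1) = \epsilon_{1_1, 1_1} - \epsilon_{1_2, 1_2}$ and $V_1 = \{1_1, 1_2\} = \{v_1, v_2\}$, where $1_2$ is a fresh auxiliary not appearing in any $U_v$ for $v > 1$. Since $\prod_{v > 1} U_v^-$ is an order-$(H_1 - 1)$ function and $\nu_0(\epsilon(1) \prod_{v > 1} U_v^-) = 0$ by replica symmetry, Proposition \ref{prop: approx} gives $\nu(\epsilon(1) \prod_{v > 1} U_v^-) = \nu_0'(\epsilon(1) \prod_{v > 1} U_v^-) + O_N(H_1 + 1)$. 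I will expand via \eqref{derivative-2} and restrict to pairs $(a, b) \in \cc{P}_1$ via Lemma \ref{lem: symmetry constant T1S1}, then organize the surviving contributions into three groups.

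The diagonal case $a = b \in V_1$ will produce $\frac{\beta^2}{2} D \nu(g_1(h(1), h'(1)))$ after using $\nu_0(\epsilon(1)\epsilon_{1_1, 1_1}) = D = -\nu_0(\epsilon(1)\epsilon_{1_2, 1_2})$ together with Corollary \ref{cor: last spin}; the remaining off-diagonal case $\{a, b\} = \{1_1, 1_2\}$ inside $V_1$ vanishes because $\nu_0(\epsilon(1) \epsilon_{1_1, 1_2}) = 0$ by the $1_1 \leftrightarrow 1_2$ antisymmetry underlying Lemma \ref{lem: symmetry constant T1S1}. For the second group, given $k \in [m] \setminus V_1$ with duplicate $k' = m + k$, I would combine the four ordered pair-groups $(1_1, k), (1_2, k), (1_1, k'), (1_2, k')$ using $\nu_0(\epsilon(1)\epsilon_{1_1, k}) = E$ and the symmetric sign changes from \eqref{eq: symmetry constant T1S1}, yielding $\beta^2 E \nu\bigl((T_{1_1, k} - T_{1_2, k} - T_{1_1, k'} + T_{1_2, k'}) \prod_{v > 1} U_v\bigr) + O_N(H_1 + 1)$ by Lemma \ref{lemma:last spin}. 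Because $1_2$ and $k'$ appear nowhere else and all $h(k, l) = 0$ in this context, Theorem \ref{thm: general T12} forces only a matched $T_{1_1, k}^2$ factor to contribute at leading order, which happens precisely when $U_v$ is a copy of $T_1$ with inner replica $v_3 = k$; summing over the $h(1)$ such $v$ yields the $\beta^2 h(1) E A_2^2 \nu(g_1(h(1) - 1, h'(1) - 1))$ term.

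The conceptually new case, compared to Lemma \ref{derivative T_k}, is when $b$ is a duplicate of a $V_1$ element, namely $b \in \{1_1', 1_2'\}$ with $1_j' = m + 1_j$. Here $\nu_0(\epsilon(1) \epsilon_{1_1, 1_j'}) = E$ and $\nu_0(\epsilon(1) \epsilon_{1_2, 1_j'}) = -E$, so the four such pair-groups combine into $-\beta^2 E \sum_{j \in \{1, 2\}} \nu_0\bigl((R^-_{1_1, 1_j'} - R^-_{1_2, 1_j'}) \prod_{v > 1} U_v^-\bigr)$. The main step, and the most delicate point, is to recognize each difference $R^-_{1_1, 1_j'} - R^-_{1_2, 1_j'}$ as structurally a fresh $T_1$ basis element (with $1_j'$ playing the role of the inner replica $v_3$ and $1_2$ the role of the auxiliary $v_2$, both unused elsewhere); Lemma \ref{lemma:last spin} then reduces each $j$-summand to $\nu(g_1(h(1) + 1, h'(1) - 1)) + O_N(H_1 + 1)$, so the two $j$-values together produce the final $-2\beta^2 E \nu(g_1(h(1) + 1, h'(1) - 1))$ term. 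Combining the three groups delivers the claimed identity, with the sign-bookkeeping in the third case being the main technical obstacle.
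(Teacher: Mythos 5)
Your proposal is correct and follows essentially the same route as the paper: reduce to $\nu_0'$ via Proposition~\ref{prop: approx}, restrict to $\cc{P}_1$ via Lemma~\ref{lem: symmetry constant T1S1}, and organize the surviving pairs into the diagonal case $a=b\in V_1$ (giving $\frac{\beta^2}{2}D$), pairs with one leg in $V_1$ and the other an original replica/duplicate of $[m]\setminus V_1$ (pairing to $T_{1_1,k}^2$ only when $U_u$ is a copy of $T_1$ with $u_3=k$, yielding $\beta^2 h(1)EA_2^2$), and pairs with the other leg a duplicate of $V_1$ itself (producing a fresh $T_1$ and the $-2\beta^2 E$ term). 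Your sign- and multiplicity-bookkeeping, including the factor of two over $j\in\{1,2\}$ and the absorbed $\frac{\beta^2}{2}$, matches the paper's; you also correctly identify the matching condition as $v_3=k$, which is stated consistently in Lemma~\ref{derivative T_k} (the "$k=v_2$" in the paper's text for Lemma~\ref{derivative S_k} is a slip).
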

\begin{proof}[Proof of Lemma~\ref{derivative S_k}]
The proof is similar to that of Lemma \ref{derivative T_k}, but with
\[
\epsilon(1) = \epsilon_{1_1, 1_1} - \epsilon_{1_2, 1_2}.
\]
We included it here for completeness. Let's count the contribution from each pair $(a, b)$ in $\cc{P}_1$. 
\begin{itemize}
    \item $a = b \in V_1$: In this case, we have $(a, b) \in \{(v_1, v_1), (v_2, v_2)\}$. By a similar argument as the proof of \eqref{eq: symmetry constant T1S1}, 
    \[\nu_0(\epsilon(1)\epsilon_{1_1, 1_1}) = - \nu_0(\epsilon(1)\epsilon_{1_2, 1_2}) = D.\]
    Combining the two terms gives 
    \[D\nu_0((R^-_{1_1, 1_1} - R^-_{1_2, 1_2})\Pi_{v > 1}U^-_v) = D \nu(g_1(h(1), h'(1))) + O_N(H_1 + 1).\]
    \item $a \neq b$. 
    \begin{itemize}
        \item For each replica $k \in [m] \backslash V_1$, let $k' = m + k$ be the corresponding new replica introduced by the derivative formula. WLOG, first fix $a \in V_1$. Combine terms corresponds to $b \in \{k, k'\}$, we have
    \begin{align*}
        &E \nu_0((R^-_{1_1, k} - R^-_{1_2, k} - R^-_{1_1, k'} + R^-_{1_2, k'})\Pi_{v > 1}U^-_v) \\
        &= E \nu_0((T_{1_1, k} - T_{1_2, k} - T_{1_1, k'} + T_{1_2, k'})\Pi_{v > 1}U_v).
    \end{align*}
    Following the same argument as in the corresponding type of pair in Lemma \ref{derivative T_k}, the only non-trival contributions come from when $\{1_1, k\}$ appears in some $V_u$ where $U_u$ is a copy of $T_{1_1}$ and $1_1 = v_1$, $k = v_2$. The contribution from such terms are
    \[E \nu_0(T_{1_1, k}^2\Pi_{v \neq 1, u}U_v) = E A_2^2 \nu(g_1(h(1) - 1, h'(1) - 1)) + O_N(H_1 + 1)\]
    Summing up all contributions from such terms gives 
    \[2 \times h(1)E A_2^2\nu(g_1(h(1) - 1, h'(1) - 1)) + O_N(H_1 + 1).\]
    \item The only $k \in [2m] \backslash V_1$ that are not counted are the two new replicas corresponding to $V_1$. For each such replica, the contribution is
    \[- E \nu_0((R^-_{1_1, k} - R^-_{1_2, k})\Pi_{v > 1}U^-_v) = -E \nu(g_1(h(1) + 1, h'(1) - 1)) + O_N(H_1 + 1).\]
    The total contribution is 
    \[2 \times (-2E \nu(g_1(h(1) + 1, h'(1) - 1)) + O_N(H_1 + 1)).\]
    \end{itemize}
\end{itemize}
Summing over contribution from all pairs in $\cc{P}_1$,
\begin{align*}
    \nu(\epsilon(1)\Pi_{v > 1}U_v) =& \frac{\beta^2}{2}D \nu(g_1(h(1), h'(1)))\\
    &+ \beta^2 h(1)E A_2^2\nu(g_1(h(1) - 1, h'(1) - 1))\\
    & -2\beta^2E \nu(g_1(h(1) + 1, h'(1) - 1)\\
    &+ O_N(H_1 + 1)).
\end{align*}
\end{proof}
\subsection{Proof of Lemma \ref{derivative T}, \ref{derivative S}}
Recall that
\[T^hS^{h'} = g(h, h') = \Pi_{v \geq 1} U_v, \quad \text{and} \quad  \epsilon(v) = \epsilon_{v_1, v_2} - Q_{v_1, v_2}.\]
For $T^hS^{h'}$, we have the additional property that if $v \neq v'$,  then
\begin{align} \label{prop: empty intersection}
    V_v \cap V_{v'} = \emptyset.
\end{align}
As in the previous section, we first introduce some notation to characterize the formula in \eqref{derivative-2}. Denote the number of total replicas appear $g(h, h')$ as
\[
m:=2h + h'.
\] 
Denote $a^{''}$ as the new replicas in $\nu'_0(g(h, h'))$ for each $a \in [2m]$, and $\text{sgn}(a, b) := -1^{|\{a, b\} \cap [m]|}$.

Our goal is to compute the following derivative of $\nu(g(h, h'))$,
\begin{align}
    \nu_0'(\epsilon(1)\Pi_{v > 1}U^-_v) &= \frac{\beta^2}{2}\sum_{1 \leq a, b \leq 2m} \text{sgn}(a, b) \nu_0(\epsilon(1)\epsilon_{a, b})\nu_0((R^-_{a, b} - \mu_{a, b})\Pi_{v > 1}U^-_v) - R_{n, g},\\
    &= \frac{\beta^2}{2}\sum_{1 \leq a, b \leq 2m} \text{sgn}(a, b) \nu_0(\epsilon(1)\epsilon_{a, b})\nu_0((R^-_{a, b} - Q_{a, b})\Pi_{v > 1}U^-_v)\\
    &-\frac{\beta^2}{2}\sum_{1 \leq a, b \leq 2m} \text{sgn}(a, b)\nu_0(\epsilon(1)\epsilon_{a^{''}, b^{''}})\nu_0((R^-_{a^{''}, b^{''}} - Q_{a^{''}, b^{''}})\Pi_{v > 1}U^-_v).
\end{align} 
Unlike in the case of $T_1, S_1$, $\nu_0(\epsilon(1)\epsilon_{a, b}) \neq 0$ for all pairs of replica $(a, b)$. To simplify the computation, for each $(a, b) \in[2m] \times [2m]$, consider the corresponding term
\begin{align} \label{def:single term ST}
    D_{a,b} := \nu_0(\epsilon(1)\epsilon_{a, b})\nu_0((R^-_{a, b} - Q_{a, b})\Pi_{v > 1}U^-_v) - \nu_0(\epsilon(1)\epsilon_{a^{''}, b^{''}})\nu_0((R^-_{a^{''}, b^{''}} - Q_{a^{''}, b^{''}})\Pi_{v > 1}U^-_v).
\end{align}
Then we can rewrite \ref{derivative-2} as a sum of all such pairs $(a, b)$.
\[
\nu_0'(\epsilon(1)\Pi_{v > 1}U^-_v) = \frac{\beta^2}{2}\sum_{(a, b) \in [2m] \times [2m]} D_{a, b}.
\]
We first characterize paris of $(a, b)$ s.t. $D_{a, b} = 0$. Observe that if $\{a, b\} \cap [m] = \emptyset$, neither $(a, b)$ nor $a^{''}, b^{''}$ appear in any terms $U^-_v$. They also do not intersect with $V_1$, thus the two terms in $D_{a, b}$ are equivalent. In this case, we have 
\[
D_{a, b} = 0.
\]
The other observation is that $\nu(D(a, b))$ depends only on if $a = b$ and  how $a, b$ appear in the remaining terms $\Pi_{v > 1}U^-_v$. The two lemma below describe values $\nu(D(a, b))$ based on the two conditions above.

\begin{lemma} \label{lem: T terms intersections}
Suppose $a, b$ are indexes of two replicas s.t. $a \neq b$. Let $X_a, X, Y_a, Y$ be the random variables s.t.
When $a \in V_k$ for some $k \in [h + h']$,  $(X_a, X) = \begin{cases}
    (T_a, T), &\text{ if } |V_k| = 2,\\
    (S_a, T), &\text{ if } |V_k| = 1.\\
\end{cases}$ Similarly, when $b \in V_l$, $(Y_b, X) = \begin{cases}
    (T_b, T), &\text{ if } |V_l| = 2,\\
    (S_b, T), &\text{ if } |V_l| = 1.\\
\end{cases}$
Then we have the following trichotomy,
\begin{itemize}
    \item If $a \in \{k_1, k_2\}$ and $b \in \{l_1, l_2\}$ 
\[\nu_0((R_{a, b} - q)U_kU_l\Pi_{w \notin \{k, l\}} U_w) = \nu_0((T_aX_aY + T_bY_bX + TXY)\Pi_{w \notin \{k, l\}} U_w))\]
\item If only $a$ appears in $V_k$ for some term $U_k$, then 
\[\nu_0((R_{a, b} - q)U_k\Pi_{w \notin \{k\}} U_w) = \nu_0((T_aX_a + TX)\Pi_{w \notin \{k\}} U_w))\]
\item If neither $a, b$ are used by the rest of the formula, 
\[\nu_0((R_{a, b} - q)\Pi_{w > 1} U_w) = \nu_0(T\Pi_{w \notin \{k\}} U_w))\]
\end{itemize}
\end{lemma}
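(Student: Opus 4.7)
The plan is to expand $(R_{a,b}-q)$ using the basis decomposition from \eqref{def:expand}, namely $R_{a,b}-q = T_{a,b} + T_a + T_b + T$, and then analyze which of the resulting four terms survive when multiplied into the product of $U_v$'s under $\nu_0$. Because the lemma is stated in terms of $\nu_0$ (cavity at $t=0$, so the last spin is decoupled and averaged), the independence structure is clean: the basis random variables built out of disjoint groups of ``new'' replicas are genuinely independent under $\nu_0$, and this is the engine that will kill most cross terms. In the same spirit, each $U_k$ can itself be expanded in basis: if $|V_k|=2$, then $U_k = T_{k_1,k_2} + T_{k_1} + T_{k_2} + T$, and if $|V_k|=1$, then $U_k = S_{k_1} + S$. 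The notation in the statement is then consistent with identifying $X_a$ as the ``single-replica $a$'' piece of $U_k$ (which is $T_a$ or $S_a$ accordingly) and $X$ as the fully mean-field piece of $U_k$.

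For the first case ($a \in V_k$, $b \in V_l$, $k \neq l$), I would expand $(R_{a,b}-q)\,U_k\,U_l$ fully into basis monomials. The cross-replica piece $T_{a,b}$ is independent under $\nu_0$ of every other basis element appearing here (by Lemma \ref{lem:independence}, since $T_{a,b}$ only correlates with itself), so it dies. The term $T_a$, which has nonzero covariance only with $S_a$ or $T_a$ (hence with $X_a$, the single-replica-$a$ component of $U_k$), survives precisely when paired with $X_a$ from $U_k$ and with the fully mean-field component $Y$ of $U_l$; the pieces of $U_l$ containing replica $b$ alone get ``used up'' by $R_{a,b}$'s residual $T$-type component instead. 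Symmetric reasoning handles $T_b$ (pairing with $Y_b$ and $X$), and the remaining $T$ from the decomposition of $R_{a,b}-q$ pairs with the pure-$T$ (mean-field) pieces $X$ of $U_k$ and $Y$ of $U_l$. Summing yields exactly $T_a X_a Y + T_b Y_b X + T X Y$ after collecting; the three cancellations that remove $T_{a,b}$, cross-$T$ terms, and paired single-replica terms, all follow from the covariance table in Lemma \ref{lem:independence}.

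The second case (only $a$ used in some $U_k$, with $b$ fresh) follows by the same decomposition: the $T_b$ component of $R_{a,b}-q$ vanishes under $\nu_0$ since $b$ does not appear elsewhere and $\nu_0(T_b) = 0$; similarly $T_{a,b}$ vanishes. What remains is $T_a \cdot X_a$ (pairing $T_a$ from $R_{a,b}-q$ with the single-replica-$a$ part $X_a$ of $U_k$) plus $T \cdot X$ (pairing the constant-mean part of $R_{a,b}-q$ with the mean part $X$ of $U_k$). The third case (neither $a$ nor $b$ appears elsewhere) is the simplest: only the $T$ component of $R_{a,b}-q$ has nonzero expectation times the remaining product, so only $T \cdot \prod_{w\neq 1} U_w$ survives.

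The main obstacle I anticipate is purely bookkeeping: one must verify that every cross term that is claimed to vanish really does vanish under $\nu_0$ (not just $\nu$), and one must be consistent about whether the $T$ piece inherited from $R_{a,b}-q$ is indeed independent of the $T_{v}$ or $S_v$ pieces built from new replicas introduced elsewhere. This is where the disjointness property $V_v \cap V_{v'} = \emptyset$ from \eqref{prop: empty intersection} plays the essential role, ensuring that the random variables indexed by disjoint replica sets factorize under $\nu_0$. No further use of the cavity derivative is needed here; the lemma is an algebraic consequence of decomposition plus independence.
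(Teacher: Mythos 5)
Your high-level plan — expand $R_{a,b}-q$ and each $U_k$ into the basis from \eqref{def:expand}, then kill cross terms by disjointness of the replica sets — is the same decomposition the paper uses, and the bookkeeping you describe (pairing $T_a$ with $X_a$, etc.) is on the right track. However, there is a substantive error in your justification of why the cross terms vanish. You claim that under $\nu_0$ the basis variables indexed by disjoint replica sets are ``genuinely independent'' and that the lemma is therefore a purely algebraic consequence of decomposition plus independence, requiring ``no further use of the cavity derivative.'' This is false. Lemma~\ref{lem:independence} only gives zero covariance under $\nu(\cdot)$; it does not give stochastic independence, and the cross terms you need to kill are not mere covariances. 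For instance, the monomial $T_{a,b}\,T_a\,T_b$ (arising from the $T_{a,b}$ piece of $R_{a,b}-q$ hitting the single-replica pieces $X_a$, $Y_b$ of $U_k$, $U_l$) has $\langle T_{a,b}T_a T_b\rangle_0 = N^{-3}\sum_i (\Sigma b)_i^2 > 0$ for $\Sigma$ the conditional spin covariance, so it does not vanish exactly under $\nu_0$.

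What is actually true — and what the paper's proof uses — is that every cross term contains some $T_{p,q}$, $T_{k_2}$, or $T_{l_2}$ to an odd power (using disjointness $V_v\cap V_{v'}=\emptyset$ and the fact that $k_2,l_2$ appear in no other $U_w$), and such odd-moment contributions are $O_N(h+h'+1)$. Establishing this requires Theorems~\ref{thm: general T12} and~\ref{thm:peel off T_kS_k}, which are themselves proved by the cavity method; the paper explicitly invokes Theorem~\ref{thm:peel off T_kS_k} for exactly this step. So the conclusion of the lemma should really carry an $O_N(h+h'+1)$ error term (as the paper's proof makes explicit, even if the statement omits it), and your claim that the cavity machinery plays no role in the error control is incorrect. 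If you flag this — replace the exact-independence argument by the ``odd-moment of $T_{p,q}$, hence $O_N$'' argument via the moment theorems — your proof matches the paper's.
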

Similarly, we can characterize the relations for when $a = b$.
\begin{lemma}\label{lem: S term intersections}
Suppose $a$ is the index of a replica, let 
\[(X_a , X) \in \{(T_a, T), (S_a, S)\},\]
where for $a \in V_k$, $(X_a, X) = \begin{cases}
    (T_a, T), & \text{ if } |V_k| = 2,\\
    (S_a, T), & \text{ if } |V_k| = 1.\\
\end{cases}$
\begin{itemize}
    \item If $a \in V_k$ for some $1 < k \leq h + h'$, then
    \[\nu_0((R_{a, a} - p)U_k\Pi_{w \notin \{k\}} U_w) = \nu_0((S_aX_a + SX)\Pi_{w \notin \{k\}} U_w)).\]
    \item If $a$ does not appear in any $U_k$, then
    \[\nu_0((R_{a, a} - p)\Pi_{w > 1} U_w) = \nu_0(S\Pi_{w \notin \{k\}} U_w)).\]
\end{itemize}
\end{lemma}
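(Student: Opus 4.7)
The plan is to derive both identities by substituting the exact decomposition $R_{a,a}-p=S_a+S$ from~\eqref{def:expand} and then eliminating the resulting cross terms via the pairwise independence structure of Lemma~\ref{lem:independence}.

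Part 2 is immediate. Writing $\nu_0((R_{a,a}-p)\Pi_{w\ge 1}U_w) = \nu_0(S_a\Pi_w U_w) + \nu_0(S\Pi_w U_w)$, I would use Claim~\ref{claim: basis to overlap} to represent $S_a = R_{a,a} - R_{a',a'}$ for a replica label $a'$ fresh with respect to $\Pi_w U_w$. Since $a$ itself does not appear in any $V_w$, the relabelling $a\leftrightarrow a'$ leaves the weight $\Pi_w U_w$ invariant while swapping the two diagonal-overlap terms, so by the replica symmetry of $\nu_0$ the contribution $\nu_0(S_a\Pi_w U_w)$ vanishes exactly and Part 2 follows. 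For Part 1, the same substitution together with an expansion of $U_k$ in basis form ($U_k=T_{a,k_2}+T_a+T_{k_2}+T$ when $|V_k|=2$ with $V_k=\{a,k_2\}$, and $U_k=S_a+S$ when $|V_k|=1$) reduces the target identity to showing that each cross product $\nu_0(S_a\,Y\,g)$ and $\nu_0(S\,Y\,g)$ with $Y$ a basis variable not paired to the left factor by Lemma~\ref{lem:independence} is negligible, in the sense of being either exactly zero or $O_N$ of the next-higher order in the cavity scheme in which this lemma is invoked.

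The key technical step is establishing these vanishings. Whenever the basis factor $Y$ uses only replicas that are fresh with respect to $g$, I would exchange those replicas with additional fresh copies and exploit the sign structure of the representations $T_{k_1}=R_{k_1,l}-R_{m,l}$ and $T_{k_1,k_2}=R_{k_1,k_2}-R_{k_1,l}-R_{m,k_2}+R_{m,l}$ from Claim~\ref{claim: basis to overlap} to produce an exact cancellation; this disposes of $\nu_0(S_aT_{k_2}g), \nu_0(ST_ag), \nu_0(ST_{k_2}g)$ in one stroke. For the remaining cross terms $\nu_0(S_aT g)$, $\nu_0(S T_{a,k_2}g)$, and (in the $|V_k|=1$ branch) $\nu_0(S_aSg)$, the replica-swap cannot be applied directly because of shared disorder-level dependence on $b$ or $\tilde b$; here I would use the factorisation of $\langle\cdot\rangle_0$ over distinct replicas at $t=0$, so that $\langle S_a Y\rangle_0=\langle S_a\rangle_0\langle Y\rangle_0$ and analogously for $S$, combined with the site-symmetry identities $\nu_0(\tilde b R_{a,a})=\nu_0(\tilde b^2)$, $\nu_0(b^2 R_{a,a})=\nu_0(b^2\tilde b)$, etc.\ that already underlie Lemma~\ref{lem:independence}.

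The main obstacle is the cross term $\nu_0(S_aT_{a,k_2}g)$ in the $|V_k|=2$ branch, since $T_{a,k_2}$ shares the replica $a$ with $S_a$, blocking the clean replica-swap argument. For this I plan to represent $T_{a,k_2}$ through Claim~\ref{claim: basis to overlap} with two extra fresh replicas, perform the $\sigma^a$-marginal integration explicitly against the $t=0$ cavity measure, and invoke the uncorrelatedness of $S_a$ with $T_{a,k_2}$ from Lemma~\ref{lem:independence}; any leftover discrepancy is absorbed by Lemma~\ref{lem:moment bound} into the $O_N(\text{order}(g)+3)$ tolerance that the lemma carries whenever it is invoked inside the cavity expansions of Section~\ref{sec: reduce ST}. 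A minor notational caveat: the statement currently reads $(X_a,X)=(S_a,T)$ in the $|V_k|=1$ branch, which I read as a typo for $(S_a,S)$, matching the $(T_a,T)$ pairing in the $|V_k|=2$ branch and giving the identity its natural symmetric form.
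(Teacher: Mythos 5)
Your proposal is correct and follows essentially the same route as the paper, whose proof of this lemma simply points back to the proof of Lemma~\ref{lem: T terms intersections}: substitute $R_{a,a}-p=S_a+S$, expand $U_k$ into basis variables via \eqref{def:expand}, and discard every cross term in which a basis variable appears to an odd power, either by exact replica-swap symmetry or as an $O_N(h+h'+1)$ error absorbed by the moment estimates (the stated "equalities" are meant modulo that tolerance). Your reading of $(S_a,T)$ in the $|V_k|=1$ branch as a typo for $(S_a,S)$ is also correct.
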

We now give the proof of the two lemmas above.
\begin{proof}[Proof of Lemma~\ref{lem: T terms intersections}]
For the first case, where $a \in \{k_1, k_2\}$ and $b \in \{l_1, l_2\}$, WLOG, assume $a = k_1$, $b = l_1$. First rewrite $R_{a, b} - q, U_k, U_l$ using renormalized random variable in definition \ref{def:expand}
\[R_{a, b} - q = T_{a, b} + T_a + T_b + T.\]
For $ U_k, U_l$,
\[R_{p, q} - Q_{p, q} = \begin{cases}
    T_{p, q} + T_{p} + T_{q} + T, & \text{if } p \neq q,\\
    S_{p} + S, & \text{if } p = q.
\end{cases}\]

Since $a, b$ do not appear in the same $U_v$ and by \eqref{prop: empty intersection}, only odd moments $T_{p, q}$ appear in the formula. Again by $\eqref{prop: empty intersection}$, $k_2$, $l_2$ are not used by any other $U_w$ for $w \notin \{k, l\}$. Thus the only terms with even moments from $U_k, U_l$ are those that depend only on $a, b$ or $T, S$. 

Let
\[U_k = X_a + X + X', \quad \text{and} \quad  U_l = Y_b + Y + Y'.\] 
where $X' = \begin{cases}
    0, & \text{ if } k_1 = k_2,\\
    T_{a, k_2} + T_{k_2}, & \text{ if } k_1 \neq k_2.
\end{cases}$ 
and $Y'$ defined similarily for $U_l$. Thus 
\begin{align*}
    \nu_0((R_{a, b} - q)U_kU_l\Pi_{w \notin \{k, l\}} U_w) &= \nu_0(\left(T_a + T_b + T\right)\left(X_a + X + X'\right)\left(Y_b + Y + Y'\right)\Pi_{w \notin \{k, l\}} U_w)\\
    &= \nu_0(\left(T_aX_aY + T_bXY_b + TXY\right)\Pi_{w \notin \{k, l\}} U_w) + O_N(h + h' + 1).
\end{align*}
where the last equality follows from Theorem \ref{thm:peel off T_kS_k}.

If only $a \in S_k$ for some $1 < k \leq (h + h')$, the above equation becomes 
\begin{align*}
    \nu_0((R_{a, b} - q)U_k\Pi_{w \notin \{k, l\}} U_w) &= \nu_0(\left(T_a + T_b + T\right)\left(X_a + X + X'\right)\Pi_{w \notin \{k\}} U_w),\\
    &= \nu_0(\left(T_aX_a + TX\right)\Pi_{w \notin \{k, l\}} U_w) + O_N(h + h' + 1).
\end{align*}
If none of $a,b \in S_k$ for some $1 < k \leq (h + h')$, 
\begin{align*}
    \nu_0((R_{a, b} - q)\Pi_{w \notin \{k, l\}} U_w) &= \nu_0(T\Pi_{w \notin \{k\}} U_w).
\end{align*}
\end{proof}
\begin{proof}[Proof of Lemma~\ref{lem: S term intersections}]
The proof is similar to the proof of Lemma~\ref{lem: T terms intersections}, but with
\[R_{a, a} - p = S_a + S.\]
\end{proof}
\begin{lemma}[restatement of Lemma~\ref{derivative T}] \label{ap: derivative T}
    If $h \geq 1$ and $|V_1| = 2$, we have
\begin{align*}
    \nu(\epsilon(1)\Pi_{v > 1}U^-_v)
    =& \beta^2 (F-3G)\nu(g(h, h') + \beta^2E\nu(g(h - 1, h' + 1))\\
    &+ \beta^2(h - 1)\left[I_5C_1^2 + 2(2G -I_3)A_1^2 + I_3A_2^2\right]\nu(g(h -2, h'))\\
    &+ \beta^2h'\left[\frac{1}{2}I_5 B_1^2 + (2G -I_3)C_1^2\right]\nu( g(h - 1, h' - 1))\\
    &+ O_N(h + h' + 1).\\
\end{align*}
\end{lemma}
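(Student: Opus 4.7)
The plan is to apply the cavity derivative formula \eqref{derivative-2} to $\nu'_0(\epsilon(1) \Pi_{v > 1} U_v^-)$ exactly as in the proofs of Lemmas \ref{derivative T_k} and \ref{derivative S_k}. Since $|V_1| = 2$, we have $\epsilon(1) = \epsilon_{1_1, 1_2} - q$, and the derivative rewrites the quantity as $\tfrac{\beta^2}{2}\sum_{(a,b) \in [2m] \times [2m]} D_{a,b}$, where $D_{a,b}$ is the paired original/fresh-replica contribution from \eqref{def:single term ST}. At $t=0$ the last spin factorizes, so each $D_{a,b}$ splits into a last-spin constant $\nu_0(\epsilon(1)\epsilon_{a,b})$ times an overlap average $\nu_0((R^-_{a,b} - Q_{a,b})\Pi_{v>1} U_v^-)$: the former depends only on $|\{a,b\}\cap V_1|$ and on whether $a=b$, and coincides with one of the constants $I_1,\ldots,I_5$ from Definition \ref{claim: constants2}; the latter is evaluated via Lemmas \ref{lem: T terms intersections}--\ref{lem: S term intersections} according to how $\{a,b\}$ meets the remaining $V_v$.

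I would organize the sum into four families according to the overlap factor. \textbf{(a)} Pairs with $a \neq b$, $\{a,b\} \cap V_1 \neq \emptyset$, and $\{a,b\} \setminus V_1$ not appearing in any other $V_v$: the overlap factor reduces to $\nu(g(h,h'))+O_N(h+h'+1)$, and the combinatorics of pairing $D_{a,b}$ against its fresh-replica counterpart (using $F = I_1 - I_2$ and $G = I_2 - I_3$ from Claim \ref{claim: constants ST}) yields the $\beta^2(F-3G)\nu(g(h,h'))$ summand. \textbf{(b)} Pairs $a = b \in V_1$: Lemma \ref{lem: S term intersections} gives an overlap factor equal to $\nu(g(h-1,h'+1))+O_N(h+h'+1)$, and the pairing against fresh replicas supplies $E = I_4 - I_5$, contributing $\beta^2 E \nu(g(h-1,h'+1))$. \textbf{(c)} Pairs $\{a,b\}$ touching some $V_v$ with $|V_v|=2$, of which there are $h-1$: using Lemma \ref{lem: T terms intersections} together with Theorems \ref{thm: general T12} and \ref{thm:peel off T_kS_k}, the overlap factor becomes a linear combination of $A_2^2$, $A_1^2$, $C_1^2$ multiplying $\nu(g(h-2,h'))$ with coefficient matching $I_5 C_1^2 + 2(2G-I_3)A_1^2 + I_3 A_2^2$. \textbf{(d)} Pairs $\{a,b\}$ touching some $V_v$ with $|V_v|=1$, of which there are $h'$: analogously, Lemmas \ref{lem: T terms intersections}--\ref{lem: S term intersections} combined with the moment results produce the coefficient $\tfrac12 I_5 B_1^2 + (2G-I_3) C_1^2$ in front of $\nu(g(h-1,h'-1))$.

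The main obstacle will be the bookkeeping in family (a): the cancellations between original and fresh-replica contributions encoded in $D_{a,b}$ must interact correctly with $\mathrm{sgn}(a,b)$ and with the multiplicity of fresh replicas introduced by the derivative for each intersection pattern, so that the raw constants $I_1, I_2, I_3$ collapse precisely to $F - 3G$. Families (c)--(d) also require a small sub-step (analogous to the one in the proof of Lemma \ref{derivative T_k}): for pairs $\{a,b\}$ that meet a given $V_v$ at only one index while using a fresh second index, one must observe that the surviving $T_{k,l}$-type factor in the expansion carries an odd moment and is therefore absorbed into $O_N(h+h'+1)$ by Theorem \ref{thm: general T12}. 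Once these reductions are in place, summing the contributions from the four families gives the stated identity.
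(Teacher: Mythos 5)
Your overall plan mirrors the paper's: approximate $\nu(\epsilon(1)\Pi_{v>1}U_v^-)$ by $\nu_0'(\cdot)$, expand via \eqref{derivative-2}, organize pairs $(a,b)$ by their intersection pattern with $V_1$ and with $[m]$, and evaluate each term using Lemmas~\ref{lem: T terms intersections}--\ref{lem: S term intersections} together with the lower-degree moment theorems. The coefficients you claim for the $\nu(g(h-2,h'))$ and $\nu(g(h-1,h'-1))$ terms also agree with the paper. However, there is a genuine gap in your accounting of the $\nu(g(h,h'))$ coefficient.

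You assert that family (a) alone — pairs $a\neq b$ with $\{a,b\}\cap V_1\neq\emptyset$ and $\{a,b\}\setminus V_1$ appearing in no other $V_v$ — collapses to $\beta^2(F-3G)\nu(g(h,h'))$. This is not the case. Family (a) corresponds to the paper's cases $\{a,b\}=V_1$ and $|\{a,b\}\cap V_1|=1$ with the other index fresh; summing these yields a coefficient of the form $2(I_1-I_3) - 4m(I_2-I_3)$ with $m = 2h+h'$, which is explicitly $(h,h')$-dependent and is \emph{not} $2(F-3G)$. The missing contribution comes from the pairs you place into families (c) and (d): when $a\in V_1$ and $b\in V_l$ for some $l>1$, Lemma~\ref{lem: T terms intersections} gives $D_{a,b} = I_2\nu_0(T_bY_b\Pi_{v\neq 1,l}U_v^-) + (I_2-I_3)\nu_0(T\Pi_{v>1}U_v^-) + O_N(h+h'+1)$, so these pairs also feed a $(I_2-I_3)\nu(g(h,h'))$ term — a contribution you have dropped by claiming families (c) and (d) only produce $\nu(g(h-2,h'))$ and $\nu(g(h-1,h'-1))$. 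Summing this $4(2h-2+h')(I_2-I_3)$ term against the $-4m(I_2-I_3)$ from family (a) is exactly what makes the $(h,h')$-dependence cancel and the constant collapse to $2(F-3G)$. Without tracking this cross-family cancellation, the coefficient of $\nu(g(h,h'))$ remains $m$-dependent and the lemma as stated does not follow. A smaller omission: your family list has no place for $a=b\notin V_1$, $a\le m$ (the paper's second bullet), which supplies the $I_5$-weighted $T_aS_a$ and $S_a^2$ contributions to families (c)/(d); you need to include it explicitly for the counts $(h-1)$ and $h'$ to come out right.
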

\begin{proof}[Proof of Lemma~\ref{derivative T}]
Let's first consider the case when $h' > 0$. By \eqref{eq:2nd approx}, 
\begin{align*}
    \nu(\epsilon(1)\Pi_{v > 1}U^-_v) = \nu_0'(\epsilon(1)\Pi_{v > 1}U^-_v) + O_N(h + h' + 1).
\end{align*}
Recall that 
\[\nu_0'(\epsilon(1)\Pi_{v > 1}U^-_v) = \frac{\beta^2}{2}\sum_{(a, b) \in {[2m] \choose 2}} D_{a, b}.\]
Let's first consider the terms when $a = b$, 
\begin{itemize}
    \item If $a=b \in V_1$, since $a \in V_1$, by \eqref{prop: empty intersection}, $a$ does not appear in $U^-_v$, then
    \begin{align*}
        \sum_{a \in V_1} D_{a, a} &\stackrel{\ref{lemma:last spin}}{=} 2(I_4 - I_5)\nu_0(S\Pi_{v > 1}U_v) + O_N(h + h' + 1)\\
        &= 2(I_4 - I_5)\nu_0(g(h - 1, h' + 1)) + O_N(h + h' + 1).
    \end{align*}
   \item For $a = b \notin V_1$, $a, b \leq M$, by Lemma \ref{lem: S term intersections}, suppose $a \in V_k$
   \begin{align*}
       \sum_{a \in V_k} D_{a, a} = 2I_5 \left(\nu_0((S_aX_a+ SX)\Pi_{v \neq 1, k}U^-_v) - \nu_0(S\Pi_{v > 1}U^-_v)\right),
   \end{align*}
   where $(X_a, X)$ is as defined in lemma \ref{lem: S term intersections}.
   There are $h - 1$ $T$ terms and $h'$ $S$ terms, thus
   \begin{align*}
       \sum_{1 < a \leq M} D_{a, a} =&  2(h - 1) I_5\left(\nu_0(T_aS_a + ST)\Pi_{v \neq 1, k}U^-_v) - \nu_0(S\Pi_{v > 1}U^-_v)\right)\\
       &+ h'I_5\left(\nu_0(S_a^2 + S^2)\Pi_{v \neq 1, k}U^-_v) - \nu_0(S\Pi_{v > 1}U^-_v)\right),\\
       \stackrel{\ref{lemma:last spin}}{=}&I_52(h - 1)\nu_0(T_aS_ag(h -2, h')) + I_5 h' \nu_0(S_a^2 g(h - 1, h' - 1)) + O_N(h + h' + 1).
   \end{align*}
\end{itemize}
Let's now consider the case when $a \neq b$,
\begin{itemize}
    \item If $\{a,b\} = V_1$, by \eqref{prop: empty intersection},
    \[\sum_{\{a,b\} = V_1}D_{a, b} \stackrel{\ref{lemma:last spin}}{=} 2(I_1 - I_3)\nu_0((R_{1_1, 1_2} - q)\Pi_{v \neq 1, k}U_v) = 2(I_1 - I_3) \nu_0(g(h, h')).\]
    \item If $|\{a, b\} \cap V_1| = 1$. WLOG, assume $a \in V_1$. By Lemma \ref{lem: T terms intersections}, if $b \in V_l$, 
    \[D_{a, b} = I_2 \nu_0((T_bY_b + TY)\Pi_{v \neq 1, l}U^-_v) - I_3\nu_0(T\Pi_{v > 1}U^-_v).\]
    Since there are $h - 1$ many terms $T$ and $h'$ many terms $S$, fix $a \in V_1$,
    \begin{align*}
        \sum_{b \in [m] \backslash V_1} D_{a, b} =& 2(h - 1)I_2 \nu_0(T_b^2\Pi_{v \neq 1, l}U^-_v) + 2(h - 1)(I_2 - I_3)\nu_0(T\Pi_{v > 1}U^-_v)\\
        &+ h' I_2 \nu_0(T_bS_b\Pi_{v \neq 1, l}U^-_v) + h'(I_2 - I_3)\nu_0(T\Pi_{v > 1}U^-_v),\\
        \stackrel{\ref{lemma:last spin}}{=}& 2(h - 1)I_2 \nu_0(T_b^2 g(h - 2, h')) + h' I_2 \nu_0(T_bS_b g(h - 1, h' - 1))\\
        &+ (2h - 2 + h')(I_2 - I_3)\nu_0(g(h, h')) + O_N(h + h' + 1).
    \end{align*}
    Summing over all such subsets, by symmetry of $a, b$ 
    \begin{align*}
        \sum_{|\{a, b\} \cap V_1| = 1} D_{a, b} =& 4 \left[2(h - 1)I_2 \nu_0(T_b^2 g(h - 2, h')) + h' I_2 \nu_0(T_bS_b g(h - 1, h' - 1))\right]\\
        &+ 4 (2h - 2 + h')\nu_0(g(h, h')) + O_N(h + h' + 1).
    \end{align*}
    \item For $|\{a, b\} \cap V_1| = 0$, $\{a, b\} \subset [m]$. In this case, $a \in V_k$ and $b \in V_l$ for $1 < k, l \leq h + h'$. For $k \neq l$, by Lemma~\ref{lem: T terms intersections} and Lemma~\ref{lemma:last spin}
    \begin{align*}
        D_{a, b} =& I_3 \nu_0(\left(T_aX_aY + T_bXY_b + TXY\right)\Pi_{v \neq 1, k, l}U_v) - I_3 \nu_0(T\Pi_{v > 1}U^-_v), \\
        =& I_3 \nu_0(\left(T_aX_aY + T_bXY_b\right)\Pi_{v \neq 1, k, l}U_v).
    \end{align*}
    For $k = l$, 
    \[D_{a, b} = I_3 \nu_0(\left(T_{a, b}^2 + T_a^2 + T_b^2\right)\Pi_{v \neq 1, k}U_v).\]
    Again, since there are $h - 1$ terms $T$ and $h'$ terms $S$,
    \begin{align*}
        \sum_{|\{a, b\} \cap V_1| = 0} D_{a, b} =& 4 (h - 1)(h - 2) I_3\nu_0((T_a^2 + T_b^2)g(h - 2, h'))\\
        &+ 2 \times 2(h - 1)h'I_3\nu_0((T_a^2S + T_bS_bT)g(h - 2, h' - 1))\\
        &+ 2h'(h' - 1) I_3\nu_0((T_aS_a + T_bS_b)g(h - 1, h' - 1))\\
        &+ 2 (h - 1) I_3\nu_0(\left(T_{a, b}^2 + T_a^2 + T_b^2\right)g(h - 2, h')).
    \end{align*}
    \item If $|\{a, b\} \cap V_1| = 1$ and $|\{a, b\} \cap [m]| = 1$. WLOG, assume $a \in V_1$,
    \[D_{a, b} \stackrel{\ref{lemma:last spin}}{=} (I_2 - I_3) \nu_0(g(h, h')).\]
    Summing over all such terms 
    \[2 \times 2m (I_2 - I_3) \nu_0(g(h, h')).\]
    \item If $|\{a, b\} \cap V_1| = 0$ and $|\{a, b\} \cap [m]| = 1$. WLOG, assume $a \in V_k$, by Lemma~\ref{lem: T terms intersections}
    \[D_{a, b} \stackrel{\ref{lemma:last spin}}{=} I_3 \nu_0((T_aX_a + TX) \Pi_{v \neq 1, k})U_v) - I_3 \nu_0(T\Pi_{v \neq 1})U_v).\]
    Summing over $h - 1$ terms $S$ and $h'$ terms $S$, 
    \begin{align*}
        2I_3 \left[2(h - 1)m \nu_0(T_a^2 g(h - 2, h')) + h'M\nu_0(T_aS_a g(h - 1, h' - 1)).\right]
    \end{align*}
\end{itemize}
After simplification, we have
\begin{align*}
    \frac{2}{\beta^2}\nu_0'(\epsilon(1)\Pi_{v > 1}U^-_v)
    =& \left[2(I_1 - 2I_2) -2(2I_2 - 3I_3)\right]\nu_0(g(h, h'))\\
    &+ \left[2(I_4 - I_5)\right]\nu_0(g(h - 1, h' + 1))\\
    &+ \left[I_52(h - 1)\right]\nu_0(T_1S_1g(h -2, h'))\\
    &+ \left[I_5 h' \right]\nu_0(S_1^2 g(h - 1, h' - 1))\\
    &+ 4(h - 1)\left[2I_2 -3I_3 \right]\nu_0(T_1^2 g(h - 2, h'))\\
    &+ 2h'\left[ 2I_2 -3I_3\right]\nu_0(T_1S_1 g(h - 1, h' - 1))\\
    &+ 2 (h - 1) I_3\nu_0(T_{a, b}^2g(h - 2, h')).\\
\end{align*}
Observe that for all terms on the RHS is a function of order $h + h'$, thus by \eqref{eq:1st approx} and Theorem \ref{thm: general T12}, \ref{thm:peel off T_kS_k} 
\begin{align*}
    \nu_0'(\epsilon(1)\Pi_{v > 1}U^-_v)=& \beta^2\left[(I_1 - I_3) -4(I_2 - I_3)\right]\nu(g(h, h'))\\
    &+ \beta^2(I_4 - I_5)\nu(g(h - 1, h' + 1))\\
    &+ \beta^2\left[I_5(h - 1)C_1^2 + 2(h - 1)(2I_2 -3I_3)A_1^2 + (h - 1) I_3A_2^2\right]\nu(g(h -2, h'))\\
    &+ \beta^2\left[\frac{1}{2}I_5 h'B_1^2 + (2I_2 -3I_3)h'C_1^2\right]\nu( g(h - 1, h' - 1))\\
    &+ O_N(h + h' + 1),\\
    =& \beta^2 (F-3G)\nu(g(h, h') + \beta^2E\nu(g(h - 1, h' + 1))\\
    &+ \beta^2(h - 1)\left[I_5C_1^2 + 2(2G -I_3)A_1^2 + I_3A_2^2\right]\nu(g(h -2, h'))\\
    &+ \beta^2h'\left[\frac{1}{2}I_5 B_1^2 + (2G -I_3)C_1^2\right]\nu(g(h - 1, h' - 1))\\
    &+ O_N(h + h' + 1).
\end{align*}
If $h' = 0$ or $h = 1$: Note that the above computation is a summation over terms $\{D_{a, b}: a, b \in [2m]\}$, for $D(a, b)$ is defined in \eqref{def:single term ST}. By Lemma \ref{lem: T terms intersections} and \ref{lem: S term intersections}, $D(a, b)$ depends on $|\{a, b\}|$ and $|V_k|$ (or $V_l$) if $a$ (or $b$) appears in some term $U_k$. We first partitioned $(a, b)$ into subsets based on the value of $D(a, b)$ and then counted the size of each subsets. 

It's easy to see that the size of the subsets depends only on $|\{k: ||V_k|  = 1; k > 1\}|$ and $|\{k: ||V_k|  = 2; k > 1\}|$. Note that $|V_k|$ represents if $U_k$ corresponds to a copy of $T$ or $S$, i.e. if $|V_k| = 1$, $U_k$ corresponds to a copy of $S$. In this case, $h' = |\{k: ||V_k|  = 1; k > 1\}|$ and $h - 1 = |\{k: ||V_k|  = 2; k > 1\}|$.

If $h' = 0$, this corresponds to the case $g(h, 0) = T^h$. We do not need to count $S$ terms in the above computation and can simply plug in $h' = 0$ in the computation above. One can also check the formula by summing over different types of pair $(a, b)$:
\begin{align*}
\nu_0'(\epsilon(1)\Pi_{v > 1}U^-_v)
    =& \beta^2\left[(I_1 - I_3) -8(I_2 - I_3)\right]\nu_0(g(h, h'))\\
    &+ \beta^2\left[(I_4 - I_5)\right]\nu_0(g(h - 1, h' + 1))\\
    &+ \beta^2(h - 1)\left[I_5C_1^2 + 2(2I_2 - 3I_3)A_1^2 + I_3A_2^2)\right]\nu_0(g(h -2, h'))\\
    &+ O_N(H + 1),\\
    =& \beta^2 (F-3G)\nu(g(h, h') + \beta^2E\nu(g(h - 1, h' + 1))\\
    &+ \beta^2(h - 1)\left[I_5C_1^2 + 2(2G -I_3)A_1^2 + I_3A_2^2\right]\nu(g(h -2, h'))\\
    &+ O_N(h + h' + 1).
\end{align*}
Similarly, if $h = 1$, all of the remaining terms $U_k$ correspond to an $T$ term. Recall the definition of $g(h, h')$ and plug in $h - 1 = 0$ give the desired result.
\end{proof}
Next we will derive $\nu'_0(\epsilon(1)\Pi_{v > 1}U^-_v)$ when reducing the moment of $S$.
\begin{lemma}[Restatment of Lemma \ref{derivative S}]  \label{ap: derivative S}
If $h' \geq 1$ and $|V_1| = 1$, we have 
\begin{align*}
     \nu(\epsilon(1)\Pi_{v > 1}U^-_v)
    =& \frac{\beta^2}{2}D\nu(g(h, h'))\\
    &+ \beta^2 h\left[K_4 C_1^2 + 2(E - K_3)A_1^2 + K_3A_2^2\right]\nu( g(h - 1, h' - 1))\\
    &+ \beta^2(h' - 1)\left[\frac{1}{2}K_4  B_1^2 + (E - K_3)C_1^2\right]\nu_0( g(h, h' - 2))\\
    &-\beta^2E\nu_0(g(h + 1, h' - 1))\\
    &+ O_N(h + h' + 1).
\end{align*}
\end{lemma}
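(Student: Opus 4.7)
The plan is to mirror the proof of Lemma~\ref{ap: derivative T} but with $\epsilon(1) = \epsilon_{1_1, 1_1} - p$ rather than $\epsilon_{1_1, 1_2} - q$. First I would apply the second-order approximation \eqref{eq:2nd approx}, together with $\nu_0(\epsilon(1)\Pi_{v>1}U_v^-) = 0$ (by symmetry across replicas), to reduce to computing
\[
\nu_0'(\epsilon(1)\Pi_{v>1}U_v^-) = \frac{\beta^2}{2} \sum_{(a,b) \in [2m]\times [2m]} D_{a, b} + O_N(h + h' + 1),
\]
where $D_{a,b}$ is defined in~\eqref{def:single term ST} and $m = 2h + h'$. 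As in the proof of Lemma~\ref{ap: derivative T}, pairs with $\{a,b\} \cap [m] = \emptyset$ contribute $0$.

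Next I would enumerate the surviving contributions $D_{a,b}$ by the same case split on $(|\{a,b\}|, |\{a,b\} \cap V_1|, |\{a,b\} \cap [m]|)$ and on the type ($|V_k| \in \{1,2\}$) of each $V_k$ that $a$ or $b$ belongs to. The only change from the previous proof is the value of $\nu_0(\epsilon(1)\epsilon_{a,b})$: with $\epsilon(1) = \epsilon_{1_1,1_1} - p$, this equals $K_2$ when $a = b = 1_1$; $K_4$ when $a = b \notin V_1$; $K_1 = I_4$ when $a \neq b$ and $1_1 \in \{a,b\}$; and $K_3 = I_5$ when $a \neq b$ and $1_1 \notin \{a,b\}$. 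For each such pair I then use Lemma~\ref{lem: T terms intersections} or Lemma~\ref{lem: S term intersections} to rewrite $\nu_0((R^-_{a,b} - Q_{a,b})\Pi_{v>1}U_v^-)$ in terms of products of the basis $T_{k,l}, T_k, S_k, T, S$ acting on $g(h-1, h'-1)$, $g(h, h'-2)$, $g(h+1, h'-1)$, or $g(h, h')$ itself, and finally invoke Theorems~\ref{thm: general T12} and~\ref{thm:peel off T_kS_k} to convert these into the scalar factors $A_2^2, A_1^2, B_1^2, C_1^2$.

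The summation then collects cleanly using Claim~\ref{claim: constants ST} (specifically $K_1 - K_3 = E$ and $K_2 - K_4 = D$): the diagonal pair $a = b = 1_1$ minus the companion "new replica" pair in the $\mathcal R_{m,\cdot}$ term produces the $\tfrac{\beta^2}{2}D\,\nu(g(h,h'))$ leading contribution; pairs with exactly one endpoint in $V_1$ and the other in some $V_l$ ($l>1$) produce the bracket multiplying $h$ (via combinations $K_4 C_1^2 + 2(E-K_3)A_1^2 + K_3 A_2^2$); pairs with both endpoints outside $V_1$, one in a $V_l$ of size one and one free, produce the bracket multiplying $(h'-1)$; and pairs of the form $\{1_1, b\}$ with $b$ a free replica give the "promotion" contribution $-\beta^2 E\, \nu(g(h+1, h'-1))$, with the sign opposite to the analogous $+\beta^2 E\, \nu(g(h-1, h'+1))$ in Lemma~\ref{ap: derivative T} because here we are adding a $T$-copy while deleting an $S$-copy instead of the reverse.

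The main obstacle will be purely bookkeeping: carefully tracking for each case which of the constants $K_2, K_3, K_4$ (or their differences) enters, and checking that the contributions from the $\mathcal R_{m,\cdot}$-shifted pairs subtract correctly from the corresponding in-range pairs so that diagonal $K_2$ terms become $K_2 - K_4 = D$ and off-diagonal $K_1$ terms become $K_1 - K_3 = E$. As before, the cases $h = 0$ and $h' = 1$ must be checked separately but follow by specialization of the same enumeration, since the combinatorial multiplicities in each case depend only on the counts $|\{k>1 : |V_k|=1\}| = h' - 1$ and $|\{k>1 : |V_k|=2\}| = h$.
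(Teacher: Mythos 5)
Your proposal follows essentially the same route as the paper's proof: second-order cavity approximation, expansion into the terms $D_{a,b}$, case split on $|\{a,b\}|$, $|\{a,b\}\cap V_1|$, and $|\{a,b\}\cap [m]|$, followed by Lemmas~\ref{lem: T terms intersections}--\ref{lem: S term intersections}, Theorems~\ref{thm: general T12} and~\ref{thm:peel off T_kS_k}, and Claim~\ref{claim: constants ST}. One caveat on the summary paragraph: the bracket $K_4C_1^2 + 2(E-K_3)A_1^2 + K_3A_2^2$ multiplying $h$ is not produced solely by pairs with one endpoint in $V_1$ and the other in some $V_l$; in the actual accounting, $K_4C_1^2$ comes from diagonal pairs $a=b\notin V_1$ with $a\in V_k$ and $|V_k|=2$, $K_3A_2^2$ from off-diagonal pairs with both endpoints in the same $V_k$, and $(E-K_3)A_1^2$ from the aggregate of $K_1$- and $K_3$-weighted cross terms. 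Likewise the $-\beta^2E\,\nu(g(h+1,h'-1))$ contribution is a net cancellation between the in-$[m]$ pairs $\{1_1,b\}$ (weight $2(2h+h'-1)(K_1-K_3)$) and the half-in-$[m]$ pairs (weight $-2m(K_1-K_3)$), not a single case. If you carry out the enumeration carefully, as you flag, these sort themselves out, but your case-to-constant attribution as written would mislead you.
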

\begin{proof}[Proof of Lemma~\ref{derivative S}]
As in the proof of Lemma~\ref{derivative T}, we will first consider the case $h > 0$. Let's begin by rewriting $\nu(\epsilon(1)\Pi_{v > 1}U^-_v)$ using \eqref{eq:2nd approx}. 
\begin{align} \label{eq: deri S 1st step}
    \nu(\epsilon(1)\Pi_{v > 1}U^-_v) = \nu_0'(\epsilon(1)\Pi_{v > 1}U^-_v) + O_N(h + h' + 1).
\end{align}
then expand the derivative term as
\[\nu_0'(\epsilon(1)\Pi_{v > 1}U^-_v) = \frac{\beta^2}{2}\sum_{(a, b) \in {[2m] \choose 2}} D_{a, b},\]
where 
\[D_{a,b} := \nu_0(\epsilon(1)\epsilon_{a, b})\nu_0((R^-_{a, b} - Q_{a, b})\Pi_{v > 1}U^-_v) - \nu_0(\epsilon(1)\epsilon_{a^{''}, b^{''}})\nu_0((R^-_{a^{''}, b^{''}} - Q_{a^{''}, b^{''}})\Pi_{v > 1}U^-_v).\]
Here 
\[\epsilon(1) = \epsilon_{1_1, 1_1} - p.\]
Note that still if $\{a, b\} \cap [m] = 0$, then $D_{a, b}= 0$. Thus we only need to consider $\{a, b\} \cap [m] \neq \emptyset$.

For $a = b$,
\begin{itemize}
    \item If $a = b = 1_1$, since $1_1$ does not appear in any other terms, 
    \[D_{a, b} = (K_2 - K_4)\nu_0(g(h, h')) + O_N(h + h' + 1).\]
    \item If $a = b \notin V_1$, suppose $a \in V_k$,
    \begin{align*}
        D_{a, a} &= K_4 \nu_0((R_{a, a} - p)\Pi_{v > 1}U^-_v) - K_4\nu_0((R_{a^{''}, a^{''}} - p)\Pi_{v > 1}U^-_v)\\
        &\stackrel{\ref{lem: S term intersections}}{=} K_4 \nu_0((S_aX_a + SX)\Pi_{v > 1}U^-_v) - K_4\nu_0(S\Pi_{v > 1}U^-_v)\\
        &= K_4 \nu_0((S_aX_a)\Pi_{v > 1}U^-_v).
    \end{align*} 
    Summing over $h$ terms  $T$ and $h'-1$ terms $S$,
    \begin{align*}
        \sum_{a \in [m] \backslash V_1} D_{a, a} &= K_4 2h \nu_0(S_aT_a \Pi_{v > 1}U^-_v) + K_4 (h' - 1)\nu_0(S_a^2 \Pi_{v > 1}U^-_v),\\
        &\stackrel{\ref{lemma:last spin}}{=}2hK_4  \nu_0(S_aT_a g(h - 1, h' - 1)) + (h' - 1)K_4  \nu_0(S_a^2 g(h, h' - 2)).
    \end{align*}
\end{itemize}
For $a \neq b$, since $|V_1| = 1$, $|\{a, b\} \cap V_1| \leq 1$,
\begin{itemize}
    \item If $|\{a, b\} \cap V_1| = 0$ and $\{a, b\} \subset [m]$: then $a \in V_k$ and $b \in V_l$ for $1 < k, l \leq M$. By Lemma~\ref{lem: T terms intersections}, for $k \neq l$,
    \begin{align*}
        D_{a, b} &= K_3 \nu_0((T_aX_aY + T_bY_bX + TXY)\Pi_{w \notin \{k, l\}} U_w)) - K_3 \nu_0(T\Pi_{v > 1}U_v)\\
        &= K_3 \nu_0((T_aX_aY + T_bY_bX)\Pi_{w \notin \{k, l\}} U_w)).
    \end{align*}
     For $k = l$, 
    \[D_{a, b} = K_3 \nu_0(\left(T_{a, b}^2 + T_a^2 + T_b^2\right)\Pi_{v \neq 1, k}U_v).\]
    Since there are $h$ $T$ terms and $h' - 1$ $S$ terms
    \begin{align*}
        \sum_{|\{a, b\} \cap V_1| = 0} D_{a, b} =& 4 h(h - 1) K_3 \nu_0((T_a^2 + T_b^2)g(h - 1, h' - 1))\\
        &+ 2 \times 2h(h' - 1)K_3 \nu_0((T_aS_aT + T_b^2S)g(h - 1, h' - 2))\\
        & + 2(h' - 1)(h' - 2)K_3 \nu_0((T_aS_a + T_bS_b)g(h, h' - 2))\\
        &+ 2h K_3 \nu_0\nu_0(\left(T_{a, b}^2 + T_a^2 + T_b^2\right)g(h - 1, h' - 1))\\
        &+ O_N(h + h' + 1).
    \end{align*}
    \item If $|\{a, b\} \cap V_1| = 1$ and $\{a, b\} \subset [m]$: Suppose $a = 1_1$, if $b \in V_l$ for some $k$, by Lemma~\ref{lem: T terms intersections}
    \[D_{a, b} = K_1 \nu_0((T_bY_b + TY)\Pi_{v \neq 1, l}U_v) - K_3\nu_0(T \Pi_{v > 1}U_v) + O_N(h + h' + 1)\]
    Summing up all such terms, we obtain
    \begin{align*}
        \sum_{|\{a, b\} \cap V_1| = 1, \{a, b\} \subset [m]}D_{a, b} &= 2 \times 2h K_1 \nu_0(T_b^2 g(h - 1, h' - 1))\\
        &+ 2(h' - 1)K_1 \nu_0(T_bS_b g(h, h' - 2))\\
        &+ 2(2h + h' - 1)(K_1 - K_3) \nu_0(g(h + 1, h' - 1)).
    \end{align*}
    \item  If $|\{a, b\} \cap V_1| = 1$ and $|\{a, b\} \cap [m]| = 1$,
    \[D_{a, b} = K_1 \nu_0(T\Pi_{v \neq 1, l}U_v) - K_3\nu_0(T \Pi_{v > 1}U_v).\]
    Summing over all such terms gives 
    \[-2m (K_1 - K_3)\nu_0(g(h + 1, h' - 1)).\]
    \item If $|\{a, b\} \cap V_1| = 0$ and $|\{a, b\} \subset [m]| = 1$: let $a \in V_k$,
    \[D_{a, b} = K_3 \nu_0((T_aX_a + TX)\Pi_{v \neq 1, k}U_v) - K_3\nu_0(T \Pi_{v > 1}U_v).\]
    Summing up all such terms gives 
    \[-2m 2h K_3 \nu_0(T_a^2 g(h - 1, h' - 1)) -2m (h' - 1) K_3 \nu_0(T_aS_a g(h, h' - 2)).\]
\end{itemize}
Combining and simplifying all terms above gives 
\begin{align*}
     \nu_0'(\epsilon(1)\Pi_{v > 1}U^-_v) 
    =& \frac{\beta^2}{2}(K_2 - K_4)\nu_0(g(h, h'))\\
    &+ \beta^2 hK_4  \nu_0(S_aT_a g(h - 1, h' - 1))\\
    &+ \frac{\beta^2}{2}(h' - 1)K_4  \nu_0(S_a^2 g(h, h' - 2))\\
    & + 2\beta^2 h(K_1 - 2K_3)\nu_0(T_1^2g(h - 1, h' - 1))\\
    &+ \beta^2(h' - 1)(K_1 - 2K_3)\nu_0(T_1S_1g(h, h' - 2))\\
    &+ \beta^2 h K_3\nu_0(T_{a, b}^2g(h - 1, h' - 1))\\
    &-\beta^2(K_1 - K_3)\nu_0(g(h + 1, h' - 1))\\
    &+ O_N(h + h' + 1).
\end{align*}
Observe that for all terms on the RHS is a function of order $h + h'$, thus by \eqref{eq:1st approx} and Theorem \ref{thm: general T12}, \ref{thm:peel off T_kS_k}.
\begin{align*}
     \nu_0'(\epsilon(1)\Pi_{v > 1}U^-_v)
    \stackrel{\ref{claim: constants ST}}{=}& \frac{\beta^2}{2}D\nu(g(h, h'))\\
    &+ \beta^2 h\left[K_4 C_1^2 + 2(E - K_3)A_1^2 + K_3A_2^2\right]\nu( g(h - 1, h' - 1))\\
    &+ \beta^2(h' - 1)\left[\frac{1}{2}K_4  B_1^2 + (E - K_3)C_1^2\right]\nu_0( g(h, h' - 2))\\
    &-\beta^2E\nu_0(g(h + 1, h' - 1))\\
    &+ O_N(h + h' + 1).\\
\end{align*}
\YS{To handle the case when $h' = 1$ or $h = 0$: the same arguement from the proof of Lemma \ref{ap: derivative T} applies by noting $h' - 1= |\{k: ||V_k|  = 1; k > 1\}|$ and $h = |\{k: ||V_k|  = 2; k > 1\}|$.} Plug this back in \eqref{eq: deri S 1st step} completes the proof.

\end{proof}

\end{document}